\providecommand \@dotsep{5} \def\listtodoname{List of Todos} \def\listoftodos{\@starttoc{tdo}\listtodoname} \makeatother %\Todo{} for margin notes, suppress in pdf with option [disable]
\patchcmd{\@startsection}{\@afterindenttrue}{\@afterindentfalse}{}{}             %omit indentation of the first paragraph of a section
\patchcmd{\part}{\bfseries}{\bfseries\LARGE}{}{}
\patchcmd{\section}{\scshape}{\bfseries}{}{}\renewcommand{\@secnumfont}{\bfseries} %boldface no smallcaps section and subsection titles with numbers
\patchcmd{\@settitle}{\uppercasenonmath\@title}{\large}{}{}
\patchcmd{\@setauthors}{\MakeUppercase}{}{}{}
\theoremstyle{plain}
\newtheorem{thm}{Theorem}[section] % provides command \autoref{}, which produces citations like ``Theorem 1.1''.
\newaliascnt{lemma}{thm}\newtheorem{lemma}[lemma]{Lemma}\aliascntresetthe{lemma}
\newaliascnt{lem}{thm}\newtheorem{lem}[lem]{Lemma}\aliascntresetthe{lem}
\newaliascnt{cor}{thm}\newtheorem{cor}[cor]{Corollary}\aliascntresetthe{cor}
\newaliascnt{prop}{thm}\newtheorem{prop}[prop]{Proposition}\aliascntresetthe{prop}
\newtheorem{thmA}{Theorem} %alphabetic theorem counter: Theorem A, Theorem B, ...
\newaliascnt{propA}{thmA}\aliascntresetthe{propA}
\newtheorem*{thm*}{Theorem}
\newtheorem*{lem*}{Lemma}
\newtheorem*{cor*}{Corollary}
\theoremstyle{definition}
\newaliascnt{df}{thm}\newtheorem{df}[df]{Definition}\aliascntresetthe{df}
\newaliascnt{defn}{thm}\newtheorem{defn}[defn]{Definition}\aliascntresetthe{defn}
\newaliascnt{rem}{thm}\newtheorem{rem}[rem]{Remark}\aliascntresetthe{rem}
\newaliascnt{ex}{thm}\newtheorem{ex}[ex]{Example}\aliascntresetthe{ex}
\newtheorem*{df*}{Definition}
\newtheorem*{ex*}{Example}
\newtheorem*{rem*}{Remark}
\newtheorem*{notation*}{Notation}
\theoremstyle{remark}
\DeclareRobustCommand{\gobblefour}[5]{}    % Command \SkipTocEntry for suppressing a section title in TOC
\numberwithin{equation}{section} \DeclareMathSizes{2}{10}{12}{13}
\DeclareFontFamily{OT1}{pzc}{}                                % Script font for small caligraphic letter, like in \cMat
\DeclareFontShape{OT1}{pzc}{m}{it}{<-> s * [1.10] pzcmi7t}{}
\DeclareMathAlphabet{\mathpzc}{OT1}{pzc}{m}{it}
\DeclareSymbolFont{sfoperators}{OT1}{bch}{m}{n} \DeclareSymbolFontAlphabet{\mathsf}{sfoperators} \makeatletter\def\operator@font{\mathgroup\symsfoperators}\makeatother % different font for math operators
\DeclareSymbolFont{cmletters}{OML}{cmm}{m}{it}              
\DeclareSymbolFont{cmsymbols}{OMS}{cmsy}{m}{n}
\DeclareSymbolFont{cmlargesymbols}{OMX}{cmex}{m}{n}
\DeclareMathSymbol{\myjmath}{\mathord}{cmletters}{"7C}     \let\jmath\myjmath %Defining the missing commands: \jmath, \amalg and \coprod
\DeclareMathSymbol{\myamalg}{\mathbin}{cmsymbols}{"71}     \let\amalg\myamalg
\DeclareMathSymbol{\mycoprod}{\mathop}{cmlargesymbols}{"60}
\DeclareMathSymbol{\myalpha}{\mathord}{cmletters}{"0B}     \let\alpha\myalpha %Greek letters from Computer Modern since the Greek letters from mathptmx are too large
\DeclareMathSymbol{\mybeta}{\mathord}{cmletters}{"0C}      \let\beta\mybeta
\DeclareMathSymbol{\mygamma}{\mathord}{cmletters}{"0D}     \let\gamma\mygamma
\DeclareMathSymbol{\mydelta}{\mathord}{cmletters}{"0E}     \let\delta\mydelta
\DeclareMathSymbol{\myepsilon}{\mathord}{cmletters}{"0F}   \let\epsilon\myepsilon
\DeclareMathSymbol{\myzeta}{\mathord}{cmletters}{"10}      \let\zeta\myzeta
\DeclareMathSymbol{\myeta}{\mathord}{cmletters}{"11}       \let\eta\myeta
\DeclareMathSymbol{\mytheta}{\mathord}{cmletters}{"12}     \let\theta\mytheta
\DeclareMathSymbol{\myiota}{\mathord}{cmletters}{"13}      \let\iota\myiota
\DeclareMathSymbol{\mykappa}{\mathord}{cmletters}{"14}     \let\kappa\mykappa
\DeclareMathSymbol{\mylambda}{\mathord}{cmletters}{"15}    \let\lambda\mylambda
\DeclareMathSymbol{\mymu}{\mathord}{cmletters}{"16}        \let\mu\mymu
\DeclareMathSymbol{\mynu}{\mathord}{cmletters}{"17}        \let\nu\mynu
\DeclareMathSymbol{\myxi}{\mathord}{cmletters}{"18}        \let\xi\myxi
\DeclareMathSymbol{\mypi}{\mathord}{cmletters}{"19}        \let\pi\mypi
\DeclareMathSymbol{\myrho}{\mathord}{cmletters}{"1A}       \let\rho\myrho
\DeclareMathSymbol{\mysigma}{\mathord}{cmletters}{"1B}     \let\sigma\mysigma
\DeclareMathSymbol{\mytau}{\mathord}{cmletters}{"1C}       \let\tau\mytau
\DeclareMathSymbol{\myupsilon}{\mathord}{cmletters}{"1D}   \let\upsilon\myupsilon
\DeclareMathSymbol{\myphi}{\mathord}{cmletters}{"1E}       \let\phi\myphi
\DeclareMathSymbol{\mychi}{\mathord}{cmletters}{"1F}       \let\chi\mychi
\DeclareMathSymbol{\mypsi}{\mathord}{cmletters}{"20}       \let\psi\mypsi
\DeclareMathSymbol{\myomega}{\mathord}{cmletters}{"21}     \let\omega\myomega
\DeclareMathSymbol{\myvarepsilon}{\mathord}{cmletters}{"22}\let\varepsilon\myvarepsilon
\DeclareMathSymbol{\myvartheta}{\mathord}{cmletters}{"23}  \let\vartheta\myvartheta
\DeclareMathSymbol{\myvarpi}{\mathord}{cmletters}{"24}     \let\varpi\myvarpi
\DeclareMathSymbol{\myvarrho}{\mathord}{cmletters}{"25}    \let\varrho\myvarrho
\DeclareMathSymbol{\myvarsigma}{\mathord}{cmletters}{"26}  \let\varsigma\myvarsigma
\DeclareMathSymbol{\myvarphi}{\mathord}{cmletters}{"27}    \let\varphi\myvarphi
\DeclareMathOperator{\Hom}{Hom}
\DeclareMathOperator{\MSch}{MSch}
\DeclareMathOperator{\Mon}{Mon}
\DeclareMathOperator{\colim}{colim\,}
\DeclareMathOperator{\Frac}{Frac}
\DeclareMathOperator{\Spec}{Spec}
\DeclareMathOperator{\MSpec}{MSpec}
\DeclareMathOperator{\Cong}{Cong}
\DeclareMathOperator{\Congker}{Congker}
\DeclareMathOperator{\congker}{congker}
\DeclareMathOperator{\Nil}{Nil}
\DeclareMathOperator{\nil}{\mathfrak{nil}}
\DeclareMathOperator{\Top}{Top}
\DeclareMathOperator{\res}{res}
\newcommand\A{{\mathbb A}}
\newcommand\F{{\mathbb F}}
\newcommand\N{{\mathbb N}}
\newcommand\Z{{\mathbb Z}}
\newcommand\cA{{\mathcal A}}
\newcommand\cO{{\mathcal O}}
\newcommand\cU{{\mathcal U}}
\newcommand\fC{{\mathfrak C}}
\newcommand\fD{{\mathfrak D}}
\newcommand\fc{{\mathfrak c}}
\newcommand\fd{{\mathfrak d}}
\newcommand\fm{{\mathfrak m}}
\newcommand\fn{{\mathfrak n}}
\newcommand\fp{{\mathfrak p}}
\newcommand\fq{{\mathfrak q}}
\newcommand\til[1]{{\widetilde{#1}}}
\renewcommand{\phi}{\varphi}
\renewcommand{\setminus}{-}
\newcommand\Fun{{\F_1}}
\newcommand\id{\textup{id}}
\newcommand\im{\textup{im}}
\newcommand\pr{\textup{pr}}
\renewcommand\max{{\operatorname{max}}}
\renewcommand\geq{\geqslant}
\renewcommand\leq{\leqslant}
\newcommand{\congr}{\textup{cong}}
\newcommand{\triv}{\textup{triv}}
\newcommand{\red}{\textup{red}}
\newcommand{\sred}{\textup{sred}}
\newcommand{\vcl}{\textup{vcl}}
\newcommand{\overint}{{\overline{\textup{int}}}}
\newcommand{\gen}[1]{\langle #1 \rangle}
\title{The topological shadow of \texorpdfstring{$\Fun$}{F1}-geometry: congruence spaces}
\author{Oliver Lorscheid}
\address{\rm Oliver Lorscheid, University of Groningen, the Netherlands, and IMPA, Rio de Janeiro, Brazil}
\email{oliver@impa.br}
\author{Samarpita Ray}
\address{\rm Samarpita Ray, Indian Statistical Institute, Bangalore, India}
\email{ray.samarpita31@gmail.com}
\begin{document}

\begin{abstract} 
 In this paper we introduce congruence spaces, which are topological spaces that are canonically attached to monoid schemes and that reflect closed topological properties. This leads to satisfactory topological characterizations of closed morphisms and closed immersions as well as separated and proper morphisms. We study congruence spaces thoroughly and extend standard results from usual scheme theory to monoid schemes: a closed immersion is the same as an affine morphism for which the pullback of sections is surjective; a morphism is separated if and only if the image of the diagonal is a closed subset of the congruence space; a valuative criterion for separated and proper morphisms.
\end{abstract}

\maketitle 

\begin{small} \tableofcontents \end{small}

%%%%%%%%%%%%%%%%%%%%%%%%%%%%%%%%%%%%%%%%%%%%%%%%%%%%%%%%%%%%%%%%%%%%%%%%%%%%%%%%%%%%%%%%%%%%%%%%%%%%%%%%%%%%%%%%%%%%%%%%%%%%%%%%%%%%%%%%%%%%%%%%%%%%%%%%%%%%%%%%%%%%%
%%%%%%%%%%%%%%%%%%%%%%%%%%%%%%%%%%%%%%%%%%%%%%%%%%%%%%%%%%%%%%%%%%%%%%%%%%%%%%%%%%%%%%%%%%%%%%%%%%%%%%%%%%%%%%%%%%%%%%%%%%%%%%%%%%%%%%%%%%%%%%%%%%%%%%%%%%%%%%%%%%%%%
\section*{Introduction}
%%%%%%%%%%%%%%%%%%%%%%%%%%%%%%%%%%%%%%%%%%%%%%%%%%%%%%%%%%%%%%%%%%%%%%%%%%%%%%%%%%%%%%%%%%%%%%%%%%%%%%%%%%%%%%%%%%%%%%%%%%%%%%%%%%%%%%%%%%%%%%%%%%%%%%%%%%%%%%%%%%%%%
%%%%%%%%%%%%%%%%%%%%%%%%%%%%%%%%%%%%%%%%%%%%%%%%%%%%%%%%%%%%%%%%%%%%%%%%%%%%%%%%%%%%%%%%%%%%%%%%%%%%%%%%%%%%%%%%%%%%%%%%%%%%%%%%%%%%%%%%%%%%%%%%%%%%%%%%%%%%%%%%%%%%%

Monoid schemes are sometimes termed the \emph{minimalist approach} to $\Fun$-geometry and appear, in fact, as a special case of nearly every other approach to $\Fun$-geometry (cf.\ \cite{LopezPena-Lorscheid11}). They have been utilized in and connected with various other areas of geometry: most profoundly, monoid schemes are a generalization of toric geometry (\cite{Deitmar08}). They are used to study the $K$-theory of toric varieties in positive characteristic (\cite{CHWW14,CHWW18}), cdh descent for toric varieties (\cite{CHWW15}) and sheaf cohomology of toric vector bundles (\cite{Flores-Lorscheid-Szczesny17}). Stable homotopy groups of spheres and other spaces resurface as the $K$-theory of monoid schemes (\cite{Chu-Lorscheid-Santhanam12,Deitmar07,Eberhardt-Lorscheid-Young22}). They enter as an integral component into the theory of tropical schemes (\cite{Giansiracusa-Giansiracusa16,Giansiracusa-Giansiracusa22,Lorscheid23,Maclagan-Rincon18,Maclagan-Rincon20,Maclagan-Rincon22}). Other works on monoid schemes include 
\cite{
Berkovich12a,
Berkovich12b,
Cohn04, 
%Deitmar07, 
%Deitmar08, 
%Eberhardt-Lorscheid-Young22, 
%Flores-Lorscheid-Szczesny17, 
%Giansiracusa-Giansiracusa16, 
Bothmer-Hinsch-Stuhler11, 
Lorscheid-Szczesny18, 
Manin10,
Merida-Angulo-Thas17a,
Merida-Angulo-Thas17b,
%Merida-Angulo-Thas19,
Pirashvili12, 
Ray20, 
Szczesny12b,
Thas14,
Thas19}.

Monoid schemes are understood from two equivalent perspectives: as topological spaces together with a structure sheaf in monoids (\cite{Chu-Lorscheid-Santhanam12,Connes-Consani10a,CHWW15,Deitmar05}) and as sheaves on the category of monoids (\cite{Marty07,Toen-Vaquie09,Vezzani12}). The topological perspective gives monoid schemes a concrete shape as spaces of prime ideals. The topos theoretic interpretation justifies that prime ideals capture \emph{open conditions} on monoids. More explicitly, a family $\{U_i\to X\}_{i\in I}$ of open immersions is a covering family in the canonical topology for monoid schemes if and only if the $U_i$ cover $X$ as a topological space. As a consequence, every monoid is isomorphic to the global sections of its spectrum and the Yoneda functor of monoids into sheaves is fully faithful.

However, in contrast to usual scheme theory, prime ideals do not reflect \emph{closed conditions}. A prototypical example is the diagonal embedding $\Delta:\A^1_\Fun\to\A^2_\Fun$, which is not a closed topological embedding. This is illustrated in Figure \ref{fig: diagonal embedding line into plane}: the image of $\Delta$ (shaded in orange) is not among the closed subsets of $\A^2_\Fun$ (shaded in green).

\begin{figure}[htb]
 \[
 \beginpgfgraphicnamed{tikz/fig1}
  \begin{tikzpicture}[x=30pt,y=30pt]%[scale=\textwidth/11.0cm]
   \draw[fill=green!50!black,fill opacity=0.2,rounded corners=11pt,thick] (4.5,1) -- (6,2.5) -- (7.5,1) -- (6,-0.5) -- cycle;   	
   \draw[fill=green!50!black,fill opacity=0.3,rounded corners=11pt,thick] (5.5,2) -- (6,2.5) -- (7.5,1) -- (7,0.5) -- cycle;   	
   \draw[fill=green!50!black,fill opacity=0.3,rounded corners=11pt,thick] (5.5,0) -- (6,-0.5) -- (7.5,1) -- (7,1.5) -- cycle;   	
   \draw[fill=orange!100!black,fill opacity=0.5,rounded corners=7pt,thick,draw=orange!50!black] (-0.25,0.75) rectangle (2.25,1.25);		
   \draw[fill=orange!100!black,fill opacity=1,rounded corners=7pt,thick,draw=orange!50!black] (1.75,0.75) rectangle (2.25,1.25);		
   \draw[fill=orange!100!black,fill opacity=0.5,rounded corners=7pt,thick,draw=orange!50!black] (4.75,0.75) rectangle (7.25,1.25);		
   \draw[fill=orange!100!black,fill opacity=1,rounded corners=7pt,thick,draw=orange!50!black] (6.75,0.75) rectangle (7.25,1.25);		
   \filldraw (0,1) circle (3pt);
   \filldraw (2,1) circle (3pt);
   \filldraw (5,1) circle (3pt);
   \filldraw (6,0) circle (3pt);
   \filldraw (6,2) circle (3pt);
   \filldraw (7,1) circle (3pt);
   \draw[->,thick] (3,1) -- (4,1);
   \node at (3.5,1.25) {$\Delta$};
  \end{tikzpicture}
 \endpgfgraphicnamed 
 \]
 \caption{The diagonal embedding $\Delta:\A^1_\Fun\to\A^2_\Fun$}
 \label{fig: diagonal embedding line into plane}
\end{figure}

This example shows that the underlying topological space of a monoid scheme does not lead to a satisfactory notion of closed morphisms, which is a prominent concept in usual scheme theory. Several subclasses of closed morphisms allow for equivalent characterizations generalize to monoid schemes. For example:
\begin{itemize}
 \item \emph{Closed immersions} are defined as affine morphisms $\varphi:Y\to X$ for which $\cO_X\to\varphi_\ast\cO_Y$ is surjective (\cite[Definition 2.5]{CHWW15}).
 \item \emph{Separated morphisms} are defined as morphisms $\varphi:Y\to X$ whose diagonal $\Delta_\varphi:Y\to Y\times_XY$ is a closed immersion (\cite[Definition 3.3]{CHWW15}).
 \item \emph{Proper morphisms} are only defined in the finite type case in terms of the valuative criterion for properness (\cite[Definition 8.4]{CHWW15}).
 \item There is no notion of \emph{(universally) closed morphisms} yet.
\end{itemize}
In this paper, we introduce a secondary topological space associated with monoid schemes, which we call the \emph{congruence space} and which captures closed conditions for monoid schemes. This leads to a satisfactory notion of closed morphisms for monoid schemes and a topological characterization of the above mentioned classes of morphisms.

%%%%%%%%%%%%%%%%%%%%%%%%%%%%%%%%%%%%%%%%%%%%%%%%%%%%%%%%%%%%%%%%%%%%%%%%%%%%%%%%%%%%%%%%%%%%%%%%%%%%%%%%%%%%%%%%%%%%%%%%%%%%%%%%%%%%%%%%%%%%%%%%%%%%%%%%%%%%%%%%%%%%%
\subsection*{Earlier works on congruences}
The idea to use congruences in the context of monoid schemes is not novel to this paper. Berkovich develops in the unpublished text \cite{Berkovich12a} a scheme theory for monoids that is based on congruences, or quotients, rather than on ideals, or submodules. In fact, this text features many concepts that appear in our text.

Berkovich's approach leads, however, to a quite different notion of $\Fun$-scheme since the space of prime congruences does not reflect the same notion of open coverings as monoid schemes (cf.\ section \ref{subsubsection: Berkovich}). In particular, there are topological coverings in the congruence setting, which are not canonical. This leads to technical ramifications in Berkovich's theory in contrast to monoid schemes in the sense of our text. 

Deitmar develops in \cite{Deitmar13} a variant of congruence schemes for sesquiads, which are algebraic objects that generalize monoids. Also Deitmar's theory deviates from monoid schemes in the sense that it features non-canonical open coverings.

%%%%%%%%%%%%%%%%%%%%%%%%%%%%%%%%%%%%%%%%%%%%%%%%%%%%%%%%%%%%%%%%%%%%%%%%%%%%%%%%%%%%%%%%%%%%%%%%%%%%%%%%%%%%%%%%%%%%%%%%%%%%%%%%%%%%%%%%%%%%%%%%%%%%%%%%%%%%%%%%%%%%%
\subsection*{Follow-up work}

Jarra builds up on the theory of congruence spaces as developed in this text: he introduces in \cite{Jarra23a} an intrinsic subspace of the congruence space that exhibits the expected dimension of monoid schemes over algebraically closed monoids. In a second text \cite{Jarra23b}, Jarra shows that this subspace coincide with the spaces in Smirnov's approach to the ABC-conjecture via Hurwitz formulas in $\Fun$-geometry.

%%%%%%%%%%%%%%%%%%%%%%%%%%%%%%%%%%%%%%%%%%%%%%%%%%%%%%%%%%%%%%%%%%%%%%%%%%%%%%%%%%%%%%%%%%%%%%%%%%%%%%%%%%%%%%%%%%%%%%%%%%%%%%%%%%%%%%%%%%%%%%%%%%%%%%%%%%%%%%%%%%%%%
\subsection*{Congruence spaces}
We refer the reader to section \ref{section: background} for background on pointed monoids and monoid schemes. Here we restrict ourselves to introduce the novel concepts of this paper.

A \emph{congruence on $A$} is an equivalence relation $\fc$ on $A$ such that $(a,b)\in\fc$ implies $(ac,bc)\in\fc$. The quotient $A/\fc$ is naturally a monoid, which establishes a bijective correspondence between the congruences of $A$ and the monoid quotients of $A$. A congruence $\fp$ is \emph{prime} if $A/\fp$ is \emph{integral}, or multiplicatively cancellative. 

The \emph{congruence space of $A$} is the set $\Cong(A)$ of all prime congruences $\fp$ on $A$ together with the topology generated by open subsets of the form 
\[
 U_{a,b} \ = \ \{ \fp\in \Cong (A) \mid (a,b)\notin\fp \}
\]
for $a,b\in A$. It comes with a continuous surjection $\pi_A:\Cong(A)\to\MSpec(A)$, which sends a prime congruence $\fp$ to its nullideal $I_\fp=\{a\in A\mid (a,0)\in\fp\}$.

The following result endows every monoid scheme with a congruence space; cf. Propositions \ref{prop: functoriality of congruence spaces} and \ref{prop: projection onto the monoid scheme}. Let $\MSch$ be the category of monoid schemes and $\Top$ the category of topological spaces.

\begin{thmA}
 There is a functor $\MSch\to\Top$ that associates with every monoid scheme $X$ a topological space $\til X$, called its \emph{congruence space}, together with a continuous surjection $\pi_X:\til X\to X$, which is characterized by the following properties:
 \begin{enumerate}
  \item If $X=\MSpec(A)$ is affine, then $\til X=\Cong(A)$ and $\pi_X=\pi_A$.
  \item For every morphism $\varphi:Y\to X$ of monoid schemes, the diagram
        \[
         \begin{tikzcd}[column sep=80pt, row sep=20pt]
          \til Y \ar[r,"\tilde\varphi"] \ar[d,"\pi_Y"'] & \til X \ar[d,"\pi_X"] \\
          Y \ar[r,"\varphi"] & X
         \end{tikzcd}
        \] 
        commutes.
  \item Open immersions $\varphi:Y\to X$ induce open topological embeddings $\tilde\varphi:\til Y\to \til X$.
  \item If $X=\bigcup_{i\in I} U_i$ is covered by open subschemes $U_i$, then $\til X=\bigcup_{i\in I} \til U_i$.
 \end{enumerate}
\end{thmA}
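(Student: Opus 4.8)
The plan is to build the functor by gluing the affine construction $A\mapsto\Cong(A)$ along a basis of principal opens, then to read off Properties (1)--(4) and to check that they pin the functor down uniquely. The two ingredients I would assume at the outset are already available from Propositions \ref{prop: functoriality of congruence spaces} and \ref{prop: projection onto the monoid scheme}: first, that a monoid homomorphism $f\colon A\to B$ pulls back prime congruences to prime congruences (the quotient $A/f^{-1}(\fp)$ embeds into the integral monoid $B/\fp$ and is therefore integral), giving a continuous map $\Cong(B)\to\Cong(A)$ compatible with the projections; and second, that $\pi_A\colon\Cong(A)\to\MSpec(A)$ is continuous and surjective.

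The decisive technical step is a localization lemma. For $h\in A$, write $\MSpec(A_h)\subseteq\MSpec(A)$ for the associated principal open. I would show that the natural map $\Cong(A_h)\to\Cong(A)$ is an open topological embedding whose image is exactly $\pi_A^{-1}(\MSpec(A_h))$. The point is that this preimage is the basic open $U_{h,0}=\{\fq\mid(h,0)\notin\fq\}$, since $\fq$ lies over a prime of $\MSpec(A_h)$ precisely when $h\notin I_\fq$; and a prime congruence $\fq$ on $A$ with $h\notin I_\fq$ extends uniquely to $A_h$. This proves Property (3) on a basis and simultaneously identifies $\til U_h$ with $\pi_A^{-1}(\MSpec(A_h))$.

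With the localization lemma in hand, I would construct $\til X$ for an arbitrary monoid scheme by gluing: choosing an affine open cover $X=\bigcup_i U_i$ with $U_i=\MSpec(A_i)$, I glue the spaces $\Cong(A_i)$ along the open subspaces corresponding to the overlaps $U_i\cap U_j$, the identifications being furnished by the localization lemma after refining $U_i\cap U_j$ by principal opens. The cocycle condition on triple overlaps is inherited from the affine functoriality of $\Cong$, and the maps $\pi_{A_i}$ glue to a continuous surjection $\pi_X\colon\til X\to X$. This yields Properties (1), (2), and (4) directly, and Property (3) in general follows by covering an arbitrary open immersion by principal opens. Independence of the chosen cover I would obtain by passing to a common refinement and again invoking the localization lemma; functoriality for a general $\varphi\colon Y\to X$ follows by covering $X$ by affines $U_i=\MSpec(A_i)$ and each $\varphi^{-1}(U_i)$ by affines $V_{ij}=\MSpec(B_{ij})$, gluing the affine maps $\Cong(B_{ij})\to\Cong(A_i)$, with the square in (2) reduced to the affine case. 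Uniqueness is then formal: (1) fixes the functor on affines and (3)--(4) force its values and transition maps on any cover.

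The main obstacle is the topological half of the localization lemma. In contrast to classical scheme theory, the topology on $\Cong(A)$ is generated by the congruence-type opens $U_{a,b}$ rather than by the ideal-type opens underlying $\MSpec$, so the crux is to match the subspace topology on $U_{h,0}\subseteq\Cong(A)$ with the intrinsic topology of $\Cong(A_h)$. Concretely, I would check that every generator $U_{a/h^m,\,b/h^n}$ of the latter arises, after clearing denominators, as the restriction to $U_{h,0}$ of a basic open $U_{a',b'}$ of $\Cong(A)$, and conversely. Once this compatibility is secured, the remaining gluing, descent, and uniqueness arguments are routine bookkeeping.
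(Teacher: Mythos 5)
Your proposal is correct and is essentially the paper's own argument: your localization lemma is exactly Proposition \ref{prop: prime congruences in localizations} combined with the first step of the proof of Proposition \ref{prop: functoriality of congruence spaces}, where the subspace-topology matching is settled by the same clearing-of-denominators identity $\iota^\congr\big(U_{a_1/s^n,\,a_2/s^m}\big)=U_{a_1s^m,\,a_2s^n}$, after which the paper likewise glues affine pieces, defines $\tilde\varphi$ locally with well-definedness via common refinements, and obtains $\pi_X$ from $\pi_A(\fp)=I_\fp$ with continuity from $\pi_A^{-1}(D(a))=U_{a,0}$ (Proposition \ref{prop: projection onto the monoid scheme}). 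The only presentational difference is that the paper defines $\til X$ as a colimit over the category of \emph{all} affine open subschemes rather than gluing a chosen cover and proving cover-independence afterwards.
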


The notion of a structure sheaf for congruence spaces is problematic  (cf.\ section \ref{subsection: structure sheaf for the congruence space}); still every point comes with a well-defined notion of a \emph{stalk} and a \emph{residue field}.

%%%%%%%%%%%%%%%%%%%%%%%%%%%%%%%%%%%%%%%%%%%%%%%%%%%%%%%%%%%%%%%%%%%%%%%%%%%%%%%%%%%%%%%%%%%%%%%%%%%%%%%%%%%%%%%%%%%%%%%%%%%%%%%%%%%%%%%%%%%%%%%%%%%%%%%%%%%%%%%%%%%%%
\subsection*{Closed properties}

The congruence space turns out to be a suitable space to capture closed properties of (morphisms between) monoid schemes.

We study several morphism classes in this paper and explain our results in the following. The systematic approach is to replace closed morphisms in usual scheme theory by morphisms $\varphi:Y\to X$ of monoid schemes for which $\tilde\varphi:\til Y\to \til X$ is closed. Note that this interpolation between the monoid scheme and its congruence space makes proofs significantly more subtle and difficult than in usual scheme theory. %Most of the following results require that the morphisms in question are \emph{quasi-compact}, i.e.\ the inverse image of quasi-compact subsets are quasi-compact.

%%%%%%%%%%%%%%%%%%%%%%%%%%%%%%%%%%%%%%%%%%%%%%%%%%%%%%%%%%%%%%%%%%%%%%%%%%%%%%%%%%%%%%%%%%%%%%%%%%%%%%%%%%%%%%%%%%%%%%%%%%%%%%%%%%%%%%%%%%%%%%%%%%%%%%%%%%%%%%%%%%%%%
\subsection*{Closed points}

As a first insight (Proposition \ref{prop: closed points of the congruence space}), we find canonical bijections
\[
 X(\Fun) \ \stackrel{\chi_X}\longrightarrow \ \big\{\text{closed points of $\til X$}\big\} \ \stackrel{\tilde\pi_X}\longrightarrow \ X
\]
where $X(\Fun)=\Hom(\MSpec\Fun,\, X)$ and where $\tilde\pi_X$ is the restriction of the surjection $\pi_X:\til X\to X$ to the subset of closed points of $\til X$.

%%%%%%%%%%%%%%%%%%%%%%%%%%%%%%%%%%%%%%%%%%%%%%%%%%%%%%%%%%%%%%%%%%%%%%%%%%%%%%%%%%%%%%%%%%%%%%%%%%%%%%%%%%%%%%%%%%%%%%%%%%%%%%%%%%%%%%%%%%%%%%%%%%%%%%%%%%%%%%%%%%%%%
\subsection*{Closed subschemes}

A \emph{closed subscheme} of a monoid scheme $X$ is the isomorphism class of a closed immersion $\varphi:Y\to X$. Closed subschemes can be characterized similarly as for usual schemes if concepts are generalized appropriately.

A monoid $A$ is \emph{strongly reduced} if the trivial congruence $\fc_\triv=\{(a,a)\in A\times A\mid a\in A\}$ is a prime congruence. A monoid scheme $X$ is strongly reduced if $\cO_X(U)$ is strongly reduced for every open subset $U$. Every monoid scheme has a strong reduction.

A \emph{vanishing set} of $\til X$ is the image $\tilde\varphi(\til Y)$ for a closed immersion $\varphi:Y\to X$. For affine monoid schemes $X=\MSpec(A)$, a vanishing set is the intersection of closed subsets of the form $V_{a,b}=\{\fp\in \til X\mid (a,b)\in\fp\}$; note that subsets of this form are not closed under finite unions, i.e.\ vanishing sets are closed, but not every closed subset of $\til X$ is a vanishing set.

A \emph{quasi-coherent congruence sheaf} on $X$ is a quasi-coherent subsheaf of $\cO_X\times\cO_X$ that is a congruence on $\cO_X(U)$ for every open subset $U$ of $X$. The congruence kernel of a closed immersion $\varphi:Y\to X$ is the quasi-coherent congruence sheaf defined by $\Congker(\varphi)(U)=\{(a,b)\in \cO_X(U)\times\cO_X(U)\mid \varphi^\#(U)(a)=\varphi^\#(U)(b)\}$.

These relations yield a diagram
\[
 \begin{tikzcd}[column sep=60]
  \big\{\text{vanishing sets of $X$}\big\} \ar[<->,r,"\text{bijective}"] \ar[>->,d,shift right=2] & \Big\{\begin{array}{c}\text{strongly reduced}\\ \text{closed subschemes of $X$}\end{array}\Big\} \ar[>->,d,shift right=2] \\
  \Big\{\begin{array}{c}\text{quasi-coherent} \\ \text{congruence sheaves on $X$}\end{array}\Big\} \ar[->>,u,shift right=2]\ar[<->,r,"\text{bijective}"] & \big\{\text{closed subschemes of $X$}\big\} \ar[->>,u,shift right=2]
 \end{tikzcd}
\]
which commutes for all paths that either start or end in the top row; cf.\ Remark \ref{vanish-c.i-qcoh}.

%%%%%%%%%%%%%%%%%%%%%%%%%%%%%%%%%%%%%%%%%%%%%%%%%%%%%%%%%%%%%%%%%%%%%%%%%%%%%%%%%%%%%%%%%%%%%%%%%%%%%%%%%%%%%%%%%%%%%%%%%%%%%%%%%%%%%%%%%%%%%%%%%%%%%%%%%%%%%%%%%%%%%
\subsection*{Closed immersions}

In \cite{CHWW15}, a \emph{closed immersion} of monoid schemes is defined as an affine morphism $\varphi:Y\to X$ for which $\cO_X\to\varphi_\ast\cO_Y$ is surjective. \emph{A posteriori} we gain the following topological characterization, which reflects the usual definition of closed immersions of usual schemes; cf. Theorem \ref{thm: topological characterization of closed immersion}.

\begin{thmA}
 Let $\varphi:Y\to X$ be a morphism of monoid schemes. The following are equivalent:
 \begin{enumerate}
  \item $\varphi:Y\to X$ is a closed immersion.
  \item $\varphi:Y\to X$ is a quasi-compact topological embedding, $\varphi^\#:\cO_X\to\varphi_\ast\cO_Y$ is surjective and the image of $\tilde\varphi:\til Y\to\til X$ is a vanishing set.
 \end{enumerate}
\end{thmA}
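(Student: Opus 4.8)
The plan is to prove the two implications separately, using as anchors the definition of a closed immersion (an affine morphism $\varphi$ with $\varphi^\#$ surjective) and the definition of a vanishing set (an image $\tilde\psi(\til Z)$ for a closed immersion $\psi\colon Z\to X$).

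For the implication $(1)\Rightarrow(2)$, two of the three assertions are essentially formal: surjectivity of $\varphi^\#$ is part of the definition of a closed immersion, and the statement that $\tilde\varphi(\til Y)$ is a vanishing set holds tautologically, since a vanishing set is by definition such an image and $\varphi$ is itself a closed immersion. Thus only the claim that $\varphi$ is a quasi-compact topological embedding requires work. Quasi-compactness is immediate, as $\varphi$ is affine and affine monoid schemes are quasi-compact, so preimages of quasi-compact opens are quasi-compact. For the embedding property I would argue locally over an affine open $U=\MSpec(A)$ of $X$, where $\varphi$ restricts to $\MSpec(A/\fc)\to\MSpec(A)$ with $\fc=\Congker(\varphi)(U)$; the quotient map $A\to A/\fc$ realizes $\MSpec(A/\fc)$ homeomorphically as the subspace of those primes of $A$ pulled back from $A/\fc$, and gluing these local embeddings along an affine cover of $X$ shows $\varphi$ is a topological embedding.

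For $(2)\Rightarrow(1)$ the goal is to upgrade the hypotheses to affineness of $\varphi$; once that is known, surjectivity of $\varphi^\#$ makes $\varphi$ a closed immersion by definition. I would first reduce to an affine base, since affineness of a morphism is local on the target: one checks that each condition in $(2)$ restricts to an affine open $U\subseteq X$, the only nonformal point being that $\tilde\varphi(\til Y)\cap\til U$ is again a vanishing set of $\til U$, which should follow from the behaviour of vanishing sets under restriction to opens. With $X=\MSpec(A)$ affine, set $B=\cO_Y(Y)$; surjectivity of $\varphi^\#$ on global sections gives $B\cong A/\fc$ with $\fc=\Congker(\varphi)(X)$, and the universal property of $\MSpec$ yields a factorization $\varphi=\iota\circ g$ with $\iota\colon\MSpec(B)\to X$ the canonical closed immersion attached to $A\to A/\fc$ and $g\colon Y\to\MSpec(B)$ inducing the identity on global sections. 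The problem is thereby reduced to proving that $g$ is an isomorphism, for then $Y\cong\MSpec(B)$ is affine.

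The heart of the matter, and the step I expect to be the main obstacle, is the surjectivity of $g$. Since $\iota$ and $\varphi$ are topological embeddings, so is $g$; moreover the identification $\varphi_\ast\cO_Y\cong\cO_X/\Congker(\varphi)$ coming from surjectivity of $\varphi^\#$ yields $g_\ast\cO_Y\cong\cO_{\MSpec(B)}$, and because $g$ is an embedding the neighbourhoods of $g(y)$ restrict to all neighbourhoods of $y$, so this sheaf isomorphism should induce isomorphisms on stalks. I would use this to show that $g$ is an open immersion, whence, by the property that open immersions induce open topological embeddings of congruence spaces, $\tilde g(\til Y)$ is open in $\Cong(B)$; here the quasi-compactness hypothesis is available to control the argument. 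On the other hand, $\tilde\iota$ identifies $\Cong(B)$ homeomorphically with the vanishing set $V(\fc)=\{\fp\in\til X\mid\fc\subseteq\fp\}$, under which $\tilde\varphi(\til Y)$ corresponds to $\tilde g(\til Y)$; the hypothesis that $\tilde\varphi(\til Y)$ is a vanishing set then forces $\tilde g(\til Y)$ to be a vanishing set, hence closed under specialization, while openness makes it closed under generization. A subset of $\Cong(B)$ closed under both is clopen, and a proper clopen would correspond to a nontrivial decomposition of $B=\cO_Y(Y)$ incompatible with $B$ being the full ring of global sections of $Y$; so $\tilde g(\til Y)=\Cong(B)$, $g$ is surjective, and therefore an isomorphism. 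The genuine subtlety throughout, absent from classical scheme theory, is that the embedding and surjectivity hypotheses live on $\varphi\colon Y\to X$ while the closedness that makes $\varphi$ affine is detected only on $\tilde\varphi\colon\til Y\to\til X$; establishing the openness of $\tilde g(\til Y)$ and reconciling these two levels is where the argument is most delicate.
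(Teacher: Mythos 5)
The implication $(1)\Rightarrow(2)$ in your proposal is fine and matches the paper (Proposition \ref{prop: closed immersions of congruence spaces} plus the remark that the embedding property is automatic). For $(2)\Rightarrow(1)$, your reduction to an affine base and the factorization $\varphi=\iota\circ g$ through the closed subscheme cut out by the congruence kernel is also the paper's first move (it needs quasi-compactness via Lemma \ref{lemma: the congrunce kernel of a quasi-compact morphism is quasi-coherent} and Lemma \ref{lemma: vanishing set of the congruence kernel is the vanishing closure of the image}; note also that if ``$\varphi^\#$ surjective'' is read stalkwise, surjectivity of $A\to\Gamma Y$ is not immediate --- the paper gets it by observing that after the substitution $g^\#$ is injective on sections and surjective on stalks, hence an isomorphism of sheaves). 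But the step you yourself flag as the heart of the matter --- that the topological embedding $g\colon Y\to\MSpec(\Gamma Y)$ with $g_\ast\cO_Y\cong\cO_{\MSpec(\Gamma Y)}$ is an \emph{open} immersion --- is precisely where the whole difficulty of the theorem is concentrated, and you offer no argument for it: an embedding with a sheaf isomorphism does give isomorphisms on stalks, but openness of the image does not follow from stalk isomorphisms, and ``the quasi-compactness hypothesis is available to control the argument'' is not a proof. The paper circumvents this entirely by a different device: it shows that $Y$ has a \emph{unique closed point}, using the homeomorphism $\tilde\varphi$ together with the description of closed points of congruence spaces (Proposition \ref{prop: closed points of the congruence space}: the unique closed point $\fm$ of the fibre over the maximal ideal pulls back to a unique $\fn\in\til Y$, and every closed point $y$ of $Y$ satisfies $\fn\in\pi_Y^{-1}(y)$), whence $Y$ is affine and $g$ is an isomorphism.

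Your concluding clopen argument is also broken as stated. The claim that a proper clopen subset of $\Cong(B)$ would ``correspond to a nontrivial decomposition of $B$'' is false for monoids: for $B=\{0,e,1\}$ with $e=e^2$, the space $\Cong(B)$ is discrete with two points $\gen{(e,0)}$ and $\gen{(e,1)}$, yet $B$ admits no product decomposition (it is a proper submonoid of $\Fun\times\Fun$). Moreover, this detour is unnecessary: once the congruence kernel of $g$ is trivial and $\im(\tilde g)$ is a vanishing set, Lemma \ref{lemma: vanishing set of the congruence kernel is the vanishing closure of the image} gives $\tilde g(\til Y)=V_{\fc_\triv}=\Cong(B)$ outright, so surjectivity of $\tilde g$ is cheap. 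What remains --- and what your proposal does not supply --- is the passage from this surjectivity to affineness of $Y$ (without presupposing that $g$ is open), which is exactly the unique-closed-point argument in the paper's proof of Theorem \ref{thm: topological characterization of closed immersion}.
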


%%%%%%%%%%%%%%%%%%%%%%%%%%%%%%%%%%%%%%%%%%%%%%%%%%%%%%%%%%%%%%%%%%%%%%%%%%%%%%%%%%%%%%%%%%%%%%%%%%%%%%%%%%%%%%%%%%%%%%%%%%%%%%%%%%%%%%%%%%%%%%%%%%%%%%%%%%%%%%%%%%%%%
\subsection*{Separated morphisms}
In \cite{CHWW15}, a morphism $\varphi:Y\to X$ is defined to be \emph{separated} if the diagonal $\Delta_\varphi:Y\to Y\times_XY$ is a closed immersion (see \cite{CHWW15}). \emph{A posteriori} the congruence space provides a topological characterization. We say that $\varphi:Y\to X$ is \emph{quasi-separated} if $\Delta_\varphi:Y\to Y\times_XY$ is quasi-compact. The following is Theorem \ref{thm: topological characterization of separated morphisms}.

\begin{thmA}
 Let $\varphi:Y\to X$ be a morphism of monoid schemes. Then the following are equivalent:
 \begin{enumerate}
  \item $\varphi$ is separated;
  \item $\varphi$ is quasi-separated and the image of $\tilde\Delta_\varphi$ is a closed subset of the congruence space of $Y\times_XY$.
 \end{enumerate}
\end{thmA}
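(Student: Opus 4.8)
The plan is to apply the topological characterization of closed immersions (Theorem~\ref{thm: topological characterization of closed immersion}) to the diagonal $\Delta_\varphi\colon Y\to Y\times_XY$, since $\varphi$ is separated precisely when $\Delta_\varphi$ is a closed immersion. One property of the diagonal is automatic: because $\pr_1\circ\Delta_\varphi=\id_Y$, functoriality gives $\widetilde{\pr}_1\circ\tilde\Delta_\varphi=\id_{\til Y}$, so $\tilde\Delta_\varphi$ is a section and therefore a topological embedding. This settles $(1)\Rightarrow(2)$ at once: if $\Delta_\varphi$ is a closed immersion, then by the characterization it is quasi-compact---whence $\varphi$ is quasi-separated---and the image of $\tilde\Delta_\varphi$ is a vanishing set, hence closed.

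For $(2)\Rightarrow(1)$ I aim to show directly that $\Delta_\varphi$ is a closed immersion. Quasi-separatedness makes $\Delta_\varphi$ quasi-compact, so the congruence kernel $\fc=\Congker(\Delta_\varphi)$ is a quasi-coherent congruence sheaf and defines a closed subscheme $\psi\colon Z\hookrightarrow Y\times_XY$ with image $\tilde\psi(\til Z)=V(\fc)$ the vanishing set of $\fc$, through which $\Delta_\varphi$ factors as $Y\xrightarrow{\eta}Z\xrightarrow{\psi}Y\times_XY$. First I would run the standard local computation: covering $X$ by affine opens $V$ and $\varphi^{-1}(V)$ by affine opens $U=\MSpec A$, the diagonal opens $U\times_VU$ form an open neighbourhood $W_0$ of $\Delta_\varphi(Y)$, and on each of them $\Delta_\varphi$ restricts to the affine diagonal $\MSpec A\to\MSpec(A\otimes_{\cO(V)}A)$, which is a closed immersion because the multiplication $A\otimes_{\cO(V)}A\to A$ is surjective. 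Consequently $\eta$ is an open immersion onto $Z\cap W_0$, the pullback $\Delta_\varphi^\#$ is surjective over $W_0$, and $\tilde\Delta_\varphi(\til Y)=\widetilde{Z\cap W_0}$ is an \emph{open} subset of $\til Z=V(\fc)$.

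The heart of the matter is to upgrade ``closed'' to ``vanishing set''. By hypothesis $\tilde\Delta_\varphi(\til Y)$ is closed in $\widetilde{Y\times_XY}$, hence closed in $\til Z$, so together with the openness just established it is \emph{clopen} in $\til Z$. I would then transport this clopen decomposition down along the projection $\pi_Z\colon\til Z\to Z$ to deduce that $Z\cap W_0$ is clopen in $Z$; then $Z':=Z\cap W_0$ is itself a closed subscheme of $Y\times_XY$, its image $\widetilde{Z'}=\tilde\Delta_\varphi(\til Y)$ is a vanishing set, and $\eta\colon Y\to Z'$ is a surjective open immersion, hence an isomorphism. Therefore $\Delta_\varphi=(Z'\hookrightarrow Y\times_XY)\circ\eta$ is a closed immersion and $\varphi$ is separated; in particular $\Delta_\varphi^\#$ is globally surjective, as the characterization demands.

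I expect the main obstacle to be exactly this transport along $\pi_Z$---equivalently, the assertion that $V(\fc)$ is the closure of $\tilde\Delta_\varphi(\til Y)$ in the congruence space. The difficulty is genuine: closed subsets of a congruence space need not be vanishing sets, and the closure of a subset is in general strictly smaller than the intersection of the defining congruences of its points, so no purely formal argument suffices. The natural strategy is to prove that $\pi_Z$ is a quotient map, so that the saturated open set $\pi_Z^{-1}(Z\setminus W_0)$ descends to an open subset of $Z$; equivalently, one must show that every prime congruence lying over a boundary point of $Z\setminus W_0$ is a specialization of prime congruences lying over the dense open $Z\cap W_0$. Carrying out this interpolation between the monoid scheme $Z$ and its congruence space $\til Z$, using the explicit form of $\fc$ coming from the local closed-immersion structure of the diagonal, is where the real work will lie.
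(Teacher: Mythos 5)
Your direction (1)$\Rightarrow$(2) is correct and agrees with the paper: a closed immersion is affine, hence quasi-compact, and Proposition~\ref{prop: closed immersions of congruence spaces} gives closedness of the image. Your reduction of (2)$\Rightarrow$(1) is also sound up to a point, and in fact follows the paper's architecture: quasi-separatedness makes $\Delta_\varphi$ quasi-compact, so $\fC=\Congker(\Delta_\varphi)$ is quasi-coherent (Lemma~\ref{lemma: the congrunce kernel of a quasi-compact morphism is quasi-coherent}) and defines a closed subscheme $\psi:Z\to Y\times_XY$ with $\til Z=V_\fC$ through which $\Delta_\varphi$ factors; the local computation correctly identifies $\eta:Y\to Z$ as an open immersion onto $Z\cap W_0$, so that $\im(\tilde\Delta_\varphi)$ is open in $\til Z$ and, by hypothesis, also closed. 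The genuine gap is the step you yourself flag as ``where the real work will lie'': concluding that $\im(\tilde\Delta_\varphi)=\til Z$. You leave this unproved, and the repair mechanism you propose---that $\pi_Z$ is a quotient map, so the saturated open $\pi_Z^{-1}(Z\setminus W_0)$ descends---is false in general. The paper's own example $A=\{0,e,1\}$ with $e=e^2$ (Figure~\ref{fig: bijection pi_X that is not a homeomorphism}) refutes it: $\pi_A:\Cong(A)\to\MSpec(A)$ is a continuous bijection from a discrete two-point space onto a non-discrete space, hence not a quotient map; moreover the open point $U=\{\gen 0\}$ of $\MSpec(A)$ has clopen preimage $\{\gen{(e,1)}\}$ in $\Cong(A)$ while $U$ is not closed, so clopenness upstairs does not descend along $\pi$. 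Nothing about the particular $Z$ at hand excludes this behaviour, so no point-set descent argument of this kind can close the gap; the intermediate statement ``$Z\cap W_0$ is clopen in $Z$'' is true only \emph{a posteriori}, because in the end $Z\cap W_0=Z$.

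What actually closes the gap---the paper's Lemma~\ref{lemma: closure of diagonal is a vanishing set}---is the specialization argument you allude to but never execute, and it uses the product structure of $Y\times_XY$ in an essential way: given $\fq\in V_\fd$ in a chart $U_1\times_VU_2$, where $\fd=\fC(U_1\times_VU_2)$ and $W=U_1\cap U_2$, one projects the nullideal $Q=I_\fq$ to $Q_1=\pr_1(Q)\in U_1$, generizes to a prime $Q_1'\subseteq Q_1$ lying in $W$, and sets $\fq'=\tilde\Delta_\varphi(\sigma_W(Q_1'))$, the \emph{minimal} prime congruence in $\im(\tilde\Delta_\varphi)$ with nullideal $I_{\fq'}$; the inclusion $\fq'\subseteq\fq$ then follows from minimality once $I_{\fq'}\subseteq I_\fq$ is verified, and the key computation there is that $\res_{U_2,W}(b)$ differs from an element of the form $\res_{U_1,W}(b')$ by a unit of $\Gamma W$, which transports the nullideal condition across the two tensor factors. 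This shows every point of $V_\fC$ is a specialization of a point of $\im(\tilde\Delta_\varphi)$, so closedness of the image forces $\im(\tilde\Delta_\varphi)=V_\fC$, i.e.\ $Z\cap W_0=Z$; then your ending ($\eta$ a surjective open immersion, hence an isomorphism, so $\Delta_\varphi=\psi\circ\eta$ is a closed immersion) goes through, and is in fact slightly more direct than the paper's Lemma~\ref{lemma: locally closed immersion whose image is a vanishing set is closed}, which treats arbitrary locally closed immersions via strong reductions. As it stands, though, your proposal is missing precisely this decisive lemma, and the suggested substitute would fail.
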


%%%%%%%%%%%%%%%%%%%%%%%%%%%%%%%%%%%%%%%%%%%%%%%%%%%%%%%%%%%%%%%%%%%%%%%%%%%%%%%%%%%%%%%%%%%%%%%%%%%%%%%%%%%%%%%%%%%%%%%%%%%%%%%%%%%%%%%%%%%%%%%%%%%%%%%%%%%%%%%%%%%%%
\subsection*{Closed and proper morphisms}

We say that a morphism $\varphi:Y\to X$ of monoid schemes is 
\begin{itemize}
 \item \emph{closed} if the map $\tilde\varphi:\til Y\to\til X$ between congruence spaces is closed;
 \item \emph{universally closed} if for every morphism $\psi:X'\to X$ the canonical projection $X'\times_XY\to X'$ is closed;
 \item \emph{proper} if it is of finite type, separated and universally closed.
\end{itemize}

%%%%%%%%%%%%%%%%%%%%%%%%%%%%%%%%%%%%%%%%%%%%%%%%%%%%%%%%%%%%%%%%%%%%%%%%%%%%%%%%%%%%%%%%%%%%%%%%%%%%%%%%%%%%%%%%%%%%%%%%%%%%%%%%%%%%%%%%%%%%%%%%%%%%%%%%%%%%%%%%%%%%%
\subsection*{Valuative criteria}
A \emph{pointed group} is a pointed monoid $G$ for which every nonzero element is invertible and $0\neq1$. A \emph{valuation monoid} is a submonoid $A$ of a pointed group $G$ such that for every nonzero $a\in G$ either $a\in A$ or $a^{-1}\in A$.

Let $\varphi:Y\to X$ be a morphism of monoid schemes. A \emph{test diagram} for $\varphi$ is a commutative diagram of the form
\[
 \begin{tikzcd}[column sep=80pt, row sep=20pt]
  U=\MSpec(G) \ar[d,"\iota"'] \ar[r,"\eta"] & Y \ar[d,"\varphi"] \\
  T=\MSpec(A) \ar[r,"\nu"]      & X
 \end{tikzcd}
\]
where $A$ is a valuation monoid in the pointed group $G$ and $\iota$ is induced by the inclusion map $A\to G$. A \emph{lift for the test diagram} is a morphism $\hat\nu:T\to Y$ such that 
\[
 \begin{tikzcd}[column sep=80pt, row sep=20pt]
  U=\MSpec (G) \ar[d,"\iota"'] \ar[r,"\eta"] & Y \ar[d,"\varphi"] \\
  T=\MSpec(A) \ar[r,"\nu"] \ar[ur,dashed,"\hat\nu"]     & X
 \end{tikzcd}
\]
commutes. The following summarizes Theorem \ref{uni-closed}, Theorem \ref{thm: valuative criterion of separatedness} and Theorem \ref{prop-intrinsic}.

\begin{thmA}
 A quasi-compact / quasi-separated / finite type morphism $\varphi:Y\to X$ of monoid schemes is universally closed / separated / proper if and only if every test diagram for $\varphi$ has at least / at most / exactly one lift.
\end{thmA}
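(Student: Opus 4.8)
The plan is to establish the three equivalences separately and to obtain the \emph{proper / exactly one lift} case formally from the other two. Indeed, a finite type morphism is proper exactly when it is separated and universally closed, and ``exactly one lift'' is the conjunction of ``at least one lift'' (existence) with ``at most one lift'' (uniqueness); since finite type entails both quasi-compactness and quasi-separatedness, the proper case reduces to the universally closed and separated cases. The substance therefore lies in two independent criteria, existence $\Longleftrightarrow$ universally closed and uniqueness $\Longleftrightarrow$ separated. I would follow the classical template of the valuative criteria from scheme theory, but run every topological argument inside the congruence space, where closedness is defined, rather than on the underlying monoid scheme.

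For the uniqueness/separatedness equivalence I would build on the topological characterization of separated morphisms (Theorem \ref{thm: topological characterization of separated morphisms}): $\varphi$ is separated if and only if it is quasi-separated and $\tilde\Delta_\varphi(\til Y)$ is closed in the congruence space of $Y\times_XY$. For the direction \emph{separated $\Rightarrow$ uniqueness}, two lifts $\hat\nu_1,\hat\nu_2$ of a test diagram assemble into a morphism $h=(\hat\nu_1,\hat\nu_2):T\to Y\times_XY$ whose restriction to $U=\MSpec(G)$ factors through the diagonal, since both lifts restrict to $\eta$. As $\Delta_\varphi$ is a closed immersion, the preimage $h^{-1}\big(\Delta_\varphi(Y)\big)$ is a closed subscheme of $T$ containing the generic point; because $G=\Frac(A)$ for a valuation monoid $A$, this generic point is dense in the integral, hence reduced and irreducible, scheme $T$, so $h$ factors through $\Delta_\varphi$ and thus $\hat\nu_1=\hat\nu_2$. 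For the converse I would argue contrapositively: a failure of separatedness produces, via Theorem \ref{thm: topological characterization of separated morphisms}, a prime congruence in the closure of $\tilde\Delta_\varphi(\til Y)$ but outside it, that is, a specialization not witnessed by the diagonal; I would convert this specialization into a valuation monoid $A\subseteq G$ and a test diagram admitting two distinct lifts.

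For the existence/universal-closedness equivalence the engine is a dictionary between specializations in the congruence space and valuation monoids. For \emph{universally closed $\Rightarrow$ existence}, I would base change $\varphi$ along $\nu:T\to X$ to get $\varphi_T:Y_T\to T$, which is again closed; the pair $(\eta,\iota)$ yields a section of $\varphi_T$ over $U$ and hence a point $\tilde y_0\in\til{Y_T}$ over the generic point of $T$. Closedness of $\tilde\varphi_T$ forces $\tilde\varphi_T\big(\overline{\{\tilde y_0\}}\big)$ to be a closed subset of $\til T$ containing the generic point, hence also the closed point of $T$; a prime congruence $\tilde y_1$ in this closure lying over the closed point of $T$ encodes, through its residue data, the desired lift $\hat\nu:T\to Y$. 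For the converse, \emph{existence $\Rightarrow$ universally closed}, I would show that the existence of lifts makes $\tilde\varphi$, and every base change of it, stable under specialization; combined with the quasi-compactness hypothesis and the spectral behaviour of congruence spaces, this upgrades to closedness. Here each specialization in $\til X$ is realized by a valuation monoid, producing a test diagram whose lift supplies the required specialization in $\til Y$.

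The step I expect to be the main obstacle is the \emph{abundance of valuations}: given a specialization $\fp_0\in\overline{\{\fq_0\}}$ of prime congruences in $\til X$, one must build a valuation monoid $A$ inside the residue field $\kappa(\fq_0)$ whose congruence space realizes precisely this specialization, compatibly with the structure maps to $X$ and, in the existence direction, with a prescribed point of $\til Y$ over $\fq_0$. Two further technical points support this: first, that congruence spaces are well enough behaved---quasi-compact and sober, with the closure of a quasi-compact image detected by specialization---for closedness of $\tilde\varphi$ to be testable pointwise; and second, the density of the generic point of $T=\MSpec(A)$ used in the uniqueness argument. These are exactly the places where, as flagged in the introduction, interpolating between a monoid scheme and its congruence space makes the arguments more delicate than their classical analogues.
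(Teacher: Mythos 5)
Your plan reproduces the paper's architecture almost exactly: the proper case is obtained formally by conjoining the other two criteria (this is precisely how Theorem \ref{prop-intrinsic} is proved from Theorems \ref{uni-closed} and \ref{thm: valuative criterion of separatedness}); the existence/universally-closed equivalence is proved via your two stated engines, namely Proposition \ref{specl} (for quasi-compact morphisms, a quasi-compact image is closed in the congruence space iff it is closed under specialization) and the ``abundance of valuations'', which is Proposition \ref{val-dom2} applied to the stalk quotient $A=\cO_{\til{X'},\fp_0}/\fp_1$ inside $L=k(\fz_1)$ --- inside the residue field of the prescribed point \emph{upstairs}, as you correctly anticipated. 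One genuine route difference: for uniqueness $\Rightarrow$ separated you propose a contrapositive construction of two lifts from a non-closed diagonal image, whereas the paper instead applies the already-proven existence criterion (Theorem \ref{uni-closed}) to the quasi-compact morphism $\Delta_\varphi$ itself: a test diagram for $\Delta_\varphi$ yields two lifts $p_1\circ\nu$ and $p_2\circ\nu$ of a test diagram for $\varphi$, and their coincidence is itself the required lift for $\Delta_\varphi$, so Theorem \ref{thm: topological characterization of separated morphisms} concludes. Your contrapositive would need the same valuation-monoid machinery re-run by hand; the paper's reduction is the more economical packaging of the same idea. Also note that in the universally closed $\Rightarrow$ existence direction, the step you summarize as ``encodes, through its residue data, the desired lift'' hides the actual engine: one must choose $\fz_1=\sigma_T(M_A)$ so that $f:A\to B=\cO_{\til{Y_T},\fq_1}/\fq_0$ is \emph{dominant} with $\Frac B\cong K$, and then invoke Proposition \ref{val-dom} (valuation monoids are maximal under domination in their fraction group) to force $f$ to be an isomorphism.

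There is one step that fails as written. In the separated $\Rightarrow$ uniqueness argument you say the closed subscheme $h^{-1}(\Delta_\varphi(Y))$ of $T$ contains the generic point, which ``is dense in the integral, hence reduced and irreducible, scheme $T$, so $h$ factors through $\Delta_\varphi$.'' For monoid schemes, topological density in $T=\MSpec(A)$ of the support of a closed subscheme does not force that subscheme to be all of $T$, because closed subschemes are cut out by congruences, not ideals: the closed immersion $\MSpec\big(\Fun[t]/\gen{(t,t^2)}\big)\to\MSpec(\Fun[t])$ is a homeomorphism on underlying spaces yet not an isomorphism, and $\Fun[t]$ is integral. ``Reduced'' is also the wrong hypothesis here --- the relevant notion is ``strongly reduced'', which does hold for integral monoids. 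The repair is exactly the paper's argument: the equalizer $Z$ of the two lifts is a closed immersion into $T$ (base change of $\Delta_\varphi$), so $\til Z$ is closed in $\til T=\Cong(A)$ by Proposition \ref{prop: closed immersions of congruence spaces}; since $a\circ\iota=b\circ\iota$, the image of $\til U$ lies in $\til Z$, in particular the trivial congruence $\fc_\triv$, which is the generic point of $\Cong(A)$ because a valuation monoid is integral; hence $\til Z=\til T$, the defining congruence of $Z$ is contained in $\fc_\triv$ and thus trivial, and $Z=T$. In short, the density argument must be run in $\til T$, not in $T$ --- consistent with your own stated principle, but the sentence as formulated appeals to the underlying space, where it is false.
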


%%%%%%%%%%%%%%%%%%%%%%%%%%%%%%%%%%%%%%%%%%%%%%%%%%%%%%%%%%%%%%%%%%%%%%%%%%%%%%%%%%%%%%%%%%%%%%%%%%%%%%%%%%%%%%%%%%%%%%%%%%%%%%%%%%%%%%%%%%%%%%%%%%%%%%%%%%%%%%%%%%%%%
\subsection*{Content overview}
Section \ref{section: background} contains a short reminder on monoid schemes and introduces the conventions for this paper. In section \ref{section: congruence spaces}, we introduce congruence spaces and study their properties. In section \ref{section: Strongly reduced monoid schemes}, we introduce the strong reduction of monoids and monoid schemes and compare it with the usual reduction. In section \ref{section: Closed immersions and vanishing sets}, we exhibit a topological characterization of closed immersions and study the relation between closed subschemes, vanishing sets and congruence sheaves. In section \ref{section: Valuation monoids}, we introduce valuation monoids and characterize them as maximal objects with respect to dominant morphisms. In section \ref{section: Universally closed morphisms}, we introduce universally closed morphisms and establish the corresponding valuative criterion. In section \ref{section: Separated morphisms}, we exhibit a topological characterization of separated morphisms, which is based on a study of locally closed immersions, and we establish the corresponding valuative criterion. In section \ref{section: Proper morphisms}, we introduce proper morphisms and establish the corresponding valuative criterion.

%%%%%%%%%%%%%%%%%%%%%%%%%%%%%%%%%%%%%%%%%%%%%%%%%%%%%%%%%%%%%%%%%%%%%%%%%%%%%%%%%%%%%%%%%%%%%%%%%%%%%%%%%%%%%%%%%%%%%%%%%%%%%%%%%%%%%%%%%%%%%%%%%%%%%%%%%%%%%%%%%%%%%
\subsection*{Acknowledgements}
The first author was supported by the Marie Curie Grant No.\ 101022339 of the European Commission. The second author’s work was partially supported
by the SERB NPDF PDF/2020/000670 grant of Govt.\ of India, partially by the IBS-R003-D1 grant of the IBS-CGP, POSTECH,
South Korea and partially by the DST/INSPIRE/04/2021/002904 grant of Govt.\ of India.

%%%%%%%%%%%%%%%%%%%%%%%%%%%%%%%%%%%%%%%%%%%%%%%%%%%%%%%%%%%%%%%%%%%%%%%%%%%%%%%%%%%%%%%%%%%%%%%%%%%%%%%%%%%%%%%%%%%%%%%%%%%%%%%%%%%%%%%%%%%%%%%%%%%%%%%%%%%%%%%%%%%%%
%%%%%%%%%%%%%%%%%%%%%%%%%%%%%%%%%%%%%%%%%%%%%%%%%%%%%%%%%%%%%%%%%%%%%%%%%%%%%%%%%%%%%%%%%%%%%%%%%%%%%%%%%%%%%%%%%%%%%%%%%%%%%%%%%%%%%%%%%%%%%%%%%%%%%%%%%%%%%%%%%%%%%
\section{Preliminaries}
\label{section: background}
%%%%%%%%%%%%%%%%%%%%%%%%%%%%%%%%%%%%%%%%%%%%%%%%%%%%%%%%%%%%%%%%%%%%%%%%%%%%%%%%%%%%%%%%%%%%%%%%%%%%%%%%%%%%%%%%%%%%%%%%%%%%%%%%%%%%%%%%%%%%%%%%%%%%%%%%%%%%%%%%%%%%%
%%%%%%%%%%%%%%%%%%%%%%%%%%%%%%%%%%%%%%%%%%%%%%%%%%%%%%%%%%%%%%%%%%%%%%%%%%%%%%%%%%%%%%%%%%%%%%%%%%%%%%%%%%%%%%%%%%%%%%%%%%%%%%%%%%%%%%%%%%%%%%%%%%%%%%%%%%%%%%%%%%%%%

In this section we recall some basic facts about commutative pointed monoids and monoid schemes in brevity. For further details and a more comprehensive introduction into these concepts, we refer the reader to \cite{Chu-Lorscheid-Santhanam12} and \cite{CHWW15}.

%%%%%%%%%%%%%%%%%%%%%%%%%%%%%%%%%%%%%%%%%%%%%%%%%%%%%%%%%%%%%%%%%%%%%%%%%%%%%%%%%%%%%%%%%%%%%%%%%%%%%%%%%%%%%%%%%%%%%%%%%%%%%%%%%%%%%%%%%%%%%%%%%%%%%%%%%%%%%%%%%%%%%
\subsection{Pointed monoids}
\label{subsection: Pointed monoids}

A \emph{(commutative) pointed monoid} is a commutative semigroup $A$ with a zero (absorbing element) $0$ and an identity $1$, satisfying $a.0=0.a=0$ and $a.1=1.a=a$ for all $a\in A$. A \emph{morphism of pointed monoids} is a semigroup homomorphism that preserves the zero and identity. 

An \emph{ideal} of a pointed monoid $A$ is a subset $I$ of $A$ such that $0\in I$ and $IA\subseteq A$. The \emph{quotient monoid} is the set $A/I=(A\setminus I)\cup \{0\}$ together with the multiplication given by $a\cdot b = ab $ if $a,b\in A\setminus I$ and $a \cdot b =0$ otherwise. An ideal $P$ of $A$ is \emph{prime} if $ab\in P$ implies $a\in P$ or $b\in P$ or, in other words, $A/P$ is without zero divisors. The complement of the set of units $A^\times$ of a pointed monoid $A$ forms an ideal and is therefore the unique maximal ideal of $A$ which we denote by $M_A$.

Let $S$ be a \emph{multiplicative subset} of a pointed monoid $A$, i.e.\ $1\in S$ and $ab\in S$ for all $a,b\in S$. The \emph{localization of $A$ at $S$} is the monoid $S^{-1}A=S\times A/\sim$ where $(s,a)\sim(s',a')$ if and only if $tsa'=ts'a$ for some $t\in S$. We denote the equivalence class of $(s,a)$ by $\tfrac as$. The association $a\mapsto\tfrac a1$ defines the \emph{localization map} $\iota_S:A\to S^{-1}A$. Particular examples of localizations are $A[h^{-1}]=S_h^{-1}A$ with $S_h=\{h^i\}_{i\geq0}$ for $h\in A$ and $A_P=S_P^{-1}A$ with $S_P=A\setminus P$ for a prime ideal $P$ of $A$.

The category of pointed monoids is complete and cocomplete. In particular, it comes with a tensor product $A\otimes_CB$ of any pair of morphisms $C\to A$ and $C\to B$. The pointed monoid $\Fun=\{0,1\}$ is its initial object.

Let $A$ be a pointed monoid. The \emph{free monoid in $t_1,\dotsc,t_n$ over $A$} is the pointed monoid
\[
 A[t_1,\dotsc,t_n] \ = \ \big\{ at_1^{e_1}\cdots t_n^{e_n} \, \big| \, e_1,\dotsc,e_n\in\N \big\}
\]
with product 
\[
 (at_1^{e_1}\cdots t_n^{e_n}) \ \cdot \ (bt_1^{f_1}\cdots t_n^{f_n}) \ = \ (ab)t_1^{e_1+f_1}\cdots t_n^{e_n+f_n}
\]
and identification $0\cdot t_1^{e_1}\cdots t_n^{e_n}=0\cdot t_1^{f_1}\cdots t_n^{f_n}$ for all $e_i,f_i\in\N$.

%%%%%%%%%%%%%%%%%%%%%%%%%%%%%%%%%%%%%%%%%%%%%%%%%%%%%%%%%%%%%%%%%%%%%%%%%%%%%%%%%%%%%%%%%%%%%%%%%%%%%%%%%%%%%%%%%%%%%%%%%%%%%%%%%%%%%%%%%%%%%%%%%%%%%%%%%%%%%%%%%%%%%
\subsection{Integral and zero divisor free monoids}
\label{subsection: Integral and zero divisor free monoids}

An \emph{integral monoid} is a pointed monoid $A$ such that $ab=ac$ implies $b=c$ for all $a,b, c\in A$ with $a\neq 0$. A \emph{zero divisor free monoid} is a pointed monoid $A$ for which $S=A\setminus \{0\}$ is multiplicative subset. 

Let $A$ be zero divisor free, $S=A\setminus \{0\}$ and $i_S:A\to S^{-1}A$ be the localization map. The \emph{group completion of $A$} is $\Frac A=S^{-1}A$. The \emph{integral quotient of $A$} is $A^{\overint}=i_S(A)$.

Note that $\Frac A$ is a pointed group and $A^\overint$ is an integral monoid. The localization map defines canonical maps $A\to\Frac A$ and $A\to A^\overint$. We mention the following universal properties of $\Frac A$ and $A^\overint$ without proof. 

\begin{lemma}
 Let $A$ be a pointed monoid without zero divisors. Then every morphism $f:A\to B$ into a pointed group $B$ factors uniquely through $\Frac A$ and every morphism $f:A\to B$ into an integral monoid $B$ factors uniquely through $A^{\overint}$.
\end{lemma}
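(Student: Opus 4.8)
The plan is to prove the two universal properties separately, but by essentially the same mechanism: in each case I want to show that a morphism $f:A\to B$ sends the multiplicative set $S=A\setminus\{0\}$ into the units of $B$ (for the group case) or into the set of cancellable elements of $B$ (for the integral case), so that $f$ factors through the localization $S^{-1}A=\Frac A$ by the universal property of localization, and the factorization lands in the desired image $A^{\overint}=i_S(A)$ in the integral case. First I would recall that $A$ is zero divisor free, so $S=A\setminus\{0\}$ is indeed multiplicative and the localization $S^{-1}A$ makes sense; this is what the statement presupposes.

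For the group case, suppose $f:A\to B$ is a morphism into a pointed group $B$. I would argue that for any nonzero $a\in A$, the image $f(a)$ must be a unit in $B$: indeed $f(a)\neq 0$ would follow from the fact that $a$ is not a zero divisor, but more to the point, every nonzero element of a pointed group is invertible by definition, so I only need $f(a)\neq 0$, which holds because $a\in S$ and $f$ respects the monoid structure with $B$ having $0\neq 1$. Then the universal property of localization gives a unique morphism $\bar f:\Frac A=S^{-1}A\to B$ with $\bar f\circ i_S=f$, where uniqueness is forced because $i_S$ is surjective onto generators of the form $a/s$ and $\bar f(a/s)=f(a)f(s)^{-1}$ is determined. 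The key point is that $\bar f$ is well-defined precisely because each $f(s)$ is invertible in $B$.

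For the integral case, suppose $f:A\to B$ with $B$ integral. Here each $f(s)$ for $s\in S$ is a cancellable nonzero element of $B$ by integrality of $B$. I would again invoke the universal property of localization: a morphism out of $S^{-1}A$ exists and is unique once the images of $S$ become invertible, but since $B$ need not be a group, I instead factor through the \emph{integral quotient} $A^{\overint}=i_S(A)\subseteq\Frac A$. The map $f$ factors as $A\xrightarrow{i_S}A^{\overint}\to B$ because two elements $a,a'\in A$ with $i_S(a)=i_S(a')$ satisfy $ta=ta'$ in $A$ for some $t\in S$, whence $f(t)f(a)=f(t)f(a')$ and cancelling the nonzero $f(t)$ in the integral monoid $B$ yields $f(a)=f(a')$; this shows $f$ is constant on fibers of $i_S$ and so descends to a well-defined morphism on $A^{\overint}$. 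Uniqueness follows since $i_S$ is surjective onto $A^{\overint}$.

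The only delicate point, and the step I expect to require the most care, is verifying well-definedness of the descended map in the integral case: one must track the localization relation $(s,a)\sim(s',a')\iff tsa'=ts'a$ and confirm that the cancellation property of $B$ exactly matches what is needed to kill the auxiliary factor $t$, without inadvertently assuming $B$ is a group. The group case is comparatively routine since invertibility trivializes all the denominators. I would also remark that uniqueness in both statements is immediate from the density of the image of the localization map, so the substance of the argument is existence and well-definedness of the factorizations.
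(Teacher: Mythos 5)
There is a genuine gap, and it sits exactly at the step you treat as routine rather than at the step you flag as delicate. You claim that $f(a)\neq 0$ for every nonzero $a\in A$ ``because $a\in S$ and $f$ respects the monoid structure with $B$ having $0\neq 1$.'' No axiom of morphisms of pointed monoids prevents $f$ from killing nonzero elements (nor does $a$ being a non-zero-divisor in $A$ help): this paper itself constantly uses such morphisms, e.g.\ the maps $f_P:A\to\Fun$ with $f_P(a)=0$ exactly for $a\in P$. Concretely, take $A=\Fun[t]$, which is zero divisor free, and $f:\Fun[t]\to\Fun$ with $f(t)=0$; the codomain $\Fun$ is a pointed group. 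Any $g:\Frac A=\Fun[t^{\pm1}]\to\Fun$ with $g\circ i_S=f$ would have to send the unit $t$ to a unit, forcing $g(t)=1\neq 0=f(t)$, so $f$ does not factor through $\Frac A$ at all. The same conflation occurs in your integral case: ``each $f(s)$ for $s\in S$ is a cancellable nonzero element of $B$ by integrality of $B$'' uses integrality of $B$ (nonzero elements of $B$ cancel) to smuggle in the unproved assertion $f(s)\neq 0$, and your well-definedness computation cancels $f(t)$, which is illegitimate when $f(t)=0$. A counterexample here: $A=\{0,e,1\}$ with $e^2=e$ is zero divisor free but not integral; since $e\cdot e=e\cdot 1$ one gets $i_S(e)=i_S(1)$, so $A^{\overint}\cong\Fun$ with $e\mapsto 1$, and the morphism $f:A\to\Fun$ with $f(e)=0$ (this is $f_P$ for $P=\gen{e}$) cannot factor through $A^{\overint}$, as any factorization would send $i_S(e)=1$ to $1\neq 0$.

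What this shows is that your argument proves the lemma only under the additional hypothesis $f^{-1}(0)=\{0\}$ --- equivalently $f(S)\subseteq B^\times$ in the group case and $f(S)\subseteq B-\{0\}$ in the integral case --- and under that hypothesis the remainder of your proof (the universal property of localization for $\Frac A$; descent along $\congker(i_S)$ via cancellation in $B$ for $A^{\overint}$, with uniqueness from surjectivity of $i_S$ onto $A^{\overint}$) is correct and is the expected argument. For calibration: the paper states this lemma explicitly \emph{without proof}, so there is no in-paper argument to compare against, and the statement as printed suffers from the same defect, since the counterexamples above satisfy all of its stated hypotheses. The honest repair is to add the trivial-kernel assumption (which is automatic for injective morphisms, the only situation in which these universal properties are invoked later, e.g.\ for monoids sitting inside pointed groups in the valuation-theoretic arguments), not to claim that $a\neq 0$ implies $f(a)\neq 0$.
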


%%%%%%%%%%%%%%%%%%%%%%%%%%%%%%%%%%%%%%%%%%%%%%%%%%%%%%%%%%%%%%%%%%%%%%%%%%%%%%%%%%%%%%%%%%%%%%%%%%%%%%%%%%%%%%%%%%%%%%%%%%%%%%%%%%%%%%%%%%%%%%%%%%%%%%%%%%%%%%%%%%%%%
\subsection{Monoid schemes}
\label{subsection: Monoid schemes}

Let $A$ be a pointed monoid. The \emph{prime spectrum of $A$} is the set
\[
 \MSpec(A) \ = \ \{P\subseteq A\mid P \text{~is a prime ideal of } A\}
\]
that carries the topology generated by the open subsets $U_h=\{P\mid h\notin P\}$ for $h\in A$, together with a sheaf $\cO_{X}$ of monoids on $X=\MSpec(A)$ that is characterized by its values $\cO_X(U_h)=A[h^{-1}]$ and its stalks $\cO_{X,P}=A_P$.

A \emph{monoid scheme} is a topological space $X$ together with a sheaf $\cO_X$ of monoids that is locally isomorphic to spectra of pointed monoids. A monoid scheme is \emph{affine} if it isomorphic the spectrum of a pointed monoid. 

A \emph{morphism of monoid schemes $X$ and $Y$} is a continuous map $\varphi:X\to Y$ together with a morphism of sheaves $\varphi^\#:\cO_X\to\varphi_\ast\cO_Y$ of pointed monoids for which the morphism of stalks $\varphi_x:\cO_{X,x}\to\cO_{Y,y}$ are \emph{local} for every $x\in X$ and $y=\varphi(x)$ in the sense that $\varphi_x^{-1}(\cO_{Y,y}^\times)=\cO_{X,x}^\times$. We denote the category of monoid schemes by $\MSch$.%, which is justified by the fact that $\MSpec\Fun$ is its terminal object.

Note that $\MSpec$ extends to an anti-equivalence of $\Mon$ with the category of affine monoid schemes: a morphism $f:A\to B$ of pointed monoids defines the morphism $f^\ast:\MSpec(B)\to\MSpec(A)$ with $f^\ast(P)=f^{-1}(P)$. It has the global section functor $\Gamma:\MSch\to\Mon$, which sends a monoid scheme $X$ to $\Gamma X=\cO_X(X)$, as an adjoint:
\[
 \Hom(X,\MSpec(A)) \ = \ \Hom(A,\Gamma X).
\]

%%%%%%%%%%%%%%%%%%%%%%%%%%%%%%%%%%%%%%%%%%%%%%%%%%%%%%%%%%%%%%%%%%%%%%%%%%%%%%%%%%%%%%%%%%%%%%%%%%%%%%%%%%%%%%%%%%%%%%%%%%%%%%%%%%%%%%%%%%%%%%%%%%%%%%%%%%%%%%%%%%%%%
\subsection{Base extensions}
\label{subsection: Base extensions}

The tensor product of pointed monoids extends to a fibre product $X\times_ZY$ of monoid schemes, which defines the base extension $X_B=X\otimes_AB$ of a monoid scheme $X$ over $A$ to a $A$-algebra $B$.

Given a pointed monoid $A$, we denote its \emph{associated ring} by $A^+=\Z[A]/\gen{0}$, which is equal to $\Z[A]\setminus\{0\}$ as an abelian group. Given a monoid scheme $X$ over $A$ and a morphism $f:A\to k$ from a pointed monoid $A$ to (the underlying monoid of) a ring $k$, we obtain a functorial base extension $X_k^+=X\otimes_Ak$ that is characterized by the properties that it commutes with colimits of open immersions and that for affine $X=\MSpec(B)$,
\[
 X_k \ = \ \Spec \big(B^+\otimes_{A^+} k\big).
\]

%%%%%%%%%%%%%%%%%%%%%%%%%%%%%%%%%%%%%%%%%%%%%%%%%%%%%%%%%%%%%%%%%%%%%%%%%%%%%%%%%%%%%%%%%%%%%%%%%%%%%%%%%%%%%%%%%%%%%%%%%%%%%%%%%%%%%%%%%%%%%%%%%%%%%%%%%%%%%%%%%%%%%
\subsection{Finite type morphisms}

A morphism $f:A\to B$ of pointed monoids is \emph{of finite type} if there are elements $b_1,\hdots, b_n\in B$ such that every element of $B$ can be expressed as a monomial in the $b_i$ and elements of $f(A)$. 

A morphism $\varphi:X\to Y$ of monoid schemes is \emph{of finite type} if there are affine open coverings $Y=\bigcup_{i\in I} V_i$ and $\varphi^{-1}(V_i)=\bigcup_{j\in J_i} U_{i,j}$ for all $i\in I$ such that $J_i$ is finite for all $i\in I$ and such that $\Gamma{\varphi_{i,j}}:\Gamma{V_i}\to \Gamma{U_{i,j}}$ is of finite type for all $i\in I$ and $j\in J_i$.

%%%%%%%%%%%%%%%%%%%%%%%%%%%%%%%%%%%%%%%%%%%%%%%%%%%%%%%%%%%%%%%%%%%%%%%%%%%%%%%%%%%%%%%%%%%%%%%%%%%%%%%%%%%%%%%%%%%%%%%%%%%%%%%%%%%%%%%%%%%%%%%%%%%%%%%%%%%%%%%%%%%%%
%%%%%%%%%%%%%%%%%%%%%%%%%%%%%%%%%%%%%%%%%%%%%%%%%%%%%%%%%%%%%%%%%%%%%%%%%%%%%%%%%%%%%%%%%%%%%%%%%%%%%%%%%%%%%%%%%%%%%%%%%%%%%%%%%%%%%%%%%%%%%%%%%%%%%%%%%%%%%%%%%%%%%
\section{Congruence spaces}
\label{section: congruence spaces}
%%%%%%%%%%%%%%%%%%%%%%%%%%%%%%%%%%%%%%%%%%%%%%%%%%%%%%%%%%%%%%%%%%%%%%%%%%%%%%%%%%%%%%%%%%%%%%%%%%%%%%%%%%%%%%%%%%%%%%%%%%%%%%%%%%%%%%%%%%%%%%%%%%%%%%%%%%%%%%%%%%%%%
%%%%%%%%%%%%%%%%%%%%%%%%%%%%%%%%%%%%%%%%%%%%%%%%%%%%%%%%%%%%%%%%%%%%%%%%%%%%%%%%%%%%%%%%%%%%%%%%%%%%%%%%%%%%%%%%%%%%%%%%%%%%%%%%%%%%%%%%%%%%%%%%%%%%%%%%%%%%%%%%%%%%%

In this section, we introduce congruence spaces for monoid schemes, which are secondary and subordinate topological spaces that translate topological property from usual scheme theory appropriately to the world of monoid schemes.

%%%%%%%%%%%%%%%%%%%%%%%%%%%%%%%%%%%%%%%%%%%%%%%%%%%%%%%%%%%%%%%%%%%%%%%%%%%%%%%%%%%%%%%%%%%%%%%%%%%%%%%%%%%%%%%%%%%%%%%%%%%%%%%%%%%%%%%%%%%%%%%%%%%%%%%%%%%%%%%%%%%%%
\subsection{Congruences}
The algebraic backbone for congruence spaces are congruences. We review their definition and first properties. For more details, see \cite[Chapter 3]{Lorscheid18}.

\begin{defn}
 A \emph{congruence} of a pointed monoid $A$ is an equivalence relation $\mathfrak{c}$ on $A$ that is closed under multiplication by $A$, i.e.\ $(a, b)\in \mathfrak{c}$ implies $(ac, bc)\in \mathfrak{c}$ for all $a,b,c \in A$. The \emph{nullideal of $\fc$} is 
 \[
  I_\mathfrak{c}=\{a\in A\mid (a,0)\in\mathfrak{c}\}. 
 \]
 The \emph{quotient of $A$ by a congruence $\fc$} is the set $A/\fc$ of equivalence classes with multiplication given by $[a]\cdot[b]=[ab]$. 
\end{defn}

Note that the product of elements of $A/\fc$ is well-defined on representatives and turns $A/\fc$ into a pointed monoid with zero $[0]$ and identity $[1]$. The quotient map $\pi:A\to A/\fc$ is a pointed monoid morphism. 

Every subset $S$ of $A\times A$ is contained in a smallest congruence 
\[
 \gen S \ = \ \bigcap_{S\subset \fc} \ \fc
\]
that contains $S$, which we call the \emph{congruence generated by $S$}.

\begin{defn}
 Let $f:A\to B$ be a morphism of pointed monoids. The \emph{congruence kernel of $f$} is 
 \[
  \congker(f):=\{(a,a')\in A\times A\mid f(a)=f(a')\}.
 \]
\end{defn}

The congruence kernel of a morphism is indeed a congruence and that every congruence $\fc$ of $A$ is the congruence kernel of the quotient map $A\to A/\fc$. This establishes a bijective correspondence between the quotients of $A$ and the congruences on $A$.

\begin{defn}
 A \emph{prime congruence} of $A$ is a congruence $\mathfrak{c}$ such that $A/\mathfrak{c}$ is integral, i.e., $(ab, ac) \in \mathfrak{c}$ implies $(a,0)\in \mathfrak{c}$ or $(b,c)\in \mathfrak{c}$ for all $a,b,c\in A$. 
\end{defn}

If $\mathfrak{c}$ is a prime congruence, then $I_\mathfrak{c}$ is a prime ideal. The congruence kernel $\congker(f)$ of a morphism $f:A\to B$ into an integral monoid $B$ is a prime congruence. 

\begin{defn}
 Let $f:A\to B$ be a morphism of pointed monoids, $\fc$ a congruence on $A$ and $\fd$ a congruence on $B$. The \emph{push-forward of $\fc$ along $f$} is the congruence 
 \[
  f_\ast(\fc) \ = \ \gen{(f(a),f(b))\in B\times B\mid (a,b)\in\fc\}}
 \]
 of $B$ and the \emph{pullback of $\fd$ along $f$} is the congruence
 \[
  f^\ast(\fd) \ = \ \{(a,b)\in A\times A\mid (f(a),f(b))\in\fd\}
 \]
 on $A$.
\end{defn}

\begin{prop}\label{prop: congruences in quotients}
 Let $A$ be a pointed monoid, $\fc$ a congruence on $A$ and $\pi:A\to A/\fc$ the quotient map. Then the pushforward and pullback define mutually inverse bijections
 \[
  \begin{tikzcd}[column sep=50pt]
   \big\{\text{congruences on $A$ that contain $\fc$}\big\} \ar[r,shift left=2pt,"\pi_\ast"] & \big\{\text{congruences on $A/\fc$}\big\} \ar[l,shift left=2pt,"\pi^\ast"]
  \end{tikzcd}
 \]
 that restrict to bijections between the respective subsets of prime congruences.
\end{prop}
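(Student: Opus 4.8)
The plan is to establish the standard correspondence between congruences above $\fc$ and congruences on the quotient, and then to check separately that this correspondence respects primality. Throughout, I would write $S=\{(\pi(a),\pi(b))\mid (a,b)\in\fc'\}$ for the set-theoretic image of a congruence $\fc'$ on $A$ under $\pi\times\pi$.

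First I would verify that both maps are well-defined. The pullback $\pi^\ast(\fd)$ of a congruence $\fd$ on $A/\fc$ is a congruence on $A$ by a routine check (the preimage of an equivalence relation that is stable under multiplication is again of this kind), and it contains $\fc$ because $(a,b)\in\fc$ forces $\pi(a)=\pi(b)$, whence $(\pi(a),\pi(b))\in\fd$ by reflexivity. For the pushforward I would prove the sharper statement that, whenever $\fc'\supseteq\fc$, the image $S$ is already a congruence, so that $\pi_\ast(\fc')=S$ with no further generation needed. Reflexivity and symmetry of $S$ follow from surjectivity of $\pi$ and from the corresponding properties of $\fc'$, and compatibility with multiplication is inherited from $\fc'$; transitivity is exactly the step where the hypothesis $\fc\subseteq\fc'$ enters, since if two representatives $c_0,c_1$ satisfy $\pi(c_0)=\pi(c_1)$ then $(c_0,c_1)\in\fc\subseteq\fc'$, which lets one splice chains together inside $\fc'$.

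Next I would check that the two maps are mutually inverse. For $\pi^\ast\circ\pi_\ast$ on a congruence $\fc'\supseteq\fc$, unwinding the definitions gives $(a,b)\in\pi^\ast(\pi_\ast(\fc'))$ if and only if $(\pi(a),\pi(b))\in S$, i.e.\ if and only if there is $(x,y)\in\fc'$ with $(x,a),(y,b)\in\fc$; using $\fc\subseteq\fc'$ and transitivity, this is equivalent to $(a,b)\in\fc'$, so $\pi^\ast(\pi_\ast(\fc'))=\fc'$. For $\pi_\ast\circ\pi^\ast$ on a congruence $\fd$, the image of $\pi^\ast(\fd)$ is $\{(\pi(a),\pi(b))\mid (\pi(a),\pi(b))\in\fd\}$, which equals $\fd$ precisely because $\pi$ is surjective and every pair of $\fd$ lifts to $A\times A$. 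Hence the correspondence is bijective.

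Finally, for the restriction to prime congruences I would invoke the first isomorphism theorem for pointed monoids. Given $\fc'\supseteq\fc$, the composite $A\xrightarrow{\pi} A/\fc \to (A/\fc)/\pi_\ast(\fc')$ is a surjection whose congruence kernel is $\pi^\ast(\pi_\ast(\fc'))=\fc'$ by the previous step, and therefore it induces an isomorphism $A/\fc'\cong (A/\fc)/\pi_\ast(\fc')$. Since primality of a congruence means by definition integrality of its quotient, this isomorphism shows that $\fc'$ is prime if and only if $\pi_\ast(\fc')$ is prime; applying this biconditional to $\fc'=\pi^\ast(\fd)$ and using that $\pi^\ast$ is the inverse of $\pi_\ast$ shows the same for $\pi^\ast$, so the bijection restricts as claimed. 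I expect the main subtlety to lie in the pushforward-as-image lemma: seeing that the containment $\fc\subseteq\fc'$ is exactly what makes $S$ transitive and what makes $\pi^\ast\pi_\ast$ the identity. Everything else is formal once this observation is in place.
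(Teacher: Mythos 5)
Your proof is correct, and it fills in details where the paper is deliberately telegraphic. The paper disposes of the whole statement in two sentences by appealing to the identification of congruences with quotients: quotients of $A/\fc$ are exactly the quotients of $A$ through which $\pi$ factors, and primality is a property of the quotient monoid, which ``agrees on both sides.'' You instead verify the correspondence directly from the definitions of $\pi_\ast$ and $\pi^\ast$, and the genuinely new content in your write-up is the sharpened pushforward lemma: for $\fc'\supseteq\fc$, the set-theoretic image of $\fc'$ under $\pi\times\pi$ is already a congruence, the containment $\fc\subseteq\fc'$ being precisely what lets you splice representatives with equal image into a chain inside $\fc'$ to get transitivity. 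Since the paper's definition of $f_\ast$ takes the \emph{generated} congruence, your lemma is what makes the map in the proposition concretely computable, and it is worth noting that it genuinely uses the hypothesis (the image of a congruence not containing $\fc$ need not be transitive). Your treatment of primality via the isomorphism $A/\fc'\cong(A/\fc)/\pi_\ast(\fc')$, obtained from the first isomorphism theorem once the congruence kernel of the composite is computed as $\pi^\ast(\pi_\ast(\fc'))=\fc'$, is exactly the isomorphism the paper gestures at without writing down; your version makes explicit that it is a consequence of the bijectivity step rather than an independent fact. What the paper's route buys is brevity and the conceptual point that the proposition is the correspondence theorem in disguise; what yours buys is a self-contained, element-level argument and a reusable observation about images of congruences under quotient maps.
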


\begin{proof}
 This result follows immediately from the identification of congruences with quotients of $A$ and the fact that the quotients of $A/\fc$ correspond to the quotients of $A$ that factor through $\pi:A\to A/\fc$. The additional claim about prime congruences follows from the characterization of prime congruences $\fp$ in terms of the quotient $A/\fp$, which agrees on both sides of the bijection.
\end{proof}

%%%%%%%%%%%%%%%%%%%%%%%%%%%%%%%%%%%%%%%%%%%%%%%%%%%%%%%%%%%%%%%%%%%%%%%%%%%%%%%%%%%%%%%%%%%%%%%%%%%%%%%%%%%%%%%%%%%%%%%%%%%%%%%%%%%%%%%%%%%%%%%%%%%%%%%%%%%%%%%%%%%%%
\subsection{Compatibility with localizations}
\label{subsection: Compatibility with localizations}

\begin{defn}
 Let $A$ be a pointed monoid, $S$ a multiplicative subset and $\mathfrak{c}$ a congruence of $A$. The \emph{localization of $\mathfrak{c}$ at $S$} is 
 \[
  S^{-1}\mathfrak{c} \ = \ \big\{(\tfrac{a}{s},\tfrac{a'}{s'})\in S^{-1}A\times S^{-1}A~\big|~(ts'a,tsa')\in\mathfrak{c}\text{~for some~}t\in S\big\}. 
 \]
\end{defn}

Note that $S^{-1}\mathfrak{c}$ is a congruence of $S^{-1}A$. Localizations of monoids are exact in the following sense.

\begin{lem}\label{lemma: kernels of localization}
 Let $f:A\to B$ be a morphism of pointed monoids. Let $S$ be a multiplicative subset of $A$ and $f_S:S^{-1}A\to f(S)^{-1}B$ the induced morphism. Then $\ker(f_S)=S^{-1}\ker(f)$ and $\congker(f_S)=S^{-1}\congker(f)$. In particular, $f_S$ is injective if $f$ is so.  
\end{lem}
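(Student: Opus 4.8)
The plan is to unwind the definition of equality in a localized monoid on both sides of each asserted equality and match the resulting conditions; everything reduces to careful bookkeeping of the auxiliary multiplier hidden in that definition. The single observation driving both statements is that a witness $u\in f(S)\subseteq B$ of an equality in $f(S)^{-1}B$ can always be written as $u=f(t)$ for some $t\in S$, since $f(S)$ is by definition the image of $S$ under $f$. I would establish $\congker(f_S)=S^{-1}\congker(f)$ first and then read off the statement for $\ker$.

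For the congruence kernel, recall that $f_S(\tfrac as)=\tfrac{f(a)}{f(s)}$. By the definition of equality in $f(S)^{-1}B$, a pair $(\tfrac as,\tfrac{a'}{s'})$ lies in $\congker(f_S)$ if and only if there is some $u\in f(S)$ with $u\,f(s')\,f(a)=u\,f(s)\,f(a')$. Writing $u=f(t)$ with $t\in S$ and using that $f$ is a morphism of pointed monoids, this is equivalent to the existence of $t\in S$ with $f(ts'a)=f(tsa')$, i.e.\ with $(ts'a,tsa')\in\congker(f)$. Comparing with the definition of the localization $S^{-1}\congker(f)$, this is precisely the condition cutting out $S^{-1}\congker(f)$, so the two congruences coincide.

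For the kernel I would use that $\ker(g)=I_{\congker(g)}$ is the nullideal of the congruence kernel for any morphism $g$ (since $g(a)=0$ means $(a,0)\in\congker(g)$), together with the fact that passing to nullideals commutes with localization: for any congruence $\fc$, the pair $(\tfrac as,0)$ lies in $S^{-1}\fc$ iff $(ta,0)\in\fc$ for some $t\in S$, which says exactly $\tfrac as\in S^{-1}I_\fc$, whence $I_{S^{-1}\fc}=S^{-1}I_\fc$. Applying this to $\fc=\congker(f)$ and combining with the first equality gives
\[
 \ker(f_S)\ =\ I_{\congker(f_S)}\ =\ I_{S^{-1}\congker(f)}\ =\ S^{-1}I_{\congker(f)}\ =\ S^{-1}\ker(f).
\]
The \emph{in particular} clause is then immediate: if $f$ is injective, then $\congker(f)$ is the trivial congruence $\fc_\triv$, and $S^{-1}\fc_\triv$ is again trivial, because $(ts'a,tsa')\in\fc_\triv$ for some $t\in S$ means precisely $\tfrac as=\tfrac{a'}{s'}$; hence $\congker(f_S)=\fc_\triv$ and $f_S$ is injective. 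The computations are entirely routine, and I do not anticipate a real obstacle—the only delicate point is tracking the multiplier $t\in S$ built into the definition of equality in a localization and the lifting of witnesses from $f(S)$ back to $S$.
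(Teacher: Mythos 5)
Your proposal is correct and takes essentially the same route as the paper: both proofs unwind the definition of equality in the localized monoid, lifting the witness $u\in f(S)$ to some $t\in S$, and both settle the injectivity clause by noting that $S^{-1}\fc_\triv$ is again the trivial congruence. The only (cosmetic) difference is that the paper verifies $\ker(f_S)=S^{-1}\ker(f)$ by the same direct computation, whereas you deduce it from the congruence-kernel statement via $\ker(g)=I_{\congker(g)}$ and $I_{S^{-1}\fc}=S^{-1}I_\fc$, both of which you justify correctly.
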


\begin{proof}
 An element $\frac as\in S^{-1}A$ is in the kernel of $f_S$ if and only if $ \frac{f(a)}{f(s)}=\frac{0}{1}$, which means that $f(t)f(1)f(a)=f(t)f(s)f(0)=0$ for some $t\in S$. Equivalently $ta\in \ker(f)$ or, in other words, $\frac{a}{s}= \frac{at}{st}\in S^{-1}(\ker(f))$. This shows $\ker(f_S)=S^{-1}(\ker(f))$.

 A pair $(\frac{a}{s},\frac{a'}{s'})$ is in the congruence kernel of $f_S$ if and only if $f_S(\frac{a}{s})= f_S(\frac{a'}{s'})$, which means that $f(tsa')=f(t)f(s)f(a')=f(t)f(s')f(a)=f(ts'a)$ for some $t\in S$. Equivalently $ (tsa',ts'a)\in \congker(f)$ or, in other words, $ \frac{a}{s}= \frac{at}{st}\in S^{-1}(\ker(f))$. This shows $\congker(f_S)=S^{-1}(\congker(f))$.
	
If $f$ is injective, then $\congker(f)$ is the trivial congruence $\fc_\triv=\{(a,a)\in \mid a\in A\}$. Thus $\congker(f_S)= S^{-1}\fc_\triv$ is trivial and consequently $f_S$ is injective.
\end{proof}

\begin{prop}\label{prop: prime congruences in localizations}
 Let $\iota_S:A\to S^{-1}A$ be a localization of the pointed monoid $A$ at $S$. Then the maps
 \[
  \begin{tikzcd}[column sep=60pt]
   \big\{\begin{array}{c}\text{prime congruences on }S^{-1}A\end{array}\big\} \ar[r,shift left=1,"\iota_S^\ast"] &
   \Big\{\begin{array}{c}\text{prime congruences $\fc$}\\ \text{on $A$ with $I_\fc\cap S=\emptyset$}\end{array}\Big\} \ar[l,shift left=1,"\iota_{S,\ast}"] 
  \end{tikzcd}
 \]
 are mutually inverse bijections.
\end{prop}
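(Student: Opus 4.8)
The plan is to imitate the classical bijection between prime ideals of a localization and prime ideals disjoint from the multiplicative set, but to run the whole argument on congruences with Lemma \ref{lemma: kernels of localization} as the engine. The first preliminary step I would carry out is to identify the pushforward with the localization of a congruence: for a congruence $\fc$ on $A$ one checks $\iota_{S,\ast}(\fc)=S^{-1}\fc$. Indeed $S^{-1}\fc$ is a congruence containing every pair $(\tfrac a1,\tfrac b1)$ with $(a,b)\in\fc$, hence contains the generated congruence $\iota_{S,\ast}(\fc)$; conversely any $(\tfrac as,\tfrac{a'}{s'})\in S^{-1}\fc$ arises from a pair $(ts'a,tsa')\in\fc$ and is recovered from $(\tfrac{ts'a}1,\tfrac{tsa'}1)$ by multiplying with $\tfrac{1}{ts's}\in S^{-1}A$. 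This rewrites both maps in a form I can feed into the exactness lemma.

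Next I would verify that the two maps land in the stated sets. For $\fd$ prime on $S^{-1}A$, the pullback $\iota_S^\ast(\fd)$ is the congruence kernel of the composite $A\to S^{-1}A\to(S^{-1}A)/\fd$ into an integral monoid, hence prime; and $I_{\iota_S^\ast\fd}\cap S=\emptyset$ because the image of any $s\in S$ in $(S^{-1}A)/\fd$ is a unit and so cannot equal $[0]$ in a nonzero integral monoid. For $\fc$ prime on $A$ with $I_\fc\cap S=\emptyset$, I write $\fc=\congker(\pi)$ with $\pi:A\to A/\fc$ and apply Lemma \ref{lemma: kernels of localization}: this identifies $S^{-1}\fc=\iota_{S,\ast}(\fc)$ with $\congker(\pi_S)$ for $\pi_S:S^{-1}A\to\pi(S)^{-1}(A/\fc)$, and the target is a localization of the integral monoid $A/\fc$ at a multiplicative set avoiding $0$ (here $I_\fc\cap S=\emptyset$ is exactly what guarantees $[0]\notin\pi(S)$), hence integral; thus $S^{-1}\fc$ is prime.

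The core of the argument is then that the two composites are identities. For $\iota_{S,\ast}\circ\iota_S^\ast$, given a prime $\fd$ I set $f$ to be the composite $A\to S^{-1}A\xrightarrow{g}(S^{-1}A)/\fd=:B$, so $\iota_S^\ast(\fd)=\congker(f)$; the key observation is that the localized map $f_S:S^{-1}A\to f(S)^{-1}B$ coincides with $g$ after the canonical identification $f(S)^{-1}B=B$ (legitimate since $f(S)\subseteq B^\times$), because $f_S(\tfrac as)=f(s)^{-1}f(a)=g(\tfrac 1s)g(\tfrac a1)=g(\tfrac as)$. Lemma \ref{lemma: kernels of localization} then gives $S^{-1}\iota_S^\ast(\fd)=\congker(f_S)=\congker(g)=\fd$. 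For $\iota_S^\ast\circ\iota_{S,\ast}$, given a prime $\fc$ with $I_\fc\cap S=\emptyset$, unwinding definitions gives $\iota_S^\ast(S^{-1}\fc)=\{(a,b)\mid(ta,tb)\in\fc\text{ for some }t\in S\}$, which visibly contains $\fc$; the reverse inclusion is where primality enters, as $(ta,tb)\in\fc$ with $t\in S$ forces $[t]\neq[0]$ in the integral monoid $A/\fc$, so cancelling $[t]$ in $[t][a]=[t][b]$ yields $(a,b)\in\fc$.

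I expect the main obstacle to be the bookkeeping in the first composite, namely pinning down the exact form of $f_S$ and justifying $f(S)^{-1}B=B$, so that the equality $f_S=g$ (checked on representatives) legitimately converts Lemma \ref{lemma: kernels of localization} into the desired identity $S^{-1}\iota_S^\ast(\fd)=\fd$; everything else reduces to cancellativity in integral quotients. A point worth flagging is that well-definedness tacitly uses the convention that prime congruences are \emph{proper} (i.e.\ $A/\fp$ satisfies $0\neq1$), since otherwise the improper congruence on $S^{-1}A$ would pull back to a congruence whose nullideal meets $S$, breaking the correspondence.
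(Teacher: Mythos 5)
Your proof is correct, and it takes a genuinely different route from the paper's, which argues entirely by hand: the paper checks primality of $\iota_S^\ast(\fd)$ and of $\iota_{S,\ast}(\fc)$ by explicit fraction manipulations inside the generated congruence $\iota_{S,\ast}(\fc)=\big\langle(\tfrac a1,\tfrac b1)\mid(a,b)\in\fc\big\rangle$, and then establishes both composites elementwise; only your second composite (cancelling $[t]\neq[0]$ in the integral quotient $A/\fc$) coincides with the paper's argument. What you do differently is to route the rest through the congruence-kernel formalism: you first prove the identification $\iota_{S,\ast}(\fc)=S^{-1}\fc$ with the localized congruence of Section \ref{subsection: Compatibility with localizations}, then obtain primality of $\iota_{S,\ast}(\fc)$ by exhibiting it, via Lemma \ref{lemma: kernels of localization}, as $\congker(\pi_S)$ with target a localization of the integral monoid $A/\fc$ at a multiplicative set avoiding $[0]$, and you get $\iota_{S,\ast}(\iota_S^\ast(\fd))=\fd$ structurally from the equality $f_S=g$ under the canonical isomorphism $f(S)^{-1}B\cong B$ (valid since $f(S)\subseteq B^\times$). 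This buys two things. First, it replaces the paper's most laborious computation (primality of $\iota_{S,\ast}(\fc)$) with a conceptual one-liner. Second, it makes explicit a step the paper takes for granted: when the paper deduces that $(\tfrac{act}1,\tfrac{bcs}1)\in\iota_{S,\ast}(\fc)$ implies $(actr',bcsr')\in\fc$ for some $r'\in S$ (and similarly in the mutual-inverse part), it is silently using precisely the inclusion $\iota_{S,\ast}(\fc)\subseteq S^{-1}\fc$, which your preliminary identification proves. Your closing remark on properness is also on target: the paper's assertion that $(\tfrac a1,\tfrac 01)\in\fd$ ``clearly'' forces $\tfrac a1\notin(S^{-1}A)^\times$ presupposes $[0]\neq[1]$ in $(S^{-1}A)/\fd$, which is exactly the convention you flag; if the improper congruence were admitted as prime, its pullback would have nullideal meeting $S$ and the correspondence would fail.
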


\begin{proof}
 We first check that the maps are well defined. Let $\fd$ be a prime congruence on $S^{-1}A$. Then $(ac,bc)\in \iota_S^\ast(\fd)=\{(a,a')\in A\times A~|~(\frac{a}{1},\frac{a'}{1})\in \fd \}$ implies $(\frac{ac}{1},\frac{bc}{1})\in \fd$ which implies $(\frac{c}{1},\frac{0}{1})\in \fd$ or $(\frac{ac}{1},\frac{bc}{1})\in \fd$. Thus we have $(c,0)\in \iota_S^\ast(\fd)$ or $(a,b)\in \iota_S^\ast(\fd)$ which shows that $\iota_S^\ast(\fd)$ is a prime congruence on $A$. Also, if $(\frac{a}{1},\frac{0}{1})\in \fd$ then clearly $\frac{a}{1}\notin (S^{-1}A)^\times$ which implies $a\notin S$. Thus $I_{\iota_S^\ast(\fd)}\cap S=\emptyset$ and this shows that $\iota_S^\ast$ is well defined.
 
 Let $\fc$ be a prime congruence on $A$ such that $I_\fc\cap S=\emptyset$. Consider $(\frac{a}{s}\frac{c}{r},\frac{b}{t}\frac{c}{r})\in \iota_{S,\ast}(\fc)=\langle\{(\frac{a}{1},\frac{b}{1})\in S^{-1}A\times S^{-1}A~|~(a,b)\in \fc\}\rangle$. Since $\frac{a}{s}\frac{c}{r}=\frac{act}{srt}$ and $\frac{b}{t}\frac{c}{r}=\frac{bcs}{trs}$ in $S^{-1}A$, we have $(\frac{act}{srt},\frac{bcs}{trs})\in \iota_{S,\ast}(\fc)$. Therefore $(\frac{act}{1}, \frac{bcs}{1})\in \iota_{S,\ast}(\fc)$ which implies $(actr', bcsr')\in \fc$ for some $r'\in S$. Thus either $(cr',0)\in \fc$ or $(at,bs)\in \fc$. This implies either $(\frac{cr'}{1},\frac{0}{1})\in  \iota_{S,\ast}(\fc)$ or $(\frac{at}{1},\frac{bs}{1})\in  \iota_{S,\ast}(\fc)$. Since $(\frac{cr'}{1},\frac{0}{1})\in  \iota_{S,\ast}(\fc)$ implies $(\frac{cr'}{r'},\frac{0r'}{r'}) = (\frac{c}{1},\frac{0}{1}) \in \iota_{S,\ast}(\fc)$ and $(\frac{at}{1},\frac{bs}{1})\in  \iota_{S,\ast}(\fc)$ implies $(\frac{at}{ts},\frac{bs}{ts})=(\frac{a}{s},\frac{b}{t})\in  \iota_{S,\ast}(\fc)$ it follows that $ \iota_{S,\ast}(\fc)$ is a prime congruence on $S^{-1}A$. Therefore $\iota_{S,\ast}$ is well defined.
 
 Clearly, we have $\fc\subseteq \iota_S^*(\iota_{S,\ast}(\fc))$ and $\fd\subseteq \iota_{S,\ast}(\iota_S^\ast(\fd))$. Consider any $(a,a')\in  \iota_S^*(\iota_{S,\ast}(\fc))$. Then $(\frac{a}{1},\frac{a'}{1})\in \iota_{S,\ast}(\fc)$ which implies $(sa,sa')\in \fc$ for some $s\in S$. Since $I_\fc\cap S=\emptyset$, we have $(s,0)\notin \fc$ and therefore we must have $(a,a')\in \fc$ as desired. Consider any $(\frac{a}{s},\frac{b}{t}) \in \iota_{S,\ast}(\iota_S^\ast(\fd))$. Then $(\frac{ast}{s},\frac{bst}{t})=(\frac{at}{1},\frac{bs}{1})\in \iota_{S,\ast}(\iota_S^\ast(\fd))$. This implies $(atr,bsr)\in \iota_S^\ast(\fd)$ for some $r\in S$. But $(r,0)\notin \iota_S^\ast(\fd)$ since $I_{\iota_S^\ast(\fd)}\cap S =\emptyset$. Therefore $(at,bs)\in \iota_S^\ast(\fd)$ which implies $(\frac{at}{1},\frac{bs}{1})\in \fd$ or equivalently, $(\frac{a}{s},\frac{b}{t})\in \fd$. This shows that $\iota_S^\ast$ and $\iota_{S,\ast}$ are mutually inverse bijections.
\end{proof}

%%%%%%%%%%%%%%%%%%%%%%%%%%%%%%%%%%%%%%%%%%%%%%%%%%%%%%%%%%%%%%%%%%%%%%%%%%%%%%%%%%%%%%%%%%%%%%%%%%%%%%%%%%%%%%%%%%%%%%%%%%%%%%%%%%%%%%%%%%%%%%%%%%%%%%%%%%%%%%%%%%%%%
\subsection{The congruence space of a pointed monoid}

\begin{defn}
 Let $A$ be a pointed monoid. The \emph{congruence space of $A$} is the topological space $\Cong(A)$ of all prime congruences on $A$ whose topology is generated by the open subsets
 \[
  U_{a,b} \ = \ \{\fp\in\Cong(A)\mid (a,b)\notin\fp\}
 \]
 where $a$ and $b$ vary through all pairs of elements of $A$. We denote the complement $U_{a,b}$ by $V_{a,b}=\{\fp\in\Cong(A)\mid(a,b)\in\fp\}$.
\end{defn}

\begin{ex}\label{notbasis}
 The collection $\{U_{a,b}\mid a,b \in A\}$ of open subsets is in general not a basis for the topology on $ \Cong(A)$, as in the example of the pointed monoid $A=\mathbb{F}_1[t]$. We have 
 \[
  \Cong(A) \ = \ \big\{\mathfrak{c}_{\triv},\ \langle(t,0)\rangle,\ \langle(t,t^k)\rangle\mid k\geq 0\big\}. 
 \]
 The topology of $\Cong(A)$ is generated by the open subsets of the forms $U_{t^n,0}$ and $U_{t^n,t^m}$ for $n>m\geq 0$. The intersection $U_{t,0}\cap U_{t,1}$ contains the prime congruence $\gen{t,t^2}$, but it does not contain any non-empty open subset of the form $U_{a,b}$. This shows that the family $\{U_{a,b}\mid a,b\in A\}$ does not form a basis for the topology of $\Cong(A)$, but only a subbasis.
\end{ex}

The following result upgrades the congruence space to a functor $\Cong:\Mon\to\Top$.

\begin{lem}\label{cong1} \ 
 Let $f:A\to B$ be a morphism of pointed monoids and $\fq$ a prime congruence of $B$. Then $f^\ast(\fq)$ is a prime congruence of $A$, and the map $f^\ast:\Cong(B)\to\Cong(A)$ is continuous.
\end{lem}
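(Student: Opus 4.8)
The plan is to treat the two assertions separately: first that $f^\ast(\fq)$ is a prime congruence, so that it indeed lies in $\Cong(A)$, and then that the induced set map is continuous. Both parts are a matter of unwinding the definitions while exploiting that $f$ is multiplicative with $f(0)=0$; I do not expect a genuine obstacle, so the emphasis is on bookkeeping rather than on any single hard step.

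For the first assertion, I would begin by recording that $f^\ast(\fq)$ is a congruence. That it is an equivalence relation is inherited directly from $\fq$ being one. For stability under multiplication, if $(a,b)\in f^\ast(\fq)$ then $(f(a),f(b))\in\fq$, and multiplying by $f(c)$ (using that $\fq$ is a congruence) gives $(f(a)f(c),f(b)f(c))\in\fq$; since $f$ is a morphism this reads $(f(ac),f(bc))\in\fq$, i.e.\ $(ac,bc)\in f^\ast(\fq)$. For primeness, suppose $(ab,ac)\in f^\ast(\fq)$, so that $(f(a)f(b),f(a)f(c))\in\fq$. As $\fq$ is prime, either $(f(a),0)\in\fq$ or $(f(b),f(c))\in\fq$; invoking $f(0)=0$, these translate back to $(a,0)\in f^\ast(\fq)$ or $(b,c)\in f^\ast(\fq)$, which is precisely the defining condition for $f^\ast(\fq)$ to be prime.

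For continuity, the key observation is that it suffices to verify that preimages of the subbasic opens $U_{a,b}$ are open, since these generate the topology of $\Cong(A)$ and the formation of preimages commutes with unions and finite intersections; this sidesteps the fact, flagged in Example \ref{notbasis}, that the sets $U_{a,b}$ form merely a subbasis rather than a basis. I would then compute directly
\[
 (f^\ast)^{-1}(U_{a,b}) \ = \ \{\fq\in\Cong(B)\mid (a,b)\notin f^\ast(\fq)\} \ = \ \{\fq\in\Cong(B)\mid (f(a),f(b))\notin\fq\} \ = \ U_{f(a),f(b)},
\]
which is visibly a subbasic open subset of $\Cong(B)$. The only point deserving a moment's care is resisting the temptation to test continuity against a basis; checking against the subbasis already implies continuity, so the computation above completes the proof.
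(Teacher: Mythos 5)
Your proposal is correct and follows essentially the same route as the paper: primeness of $f^\ast(\fq)$ is checked directly from the definition (a product relation in $\fq$ splits via primeness of $\fq$ and pulls back along $f$), and continuity is verified on the subbasic opens via the identity $(f^\ast)^{-1}(U_{a,b})=U_{f(a),f(b)}$, exactly as in the paper. The only difference is that you also spell out that $f^\ast(\fq)$ is a congruence at all, which the paper folds into its earlier definition of the pullback; this is a harmless extra verification, not a different argument.
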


\begin{proof} 
 We begin with the verification that $\fp=f^\ast(\fq)$ is prime. Consider $(ca,cb)\in\fp$, i.e.\ $f(c)f(a)=f(ca)\sim f(cb)=f(c)f(b)$ in $\fq$. Since $\fq$ is prime, this means that either $f(c)\sim0$ or $f(a)\sim f(b)$ in $\fq$. Thus $c\sim 0$ or $a\sim b$ in $\fp$, which shows that $\fp$ is prime.
 
 It suffices to verify the continuity of $f^\ast$ on a subbasis. For $U_{a_1,a_2}\subseteq \Cong(A)$, we have
 \begin{align*}
  (f^\ast)^{-1}(U_{a_1,a_2}) \ &= \ \{\mathfrak{d}\in \Cong(B)\mid (a_1,a_2)\notin f^\ast(\mathfrak{d})\} \\
  &= \ \{\mathfrak{d}\in \Cong(B)\mid (f(a_1),f(a_2))\notin\mathfrak{d})\}= \ U_{f(a_1),f(a_2)}
 \end{align*}
 which shows $f^\ast$ is continuous.
\end{proof}

%%%%%%%%%%%%%%%%%%%%%%%%%%%%%%%%%%%%%%%%%%%%%%%%%%%%%%%%%%%%%%%%%%%%%%%%%%%%%%%%%%%%%%%%%%%%%%%%%%%%%%%%%%%%%%%%%%%%%%%%%%%%%%%%%%%%%%%%%%%%%%%%%%%%%%%%%%%%%%%%%%%%%
\subsection{The congruence space of a monoid scheme}

The results from section \ref{subsection: Compatibility with localizations} allow us to extend the definition of the congruence space from pointed monoids, or affine monoid schemes, to arbitrary monoid schemes.

\begin{defn}
 Let $X$ be a monoid scheme and $\mathcal{U}_X$ the category of all affine open subschemes of $X$ together with all open immersions. Let $\cU^\congr_X$ be the category of congruence spaces $U^\congr=\Cong(\Gamma U)$ where $U\in \mathcal{U}_X$ together with the induced maps. The congruence space associated with $X$ is the topological space 
 \begin{equation*}
		X^\congr \ = \ \colim \mathcal{U}^\congr_X.
 \end{equation*}
 It comes with canonical inclusions $\iota_U: \Cong(\Gamma U)\to X^\congr$ for every affine open $U$ of $X$.
\end{defn}

Note that since every affine open covering of an affine monoid scheme $X=\MSpec(A)$ must contain $X$ itself as its largest open subset, the canonical inclusion $\iota_{\MSpec(A)}:\Cong(A)\to (\MSpec(A))^\congr$ is a homeomorphism. The term ``canonical inclusions'' is justified by the following result.

Recall that a morphism $\iota:Y\to X$ of monoid schemes is an \emph{open immersion} if it is an open topological embedding and if $\mathcal{O}_Y$ is the restriction of $\mathcal{O}_X$ to $Y$. 

\begin{prop}\label{prop: functoriality of congruence spaces}
 The assignment $X\mapsto X^\congr$ extends uniquely to a functor $(-)^\congr:\MSch\to\Top$ such that for every morphism $f:A\to B$ of pointed monoids and $\varphi=f^\ast:\MSpec(B)\to \MSpec(A)$, the diagram
 \[
  \begin{tikzcd}
   \Cong(B) \ar[r,"\varphi"] \ar[d,"\iota_{\MSpec(B)}"',"\cong"] & \Cong(A) \ar[d,"\iota_{\MSpec(A)}","\cong"'] \\
   (\MSpec(B))^\congr \ar[r,"\varphi^\congr"]                & (\MSpec(A))^\congr
  \end{tikzcd}
 \]
 commutes. It satisfies that
 \begin{enumerate}
  \item\label{open1} if $\iota:Y\to X$ is an open immersion, then $\iota^\congr:Y^\congr\to X^\congr$ is an open topological embedding;
  \item\label{open2} if $\{U_i\}$ is an open covering of $X$, then $\{U_i^\congr\}$ is an open covering of $X^\congr$.
 \end{enumerate}
\end{prop}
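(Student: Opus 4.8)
The plan is to build the functor from the affine case outward, using the colimit presentation of $X^\congr$ together with the compatibility of congruences with localizations established in Proposition~\ref{prop: prime congruences in localizations}.

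\emph{The affine case.} By Lemma~\ref{cong1}, $f\mapsto f^\ast$ makes $\Cong$ a (contravariant) functor $\Mon\to\Top$, so composing with the anti-equivalence $\MSpec$ yields the functor on affine monoid schemes: for $\varphi=f^\ast$ with $f:A\to B$ set $\varphi^\congr=f^\ast:\Cong(B)\to\Cong(A)$. Under the identifications $\iota_{\MSpec(A)}$ and $\iota_{\MSpec(B)}$ the displayed square commutes by construction, and functoriality ($\id^\congr=\id$, $(\psi\varphi)^\congr=\psi^\congr\varphi^\congr$) is inherited from $\Cong$.

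\emph{Key lemma on localizations.} Next I would show that for a principal open $U_h=\MSpec(A[h^{-1}])\hookrightarrow\MSpec(A)$ the induced map $\Cong(A[h^{-1}])\to\Cong(A)$ is an open topological embedding with image $U_{h,0}$. By Proposition~\ref{prop: prime congruences in localizations} with $S=S_h=\{h^i\}$ the map is a bijection onto $\{\fp\mid I_\fp\cap S_h=\emptyset\}$; since $I_\fp$ is a prime ideal this set equals $\{\fp\mid (h,0)\notin\fp\}=U_{h,0}$, which is open. That the bijection is a homeomorphism onto $U_{h,0}$ follows by matching subbasic opens in both directions: because $h$ becomes a unit one has $U_{a/h^m,\,b/h^n}=U_{ah^n/1,\,bh^m/1}$ in $\Cong(A[h^{-1}])$, corresponding to $U_{ah^n,bh^m}\cap U_{h,0}$ below, while conversely $U_{a,b}\cap U_{h,0}$ corresponds to $U_{a/1,b/1}$. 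One must argue on the subbasis here, since $\{U_{a,b}\}$ is only a subbasis (Example~\ref{notbasis}). A general open immersion of affines is a union of such principal opens, so it too induces an open embedding.

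\emph{Properties (open1) and (open2).} For a principal cover $V=\bigcup U_{h_k}$ of an affine $V=\MSpec(A)$, the images $U_{h_k,0}$ cover $\Cong(A)$: every prime congruence $\fp$ has prime nullideal $I_\fp$, which misses some $h_k$, so $\fp\in U_{h_k,0}$. With the key lemma this proves (open2) for affines and exhibits $\Cong(A)$ as the gluing of the $\Cong(\Gamma U_{h_k})$ along their overlaps. For general $X$, the affine opens subordinate to a cover $\{U_i\}$ are cofinal in $\mathcal{U}_X$, so $X^\congr$ is the gluing of the $\Cong(\Gamma U)$ along open subspaces; this yields that each $\iota_U$ is an open embedding, that an open immersion $\iota:Y\to X$ induces the open embedding $\iota^\congr$ coming from the inclusion $\mathcal{U}_Y\hookrightarrow\mathcal{U}_X$ of diagrams (open1), and that $\{U_i^\congr\}$ covers $X^\congr$ (open2).

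\emph{General morphisms and uniqueness.} To define $\varphi^\congr$ for $\varphi:Y\to X$, I would use the universal property of $Y^\congr=\colim_{V\in\mathcal{U}_Y}\Cong(\Gamma V)$: for each affine open $V\subseteq Y$, cover $X$ by affine opens $U_j$, refine to principal opens $V'\subseteq V$ with $\varphi(V')\subseteq U_j$, and take the composite $\Cong(\Gamma V')\xrightarrow{(\varphi|_{V'})^\congr}\Cong(\Gamma U_j)=U_j^\congr\xrightarrow{\iota_{U_j}}X^\congr$ from the affine case; these glue over the cover of $V$, and then over $\mathcal{U}_Y$, to a continuous map $\varphi^\congr$. \textbf{The main obstacle} is exactly this gluing: one must check that the locally defined maps agree on overlaps and are independent of all choices (the cover of $X$, the cover of $V$, and the refinements), which rests on the functoriality of $\Cong$ (Lemma~\ref{cong1}) together with the open-gluing description of $X^\congr$ from the previous step; functoriality of $\varphi\mapsto\varphi^\congr$ then follows by the same locality. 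Finally, uniqueness holds because the affine compatibility determines any such functor $G$ on all affine morphisms, hence on the canonical inclusions $\iota_V^Y$ (these are the colimit structure maps, and reduce to the affine diagram when the target is affine); thus each composite $G(\varphi)\circ\iota_V^Y$ is forced, and since the $\iota_V^Y$ are jointly epimorphic this gives $G(\varphi)=\varphi^\congr$.
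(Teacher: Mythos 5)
Your proposal follows the same overall route as the paper's proof: the affine case via Lemma~\ref{cong1}, open embeddings for principal open immersions, gluing over affine covers to obtain properties \eqref{open1} and \eqref{open2}, patching locally defined maps to construct $\varphi^\congr$ for a general morphism, and uniqueness by locality of morphisms. Within this, your treatment of the principal-open step is a cleaner variation: where the paper verifies injectivity by hand and constructs an explicit preimage congruence $\fc$ for each $\fd\in U_{a_1s^m,a_2s^n}$ (in effect re-deriving the inverse $\iota_{S,\ast}$ inside the proof), you directly cite Proposition~\ref{prop: prime congruences in localizations} for the bijection onto $\{\fp\mid I_\fp\cap S_h=\emptyset\}=U_{h,0}$ and then match subbasic opens; your remark that it suffices to argue on the subbasis is correct, since an injection carries finite intersections to finite intersections, so images of basic opens are automatically open once images of subbasic opens are. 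One minor quibble at this step: for a general open immersion between affines you appeal to ``a union of such principal opens,'' which leaves injectivity of the glued map unaddressed; the paper instead invokes \cite[Lemmas 1.3 and 2.4]{CHWW15}, by which any affine open subscheme of an affine monoid scheme is already principal, so no union argument is needed.

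The one substantive point you should not gloss over is your assertion that the affine opens subordinate to a cover $\{U_i\}$ are cofinal in $\mathcal{U}_X$. This is true, but not for formal reasons: it requires the fact, special to monoid schemes, that every affine open $W$ of $X$ is contained in a \emph{single} member $U_i$ of the cover. The reason is that $W$ has a unique closed point (its maximal ideal), that any nonempty open subset of an affine monoid scheme containing the closed point is the whole space, and that the closed point lies in some $W\cap U_i$. The paper makes exactly this argument explicit; without it your cofinality claim --- and hence your derivation of \eqref{open2} and of the colimit presentation over subordinate affines --- is unjustified, and indeed the analogous statement is false for ordinary schemes. With that observation supplied, your gluing/well-definedness checks and the uniqueness argument are at essentially the same level of detail as the paper's own proof, which likewise delegates the independence-of-choices verifications to functoriality of $\Cong$ and common refinements.
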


\begin{proof}
  The functor $(-)^\congr:\MSch\to\Top$ is defined on objects, and the commutativity requirement determines $\varphi^\congr$ for morphisms $\varphi:X\to Y$ of affine monoid schemes. If we can extend $(-)^\congr$ to a functor on all monoid schemes, then uniqueness follows from the functoriality of $(-)^\congr$ and the local nature of morphisms; namely, given $\varphi:Y\to X$, there are affine open coverings $\{U_i\}$ of $X$ and $\{V_i\}$ of $Y$  with $\varphi(V_i)\subseteq U_i$, and $\varphi$ is uniquely determined by its restrictions $\varphi_i:V_i\to U_i$ to affine monoid schemes.
 
 We are left with the construction of $\varphi^\congr:Y^\congr\to X^\congr$ for an arbitrary morphism $\varphi:Y\to X$. As a first step, we establish property \eqref{open1} for an open immersion $\iota:Y\to X$ between affine monoid schemes $Y=\MSpec(B)$ and $X=\MSpec(A)$. Since $i:Y\hookrightarrow X$ is an open immersion, it follows from \cite[Lemma 1.3]{CHWW15} and \cite[Lemma 2.4]{CHWW15} that $B\Cong(A)[s^{-1}]$ for some $s\in A$. Next we verify that $i^\congr$ is injective. Let $\mathfrak{c},\mathfrak{d}\in  {Y}^\congr$ be such that $i^\congr(\mathfrak{c})= i^\congr(\mathfrak{d})$, i.e., $\{(a,a')\in A\times A\mid (\frac{a}{1}, \frac{a'}{1})\in \mathfrak{c}\} =\{(a,a')\in A\times A\mid (\frac{a}{1}, \frac{a'}{1})\in \mathfrak{d}\}$. Then we have
	\begin{equation*}
		(\frac{a_1}{s^n}, \frac{a_2}{s^m})\in \mathfrak{c}\iff (\frac{a_1 s^m}{1}, \frac{a_2 s^n}{1}) \in \mathfrak{c} \iff (\frac{a_1s^{m}}{1}, \frac{a_2s^{n}}{1}) \in \mathfrak{d} \iff 	(\frac{a_1}{s^n}, \frac{a_2}{s^m})\in \mathfrak{d}.
	\end{equation*}
That $i^\congr$ is an open embedding follows if we can show that any open subset of $Y^\congr$ of the form $U_{\frac{a_1}{s^n}, \frac{a_2}{s^m}}$ maps to an open subset of $X^\congr$ under $i^\congr$. Note that we have $i^\congr(U_{\frac{a_1}{s^n}, \frac{a_2}{s^m}})\subseteq U_{a_1s^m,a_2s^n}$. Consider any $\mathfrak{d}\in U_{a_1s^m,a_2s^ n}$. We define
\begin{equation*}
	\mathfrak{c}:=\{(\frac{a}{s^k},\frac{a'}{s^r})\in B\times B\mid (as^{r+n_0}, a's^{k+n_0})\in \mathfrak{d}\text{~for some~} n_0>0\}.
\end{equation*}
We leave it to the reader to check that $\mathfrak{c}\in ({\MSpec(B)})^\congr$ and that $i^\congr(\mathfrak{c}) = \mathfrak{d}$. Thus we have $i^\congr(U_{\frac{a_1}{s^n}, \frac{a_2}{s^m}})= U_{a_1s^m,a_2s^n}$.

As the next step of the proof, we establish property \eqref{open2}. We note that it is enough to prove property \eqref{open2}  for an affine open covering of $X$. Consider the family $\mathcal{U}_X=\{U_i\}_{i\in I}$ of all affine open subschemes of $X$. For all $i,j\in I$, consider principal affine opens $U_{ij}\subseteq U_i$ and isomorphisms $\phi_{ji}: U_{ij}\to U_{ji}$. As already shown, this implies $U_{ij}^\congr$ are open subsets of ${U_i}^\congr$ for all $i,j\in I$. Moreover, since $\phi_{ki}=\phi_{kj}\circ\phi_{ji}$ on $U_{ij}\cap U_{jk}$ and $\Cong: \Mon_0 \to \Top$ is a functor by Lemma \ref{cong1}, we have  $\phi_{ki}^\congr=\phi_{kj}^\congr\circ\phi_{ji}^\congr$ on $U_{ij}^\congr\cap U_{jk}^\congr$. Thus the collection $(I, {U_i^\congr}\}_{i\in I}, \{{U_{ij}^\congr}\}_{i,j\in I}, \{\phi_{ij}^\congr\}_{i,j\in I} )$ gives a gluing data of topological spaces (in other words, ${{U}_X^\congr}$ is a monodromy-free diagram in the category of topological spaces in the sense of \cite[\S 1]{Lorscheid17}). The topological space obtained from this gluing data is ${X}^\congr$, by definition. Therefore for any $U_i\in \mathcal{U}_X$, we have ${U_i}^\congr$ is an open subset of ${X}^\congr$. Let $\{X_\alpha\}_{\alpha\in A}$ be an affine open cover of $X$ and consider any $U\in \mathcal{U}_X$. Since every affine open has a unique closed point and since  $U$ is covered by $\{U\cap X_\alpha\}_{\alpha\in A}$, we must have $U\subseteq X_\alpha$ for some $\alpha\in A$. Therefore we have ${U}^\congr\subseteq X_\alpha^\congr$ for some $\alpha\in A$. Since we have shown that $\{{U}^\congr\}_{U\in \mathcal{U}_X}$ is an open cover for ${X}^\congr$, it follows that $\{X_\alpha^\congr\}_{\alpha\in A}$ is an open cover of ${X}^\congr$. 

We are all set to define $\varphi^\congr:Y^\congr\to X^\congr$ for an arbitrary morphism $\varphi:Y\to X$. We choose an affine open cover $\{X_\alpha\}_{\alpha\in A}$ of $X$ and an affine open cover $\{Y_\alpha\}_{\alpha\in A}$ of $Y$ such that $\phi(Y_\alpha)\subseteq X_\alpha$ for each $\alpha \in A$.  We also choose an affine cover $\{Y_{\alpha\beta k}\}_{k\in K}$ for each $Y_{\alpha\beta}=Y_\alpha\cap Y_\beta$ and an affine cover $\{X_{\alpha\beta k}\}_{k\in K}$ for each $X_{\alpha\beta}=X_\alpha\cap X_\beta$ such that $\phi(Y_{\alpha\beta k})\subseteq X_{\alpha\beta k}$ for each $k\in K$. Let $\phi_\alpha := \phi|_{Y_\alpha}:Y_\alpha\to X_\alpha$ and $\phi_{\alpha\beta k} := \phi|_{Y_{\alpha\beta k}}:Y_{\alpha\beta k}\to X_{\alpha\beta k}$. The corresponding morphisms of congruence spaces are given by $\phi_\alpha^\congr:Y_\alpha^\congr\to X_\alpha^\congr$ and $\phi_{\alpha\beta k}^\congr:Y_{\alpha\beta k}^\congr\to X_{\alpha\beta k}^\congr$. We define
\begin{align*}
	{\phi}^\congr:{Y}^\congr=\bigcup_{\alpha\in A}Y_\alpha^\congr&\longrightarrow {X}^\congr=\bigcup_{\alpha\in A}X_\alpha^\congr \quad,\quad
	\mathfrak{q}&\mapsto \phi_\alpha^\congr(\mathfrak{q})\quad\text{ if~}\mathfrak{q}\in Y_\alpha^\congr.
\end{align*}
Assume $\mathfrak{q}\in {Y_\beta}^\congr$ for a different choice of affine open $Y_\beta$  of $Y$.  Since $\{Y_{\alpha\beta k}\}_{k\in K}$ is an affine cover of $Y_{\alpha\beta}$, we know that $\{Y_{\alpha\beta k}^\congr\}_{k\in K}$ is an open cover of $Y_{\alpha\beta}^\congr$. Let $\mathfrak{q}\in Y_{\alpha\beta k}^\congr$ for some $k\in K$. Since $\Cong: \Mon_0 \to \Top$ is a functor by Lemma \ref{cong1}, we have $\phi_\beta^\congr(\mathfrak{q})= \phi_{\alpha\beta k}^\congr(\mathfrak{q})= \phi_\alpha^\congr(\mathfrak{q})$.  This shows that there is no ambiguity in the choice of congruence that represents the point $\mathfrak{q}$ in $Y^\congr$. We leave it to the reader to verify that ${\phi}^\congr$ is also independent of the choice of affine covers considered in its definition by taking suitable common refinements and using similar arguments as above.  This shows that $\varphi^\congr:Y^\congr\to X^\congr$ is well defined as a map. It is clear that $\phi^\congr$ is continuous since each $\phi_\alpha^\congr$ is continuous by Lemma \ref{cong1}.

The only thing left to show is property \eqref{open1} for general $X$ and $Y$. We note that an open immersion $\iota:Y\to X$ is a morphism of monoid schemes which decomposes uniquely into an isomorphism of monoid schemes and the identity inclusion of an open subscheme. We have already shown that for any open subscheme $U$ of $X$, we have $U^\congr$ is an open subspace of $X^\congr$ and we leave it to the reader to verify that any isomorphism of monoid schemes induces a homeomorphism of congruence spaces. It follows that $\iota^\congr:Y^\congr\to X^\congr$ is an open topological embedding.
\end{proof}

\begin{notation*}
 Given a morphism $\varphi:Y\to X$ of monoid schemes, we often write $\widetilde\varphi:\widetilde Y\to\widetilde X$ for $\varphi^\congr:Y^\congr\to X^\congr$. 
\end{notation*}

%%%%%%%%%%%%%%%%%%%%%%%%%%%%%%%%%%%%%%%%%%%%%%%%%%%%%%%%%%%%%%%%%%%%%%%%%%%%%%%%%%%%%%%%%%%%%%%%%%%%%%%%%%%%%%%%%%%%%%%%%%%%%%%%%%%%%%%%%%%%%%%%%%%%%%%%%%%%%%%%%%%%%
\subsection{Projection onto the monoid scheme}
\label{subsection: Projection onto the monoid scheme}

The congruence space $ X^\congr$ of a monoid scheme $X$ comes equipped with a continuous surjection $\pi_X: X^\congr\to X$, which sends a prime congruence $\fp$ to its nullideal $I_\fp$ in the affine case. More precisely, $\pi_X$ is characterized by the following properties.

\begin{prop}\label{prop: projection onto the monoid scheme}
 There are continuous maps $\pi_X:X^\congr\to X$, one for every monoid scheme $X$, which are uniquely determined by the following properties:
 \begin{enumerate}
  \item\label{proj1} If $X=\MSpec(A)$ is affine, then $\pi_X(\fp)=I_\fp$ for all prime congruences $\fp\in X^\congr$.
  \item\label{proj2} The maps $\pi_X$ are functorial in $X$, i.e.,
  \begin{equation*}
         \begin{tikzcd}[column sep=60pt]
          Y^\congr \ar[d,"\pi_Y"'] \ar[r,"\varphi^\congr"] & X^\congr \ar[d,"\pi_X"] \\
          Y \ar[r,"\varphi"]                               & X
         \end{tikzcd}
        \end{equation*}
        commutes for every morphism $\varphi:Y\to X$ of monoid schemes.
 \end{enumerate}
\end{prop}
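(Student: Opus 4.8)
The plan is to build $\pi_X$ from the affine case and then exploit the colimit presentation $X^\congr=\colim\mathcal U_X^\congr$ of Proposition \ref{prop: functoriality of congruence spaces}, deducing existence, continuity, functoriality and uniqueness in turn. For affine $X=\MSpec(A)$ I define $\pi_A(\fp)=I_\fp$. This lands in $\MSpec(A)$ because the nullideal of a prime congruence is a prime ideal, as recorded after the definition of prime congruences. Continuity is checked on the generating opens of $\MSpec(A)$: for $h\in A$ one has $\pi_A^{-1}(U_h)=\{\fp\mid(h,0)\notin\fp\}=U_{h,0}$, which is open. (Surjectivity, although not part of the statement, follows by sending a prime ideal $P$ to $\congker(A\to A/P)$.)

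The crux is that the nullideal construction commutes with localization. For a localization $\iota_S\colon A\to S^{-1}A$ --- equivalently, by the lemmas of \cite{CHWW15} cited in the proof of Proposition \ref{prop: functoriality of congruence spaces}, for the restriction map attached to an open immersion of affines --- and a prime congruence $\fq$ on $S^{-1}A$, unwinding the definitions gives $I_{\iota_S^\ast(\fq)}=\iota_S^{-1}(I_\fq)$: indeed $a\in I_{\iota_S^\ast(\fq)}$ iff $(\tfrac a1,\tfrac 01)\in\fq$ iff $\tfrac a1\in I_\fq$. Since the transition map $\MSpec(S^{-1}A)\to\MSpec(A)$ sends $Q$ to $\iota_S^{-1}(Q)$, this identity says precisely that the maps $g_U\colon\Cong(\Gamma U)\xrightarrow{\pi_{\Gamma U}}U\hookrightarrow X$ are compatible with every structure morphism $V^\congr\to U^\congr$ of the diagram $\mathcal U_X^\congr$. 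Hence $(g_U)_U$ is a cocone into the space $X$, and the universal property of the colimit in $\Top$ yields a unique continuous map $\pi_X\colon X^\congr\to X$ restricting to each $g_U$ on the chart $\iota_U(\Cong(\Gamma U))$.

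Property \eqref{proj1} is then immediate: for affine $X$ the canonical inclusion $\iota_{\MSpec(A)}$ is a homeomorphism and $\pi_X=g_{\MSpec(A)}=\pi_A$. For property \eqref{proj2} I reduce to affine charts by the local nature of morphisms already used in Proposition \ref{prop: functoriality of congruence spaces}; for $\varphi=f^\ast$ with $f\colon A\to B$ both composites send $\fq\in\Cong(B)$ to $f^{-1}(I_\fq)$, since $\pi_A(f^\ast\fq)=I_{f^\ast\fq}=\{a\mid(f(a),0)\in\fq\}=f^{-1}(I_\fq)$, while $\varphi(\pi_B(\fq))=f^\ast(I_\fq)=f^{-1}(I_\fq)$. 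Uniqueness follows because property \eqref{proj1} fixes $\pi_X$ on affines, and applying property \eqref{proj2} to the open immersions $U\hookrightarrow X$ with $U$ affine forces $\pi_X\circ\iota_U=(U\hookrightarrow X)\circ\pi_{\Gamma U}$ on the open cover $\{\iota_U(\Cong(\Gamma U))\}$ of $X^\congr$.

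I expect the load-bearing step to be the localization compatibility together with its correct use as the cocone condition: the computation itself is a one-line unwinding, but one must match it carefully to the structure maps of $\mathcal U_X^\congr$ so that descent to the colimit is genuinely established rather than merely verified on principal opens.
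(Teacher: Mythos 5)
Your proof is correct and follows essentially the same route as the paper: the paper likewise defines $\pi_X$ on affine charts via the nullideal, verifies continuity through $\pi_A^{-1}(U_h)=U_{h,0}$, reduces functoriality to the affine computation $I_{f^\ast\fq}=f^{-1}(I_\fq)$, and establishes well-definedness of the glued map by exactly your localization compatibility (phrased there as commutativity of the square for a principal open $\MSpec(A[s^{-1}])\subseteq\MSpec(A)$), merely carrying out the gluing pointwise with common refinements instead of invoking the universal property of the colimit $X^\congr=\colim\mathcal{U}_X^\congr$. One caveat, harmless for the statement itself since surjectivity is not claimed: your parenthetical argument for surjectivity is wrong, because for a prime ideal $P$ the quotient $A/P$ is only zero divisor free, not integral (e.g.\ $A=\{0,e,1\}$ with $e=e^2$ and $P=\{0\}$), so $\congker(A\to A/P)$ need not be a prime congruence; the correct witness is $\congker(f_P)$ for the morphism $f_P:A\to\Fun$ with $f_P^{-1}(0)=P$, as used in Proposition~\ref{prop: closed points of the congruence space}.
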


\begin{proof}
 Let $X=\MSpec(A)$ and let $\mathfrak{p}\in X^\congr$ be any prime congruence on $A$. It is clear that $I_{\mathfrak{p}}$ is a prime ideal of $A$ and that $\pi_X$ is a well defined morphism of sets. Also for any basic open $D(a)= \{P\in X\mid a\notin P\}$ of $\MSpec(A)$, we have $ {\pi_X}^{-1}(D(a)) = \{ \mathfrak{p}\in {X}^\congr\mid  a\notin I_{\mathfrak{p}}\}= U_{a,0}$ and so $\pi_X$ is a continuous map of topological spaces. Moreover, for $X= \MSpec(A)$ and $Y= \MSpec(B)$, the diagram in \eqref{proj2} commutes since for any $\fp$ in $(\MSpec(A))^\congr$, we have
\begin{equation*}
\pi_Y(	\phi^\congr(\fp)) =  \ \{b\in B\mid (\Gamma_\varphi(b),0)\in\fp\} =\Gamma_\varphi^{-1}(I_\fp)= \varphi(I_\fp).
\end{equation*}

For a general monoid scheme $X$, every $\fp\in X^\congr$ is contained in $U^\congr$ for an affine open subscheme $U=\MSpec(A)$ of $X$ and corresponds to a prime congruence $\fp$ of $A$. We define $\pi_X(\fp):=I_\fp$. Let $U'=\MSpec(A)[s^{-1}]$ be an affine open of $U=\MSpec(A)$ containing $\fp$. Let $\fp'$ be the prime congruence of $A[s^{-1}]$ which corresponds to the prime congruence $\fp$ of $A$. Since the following diagram 
 \begin{equation*}
	\begin{tikzcd}[column sep=60pt]
		(\MSpec(A)[s^{-1}])^\congr \ar[d,"\pi_{U'}"'] \ar[r,"\varphi^\congr"] & (\MSpec(A))^\congr \ar[d,"\pi_U"] \\
		\MSpec(A)[s^{-1}] \ar[r,"\varphi"]                               & \MSpec(A)
	\end{tikzcd}
\end{equation*}
commutes, it shows that $I_\fp$ and $I_{\fp'}$ represents the same point in $X$. Hence if $\fp$ is contained in $V^\congr$ for some other affine open $V=\MSpec(B)$ of $X$, by taking common affine refinement of $U$ and $V$ and using similar argument as above, we can conclude that $\pi_X:X^\congr\to X$ is a well defined map. For general $X$ and $Y$, the morphism $\varphi^\congr: Y^\congr\to X^\congr$ is defined by taking restriction to suitable affine pieces and therefore \eqref{proj2} follows from the affine case.

To check that $\pi_X:X^\congr\to X$ is continuous consider an affine open cover $\{X_\alpha\}_{\alpha\in A}$  of $X$. Then  $\{X_{\alpha}^\congr\}_{\alpha\in A}$ is an open cover of $X^\congr$ and since each $\pi_{X_\alpha}: X_\alpha^\congr \to X_\alpha$ is continuous it follows that $\pi_X:X^\congr\to X$ is continuous. 
\end{proof}

\begin{ex}\label{ex: congruence space of affine space and the diagonal embedding}
 In this example, we describe the congruence space of the affine $n$-space $\A^n_\Fun=\MSpec(\Fun[t_1,\dotsc,t_n])$. By \cite[Example 1.2]{CHWW15}, the prime ideals of $\Fun[t_1,\dotsc,t_n]$ are of the form $P_I=\gen{t_i}_{i\in I}$ where $I$ is a subset of $\{1,\dotsc,n\}$. Let $J=\{1,\dotsc,n\}-I$. Then the quotient $\Fun[t_1,\dotsc,t_n]/P_I$ is isomorphic to $\Fun[t_j\mid j\in J]$. The quotients of $\Fun[t_j\mid j\in J]$ by prime congruence $\fp$ with trivial nullideal $I_\fp=\gen0$ are of the form
 \[
  \fp_H \ = \ \Big\langle \Big( \ \prod_{j\in J} \ t_j^{a_j}, \ \prod_{j\in J} \ t_j^{b_j} \ \Big) \ \Big| \ (a_j-b_j)_{i\in J}\in H \Big\rangle
 \]
 where $H$ is a subgroup of $\Z^J$. Piecing these two types of quotients together, we see that the prime congruences of $\Fun[t_1,\dotsc,t_n]$ are of the form
 \[
  \fp_{I,H} \ = \ \big\langle \ (t_i,0), (s,s') \ \big| \ i\in I, \ (s,s')\in \fp_H \ \big\rangle
 \]
 for subsets $I\subset\{1,\dotsc,n\}$ and subgroups $H$ of $\Z^J$ where $J=\{1,\dotsc,n\}-I$. 

 The congruence spaces of the affine line $\A^1_\Fun$ and the affine plane $\A^2_\Fun$ over $\Fun$ are illustrated in Figure \ref{fig: diagonal embedding line into plane with congruence spaces}. The green areas indicate closed subsets of the respective underlying topological spaces. While both $\A^1_\Fun$ and $\A^2_\Fun$ are finite, and all points and closed subsets appear in the illustration, both $\til\A^1_\Fun$ and $\til\A^2_\Fun$ are infinite, i.e.\ the illustrations only capture a few of their points and closed subsets.
 
 The horizontal arrow is the diagonal embedding $\Delta:\A^1_\Fun\to\A^2_\Fun$. Remarkable is that the image of $\Delta$ (in orange) is not closed in $\A^2_\Fun$, but that the image of $\tilde\Delta$ (in light green) is closed in $\til\A^2_\Fun$.
\end{ex}

\begin{figure}[tb]
 \[
 \beginpgfgraphicnamed{tikz/fig2}
  \begin{tikzpicture}[x=60pt,y=60pt]%[scale=\textwidth/11.0cm]
   \draw[fill=green!50!black,fill opacity=0.2,rounded corners=21pt,thick] (4.5,1) -- (6,2.5) -- (7.5,1) -- (6,-0.5) -- cycle;   	
   \draw[fill=green!50!black,fill opacity=0.3,rounded corners=21pt,thick] (5.5,2) -- (6,2.5) -- (7.5,1) -- (7,0.5) -- cycle;   	
   \draw[fill=green!50!black,fill opacity=0.3,rounded corners=21pt,thick] (5.5,0) -- (6,-0.5) -- (7.5,1) -- (7,1.5) -- cycle;   	
   \draw[fill=orange!100!black,fill opacity=0.5,rounded corners=14pt,thick,draw=orange!50!black] (0.75,0.75) rectangle (3.25,1.25);		
   \draw[fill=orange!100!black,fill opacity=1,rounded corners=14pt,thick,draw=orange!50!black] (2.75,0.75) rectangle (3.25,1.25);		
   \draw[fill=orange!100!black,fill opacity=0.5,rounded corners=14pt,thick,draw=orange!50!black] (4.75,0.75) rectangle (7.25,1.25);		
   \draw[fill=orange!100!black,fill opacity=1,rounded corners=14pt,thick,draw=orange!50!black] (6.75,0.75) rectangle (7.25,1.25);		
   \node at (1,1) {\footnotesize $P_\emptyset$};
   \node at (3,1) {\footnotesize $P_1$};
   \node at (5,1) {\footnotesize $P_\emptyset$};
   \node at (6,0) {\footnotesize $P_2$};
   \node at (6,2) {\footnotesize $P_1$};
   \node at (7,1) {\footnotesize $P_{12}$};
   \draw[->,thick] (3.5,1) -- node[above] {$\Delta$} (4.5,1);
   \draw[fill=green!50!black,fill opacity=0.2,rounded corners=21pt,thick] (4.5,4) -- (6,5.5) -- (7.5,4) -- (6,2.5) -- cycle;   
   \draw[fill=green!50!black,fill opacity=0.3,rounded corners=21pt,thick] (5.5,5) -- (6,5.5) -- (7.5,4) -- (7,3.5) -- cycle;   
   \draw[fill=green!50!black,fill opacity=0.3,rounded corners=21pt,thick] (5.5,3) -- (6,2.5) -- (7.5,4) -- (7,4.5) -- cycle;   
   \draw[fill=orange!100!black,fill opacity=0.5,rounded corners=14pt,thick,draw=orange!50!black] (0.75,3.75) rectangle (3.25,4.25);
   \draw[fill=orange!100!black,fill opacity=1,rounded corners=14pt,thick,draw=orange!50!black] (2.75,3.75) rectangle (3.25,4.25);
   \draw[fill=orange!100!black,fill opacity=0.5,rounded corners=14pt,thick,draw=orange!50!black] (0.75,3.75) rectangle (1.75,4.25);
   \draw[fill=orange!100!black,fill opacity=1,rounded corners=14pt,thick,draw=orange!50!black] (0.75,3.75) rectangle (1.25,4.25);
   \draw[fill=green!50!black,fill opacity=0.3,rounded corners=14pt,thick] (6.35,5) -- (6,5.35) -- (4.65,4) -- (5,3.65) -- cycle;   
   \draw[fill=green!50!black,fill opacity=0.3,rounded corners=14pt,thick] (6.35,3) -- (6,2.65) -- (4.65,4) -- (5,4.35) -- cycle;   
   \draw[fill=green!50!black,fill opacity=0.5,rounded corners=14pt,thick] (5.55,4.2) -- (5.2,4.55) -- (4.65,4) -- (5,3.65) -- cycle;   
   \draw[fill=green!50!black,fill opacity=0.5,rounded corners=14pt,thick] (5.55,3.8) -- (5.2,3.45) -- (4.65,4) -- (5,4.35) -- cycle;   
   \draw[fill=green!50!black,fill opacity=0.5,rounded corners=14pt,thick] (6.55,3.2) -- (6.2,3.55) -- (5.65,3) -- (6,2.65) -- cycle;   
   \draw[fill=green!50!black,fill opacity=0.5,rounded corners=14pt,thick] (6.55,4.8) -- (6.2,4.45) -- (5.65,5) -- (6,5.35) -- cycle;   
   \draw[fill=green!50!black,fill opacity=0.5,rounded corners=14pt,thick,draw=green!0!black] (5.75,2.75) rectangle (6.25,3.25);
   \draw[fill=green!50!black,fill opacity=0.5,rounded corners=14pt,thick,draw=green!0!black] (5.75,4.75) rectangle (6.25,5.25);
   \draw[fill=green!50!orange,fill opacity=0.5,rounded corners=14pt,thick,draw=green!50!black] (4.75,3.75) rectangle (7.25,4.25);
   \draw[fill=green!50!orange,fill opacity=0.5,rounded corners=14pt,thick,draw=green!50!black] (4.75,3.75) rectangle (5.65,4.25);
   \draw[fill=green!50!orange,fill opacity=1,rounded corners=14pt,thick,draw=green!50!black] (4.75,3.75) rectangle (5.25,4.25);
   \draw[fill=green!50!orange,fill opacity=1,rounded corners=14pt,thick,draw=green!50!black] (6.75,3.75) rectangle (7.25,4.25);
   \node at (1,4) {\footnotesize $\fp_{\emptyset,\Z}$};
   \node at (1.25,4) {\footnotesize $\dotsb$};
   \node at (1.5,4) {\footnotesize $\fp_{\emptyset,n\Z}$};
   \node at (1.75,4) {\footnotesize $\dotsb$};
   \node at (2,4) {\footnotesize $\fp_{\emptyset,0}$};
   \node at (3,4) {\footnotesize $\fp_{1,0}$};
   \node at (5,4) {\footnotesize $\fp_{\emptyset,\Z^2}$};
   \node at (6,3) {\footnotesize $\fp_{2,\Z}$};
   \node at (6,5) {\footnotesize $\fp_{1,\Z}$};
   \node at (5.5,3.5) {\footnotesize $\fp_{\emptyset,\Z\oplus0}$};
   \node at (5.5,4.5) {\footnotesize $\fp_{\emptyset,0\oplus\Z}$};
   \node at (7,4) {\footnotesize $\fp_{12,0}$};
   \node at (6.5,4.5) {\footnotesize $\fp_{2,0}$};
   \node at (6.5,3.5) {\footnotesize $\fp_{1,0}$};
   \node at (6,4) {\footnotesize $\fp_{\emptyset,\gen{(1,-1)}}$};
   \node at (5.95,3.62) {\footnotesize $\fp_{\emptyset,0}$};
   \draw[->,thick] (3.5,4) -- node[above] {$\tilde\Delta$} (4.5,4);
   \draw[->,thick] (3,3) -- node[left] {$\pi_{\A^1_\Fun}$} (3,2);
   \draw[->,thick] (5,3) -- node[left] {$\pi_{\A^2_\Fun}$} (5,2);
   \node at (3,0) {$\A^1_\Fun$};
   \node at (3,5) {$\til\A^1_\Fun$};
   \node at (5,0) {$\A^2_\Fun$};
   \node at (5,5) {$\til\A^2_\Fun$};
  \end{tikzpicture}
 \endpgfgraphicnamed 
 \]
 \caption{The diagonal embeddings $\Delta:\A^1_\Fun\to\A^2_\Fun$ and $\tilde\Delta:\til\A^1_\Fun\to\til\A^2_\Fun$}
 \label{fig: diagonal embedding line into plane with congruence spaces}
\end{figure}

%%%%%%%%%%%%%%%%%%%%%%%%%%%%%%%%%%%%%%%%%%%%%%%%%%%%%%%%%%%%%%%%%%%%%%%%%%%%%%%%%%%%%%%%%%%%%%%%%%%%%%%%%%%%%%%%%%%%%%%%%%%%%%%%%%%%%%%%%%%%%%%%%%%%%%%%%%%%%%%%%%%%%
\subsection{Stalks and residue fields}
\label{subsection: Stalks and residue fields}

The points of a monoid scheme $X$ come with the notion of a stalk and a residue field, which are as follows: let $\cU_{x}$ be the system of all affine open neighbourhood of the point $x$ in $X$. The stalk at $x$ is the pointed monoid
\[
  \cO_{X,x} \ = \ \underset{U\in\cU_{x}}\colim \ \Gamma U
\]
and the residue field at $x$ is the pointed group
 \[
  k(x) \ = \ \cO_{X,x} \, / \, M_x
 \]
where $M_x$ is the maximal ideal of $\cO_{X,x}$. Stalks and residue fields come with canonical morphisms
\[
 \begin{tikzcd}
  \omega_x: \MSpec(\cO_{X,x}) \ \longrightarrow \ X \qquad \text{and} \qquad \kappa_x: \MSpec( k(x)) \ \longrightarrow \ X
 \end{tikzcd}
\]
that commute with the morphism $\MSpec (k(x))\to\MSpec(\cO_{X,x})$ that is induced by the quotient map $\cO_{X,x}\to k(x)$.

Both stalks and residue fields generalize to invariants for points of the congruence space of $X$ as follows.

\begin{defn}\label{stalk}
 Let $X$ be a monoid scheme with congruence space $\til X$ and projection $\pi_X:\til X\to X$. Let $\tilde x\in\til X$ and $x=\pi_X(\tilde x)$. The \emph{stalk at $\tilde x$} is the pointed monoid
 \[
  \cO_{\til X,\tilde x} \ = \ \underset{U\in\cU_{x}}\colim \ \Gamma U \ = \ \cO_{X,x}.
 \]
 where the colimit is a filtered colimit with respect to the restriction maps $\res_{U,V}:\Gamma U\to\Gamma V$ if $V\subseteq U$.
 
 For $U\in\cU_x$, let $i_U:\Gamma U \to\cO_{X,x}=\cO_{\til X,\tilde x}$ the canonical map into the colimit. Let $\fp_{\tilde x,U}$ be the prime congruence of $\Gamma U$ that corresponds to $\tilde x$ and define the congruence
 \[
  \fp_{\tilde x} \ = \ \big\langle \bigcup_{U\in\cU_x} i_{U,\ast}(\fp_{\tilde x,U})\big\rangle
 \]
 of $\cO_{\til X,\tilde x}$. The \emph{residue field at $\tilde x$} is the pointed group
 \[
  k(\tilde x) \ = \ \cO_{\til X,\tilde x} \, / \, \fp_{\tilde x}.
 \]
\end{defn}

\begin{lemma}
 Let $X$ be a monoid scheme with congruence space $\til X$. Let $\tilde x\in \til X$ and $x=\pi_X(\tilde x)$. Then $\fp_{\tilde x}$ is a prime congruence of $\cO_{\til X,\tilde x}$ and $k(\tilde x)$ is a pointed group. More precisely, $\fp_{\tilde x}=i_{U,\ast}(\fp_{\tilde x,U})$ for any $U\in\cU_x$ and $k(\tilde x)$ is isomorphic to $\Frac(\Gamma U/\fp_{\tilde x,U})$. 
\end{lemma}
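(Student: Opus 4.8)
The plan is to fix an affine open neighbourhood $U=\MSpec(A)$ of $x$, let $P\subseteq A$ be the prime ideal corresponding to $x$ and $S=A\setminus P$, and to exploit that the canonical map $i_U\colon A=\Gamma U\to\cO_{X,x}=A_P$ is exactly the localization map $\iota_S$. Under this identification $i_{U,\ast}=\iota_{S,\ast}$, and since $\pi_X(\tilde x)=x$ forces $I_{\fp_{\tilde x,U}}=P$ (by the affine formula $\pi_A(\fp)=I_\fp$), we have $I_{\fp_{\tilde x,U}}\cap S=\emptyset$, so Proposition \ref{prop: prime congruences in localizations} applies to $\fp_{\tilde x,U}$.

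First I would establish $\fp_{\tilde x}=i_{U,\ast}(\fp_{\tilde x,U})$, which amounts to showing that $i_{U,\ast}(\fp_{\tilde x,U})$ is independent of $U\in\cU_x$. Restricting to the cofinal subsystem of principal open subschemes, it suffices to treat a pair $V=\MSpec(A[s^{-1}])\subseteq U$ with $s\notin P$ and restriction map $r=\iota_{S'}\colon A\to A[s^{-1}]$, $S'=\{s^i\}$. Because $\fp_{\tilde x,U}$ and $\fp_{\tilde x,V}$ represent the same point $\tilde x$ and the transition map in the colimit defining $\til X$ is the pullback $r^\ast\colon\Cong(A[s^{-1}])\to\Cong(A)$, which is an open embedding and hence injective by Proposition \ref{prop: functoriality of congruence spaces}, we get $r^\ast(\fp_{\tilde x,V})=\fp_{\tilde x,U}$; since $P\cap S'=\emptyset$, Proposition \ref{prop: prime congruences in localizations} then yields $r_\ast(\fp_{\tilde x,U})=\fp_{\tilde x,V}$. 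Combined with functoriality of the pushforward and $i_U=i_V\circ r$, this gives $i_{U,\ast}(\fp_{\tilde x,U})=i_{V,\ast}(\fp_{\tilde x,V})$, so all these congruences agree and $\fp_{\tilde x}=\big\langle i_{U,\ast}(\fp_{\tilde x,U})\big\rangle=i_{U,\ast}(\fp_{\tilde x,U})$.

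For primeness and the residue field I would construct the explicit morphism $\phi\colon A_P\to\Frac(A/\fp_{\tilde x,U})$, $\tfrac as\mapsto\tfrac{[a]}{[s]}$. It is well defined by the universal property of localization, since $I_{\fp_{\tilde x,U}}=P$ means that $S$ maps into the nonzero, hence invertible, elements of the integral monoid $A/\fp_{\tilde x,U}$, and it is surjective because the quotient map $A\to A/\fp_{\tilde x,U}$ is. A direct computation identifies $\congker(\phi)$ with the localized congruence $S^{-1}\fp_{\tilde x,U}$, which in turn equals $\iota_{S,\ast}(\fp_{\tilde x,U})=\fp_{\tilde x}$ (one checks both inclusions against the generating set of the pushforward, using closure of congruences under multiplication by $A_P$). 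Since $\Frac(A/\fp_{\tilde x,U})$ is a pointed group, hence integral, its congruence kernel $\fp_{\tilde x}$ is prime, and the isomorphism theorem for pointed monoids gives $k(\tilde x)=\cO_{\til X,\tilde x}/\fp_{\tilde x}=A_P/\congker(\phi)\cong\im(\phi)=\Frac(A/\fp_{\tilde x,U})$, a pointed group.

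The main obstacle is the independence-of-$U$ statement $\fp_{\tilde x}=i_{U,\ast}(\fp_{\tilde x,U})$: it requires carefully tracking how the single point $\tilde x$ is represented by the various prime congruences $\fp_{\tilde x,U}$ in the different affine charts and how these representatives transform under the restriction maps, which is precisely where the mutually inverse bijections $\iota_S^\ast,\iota_{S,\ast}$ of Proposition \ref{prop: prime congruences in localizations} together with functoriality of the pushforward do the essential bookkeeping. Once this identity is secured, primeness and the residue-field computation reduce to a routine use of the universal property of localization and the isomorphism theorem.
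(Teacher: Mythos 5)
Your proof is correct and follows essentially the same route as the paper's: you reduce the independence of $i_{U,\ast}(\fp_{\tilde x,U})$ to basic affine opens $V=\MSpec(A[s^{-1}])\subseteq U$ via $i_U=i_V\circ\res_{U,V}$ together with the localization bijection of Proposition \ref{prop: prime congruences in localizations}, and then identify $k(\tilde x)$ with $S^{-1}A/S^{-1}\fp_{\tilde x,U}=\Frac(A/\fp_{\tilde x,U})$, exactly as the paper does. The only difference is presentational: where the paper invokes exactness of localizations (Lemma \ref{lemma: kernels of localization}) to identify the quotient, you construct the map $\phi\colon A_P\to\Frac(A/\fp_{\tilde x,U})$ and compute its congruence kernel by hand, which also makes explicit the step $\res_{U,V,\ast}(\fp_{\tilde x,U})=\fp_{\tilde x,V}$ that the paper leaves implicit.
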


\begin{proof}
 For any $U, V\in \mathcal{U}_x$, we want to show that $i_{U,\ast}(\fp_{\tilde x,U})= i_{V,\ast}(\fp_{\tilde x,V})$. In fact, it is enough to check this for any basic affine open $V$ of $U$. Since we have $i_V\circ \res_{U,V} = i_U$ it follows that $i_{V,\ast}(\fp_{\tilde x,V}) = i_{U,\ast}(\fp_{\tilde x,U})$. Thus $\fp_{\tilde x}=i_{U,\ast}(\fp_{\tilde x,U})$. Also, we have $k(\tilde{x})=\ \cO_{\til X,\tilde x} \, / \, \fp_{\tilde x} =\ \cO_{\til U,\tilde x} \, / \, i_{U,\ast}(\fp_{\tilde x,U})=S^{-1}A \, / \, S^{-1}(\fp_{\tilde x,U})$ for any $U=\MSpec(A)\in \mathcal{U}_x$ and $S=A\setminus \pi_U(\fp_{\tilde x,U})=\{a\in A~|~(a,0)\notin \fp_{\tilde x,U} \}$. Let $\phi: A\to A/\fp_{\tilde x,U}$ denote the projection map. Since localizations of monoids are exact (in the sense of Lemma \ref{lemma: kernels of localization}), we have $S^{-1}A \, / \, S^{-1}(\fp_{\tilde x,U}) = \phi(S)^{-1}(A \, / \, \fp_{\tilde x,U})= \Frac (A \, / \, \fp_{\tilde x,U})$. 
\end{proof}

Note that a morphism $\phi:X\to Y$ of monoid schemes induces morphisms 
\begin{equation*}
{{\tilde\phi}}_{\mathfrak{p}}:\mathcal{O}_{\widetilde{Y},\mathfrak{q}}\longrightarrow \mathcal{O}_{\widetilde{X},\mathfrak{p}}\qquad \text{and} \qquad k(\mathfrak{q})\longrightarrow k(\mathfrak{p})
\end{equation*}
between stalks and residue fields for every $\mathfrak{p}\in \widetilde{X}$ and ${\tilde\phi}(\mathfrak{p}) =\mathfrak{q}\in \widetilde{Y}$. Note further that the morphism $k(\mathfrak q)\to k(\mathfrak p)$ is injective in contrast to the induced maps between residue fields associated with prime ideals.

%%%%%%%%%%%%%%%%%%%%%%%%%%%%%%%%%%%%%%%%%%%%%%%%%%%%%%%%%%%%%%%%%%%%%%%%%%%%%%%%%%%%%%%%%%%%%%%%%%%%%%%%%%%%%%%%%%%%%%%%%%%%%%%%%%%%%%%%%%%%%%%%%%%%%%%%%%%%%%%%%%%%%
\subsection{Fibres over the monoid scheme}
\label{subsection: Fibres over the monoid scheme}

Let $X$ be a monoid scheme with congruence space $\til X$ and projection $\pi_X:\til X\to X$. In this section, we study the fibres $\pi^{-1}_X(x)$ over the points $x$ of $X$. 

\begin{prop}\label{prop: fibre of pi_X}
 Let $x\in X$ and $k(x)$ be the residue field at $x$. The canonical morphism $\kappa_x:\MSpec(k(x))\to X$ induces a homeomorphism $\tilde\kappa_x:\Cong (k(x)) \to \pi_X^{-1}(x)$.
\end{prop}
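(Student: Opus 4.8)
The plan is to reduce to the affine case and then to identify $\tilde\kappa_x$ with the congruence-space pullback of a single explicit morphism of pointed monoids, whose bijectivity and openness I can read off from the localization and quotient correspondences of Proposition~\ref{prop: prime congruences in localizations} and Proposition~\ref{prop: congruences in quotients}. Concretely, I first choose an affine open $U=\MSpec(A)$ containing $x$ and let $P\subseteq A$ be the prime ideal corresponding to $x$. I claim $\pi_X^{-1}(x)\subseteq U^\congr$: any $\tilde x$ over $x$ lies in some $V^\congr$ with $\pi_V(\tilde x)=x\in V$, and passing to a principal affine $W\subseteq U\cap V$ around $x$ shows, using the open-embedding property~\eqref{open1} of Proposition~\ref{prop: functoriality of congruence spaces} together with the functoriality of $\pi$ from Proposition~\ref{prop: projection onto the monoid scheme}, that $\tilde x\in W^\congr\subseteq U^\congr$. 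Since $U^\congr\hookrightarrow X^\congr$ is an open embedding and $\kappa_x$ factors through $U$, the subspace topology on $\pi_X^{-1}(x)$ is the one inherited from $U^\congr=\Cong(A)$, and $\kappa_x=\lambda^\ast$ is induced by the canonical morphism $\lambda\colon A\to A_P\to A_P/PA_P=k(P)$. By the commuting square of Proposition~\ref{prop: functoriality of congruence spaces}, $\tilde\kappa_x$ is then the continuous map $\lambda^\ast\colon\Cong(k(P))\to\Cong(A)$ of Lemma~\ref{cong1}, and it remains to show it is a homeomorphism onto $\pi_X^{-1}(P)=\{\fp\mid I_\fp=P\}$.

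For bijectivity I factor $\lambda=\pi\circ\iota_{S_P}$ with $S_P=A\setminus P$ and $\pi\colon A_P\to A_P/\fc_M$ the quotient by the congruence $\fc_M=\{(a,b)\mid a=b\}\cup(M\times M)$ of the maximal ideal $M=PA_P$, so that $\lambda^\ast=\iota_{S_P}^\ast\circ\pi^\ast$. By Proposition~\ref{prop: congruences in quotients}, $\pi^\ast$ identifies $\Cong(k(P))$ with the prime congruences of $A_P$ containing $\fc_M$, which for prime congruences are exactly those with nullideal $M$. By Proposition~\ref{prop: prime congruences in localizations}, $\iota_{S_P}^\ast$ identifies the prime congruences of $A_P$ with those $\fp$ of $A$ satisfying $I_\fp\cap S_P=\emptyset$; here a short computation gives $I_{\iota_{S_P}^\ast\fp'}=\iota_{S_P}^{-1}(I_{\fp'})$, so that the condition $I_{\fp'}=M$ corresponds precisely to $I_\fp=\iota_{S_P}^{-1}(PA_P)=P$. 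Composing the two correspondences shows that $\lambda^\ast$ is a bijection $\Cong(k(P))\to\pi_X^{-1}(P)$.

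Continuity of $\tilde\kappa_x=\lambda^\ast$ is Lemma~\ref{cong1}, so the remaining point, where I expect the real work, is openness. I would use that every element of $k(P)=\Frac(A/P)$ has the form $\lambda(a)\lambda(s)^{-1}$ with $s\in S_P$: for $\alpha=\lambda(a)\lambda(s)^{-1}$ and $\beta=\lambda(b)\lambda(t)^{-1}$, multiplying by the unit $\lambda(st)$ gives $(\alpha,\beta)\in\fq\iff(\lambda(at),\lambda(bs))\in\fq$, and pulling this back along $\lambda$ yields $\tilde\kappa_x(U_{\alpha,\beta})=U_{at,bs}\cap\pi_X^{-1}(P)$ for every subbasic open $U_{\alpha,\beta}$ of $\Cong(k(P))$. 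Since $\tilde\kappa_x$ is a bijection it commutes with intersections and unions, so it carries every open set to an open set and is therefore a homeomorphism. The main obstacle is the bookkeeping in the bijectivity step — correctly tracking the fibre condition $I_\fp=P$ through the two-stage correspondence (localize at $S_P$, then collapse the maximal ideal) and checking that nullideals transform as claimed — together with the observation about units of $k(P)$ that makes the subbasis of $\Cong(k(P))$ match the subspace subbasis on $\pi_X^{-1}(P)$; note that, as in Example~\ref{notbasis}, the sets $U_{a,b}$ form only a subbasis, which is precisely why reducing the openness check to subbasic opens and then invoking bijectivity is the clean route.
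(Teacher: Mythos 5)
Your proof is correct and follows essentially the same route as the paper's: reduce to the affine case, identify $\tilde\kappa_x$ with the pullback along $A\to A_P\to A_P/PA_P$, deduce bijectivity from the two-stage correspondence of Propositions \ref{prop: prime congruences in localizations} and \ref{prop: congruences in quotients}, and verify openness on the subbasic opens $U_{\alpha,\beta}$. If anything, your final step is more careful than the paper's, which asserts that $\varphi\colon A\to k(P)$ is surjective (false in general, e.g.\ for $A=\Fun[t]$ and $P=\gen{0}$, where $k(P)=\Fun[t^{\pm1}]$); your observation that multiplying by the unit $\lambda(st)$ rewrites any $U_{\alpha,\beta}$ as $U_{\lambda(at),\lambda(bs)}$ is exactly the repair needed, and together with bijectivity it yields the same conclusion.
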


\begin{proof}
 Since $\im(\kappa_x)=\{x\}$ and the diagram 
   \begin{equation*}
 	\begin{tikzcd}[column sep=60pt]
 		\Cong(k(x)) \ar[d,"\pi_{\MSpec(k(x))}"'] \ar[r,"\til{\kappa_x}"] & \til X \ar[d,"\pi_X"] \\
 		\MSpec(k(x)) \ar[r,"\kappa_x"]                               & X
 	\end{tikzcd}
 \end{equation*}
 commutes, we have $im(\til {\kappa_x})\subseteq \pi_X^{-1}(x)$. Moreover, we can assume that $X=\MSpec(A)$ is affine and $x$ corresponds to a prime ideal $P$ of $A$.
 
 Since $A_P\rightarrow A_P/ {PA_P}$ is a surjective morphism, $ \MSpec (A_P/ {PA_P})\rightarrow \MSpec(A_P)$ is a closed immersion and therefore $ \Cong (A_P/ {PA_P})\rightarrow \Cong(A_P)$ is injective. By Proposition \ref{prop: prime congruences in localizations}, we know that $\Cong(A_P) \rightarrow \Cong(A)$ is an injection. Thus $\til{\kappa_x}: \Cong(k(x)) \cong \Cong (A_P/{PA_P})\rightarrow \pi_{X}^{-1}(P) \subseteq  \pi_X^{-1}(x)$ is an injection. 
 
 In order to show surjectivity, consider any $\mathfrak{c}\in  \pi_{X}^{-1}(P) $. Since $ \pi_{X}(\mathfrak{c})=I_\mathfrak{c} =P$, we have $I_\mathfrak{c}\cap {(A\setminus P)} = \emptyset$ and therefore $\mathfrak{c}\in \Cong(A)_P$ by Proposition \ref{prop: prime congruences in localizations}. Moreover, since $(a,0)\in \mathfrak{c}$ for all $a\in P$, we have $\mathfrak{c}\in \Cong (A_P/PA_P)$ and this shows $\til \kappa_x$ is surjective. 
 
 Let $\varphi: A\to A_P\to A_P/PA_P$ be the natural map. Since $\til \kappa_X(U_{\varphi(a),\varphi(b)})= U_{a,b}\cap  \pi_{X}^{-1}(P) $ and $\varphi$ is surjective, it follows that $\til \kappa_x$ is a homeomorphism.
\end{proof}

As a consequence of Proposition \ref{prop: fibre of pi_X}, we find two natural section $\sigma_X:X\to\til X$ and $\tau_X:X\to\til X$ of $\pi_X$: the former section $\sigma_X$ send a point $x$ of $X$ to the image $\kappa_x(\fc_\triv)$ of the trivial congruence $\fc_\triv$ on $k(x)$, which is prime since $k(x)$ is integral; the latter section $\tau_X$ sends $x$ to the image $\til{\kappa}_x(\fm)$ of congruence kernel $\fm$ of the unique morphism $k(x)\to\Fun$, which sends all units of $k(x)$ to $1$.

\begin{rem}
 The sections $\sigma_X$ and $\tau_X$ are less well-behaved than the projection $\pi_X$. Neither of them is continuous in general, as the example $X=\MSpec(A)$ for $A=\{0,e,1\}$ with idempotent $e=e^2$ shows, which is illustrated in Figure \ref{fig: bijection pi_X that is not a homeomorphism}. The map $\pi_X:\til X\to X$ is a bijection, which fails to be a homeomorphism since $\til X$ is discrete, but $X$ is not. Thus both $\sigma_X$ and $\tau_X$ are equal to the inverse bijection to $\pi_X$ and fail to be continuous.
 
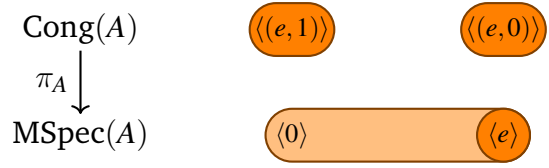
\begin{figure}[tb]
 \[
 \beginpgfgraphicnamed{tikz/fig3}
  \begin{tikzpicture}[x=40pt,y=40pt]%[scale=\textwidth/11.0cm]
   \draw[fill=orange!100!black,fill opacity=0.5,rounded corners=10pt,thick,draw=orange!50!black] (0.75,0.75) rectangle (3.25,1.25);		
   \draw[fill=orange!100!black,fill opacity=1,rounded corners=10pt,thick,draw=orange!50!black] (2.75,0.75) rectangle (3.25,1.25);		
   \node at (1,1) {\footnotesize ${\gen 0}$};
   \node at (3,1) {\footnotesize ${\gen e}$};
   \draw[fill=orange!100!black,fill opacity=1,rounded corners=10pt,thick,draw=orange!50!black] (2.60,1.75) rectangle (3.40,2.25);
   \draw[fill=orange!100!black,fill opacity=1,rounded corners=10pt,thick,draw=orange!50!black] (0.60,1.75) rectangle (1.40,2.25);
   \node at (1,2) {\footnotesize ${\gen{(e,1)}}$};
   \node at (3,2) {\footnotesize ${\gen{(e,0)}}$};
   \node at (-1,1) {$\MSpec(A)$};
   \node at (-1,2) {$\Cong(A)$};
   \draw[->,thick] (-1,1.8) -- node[left] {$\pi_A$} (-1,1.2);
  \end{tikzpicture}
 \endpgfgraphicnamed 
 \]
 \caption{The projection $\pi_A:\Cong(A)\to\MSpec(A)$ for $A=\{0,e,1\}$}
 \label{fig: bijection pi_X that is not a homeomorphism}
\end{figure}

 It follows from Proposition \ref{prop: closed points of the congruence space} that $\tau_X$ is functorial in $X$. The section $\sigma_X$ is functorial in open immersions, but fails to be functorial in general, as the example of the diagonal embedding $\Delta:\A^1_\Fun\to\A^2_\Fun$ shows:
 \[
  \til\Delta\circ\sigma_{\A^1_\Fun}\big(\gen 0\big) = \til\Delta\big(\gen{(0,0)}\big) = \gen{(t_1,t_2)} \ \neq \ \gen{(0,0)} = \sigma_{A^2_\Fun}\big(\gen 0\big) = \sigma_{\A^2_\Fun}\circ\Delta\big(\gen 0\big). 
 \]
\end{rem}

Let $A$ be a pointed monoid. Taking congruence kernels defines an injection $\chi_A:\Hom(A,\Fun)\to \Cong(A)$. Let $X$ be a monoid scheme with congruence space $\til X$ and $X(\Fun)=\Hom(\MSpec(\Fun), X)$. Since the congruence kernel of a composition is the pullback of a congruence kernel, the maps $\chi_A$ are functorial in $A$. Therefore the injections
\[
 \chi_{\Gamma U}: \ U(\Fun) \ = \ \Hom(\Gamma U,\Fun) \ \longrightarrow \ \Cong(\Gamma U) \ = \ \til U
\]
for affine open subschemes $U$ of $X$ glue to a functorial injection
\[
 \chi_X: \ X(\Fun) \ \longrightarrow \ \til X.
\]

\begin{prop}\label{prop: closed points of the congruence space}
 Let $X$ be a monoid scheme with congruence space $\til X$.
 \begin{enumerate}
  \item\label{closedpoint1} The image of $\chi_X:X(\Fun)\to\til X$ is the set of closed points of $\til X$.
  \item\label{closedpoint2} The composition $\pi_X\circ\chi_X:X(\Fun)\to X$ is a bijection and $\tau_X$ is the composition of its inverse bijection with $\chi_X$. In particular, $\tau_X$ is functorial in $X$ and gives a bijection between $X$ and the set of closed points of $\til X$.
 \end{enumerate}
\end{prop}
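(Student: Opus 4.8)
The plan is to reduce both statements to the affine case and then glue, using the residue field as a chart-independent bookkeeping device.

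\textbf{Part (1), affine.} First I would determine the specialization order on $\Cong(A)$. Since the $U_{a,b}$ form only a subbasis, I would argue that $\fq$ lies in $\overline{\{\fp\}}$ iff every finite intersection $U_{a_1,b_1}\cap\dots\cap U_{a_n,b_n}$ containing $\fq$ also contains $\fp$; this reduces pair-by-pair to $(a,b)\in\fp\Rightarrow(a,b)\in\fq$, i.e.\ $\fp\subseteq\fq$. Hence $\overline{\{\fp\}}=\{\fq\mid\fp\subseteq\fq\}$, so the closed points of $\Cong(A)$ are the maximal prime congruences. By Proposition \ref{prop: congruences in quotients}, $\fp$ is maximal iff the integral monoid $A/\fp$ has no nontrivial prime congruence. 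The key algebraic lemma is that an integral monoid $B$ has only the trivial prime congruence iff $B\cong\Fun$: if $B$ has a second nonzero element, the map $B\to\Fun$ collapsing all nonzero elements to $1$ (a morphism because $B$ has no zero divisors) has a nontrivial prime congruence kernel. Thus the maximal prime congruences are exactly the $\fp$ with $A/\fp\cong\Fun$, which are exactly the kernels $\congker(f)$ of morphisms $f:A\to\Fun$, i.e.\ the image of $\chi_A$. This settles (1) for affine $X$.

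\textbf{Part (1), global.} I would first record the chart-independent description $\im(\chi_X)=\{\tilde x\in\til X\mid k(\tilde x)\cong\Fun\}$. By the lemma following Definition \ref{stalk}, $k(\tilde x)\cong\Frac(\Gamma U/\fp_{\tilde x,U})$ for every affine open $U\ni x$; and for an integral monoid $C$ with $\Frac(C)\cong\Fun$, every nonzero element maps to $1$ and $C\hookrightarrow\Frac(C)$, so $C\cong\Fun$. Hence $A/\fp\cong\Fun$ in one chart is equivalent to $k(\tilde x)\cong\Fun$ and to $\Gamma V/\fp_{\tilde x,V}\cong\Fun$ in every chart $V$. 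I would combine this with the locality of closed points: for an open cover $\{W_i\}$ and $p\in W_{i_0}$, the point $p$ is closed in the whole space iff it is closed in every $W_j$ with $p\in W_j$ (the nontrivial direction uses that a point in $\overline{\{p\}}$ lies in some $W_j$, which must then contain $p$). Applying this to the cover $\{\til U\}$ of $\til X$ from Proposition \ref{prop: functoriality of congruence spaces}\eqref{open2}: if $\tilde x$ is closed in $\til X$ it is closed in its chart, so $\fp$ is maximal and $A/\fp\cong\Fun$; conversely if $A/\fp\cong\Fun$ then $\Gamma V/\fp_{\tilde x,V}\cong\Fun$ is maximal in every chart $V$, so $\tilde x$ is closed in each $\til V$ and hence in $\til X$. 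I expect this gluing step to be the main obstacle, because closed points of an open subspace are in general not closed in the ambient space; it is precisely the chart-independence of ``$k(\tilde x)\cong\Fun$'' that rescues the argument.

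\textbf{Part (2), the bijection.} Here I would compute $\pi_A(\chi_A(f))=I_{\congker(f)}=f^{-1}(0)$ and recall that $f\mapsto f^{-1}(0)$ is the standard bijection $\Hom(A,\Fun)\to\MSpec(A)$, whose inverse sends a prime ideal $P$ to the morphism $f_P$ that is $0$ on $P$ and $1$ off $P$. To globalize, I would observe that a $\Fun$-point $p:\MSpec\Fun\to X$ has a single image point $x$ and is determined by a local homomorphism $\cO_{X,x}\to\Fun$, of which there is exactly one (it must kill $M_x$ and factor through the unique morphism $k(x)\to\Fun$); since $\pi_X(\chi_X(p))=x$, the map $\beta:=\pi_X\circ\chi_X:X(\Fun)\to X$ is a bijection.

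\textbf{Part (2), identification of $\tau_X$.} It remains to match $\tau_X$ with $\chi_X\circ\beta^{-1}$. In a chart $U=\MSpec(A)$ with $x=P$, we have $\beta^{-1}(x)=f_P$ and $\chi_U(f_P)=\congker(f_P)$, while $\tau_X(x)=\tilde\kappa_x(\fm)$ is, by the description of $\tilde\kappa_x$ in Proposition \ref{prop: fibre of pi_X} as pullback along $A\to k(x)$, the congruence kernel of the composite $A\to k(x)\to\Fun$. These two composites $A\to\Fun$ both equal $f_P$, so $\tau_X=\chi_X\circ\beta^{-1}$. Functoriality of $\tau_X$ then follows formally: $\chi$ and $\pi_X$ are natural (the functoriality of $\chi$ and Proposition \ref{prop: projection onto the monoid scheme}), hence so is $\beta$, and naturality of a bijection passes to its inverse; thus $\tau_X=\chi_X\circ\beta^{-1}$ is natural. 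Finally, since $\chi_X$ is injective with image the closed points of $\til X$ by part (1) and $\beta^{-1}$ is a bijection, $\tau_X$ restricts to a bijection from $X$ onto the set of closed points of $\til X$.
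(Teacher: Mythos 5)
Your proof is correct, and its skeleton (affine computation plus gluing) is the paper's, but you take a noticeably different route at the two places where the paper is terse. In the affine case the paper proves closedness of $\congker(f)$ directly: from a prime $\fc'\supsetneq\congker(f)$ it picks $(a_1,a_2)\in\fc'$ with $f(a_1)=0$, $f(a_2)=1$ and derives $(0,1)\in\fc'$ by transitivity; you instead first establish the specialization order $\overline{\{\fp\}}=\{\fq\mid\fp\subseteq\fq\}$ from the subbasis and then prove the lemma that an integral monoid with only the trivial prime congruence is $\Fun$ --- which is exactly the justification missing from the paper's unproved sentence ``it follows from the maximality of $\fc$ that $A/\fc\cong\Fun$,'' so your route treats both directions uniformly (note that both arguments tacitly use the convention that the total congruence $A\times A$ is not prime). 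More significantly, the paper dismisses globalization with ``all assertions are local in nature,'' which is genuinely delicate for closed points, since a point closed in a chart $\til U$ need not be closed in $\til X$; your chart-independent criterion $k(\tilde x)\cong\Fun$, obtained from $k(\tilde x)\cong\Frac(\Gamma U/\fp_{\tilde x,U})$ and the embedding of an integral monoid into its group completion, combined with the cover-based closedness criterion, supplies precisely the missing argument. Your part (2) is the paper's computation in slightly different clothing: the paper verifies injectivity and surjectivity of $\pi_X\circ\chi_X$ affinely and computes $\tau_X(P)=\congker(f_P)$ by hand, while you identify $\Fun$-points with local morphisms $\cO_{X,x}\to\Fun$ on stalks and read off $\tau_X(x)$ as $\congker(A\to k(x)\to\Fun)$ via the description of $\tilde\kappa_x$ as a pullback; both give the same identities, and your formal derivation of the functoriality of $\tau_X$ from naturality of $\chi$ and $\pi$ matches the paper's. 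In short: same theorem, same decomposition, but your write-up buys rigor exactly where the paper's proof relies on unstated locality and maximality facts, at the cost of the extra machinery of residue fields.
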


\begin{proof}
 All assertions are local in nature, so we can assume that $X=\MSpec(A)$ is affine. 
 
 We begin with \eqref{closedpoint1}. Consider a congruence $\mathfrak{c}$ in the image of $\chi_A: \Hom(A, \mathbb{F}_1)\to \Cong(A)$, i.e.\ $\mathfrak{c} = \congker (f)$ for some $f\in\Hom(A,\mathbb{F}_1)$.  Assume that $\mathfrak{c}$ is not closed in $\til X$. Then there exists a prime congruence $\mathfrak{c'}$ that properly contains $\mathfrak{c}$. Thus there is an $(a_1, a_2)\in \mathfrak{c'}$ with $f(a_1)\neq f(a_2)$. Without loss of generality, let $f(a_1)= 0$ and $f(a_2)= 1$, i.e.\ $(0, a_1), (a_2,1)\in \mathfrak{c}\subseteq \mathfrak{c'}$. By transitivity, $(0,1)\in \mathfrak{c'}$ which implies $\mathfrak{c'}= A\times A$ which yields the desired contradiction. Hence $\mathfrak{c}$ is a closed point in $\Cong(A)$. 
 
 Conversely, let $\mathfrak{c}$ be any closed point in $\Cong(A)$. It follows from the maximality of $\mathfrak{c}$ that $A/\mathfrak{c}\cong \mathbb{F}_1$ and thus $\mathfrak{c} = \congker (A\to A/\mathfrak{c}\cong \mathbb{F}_1)$. This proves \eqref{closedpoint1}. 
 
 We continue with \eqref{closedpoint2}. Let $f, g \in X(\mathbb{F}_1)$ satisfy that $\pi_X\circ\chi_X(f) = \pi_X\circ\chi_X(g)$.  This implies $f^{-1}(0)=g^{-1}(0)$. Since $\im f=\im g=\{0,1\}$ has only $2$ elements, this implies that $f=g$. Thus $\pi_X\circ\chi_X$ is injective. For $P\in X$, we define the map $f_P: A\to \mathbb{F}_1$ by $f_P(a)= 0$ if $a\in P$ and $f_P(a)= 1 $ if $a\notin P$. Then $\pi_X\circ\chi_X(f_P) = P$, which shows that $\pi_X\circ\chi_X$ is surjective. This shows that  $\pi_X\circ\chi_X:X(\Fun)\to X$ is a bijection. 
 
 Next we show that $\tau_X=\chi_X\circ(\pi_X\circ\chi_X)^{-1}$. Let $f\in X(\mathbb{F}_1)$ and $P=\ker(f)$. Then $\tau_X\circ\pi_X\circ\chi_X(f) = \tau_X(P)$. Let $\varphi: A\to A_P\to A_P/PA_P\cong k(P)$ be the natural map and let $\fm$ denote the congruence kernel of the unique morphism $k(P)\to\Fun$, which sends all units of $k(P)$ to $1$. Then 
 \begin{multline*}
 \tau_X(P) \ = \ \til {\kappa}_P(\fm)=\{(a,a')\in A\times A~|~a,a'\in P\text{~or~}a,a'\in A\setminus P\}\\ 
 =\{(a,a')\in A\times A~|~f(a)=f(a')\} \ = \ \congker(f) \ = \ \chi_X(f).
 \end{multline*}
 Thus $\tau_X\circ\pi_X\circ\chi_X=\chi_X$ as desired. 
 
 It follows that $\tau_X$ gives a bijection between $X$ and the set of closed points of $\til X$. Since both $\chi_X$ and $\pi_X$ are functorial in $X$, so is $\tau_X$.
\end{proof}

\begin{rem}
 The natural maps $\chi_X:X(\Fun)\to \til X$ and $\pi_X\circ\chi_X:X(\Fun)\to X$ endow $X(\Fun)$ with two distinct initial topologies, which differ in general. Both topologies are, in fact, fine topologies that are induced by topologies for $\Fun$. 
 
 The concept of fine topologies was introduced in \cite{Lorscheid-Salgado16} in the case of rational point sets for usual schemes and later applied for semiring schemes and ordered blue schemes in \cite{Lorscheid23}. We recall the definition tailored to monoid schemes.
 
 Let $A$ be a monoid together with a topology and $X$ a monoid scheme. The \emph{fine topology for $X(A)$} is the finest topology such that for every morphism $U\to X$ from an affine monoid scheme $U$ into $X$, the induced map $U(A)\to X(A)$ is continuous where $U(A)=\Hom(\Gamma U,A)$ carries the compact open topology with respect to the discrete topology for $\Gamma U$. 
 
 The initial topology for $X(\Fun)$ with respect to $\chi_X:X(\Fun)\to \til X$ coincides with the fine topology for $X(\Fun)$ that is induced by the discrete topology for $\Fun$.
 
 The morphism $\pi_X\circ\chi_X:X(\Fun)\to X$ is a bijection and thus the initial topology for $X(\Fun)$ corresponds to the Zariski topology of $X$. This equals the fine topology for $X(\Fun)$ that is induced by the topology for $\Fun$ for which $\{0\}$ is closed, but $\{1\}$ is not. 
\end{rem}

%%%%%%%%%%%%%%%%%%%%%%%%%%%%%%%%%%%%%%%%%%%%%%%%%%%%%%%%%%%%%%%%%%%%%%%%%%%%%%%%%%%%%%%%%%%%%%%%%%%%%%%%%%%%%%%%%%%%%%%%%%%%%%%%%%%%%%%%%%%%%%%%%%%%%%%%%%%%%%%%%%%%%
%%%%%%%%%%%%%%%%%%%%%%%%%%%%%%%%%%%%%%%%%%%%%%%%%%%%%%%%%%%%%%%%%%%%%%%%%%%%%%%%%%%%%%%%%%%%%%%%%%%%%%%%%%%%%%%%%%%%%%%%%%%%%%%%%%%%%%%%%%%%%%%%%%%%%%%%%%%%%%%%%%%%%
\section{Strongly reduced monoid schemes}
\label{section: Strongly reduced monoid schemes}
%%%%%%%%%%%%%%%%%%%%%%%%%%%%%%%%%%%%%%%%%%%%%%%%%%%%%%%%%%%%%%%%%%%%%%%%%%%%%%%%%%%%%%%%%%%%%%%%%%%%%%%%%%%%%%%%%%%%%%%%%%%%%%%%%%%%%%%%%%%%%%%%%%%%%%%%%%%%%%%%%%%%%
%%%%%%%%%%%%%%%%%%%%%%%%%%%%%%%%%%%%%%%%%%%%%%%%%%%%%%%%%%%%%%%%%%%%%%%%%%%%%%%%%%%%%%%%%%%%%%%%%%%%%%%%%%%%%%%%%%%%%%%%%%%%%%%%%%%%%%%%%%%%%%%%%%%%%%%%%%%%%%%%%%%%%

In this section, we develop the theory of (strong) reductions of monoids and monoid schemes. Namely, as a consequence of the discrepancy between ideals and congruences, we obtain to distinct concepts of reduction of a monoid: the (weak) reduction quotients out the intersection of all prime ideals, the strong reduction quotients out the intersection of all prime congruences. These concepts carry over to the monoid schemes.

%%%%%%%%%%%%%%%%%%%%%%%%%%%%%%%%%%%%%%%%%%%%%%%%%%%%%%%%%%%%%%%%%%%%%%%%%%%%%%%%%%%%%%%%%%%%%%%%%%%%%%%%%%%%%%%%%%%%%%%%%%%%%%%%%%%%%%%%%%%%%%%%%%%%%%%%%%%%%%%%%%%%%
\subsection{Radicals}
\label{subsection: Radicals}

\begin{defn}
 Let $A$ be a pointed monoid and $I$ an ideal of $A$. The \emph{radical of $I$} is the ideal
 \[
  \sqrt{I} \ = \ \{ a\in A \mid a^n\in I \text{ for some }n\geq0\}.
 \]
 A \emph{radical ideal of $A$} is an ideal $I$ of $A$ with $\sqrt{I}=I$. The \emph{nilradical of $A$} is the radical ideal $\Nil(A)=\sqrt{\{0\}}$.

 Let $\fc$ be a congruence on $A$. The \emph{radical of $\fc$} is the congruence 
 \[
  \sqrt{\fc} \ = \ \big\{(a,b)\in A\times A \, \big| \, (aa^n,ba^n)\in\fc \text{ and }(ab^n,bb^n)\in\fc \text{ for some }n\geq 0 \big\}.
 \]
 A \emph{radical congruence} is a congruence $\fc$ with $\sqrt{\fc}=\fc$. The \emph{strong nilradical of $A$} is the radical $\nil(A)=\sqrt{\fc_\triv}$ of the trivial congruence $\fc_\triv$.
\end{defn}

\begin{prop}\label{prop: radicals}
 Let $A$ be a pointed monoid, $I$ an ideal of $A$ and $\fc$ a congruence on $A$. Then
 \[
  \sqrt{I} \ = \ \bigcap_{\substack{I\subset P,\\ P \text{ prime}}} P, \qquad \sqrt{\fc} \ = \ \bigcap_{\substack{\fc\subset \fp,\\ \fp \text{ prime}}} \fp \qquad \text{and} \qquad I_{\sqrt{\fc}} \ = \ \sqrt{I_\fc}.
 \]
\end{prop}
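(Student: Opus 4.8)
The plan is to establish the three identities in the stated order, deriving the congruence statement from the ideal statement and settling the last identity by a direct computation. For the ideal radical $\sqrt I=\bigcap_{I\subseteq P}P$ I would first reduce to the case $I=\gen0$ by passing to $A/I$, under which prime ideals containing $I$ correspond to prime ideals of $A/I$ and $\sqrt I$ corresponds to $\Nil(A/I)$. For $\Nil(A)=\bigcap_P P$ the inclusion ``$\subseteq$'' is immediate, since $a^n\in P$ with $P$ prime forces $a\in P$. For ``$\supseteq$'', given a non-nilpotent $a$ the set $S=\{a^n\}_{n\geq0}$ avoids $0$, so $A[a^{-1}]$ satisfies $0\neq1$; its maximal ideal (the complement of the units) is prime and pulls back along $\iota_S$ to a prime ideal $P$ of $A$ with $a\notin P$, exhibiting $a\notin\bigcap_P P$.

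For the congruence radical I would first record two compatibilities of the pullback $\pi^\ast$ along $\pi:A\to A/\fc$: it commutes with arbitrary intersections (being a preimage) and with the radical operation (a direct unwinding of the definition), while by Proposition \ref{prop: congruences in quotients} it identifies the prime congruences of $A/\fc$ with the prime congruences of $A$ containing $\fc$. Since $\pi^\ast(\fc_\triv^{A/\fc})=\fc$, applying $\pi^\ast$ to the identity for $A/\fc$ reduces the claim $\sqrt\fc=\bigcap_{\fc\subseteq\fp}\fp$ to the single case $\fc=\fc_\triv$, i.e.\ to $\nil(A)=\bigcap_{\fp}\fp$. Here the inclusion ``$\subseteq$'' is a short check: if $(a,b)\in\sqrt{\fc_\triv}$ is witnessed by $a^{n+1}=a^nb$ and $ab^n=b^{n+1}$, then for any prime $\fp$ primeness applied to $(a^n\cdot a,a^n\cdot b)\in\fp$ and to $(b^n\cdot a,b^n\cdot b)\in\fp$ yields either $(a,b)\in\fp$ directly, or $(a^n,0),(b^n,0)\in\fp$, in which case the primality of the nullideal $I_\fp$ gives $(a,0),(b,0)\in\fp$ and hence $(a,b)\in\fp$ by transitivity.

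The reverse inclusion is the heart of the matter, and the hard part will be producing enough prime congruences to separate a pair lying outside the radical. My plan is to argue contrapositively using only the subfamily of prime congruences $\fp_P=\congker\big(A\to A/P\to\Frac(A/P)\big)$ indexed by prime ideals $P$ of $A$; each $\fp_P$ is prime because $\Frac(A/P)$ is a pointed group, and by Lemma \ref{lemma: kernels of localization} one has $(x,0)\in\fp_P$ iff $x\in P$, while for $a,b\notin P$ one has $(a,b)\in\fp_P$ iff $ta=tb$ for some $t\notin P$. Assuming $(a,b)$ lies in every prime congruence, hence in every $\fp_P$, the primes $P$ containing exactly one of $a,b$ are excluded, so $a\in P\Leftrightarrow b\in P$ for all $P$; and the primes with $a,b\notin P$ show that the conductor ideal $\fr=\{t\in A\mid ta=tb\}$ is not contained in any such $P$. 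Together these say that every prime ideal containing $\fr$ also contains $a$ and $b$, so the already-proved ideal identity gives $a,b\in\sqrt{\fr}$. Unwinding $a^{N}\in\fr$ and $b^{M}\in\fr$ and setting $n=\max(N,M)$ produces precisely $a^{n+1}=a^nb$ and $ab^n=b^{n+1}$, i.e.\ $(a,b)\in\nil(A)$, as desired.

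Finally, for $I_{\sqrt\fc}=\sqrt{I_\fc}$ I would simply unravel the definitions: $(a,0)\in\sqrt\fc$ holds iff $a^{m}\in I_\fc$ for some $m\geq0$, which is exactly $a\in\sqrt{I_\fc}$. In summary, the only genuine obstacle is the reverse inclusion for congruences, and the key device that overcomes it is the conductor ideal $\fr$, which converts the problem of separating $(a,b)$ by a prime congruence into the purely ideal-theoretic statement $a,b\in\sqrt{\fr}$ proved in the first part.
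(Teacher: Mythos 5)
Your proposal is correct. For the ideal identity, the final identity $I_{\sqrt{\fc}}=\sqrt{I_\fc}$, and the reduction of the congruence identity to the case $\fc=\fc_\triv$ via Proposition \ref{prop: congruences in quotients}, you follow essentially the same path as the paper. The genuine difference lies in the hard inclusion $\bigcap_{\fp}\fp\subseteq\nil(A)$: the paper argues directly, taking $(a,b)\notin\nil(A)$, assuming without loss of generality $a^{k+1}\neq a^kb$ for all $k\geq 0$ (so that $\tfrac a1\neq \tfrac b1$ in $A[a^{-1}]$), and exhibiting the single separating prime congruence $\congker\big(A\to A[a^{-1}]/M_{A[a^{-1}]}\big)$, a quotient onto a pointed group. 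You instead argue contrapositively with the family $\fp_P=\congker\big(A\to \Frac(A/P)\big)$ indexed by the prime ideals of $A$, together with the conductor ideal $\fr=\{t\in A\mid ta=tb\}$: membership of $(a,b)$ in every $\fp_P$ forces $a\in P\Leftrightarrow b\in P$ for all primes $P$ and forces $\fr\not\subseteq P$ whenever $a,b\notin P$, so every prime containing $\fr$ contains both $a$ and $b$; the already-established ideal identity then gives $a,b\in\sqrt{\fr}$, which unwinds (after passing to $n=\max(N,M)$) to exactly the defining conditions $a^{n+1}=a^nb$ and $ab^n=b^{n+1}$ of $(a,b)\in\nil(A)$. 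Both arguments are sound; your well-definedness checks for $\fp_P$ (primeness via the pointed group $\Frac(A/P)$, injectivity of the localization $A/P\to\Frac(A/P)$, and the description of $(a,b)\in\fp_P$ for $a,b\notin P$) all hold. The paper's construction is shorter and self-contained, whereas your route makes the congruence identity a formal consequence of the ideal identity and yields a slightly sharper by-product: the minimal prime congruences $\fp_P$ --- which are the images $\sigma_X(P)$ of the section $\sigma_X$ from section \ref{subsection: Fibres over the monoid scheme} --- already suffice to compute $\nil(A)$. (In fact the paper's separating congruence is itself of this form, with $P$ the preimage of $M_{A[a^{-1}]}$, but your treatment makes this systematic.)
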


\begin{proof}
 As a first step, we prove the claims for $I=\{0\}$ and $\fc=\fc_\triv$, for which we have $\sqrt{I}=\Nil(A)$ and $\sqrt{\fc}=\nil(A)$. Let $X=\MSpec(A)$ and $\til X=\Cong(A)$. The inclusions $\Nil(A)\subseteq\bigcap_{P\in X} P$ and $\nil(A)\subseteq\bigcap_{\fp\in \til X} \fp$ follow directly from the definitions. 
 
 Conversely consider $a\in A\setminus \Nil(A)$. Then $0\notin S_a=\{a^i\}_{i\geq 0}$ and $A[a^{-1}]= S_a^{-1}A\neq \{0\}$. Let $q$ denote the unique maximal ideal $M_{A[a^{-1}]}=A[a^{-1}]\setminus A[a^{-1}]^\times$ of $A[a^{-1}]$ and let $P=i_a^{-1}(Q)$ under the natural map $i_a: A\to A[a^{-1}]$. Then $P$ is a prime ideal of $A$ and $a\notin P$. This proves that $\Nil(A)=\bigcap_{P\in X}  P$.
 
 Next we consider $a,b\in A$ such that $(a,b)\notin \mathfrak{nil}(A)$. Without loss of generality, we can assume $aa^k\neq ba^k$ for all integers $k\geq 0$. Then $\frac{a}{1}\neq \frac{b}{1}$ in $A[a^{-1}]=S_a^{-1}(A)$ where $S_a=\{a^i\}_{i\geq 0}$. We consider the quotient map $\pi:A[a^{-1}]^\times \to A[a^{-1}]/M_{A[a^{-1}]}$ which is clearly injective. Thus $\pi(\frac{a}{1})\neq \pi(\frac{b}{1})$ in $A[a^{-1}]/M_{A[a^{-1}]}$. Since $A[a^{-1}]/M_{A[a^{-1}]}$ is a pointed group (and therefore integral), the congruence $\mathfrak{c}=\congker(A\to A[a^{-1}]/M_{A[a^{-1}]})$ is prime and $(a,b)\notin \mathfrak{p}$. This proves that $\mathfrak{nil}(A)= \bigcap_{\mathfrak{p}\in \widetilde{X}}\mathfrak{p}$.
	 
 Consider any $ a\in I_{\mathfrak{nil}(A)}$. Then $	(a,0)\in \mathfrak{nil}(A)$ which implies $a.a^n=0.a^n =0 $ for some integer $n\geq 0$. Therefore $a\in \Nil(A)$. Conversely, consider any $ a\in \Nil(A)$. Then $a^n=0 $ for some $n\geq 0$. This implies $a.a^{n-1}=0=0.a^{n-1}$ and $n.0^{n-1}=0=0.0^{n-1}$. Thus $(a,0)\in \mathfrak{nil}(A)$, i.e.  $a\in I_{\mathfrak{nil}(A)}$. This proves that $I_{\mathfrak{nil}(A)}=\Nil(A)$.
	 
 We turn to the general case and consider an arbitrary ideal $I$ of $A$. The previous arguments show that $\Nil(A/I)=\bigcap_{P\in \MSpec(A/I)}~ P$. Since the prime ideals of $A/I$ can be identified with the prime ideals of $A$ containing $I$ and since $\Nil(A/I)$ can be identified with $\sqrt{I}$ (see Proposition \ref{prop: congruences in quotients}), it follows that 
 \[
  \sqrt{I} \ = \ \bigcap_{\substack{I\subset P,\\ P \text{ prime}}} P.
 \]
 
 Let $\fc$ be an arbitrary congruence of $A$. The previous arguments show that $\mathfrak{nil}(A/I)=\bigcap_{\fp\in \Cong(A)/I}~  \fp$. By Proposition \ref{prop: congruences in quotients}, the prime congruences of $A/\fc$ pull back to the prime congruences of $A$ that contain $\fc$. Thus $\mathfrak{nil}(A/\fc)$ pulls back to $\sqrt{\fc}$. Therefore 
 $$ \sqrt{\fc} \ = \ \bigcap_{\substack{\fc\subset \fp,\\ \fp \text{ prime}}} \fp. $$ 
 
 To conclude the proof, note that $a\in I_{\sqrt{\fc}}$ if and only if $ (a,0)\in \sqrt{\fc} $, i.e. $(a^n,0)\in \fc$ for some $n>0$. This is equivalent to $a^n\in I_\fc $ or, in other words, $ a\in \sqrt{I_\fc}$.
\end{proof}

\begin{defn}
 Let $A$ be a pointed monoid. The \emph{reduction of $A$} is $A^\red= A/\Nil(A)$. The \emph{strong reduction of $A$} is $A^\sred= A/\mathfrak{nil}(A)$. We say that $A$ is \emph{reduced} if $A=A^\red$ and that $A$ is \emph{strongly reduced} if $A=A^\sred$.
\end{defn}

\begin{lem}\label{induced-sred}
 Let $f:A\to B$ be a morphism of pointed monoids. If $B$ is reduced, then $f$ factors uniquely through $A\to A^\red$. If $B$ is strongly reduced, then  $f$ factors uniquely through $A\to A^\sred$. This defines reflections 
 %$(-)^\red:\Mon_0\to \Mon_0$ and  $(-)^\sred:\Mon_0\to \Mon_0$
 \[
  (-)^\red: \ \Mon_0 \ \longrightarrow \ \Mon_0 \qquad \text{and} \qquad (-)^\sred: \ \Mon_0 \ \longrightarrow \ \Mon_0 
 \]
 onto the respective subcategories of reduced and strongly reduced pointed monoids.
\end{lem}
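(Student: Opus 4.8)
The plan is to prove the two factorization statements and then read off the reflections; both halves follow the classical template by which quotienting by a radical yields a reflective subcategory, and the only real work lies in the congruence case.

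For the reduction, I would start from the elementary fact that a morphism $f\colon A\to B$ factors (uniquely, by surjectivity of the quotient) through $A\to A^\red=A/\Nil(A)$ exactly when $\Nil(A)\subseteq f^{-1}(0)$. To obtain this inclusion when $B$ is reduced, take $a\in\Nil(A)$, so that $a^n=0$ for some $n\geq 0$; then $f(a)^n=f(a^n)=0$, whence $f(a)\in\Nil(B)=\{0\}$ and $f(a)=0$. The induced $\bar f\colon A^\red\to B$ is then well defined and unique.

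For the strong reduction I would use that $f\colon A\to B$ factors uniquely through $A\to A^\sred=A/\nil(A)$ iff $\nil(A)\subseteq\congker(f)$. Given $(a,b)\in\nil(A)=\sqrt{\fc_\triv}$, unwinding the definition of the radical of a congruence yields an $n$ with $aa^n=ba^n$ and $ab^n=bb^n$; applying $f$ gives $f(a)f(a)^n=f(b)f(a)^n$ and $f(a)f(b)^n=f(b)f(b)^n$, which is precisely the assertion $(f(a),f(b))\in\sqrt{\fc_\triv}=\nil(B)$. Since $B$ is strongly reduced, $\nil(B)=\fc_\triv$, so $f(a)=f(b)$ and $(a,b)\in\congker(f)$; the factorization and its uniqueness follow as before.

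It then remains to check that $A^\red$ is reduced and $A^\sred$ is strongly reduced, i.e.\ that the two radicals are idempotent. For $\Nil$ this is the standard $\sqrt{\sqrt{\{0\}}}=\sqrt{\{0\}}$. For $\nil$ I would invoke Propositions \ref{prop: radicals} and \ref{prop: congruences in quotients}: because $\nil(A)$ is the intersection of all prime congruences of $A$, every prime congruence contains it, so the prime congruences of $A^\sred$ are the images of all prime congruences of $A$, and their intersection is the image of $\nil(A)$, namely $\fc_\triv$. Granting idempotency, the two universal properties are exactly the adjunction data presenting $(-)^\red$ and $(-)^\sred$ as left adjoints to the full inclusions of reduced and of strongly reduced monoids; functoriality is produced by applying each universal property to $A\to B\to B^\red$ and to $A\to B\to B^\sred$. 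I expect the strong-reduction half to be the main obstacle: correctly extracting the two relations from the definition of $\sqrt{\fc_\triv}$, recognizing their images as a witness for membership in $\nil(B)$, and confirming the idempotency $\nil(A^\sred)=\fc_\triv$.
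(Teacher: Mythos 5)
Your proof is correct and takes essentially the same route as the paper: both halves verify the factorization by applying $f$ to the defining relations of $\Nil(A)$ and of $\nil(A)=\sqrt{\fc_\triv}$, using that $B$ is (strongly) reduced, and then read off the two reflections from these universal properties. Your extra verification of idempotency --- that $A^\sred$ is itself strongly reduced, via Propositions \ref{prop: radicals} and \ref{prop: congruences in quotients} --- is a step the paper's proof leaves implicit, and your argument for it is sound.
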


\begin{proof}
 Let $f:A\to B$ be a morphism of pointed monoids. If $a^n=0$ for some $a\in A$ and $n\geq 0$, then $f(a)^n=0$ in $B$. Thus if $B$ is reduced, then $f(a)=0$ and $f$ factors (necessarily uniquely) through $A^\red$. If $aa^n=ba^n$ and $ab^n=bb^n$ for some $a,b\in A$ and $n\geq 0$, then $f(a)f(a)^n=f(b)f(a)^n$ and $f(a)f(b)^n=f(b)f(b)^n$ in $B$. Thus if $B$ is strictly reduced, then $f(a)=f(b)$ in $B$ and $f$ factors through (necessarily uniquely) $A^\sred$. 
 
 These two universal properties show that the embeddings of the subcategories of reduced and strongly reduced monoids into $\Mon$ have respective left adjoints, which gives rise to the reflections $(-)^\red:\Mon_0\to \Mon_0$ and  $(-)^\sred:\Mon_0\to \Mon_0$.
\end{proof}

\begin{rem}
 The difference between the nilradical and the strong nilradical extends the difference of zero divisor free and integral monoids in the following sense. If $A$ is a zero divisor free monoid, then $A$ is reduced and $\Nil(A)=\{0\}$. On the other hand, $\nil(A)=\congker(A\to\Frac A)$ might be non-trivial, and $A^\sred=A^\overint$.
\end{rem}

%%%%%%%%%%%%%%%%%%%%%%%%%%%%%%%%%%%%%%%%%%%%%%%%%%%%%%%%%%%%%%%%%%%%%%%%%%%%%%%%%%%%%%%%%%%%%%%%%%%%%%%%%%%%%%%%%%%%%%%%%%%%%%%%%%%%%%%%%%%%%%%%%%%%%%%%%%%%%%%%%%%%%
\subsection{Compatibility with localizations}

\begin{lem}\label{red-comm}
	Let $A$ be a pointed monoid and $\pi:A \to A^\sred$ the quotient map. Let $S$ be a multiplicative subset of $A$. Then $(S^{-1}A)^\sred= \pi(S)^{-1}A^\sred$.
\end{lem}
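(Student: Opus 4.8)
The plan is to reduce the claimed identity to the single congruence-level equality $\nil(S^{-1}A)=S^{-1}\nil(A)$ on $S^{-1}A$, and then to establish this equality by comparing the two congruences directly. The reduction uses the exactness of localization provided by Lemma \ref{lemma: kernels of localization}. Since $\pi:A\to A^\sred=A/\nil(A)$ is the quotient map by the congruence $\nil(A)$, we have $\congker(\pi)=\nil(A)$, and $\pi$ is surjective. Applying Lemma \ref{lemma: kernels of localization} to $\pi$ and the multiplicative set $S$, the induced morphism $\pi_S:S^{-1}A\to\pi(S)^{-1}A^\sred$ satisfies $\congker(\pi_S)=S^{-1}\congker(\pi)=S^{-1}\nil(A)$ and is again surjective. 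By the bijective correspondence between quotients and congruences, this canonically identifies $\pi(S)^{-1}A^\sred$ with $S^{-1}A/S^{-1}\nil(A)$, compatibly with the canonical maps out of $S^{-1}A$. On the other hand, $(S^{-1}A)^\sred=S^{-1}A/\nil(S^{-1}A)$ by definition. Hence the two monoids in the statement agree as quotients of $S^{-1}A$ exactly when $\nil(S^{-1}A)=S^{-1}\nil(A)$, so it suffices to prove this equality of congruences.

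For the inclusion $S^{-1}\nil(A)\subseteq\nil(S^{-1}A)$ I would simply unwind both definitions. If $(\tfrac as,\tfrac bt)\in S^{-1}\nil(A)$, then $(uta,usb)\in\nil(A)$ for some $u\in S$, so there is $n\geq0$ with $(uta)(uta)^n=(usb)(uta)^n$ and $(uta)(usb)^n=(usb)(usb)^n$ in $A$. Mapping these relations into $S^{-1}A$ and using $\iota_S(uta)=uts\cdot\tfrac as$ and $\iota_S(usb)=uts\cdot\tfrac bt$, the common factor $uts$ is a unit in $S^{-1}A$ and can be cancelled; what remains are precisely the relations $\tfrac as(\tfrac as)^n=\tfrac bt(\tfrac as)^n$ and $\tfrac as(\tfrac bt)^n=\tfrac bt(\tfrac bt)^n$ witnessing $(\tfrac as,\tfrac bt)\in\nil(S^{-1}A)$.

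The reverse inclusion $\nil(S^{-1}A)\subseteq S^{-1}\nil(A)$ is the \emph{main obstacle}. Starting from $(\tfrac as,\tfrac bt)\in\nil(S^{-1}A)$ and clearing denominators, one obtains, for some $u\in S$, a relation of the form $us^na^n(ta)=us^na^n(sb)$ (and a symmetric one from the second defining relation). These are valid in $A$ but the multiplier $us^na^n$ involves the power $a^n$, which need not lie in $S$. The key manoeuvre is to rewrite everything in terms of $c=ta$ and $d=sb$ via $t^na^n=c^n$ and $s^nb^n=d^n$, and then to multiply by suitable powers of the $S$-elements $s,t,u$: this turns the first relation into $(gc)(gc)^n=(gd)(gc)^n$ with $g=us^n\in S$, and the second into $(hc)(hd)^n=(hd)(hd)^n$ with $h\in S$. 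Finally I would combine the two witnesses into one by setting $w=gh\in S$ and multiplying through by the appropriate powers of $h$ and $g$, obtaining both relations simultaneously for the pair $(wc,wd)=(wta,wsb)$; that is, $(wta,wsb)\in\nil(A)$ for a single $w\in S$, which is exactly the condition $(\tfrac as,\tfrac bt)\in S^{-1}\nil(A)$.

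In summary, the conceptual content is the exactness lemma together with the commutation of strong nilradicals with localization; the only delicate point is the bookkeeping with multiplicative witnesses in the last inclusion, where powers $a^n,b^n$ must be absorbed into elements of $S$. The remaining manipulations are routine monoid algebra.
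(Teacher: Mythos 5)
Your proof is correct, and it takes a genuinely different route from the paper's, even though the computational heart is shared. The paper proves Lemma \ref{red-comm} by explicitly constructing two morphisms $\bar{f_s}:(S^{-1}A)^\sred\to\pi(S)^{-1}A^\sred$ and $\bar{g}_{\bar s}:\pi(S)^{-1}A^\sred\to(S^{-1}A)^\sred$ through a chain of universal-property factorizations (first through the localization, then through the strong reduction, in each direction) and observes that they are mutually inverse since both are induced by $\id_A$; the decisive computation there is exactly your hard inclusion, namely that a pair in $\nil(S^{-1}A)$, after clearing denominators, lies in $\nil(A)$ up to a multiplier from $S$ --- the paper does this after normalizing to a common denominator and with a rather terse absorption step (``we can assume $k=0$ if $N$ is large enough''). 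You instead reduce the whole lemma to the single congruence identity $\nil(S^{-1}A)=S^{-1}\nil(A)$, by applying Lemma \ref{lemma: kernels of localization} to $\pi$ (giving $\congker(\pi_S)=S^{-1}\nil(A)$), noting $\pi_S$ is surjective, and invoking the quotient--congruence correspondence to identify $\pi(S)^{-1}A^\sred$ with $S^{-1}A/S^{-1}\nil(A)$. Your witness bookkeeping in the hard inclusion checks out: from $\tfrac as(\tfrac as)^n=\tfrac bt(\tfrac as)^n$ one gets $us^na^n(ta)=us^na^n(sb)$ for some $u\in S$; multiplying by $(us^n)^nt^n$ yields $(gc)(gc)^n=(gd)(gc)^n$ with $g=us^n$, $c=ta$, $d=sb$; symmetrically $(hc)(hd)^n=(hd)(hd)^n$ with $h=vt^n$; and with $w=gh$ both relations hold simultaneously with the same exponent $n$, so $(wta,wsb)\in\nil(A)$, which is precisely the membership condition for $(\tfrac as,\tfrac bt)\in S^{-1}\nil(A)$ under the paper's definition of $S^{-1}\fc$. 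What your organization buys: it isolates the reusable statement that the strong nilradical commutes with localization, it treats arbitrary denominators $\tfrac as,\tfrac bt$ uniformly rather than reducing to common powers of a single $s$ as the paper does, and it replaces the diagram chases by a first-isomorphism-style identification; the paper's construction, in exchange, exhibits the isomorphism by explicit formulas on elements in both directions, which is convenient when one needs the maps themselves (as in Lemma \ref{o.istrngred}).
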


\begin{proof}
	We denote the multiplicative set $\pi(S)$ of $A^\sred$ by $\bar{S}$.
	We claim that the canonical morphisms 
	\[
	\begin{array}{cccc}
		f: & A & \longrightarrow & \bar{S}^{-1}A^\sred\\
		& a & \longmapsto     & \frac{[a]}{[1] } 
	\end{array}
	\qquad\qquad 
	\begin{array}{cccc}
		g: & A & \longrightarrow &(S^{-1}A)^\sred\\
		& a & \longmapsto     & [\frac{a}{1}]
	\end{array}
	\]
	descend to morphisms 
	\begin{equation*}
		\bar{f_s}:(S^{-1}A)^\sred\longrightarrow  \bar{S}^{-1}A^\sred \qquad\qquad \bar{g}_{\bar{s}}: \bar{S}^{-1}A^\sred\longrightarrow (S^{-1}A)^\sred
	\end{equation*}
	which are mutually inverse to each other since both are induced from the identity map $A\xrightarrow{id} A$. We proceed to show how to obtain the above maps. Since $f(s)=\frac{[s]}{[1]}$ is invertible in  $\bar{S}^{-1}A^\sred$, there is a unique morphism $f_s: S^{-1}A\to \bar{S}^{-1}A^\sred$ such that $f$ factors as follows
	\begin{equation*}
		\xymatrix{
			A\ar[r]^{f} \ar@<-2pt>[d]_{} &  \bar{S}^{-1}A^\sred\\
			S^{-1}A\ar[ur]^{f_s} &
		}
	\end{equation*}
	The morphism $f_s: S^{-1}A\to \bar{S}^{-1}A^\sred$ is explicitly given by $f_s(\frac{a}{s})=\frac{\pi{(a)}}{\pi{(s)}}$. Consider an element $(\frac{a}{s^i},\frac{b}{s^j})\in \mathfrak{nil}(S^{-1}A)$. Since $\frac{a}{s^i}=\frac{as^j}{s^{i+j}}$ and $\frac{b}{s^j}=\frac{bs^i}{s^{i+j}}$, we can assume that any element of $ \mathfrak{nil}(S^{-1}A)$ is of the form $(\frac{a}{s^i},\frac{b}{s^i})$. Thus $(\frac{a}{s^i},\frac{b}{s^i})\in \mathfrak{nil}(S^{-1}A)$ means that $\frac{a}{s^i}(\frac{c}{s^i})^N = \frac{b}{s^i}(\frac{c}{s^i})^N$ in $S^{-1}A$ for $c=a,b$ and for some integer $N\gg 0$. This implies $s^ks^ia(s^ic)^N=s^ks^ib(s^ic)^N$ in $A$ for some integer $k\geq 0$, $c=a,b$ and $N\gg0$. In fact, we can assume $k=0$ if $N$ is large enough, i.e., $s^ia(s^ic)^N=s^ib(s^ic)^N$ for $c=a,b$ and $N\gg0$. Then $ (s^ia,s^ib)\in \mathfrak{nil}(A)$ and therefore ${\pi(s)}^i\pi(a)={\pi(s)}^i\pi(b)\text{~in~} A^\sred$ which implies $ \frac{\pi(a)}{{\pi(s)}^i}= \frac{\pi(b)}{{\pi(s)}^i}\text{~in~} \bar{S}^{-1}A^\sred$.
	Thus there is a unique morphism $\bar{f_s}:(S^{-1}A)^{\sred}\to \bar{S}^{-1}A^\sred$ such that $f_s$ factors as follows 
	\begin{equation*}
		\xymatrix{
			S^{-1}A\ar[r]^{f_s} \ar@<-2pt>[d]_{} & \bar{S}^{-1}A^\sred\\
			(S^{-1}A)^{\sred}\ar[ur]^{\bar{f_s}} &
		}
	\end{equation*}
	The morphism $\bar{f_s}:(S^{-1}A)^{\sred}\to  \bar{S}^{-1}A^\sred$ is explicitly given by $\bar{f_s}([\frac{a}{s}])=\frac{\bar{a}}{\bar{s}}$. Next, we consider $(a,b)\in \mathfrak{nil}(A)$, i.e., $ac^N=bc^N$ for $c=a,b$ and an integer $N\gg 0$. This implies $ \frac{a}{1}(\frac{c}{1})^N= \frac{b}{1}(\frac{c}{1})^N \text{~in~} S^{-1}A$ and hence $[\frac{a}{1}]=[\frac{b}{1}] \text{~in~} (S^{-1}A)^\sred$. Thus there is a unique morphism $\bar{g}:A^\sred\to (S^{-1}A)^\sred$ such that $g$ factors as follows
	\begin{equation*}
		\xymatrix{
			A\ar[r]^{g} \ar[d]_{} &  (S^{-1}A)^\sred \\
			A^\sred\ar[ur]^{\bar{g}} &
		}
	\end{equation*}
	where the morphism $\bar{g}:A^\sred\to (S^{-1}A)^\sred$ is explicitly given by $\bar{g}(\bar{a})=[\frac{a}{1}]$. Since $\bar{g}(\bar{s})= [\frac{s}{1}]$ is invertible in $ (S^{-1}A)^\sred$, there is a unique morphism $\bar{g}_{\bar{s}}: {\bar{S}}^{-1}A^\sred\to (S^{-1}A)^\sred$ such that $\bar{g}$ factors as follows
	\begin{equation*}
		\xymatrix{
			A^\sred\ar[r]^{\bar{g}} \ar[d]_{} &  (S^{-1}A)^\sred\\
			{\bar{S}}^{-1}A^\sred\ar[ur]^{{\bar{g}}_{\bar{s}}} &
		}
	\end{equation*}
	The morphism $\bar{g}_{\bar{s}}: {\bar{S}^{-1}}A^\sred\to(S^{-1}A)^\sred$ is explicitly given by $\bar{g}_{\bar{s}}(\frac{\bar{a}}{\bar{s}^i})=[\frac{a}{s^i}]$.
\end{proof}

Proposition \ref{prop: prime congruences in localizations} extends to radical congruences in the following way.

\begin{prop}\label{prop: radical congruences in localizations}
 Let $\iota_S:A\to S^{-1}A$ be a localization of the pointed monoid $A$ at $S$. Then the maps
 \[
  \begin{tikzcd}[column sep=60pt]
   \Big\{\begin{array}{c}\text{radical}\\ \text{congruences on }S^{-1}A\end{array}\Big\} \ar[r,shift left=1,"\iota_S^\ast"] &
   \Big\{\begin{array}{c}\text{radical congruences}\\ \text{$\fc$ on $A$ with $I_\fc\cap S=\emptyset$}\end{array}\Big\} \ar[l,shift left=1,"\iota_{S,\ast}"] 
  \end{tikzcd}
 \]
 are mutually inverse bijections.
\end{prop}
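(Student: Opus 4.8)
The plan is to reduce the statement to the prime case already settled in Proposition \ref{prop: prime congruences in localizations}, using the description of radical congruences as intersections of prime congruences furnished by Proposition \ref{prop: radicals}. The guiding principle is that both operations $\iota_S^\ast$ and $\iota_{S,\ast}$ are compatible with the passage to radicals, so that the prime bijection propagates to the radical congruences assembled from primes.

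First I would record the structural behaviour of the two maps. The pullback $\iota_S^\ast(\fd)=\{(a,b)\mid(\tfrac a1,\tfrac b1)\in\fd\}$ is defined by a preimage condition and therefore commutes with arbitrary intersections; combined with Proposition \ref{prop: radicals} this shows that for a radical $\fd$ on $S^{-1}A$ the congruence $\iota_S^\ast(\fd)=\bigcap_{\fq\supseteq\fd}\iota_S^\ast(\fq)$ is an intersection of prime congruences of $A$ (each $\iota_S^\ast(\fq)$ is prime by the prime case), hence radical, while $I_{\iota_S^\ast(\fd)}\cap S=\emptyset$ holds because an element of $S$ in the nullideal would be a unit congruent to $0$. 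For the pushforward I would first identify it with the localized congruence, $\iota_{S,\ast}(\fc)=S^{-1}\fc$ (the inclusion $\supseteq$ uses that $S$ acts invertibly after localization), and then establish the compatibility of localization with radicals,
\[
 \sqrt{S^{-1}\fc} \ = \ S^{-1}\sqrt{\fc}.
\]
This identity generalizes Lemma \ref{red-comm}, which is precisely the case $\fc=\fc_\triv$, and I would deduce it from that lemma by passing to the quotient $A/\fc$ and invoking the congruence correspondence of Proposition \ref{prop: congruences in quotients}. In particular, if $\fc$ is radical then $S^{-1}\fc=\sqrt{S^{-1}\fc}$ is again radical, so $\iota_{S,\ast}$ is well defined.

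With both maps in place I would show they are mutually inverse, following the template of Proposition \ref{prop: prime congruences in localizations}. The composite $\iota_{S,\ast}\circ\iota_S^\ast$ equals the identity on radical congruences of $S^{-1}A$ by a direct computation, $S^{-1}\bigl(\iota_S^\ast(\fd)\bigr)=\fd$, that uses only the invertibility of $S$ and requires no primeness. For the reverse composite one computes $\iota_S^\ast(\iota_{S,\ast}(\fc))=\{(a,b)\mid(ta,tb)\in\fc\text{ for some }t\in S\}$, the $S$-saturation of $\fc$, so the whole assertion collapses to the claim that a radical congruence $\fc$ with $I_\fc\cap S=\emptyset$ is $S$-saturated, i.e.\ $(ta,tb)\in\fc$ with $t\in S$ forces $(a,b)\in\fc$.

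This saturation step is the delicate heart of the argument and the point I expect to cost the most work: unlike in the prime case, the cancellation by $t\in S$ is not a formal consequence of $I_\fc\cap S=\emptyset$ once $\fc$ is merely radical rather than prime, since a prime $\fp\supseteq\fc$ may well contain $t$ in its nullideal even when $I_\fc$ avoids $S$. I would resolve it through the localization--radical compatibility above, writing $\fc=\iota_S^\ast(S^{-1}\fc)$ and thereby expressing $\fc$ as the intersection of exactly those primes $\fp\supseteq\fc$ with $I_\fp\cap S=\emptyset$; each such prime is $S$-saturated (this is precisely the primeness step of Proposition \ref{prop: prime congruences in localizations}), and $S$-saturation is inherited by intersections. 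Verifying the equality $\fc=\iota_S^\ast(S^{-1}\fc)$ for radical $\fc$ with $I_\fc\cap S=\emptyset$ — equivalently, that the radical can be computed from primes avoiding $S$ alone — is the one place where care is essential and where the extension beyond the prime case is genuinely nontrivial.
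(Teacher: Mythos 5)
Your first three steps are sound and in fact track the paper closely: the paper also proves that $\iota_S^\ast$ preserves radicality by writing a radical $\fd$ as $\bigcap\fp$ over the primes containing it and pulling back the intersection (essentially verbatim your argument), and it proves that $\iota_{S,\ast}(\fc)$ is radical by a direct computation with the definition of the radical, where you instead propose the identity $\sqrt{S^{-1}\fc}=S^{-1}\sqrt{\fc}$ deduced from Lemma \ref{red-comm} via Proposition \ref{prop: congruences in quotients} --- a harmless variant, and your identity $\iota_{S,\ast}(\iota_S^\ast(\fd))=\fd$ for arbitrary congruences on $S^{-1}A$ is correct. The genuine gap is exactly where you put your finger: $\iota_S^\ast(\iota_{S,\ast}(\fc))$ is the $S$-saturation of $\fc$, and you never prove that a radical $\fc$ with $I_\fc\cap S=\emptyset$ is $S$-saturated; your proposed fix --- ``write $\fc=\iota_S^\ast(S^{-1}\fc)$'' --- is circular, since that identity \emph{is} the saturation claim. (For what it is worth, the paper does not close this gap either: it dismisses bijectivity as ``similar to Proposition \ref{prop: prime congruences in localizations}'', but the cancellation step there, namely that $(sa,sa')\in\fc$ and $(s,0)\notin\fc$ imply $(a,a')\in\fc$, uses primeness and does not transfer to radical congruences.)

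Moreover, this step cannot be repaired, because the statement fails as written. Take $A=\Fun[x,y]$, $S=\{x^n\}_{n\geq0}$ and $\fc=\congker\big(A\to A/\gen{xy}\big)$, i.e.\ $(u,v)\in\fc$ iff $u=v$ or $u,v\in\gen{xy}$. Then $I_\fc=\gen{xy}$ is disjoint from $S$, and $\fc$ is radical: if $(u,v)\in\sqrt{\fc}$ with $u\neq v$ and $u\neq 0$, then cancellativity of $\Fun[x,y]$ forces $u^{n+1}\in\gen{xy}$, hence $u\in\gen{xy}$, and likewise $v\in\gen{xy}$ (the case $v=0$ is trivial since $0\in\gen{xy}$), so $(u,v)\in\fc$. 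But $(xy,0)\in\fc$ while $(y,0)\notin\fc$, so $\fc$ is not $S$-saturated: since $x$ becomes a unit, $(\tfrac{y}{1},0)\in\iota_{S,\ast}(\fc)$ and hence $(y,0)$ lies in $\iota_S^\ast(\iota_{S,\ast}(\fc))$ but not in $\fc$. Equivalently, $\fc$ and the prime congruence $\gen{(y,0)}$ are distinct radical congruences in the stated right-hand set with the same pushforward, so $\iota_{S,\ast}$ is not injective there. Your diagnosis is precisely what happens: every prime $\fp\supseteq\fc$ with $I_\fp\cap S=\emptyset$ must contain $(y,0)$, so the radical of $\fc$ cannot be computed from primes avoiding $S$ alone. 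The correct statement replaces the condition $I_\fc\cap S=\emptyset$ by $S$-saturation (equivalently, $\fc$ is an intersection of primes $\fp$ with $I_\fp\cap S=\emptyset$); note that the paper's later uses (Lemma \ref{loc-van} and Lemma \ref{lemma: vanishing sets are detectable by open affine covers}) only need the vanishing-set identity, which is unaffected by this correction.
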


\begin{proof}
 Let $\fd$ be a radical congruence on $S^{-1}A$. Then by Proposition \ref{prop: radicals}, $\fd =\bigcap_{\substack{\fd\subset \fp,\\ \fp \text{ prime}}} \fp$. Thus we have
 \begin{equation*}
 \iota_S^\ast(\fd)=(\iota_S,\iota_S)^{-1}( \underset{\substack{\fd\subset \fp,\\ \fp \text{ prime}}} \bigcap \fp) =  \underset{\substack{\fd\subset \fp,\\ \fp \text{ prime}}} \bigcap (\iota_S,\iota_S)^{-1}(\fp)= \underset{\substack{\fd\subset \fp,\\ \fp \text{ prime}}} \bigcap \iota_S^\ast(\fp)= \underset{\substack{ \iota_S^\ast(\fd)\subset \fq,\\ \fq \text{ prime}}} \bigcap \fq
 \end{equation*}
  where the last equality follows from Proposition \ref{prop: prime congruences in localizations}. This shows that $ \iota_S^\ast(\fd)$ is a radical congruence on $A$.
  
  Conversely, let $\fc$ be a radical congruence on $A$ such that $I_\fc\cap S=\emptyset$. Consider any $(\frac{a}{s}, \frac{a'}{s'}) \in \sqrt{\iota_{S,*}(\fc)}$. Since $\frac{a}{s}$ is equivalent to $\frac{as'}{ss'}$ and $\frac{a'}{s'}$ is equivalent to $\frac{a's}{ss'}$ in $S^{-1}A$, we have $(\frac{a}{s}, \frac{a'}{s'}) \in \sqrt{\iota_{S,*}(\fc)}$ if and only if $(\frac{as'}{1}, \frac{a's}{1}) \in \sqrt{\iota_{S,*}(\fc)}$. Therefore we have $(\frac{as'}{1}(\frac{as'}{1})^n, \frac{a's}{1}(\frac{as'}{1})^n) \in {\iota_{S,*}(\fc)}$ and $(\frac{as'}{1}(\frac{a's}{1})^n, \frac{a's}{1}(\frac{a's}{1})^n) \in {\iota_{S,*}(\fc)}$ for some $n\gg 0$. This implies there exists some $r\in S$ such that $(as'a^ns'^n r, a'sa^ns'^nr)\in \fc$ and $(as'a'^ns^nr, a'sa'^ns^nr)\in \fc$. Hence we have $(as'r(as'r)^n, a'sr(as'r)^n)\in \fc$ and $(as'r(a'sr)^n,a'sr(a'sr)^n)\in \fc$. Since $\fc$ is radical it follows that $(as'r,~a'sr)\in \fc$ which implies $(\frac{as'}{1}, \frac{a's}{1}) \in \iota_{S,*}(\fc)$ or equivalently, $(\frac{a}{s}, \frac{a'}{s'}) \in {\iota_{S,*}(\fc)}$. This shows that ${\iota_{S,*}(\fc)}$ is radical.
  
  We have therefore shown that both maps are well defined. The proof that the maps are mutually bijective is similar to Proposition  \ref{prop: prime congruences in localizations}.
\end{proof}

%%%%%%%%%%%%%%%%%%%%%%%%%%%%%%%%%%%%%%%%%%%%%%%%%%%%%%%%%%%%%%%%%%%%%%%%%%%%%%%%%%%%%%%%%%%%%%%%%%%%%%%%%%%%%%%%%%%%%%%%%%%%%%%%%%%%%%%%%%%%%%%%%%%%%%%%%%%%%%%%%%%%%
\subsection{Strong reduction of monoid schemes}

Let $X=\MSpec(A)$ be an affine monoid scheme. We define $X^\sred=\MSpec(A^\sred)$. By Lemma \ref{induced-sred}, a morphism $\varphi:\MSpec(A)\to \MSpec(B)$ of affine monoid schemes induces a morphism $\varphi^\sred: \MSpec(B^\sred)\hookrightarrow \MSpec(A^\sred)$ between the respective strong reductions.

\begin{lem}\label{o.istrngred}
 Let $i:\MSpec(B)\hookrightarrow \MSpec(A)$ be an open immersion of affine monoid schemes. Then $i^\sred: \MSpec(B^\sred)\hookrightarrow \MSpec(A^\sred)$ is an open immersion of strongly reduced affine monoid schemes.
\end{lem}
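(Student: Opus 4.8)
The plan is to reduce the claim to two inputs: the description of affine open immersions as principal localizations, and the compatibility of strong reduction with localization established in Lemma~\ref{red-comm}. Under the anti-equivalence $\MSpec$, the open immersion $i:\MSpec(B)\hookrightarrow\MSpec(A)$ is $i=f^\ast$ for a monoid morphism $f:A\to B$, and by \cite[Lemma 1.3]{CHWW15} and \cite[Lemma 2.4]{CHWW15} we may assume that $f$ is the localization map $\iota_S:A\to S^{-1}A=A[s^{-1}]$ for some $s\in A$, where $S=\{s^i\mid i\geq0\}$. It therefore suffices to show that the induced morphism $f^\sred:A^\sred\to B^\sred$ is again, up to isomorphism, a principal localization of $A^\sred$.

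First I would apply Lemma~\ref{red-comm} to the multiplicative set $S$ and the quotient map $\pi:A\to A^\sred$, which yields a canonical identification
\[
 B^\sred \ = \ (S^{-1}A)^\sred \ \cong \ \pi(S)^{-1}A^\sred \ = \ A^\sred[\bar s^{-1}], \qquad \bar s=\pi(s).
\]
In particular $B^\sred$ is a principal localization of $A^\sred$, and both $A^\sred$ and $B^\sred$ are strongly reduced as values of the reflection $(-)^\sred$ from Lemma~\ref{induced-sred}; hence $\MSpec(A^\sred)$ and $\MSpec(B^\sred)$ are strongly reduced affine monoid schemes. It then remains only to verify that, under this identification, $f^\sred$ is precisely the localization map $\iota_{\bar S}:A^\sred\to A^\sred[\bar s^{-1}]$, for then $i^\sred=(f^\sred)^\ast$ is exactly the principal affine open immersion attached to $\bar s$.

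The one step requiring care is this last identification, and I would settle it by uniqueness of factorizations through $\pi$. Transported along the isomorphism of Lemma~\ref{red-comm}, the induced map $f^\sred$ and the localization map $\iota_{\bar S}$ are two morphisms $A^\sred\to A^\sred[\bar s^{-1}]$ that agree after precomposition with $\pi$: for $f^\sred$ this follows from its defining property $f^\sred\circ\pi=\pi_B\circ f$ together with the explicit form of the isomorphism $\bar f_s$ constructed in the proof of Lemma~\ref{red-comm}, which sends $[\tfrac a1]$ to $\tfrac{\bar a}{1}$; for $\iota_{\bar S}$ it is immediate. Since $\pi:A\to A^\sred$ is surjective, a morphism out of $A^\sred$ is determined by its composite with $\pi$, so the two coincide and $f^\sred=\iota_{\bar S}$. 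This completes the argument; the only genuine obstacle is the bookkeeping of tracking that the isomorphism of Lemma~\ref{red-comm} is compatible with the canonical maps from $A$, rather than being merely an abstract isomorphism of monoids, which is why I lean on the uniqueness of factorizations through the reflection $(-)^\sred$ rather than on manipulating explicit formulas.
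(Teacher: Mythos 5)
Your proof is correct and follows essentially the same route as the paper's: reduce via \cite[Lemma 1.3]{CHWW15} and \cite[Lemma 2.4]{CHWW15} to the case $B\cong A[s^{-1}]$, then invoke Lemma~\ref{red-comm}. Your additional step identifying $f^\sred$ with the localization map $\iota_{\bar S}$ by surjectivity of $\pi:A\to A^\sred$ spells out a compatibility that the paper's two-line proof leaves implicit, but it is the same argument.
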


\begin{proof}
 Since $i:\MSpec(B)\hookrightarrow \MSpec(A)$ is an open immersion, it follows from \cite[Lemma 1.3]{CHWW15} and \cite[Lemma 2.4]{CHWW15} that we must have $\MSpec(B)\cong\MSpec(A[s^{-1}])$ for some $s\in A$. Therefore by Lemma \ref{red-comm}, it follows that  $i^\sred: \MSpec(B^\sred)\hookrightarrow \MSpec(A^\sred)$ is an open immersion.
\end{proof}

\begin{prop}
 The functor $(-)^\sred$ on affine monoid schemes extends uniquely to a functor $(-)^\sred:\MSch\to\MSch$, which preserves affine open covers.
\end{prop}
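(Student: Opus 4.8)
The plan is to construct $X^\sred$ for a general monoid scheme $X$ by gluing the strong reductions of its affine open subschemes, running the same strategy as the construction of the congruence space in Proposition \ref{prop: functoriality of congruence spaces}, but now carried out inside $\MSch$ rather than $\Top$. First I would fix the category $\cU_X$ of all affine open subschemes of $X$ with open immersions as morphisms. Lemma \ref{o.istrngred} shows that the affine functor $(-)^\sred$ sends this diagram to one consisting of affine monoid schemes and open immersions, while Lemma \ref{red-comm} identifies, for a principal open $U'=\MSpec(A[s^{-1}])$ of $U=\MSpec(A)$, the strong reduction $U'^\sred$ with the principal open $\MSpec(\pi(S_s)^{-1}A^\sred)$ of $U^\sred$, where $\pi:A\to A^\sred$ is the quotient map. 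These two facts together guarantee that $(-)^\sred$ turns $\cU_X$ into compatible gluing data: on an overlap $U_i\cap U_j$, covered by affine opens $U_{ijk}$ that are principal in both $U_i$ and $U_j$, the reductions $U_{ijk}^\sred$ embed as principal opens into both $U_i^\sred$ and $U_j^\sred$.

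Next I would verify the cocycle condition. Since $(-)^\sred$ is a functor on affine monoid schemes, the transition isomorphisms satisfy $\phi_{ki}^\sred=\phi_{kj}^\sred\circ\phi_{ji}^\sred$ on triple overlaps, so the collection $(\{U_i^\sred\},\{U_{ijk}^\sred\},\{\phi_{ij}^\sred\})$ is a monodromy-free gluing datum in $\MSch$ in the sense of \cite[\S 1]{Lorscheid17}; I would define $X^\sred$ to be the monoid scheme obtained from it. By construction every affine open subscheme $U$ of $X$ gives an affine open subscheme $U^\sred$ of $X^\sred$, and for any affine open cover $\{U_i\}$ of $X$ the $U_i^\sred$ cover $X^\sred$, which is precisely the assertion that $(-)^\sred$ preserves affine open covers. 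Independence of the chosen cover follows, exactly as in Proposition \ref{prop: functoriality of congruence spaces}, by passing to a common affine refinement and using Lemma \ref{red-comm} to compare the two gluings.

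To define $(-)^\sred$ on a morphism $\varphi:Y\to X$, I would choose affine open covers $\{X_\alpha\}$ of $X$ and $\{Y_\alpha\}$ of $Y$ with $\varphi(Y_\alpha)\subseteq X_\alpha$, together with affine refinements of the overlaps, and declare $\varphi^\sred$ to agree with $(\varphi|_{Y_\alpha})^\sred$ on each $Y_\alpha^\sred$. Functoriality of $(-)^\sred$ on affine pieces (Lemma \ref{induced-sred}) shows that the local definitions agree on overlaps and that the result is independent of all choices, so $\varphi^\sred:Y^\sred\to X^\sred$ is a well-defined morphism, and compatibility with composition is checked locally. Uniqueness of the extension is then forced by the requirement that it restrict to the affine functor: every monoid scheme is glued from affine opens and every morphism is determined by its restrictions to affine pieces, so any such extension must coincide with the one produced by the gluing.

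The main obstacle I anticipate is verifying that the gluing datum is genuinely monodromy-free and that the construction is independent of the chosen affine cover. Because the strong reduction quotients out a congruence rather than an ideal, its localization behaviour is governed by Lemma \ref{red-comm} instead of the naive localization of ideals, and this identification must be used carefully when comparing two covers through a common refinement and when matching principal opens of $U_i^\sred$ with those of $U_j^\sred$. Once Lemmas \ref{red-comm} and \ref{o.istrngred} are in hand, however, the argument runs in close parallel to the gluing performed in Proposition \ref{prop: functoriality of congruence spaces}, and the remaining verifications are routine.
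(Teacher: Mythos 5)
Your proposal is correct and follows essentially the same route as the paper's proof: gluing the strong reductions of all affine opens into a monodromy-free datum in the sense of \cite[\S 1]{Lorscheid17} via Lemmas \ref{o.istrngred} and \ref{red-comm}, defining $\varphi^\sred$ locally on compatible affine covers with well-definedness from the functoriality in Lemma \ref{induced-sred}, and deducing uniqueness from the fact that morphisms are determined by their restrictions to affine pieces. No gaps to report.
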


\begin{proof}
 Since morphisms are determined by their restrictions to open affines, any endofunctor on $\MSch$ is determined by its restriction to affine monoid schemes, which shows the uniqueness of $(-)^\sred$.
 
 Next we construct $(-)^\sred$ on objects. Let $X$ be a monoid scheme. Consider the family $\mathcal{U}_X=\{U_i\}_{i\in I}$ of all affine open subschemes of $X$. For all $i,j\in I$, consider principal affine opens $U_{ij}\subseteq U_i$ and isomorphisms $\phi_{ji}: U_{ij}\to U_{ji}$. Then by Lemma \ref{o.istrngred}, we have $U_{ij}^\sred$ are open subsets of $U_i^\sred$ for all $i,j\in I$. By Lemma \ref{induced-sred}, we know that $(-)^\sred: \Mon_0 \to \Mon_0,~ A\mapsto A^\sred$ is a functor and therefore it follows that the collection $(I, \{{U_i}^\sred\}_{i\in I}, \{U_{ij}^\sred\}_{i,j\in I}, \{\phi_{ij}^\sred\}_{i,j\in I} )$ gives a gluing data of monoid schemes (in other words, $\mathcal{U}_X^\sred=\{U_i^\sred\}_{i\in I}$ is a monodromy-free diagram in the category of monoid schemes in the sense of \cite[\S 1]{Lorscheid17}). We define $X^\sred$ as the monoid scheme obtained from this gluing data. 
 
 In particular, every open subscheme $U_i\in \mathcal{U}_X$ defines an open subscheme ${U_i}^\sred$ of ${X}^\sred$ and $X^\sred= \bigcup_{U_i\in \mathcal{U}_X}U_i^\sred$. This shows that $(-)^\sred$ preserves affine open covers.

 We continue with the construction of $(-)^\sred$ on morphisms. Let $f: Y\to X$ be a morphism of monoid schemes. We choose an affine open cover $\{X_\alpha\}_{\alpha\in A}$ of $X$ and an affine open cover $\{Y_\alpha\}_{\alpha\in A}$ of $Y$ such that $f(Y_\alpha)\subseteq X_\alpha$ for each $\alpha \in A$.  We also choose an affine cover $\{Y_{\alpha\beta k}\}_{k\in K}$ for each $Y_{\alpha\beta}=Y_\alpha\cap Y_\beta$ and an affine cover $\{X_{\alpha\beta k}\}_{k\in K}$ for each $X_{\alpha\beta}=X_\alpha\cap X_\beta$ such that $f(Y_{\alpha\beta k})\subseteq X_{\alpha\beta k}$ for each $k\in K$. Let $f_\alpha := f|_{Y_\alpha}:Y_\alpha\to X_\alpha$ and $f_{\alpha\beta k} := f|_{Y_{\alpha\beta k}}:Y_{\alpha\beta k}\to X_{\alpha\beta k}$. The corresponding morphisms of strongly reduced affine monoid schemes are respectively given by $f_\alpha^\sred:Y_\alpha^\sred\to X_\alpha^\sred$ and $f_{\alpha\beta k}^\sred:Y_{\alpha\beta k}^\sred\to X_{\alpha\beta k}^\sred$. We define
	\begin{align*}
		f^\sred:{Y}^\sred=\bigcup_{\alpha\in A}Y_\alpha^\sred&\longrightarrow \bigcup_{\alpha\in A}X_\alpha^\sred = {X}^\sred \\
		\mathfrak{q}&\mapsto f_\alpha^\sred(\mathfrak{q})~~~~~~~~~~~~ \text{ if~}\mathfrak{q}\in Y_\alpha^\sred.
	\end{align*}
	To show that $f^\sred$ is well defined, it is enough to check that $\mathfrak{q}\in Y_{\alpha\beta k}^\sred\subseteq Y_\alpha^\sred$ implies $ f_{\alpha\beta k}^\sred(\mathfrak{q})=f_\alpha^\sred(\mathfrak{q})$ which is clear from the functoriality described in Lemma \ref{induced-sred}. It is also clear that $f^\sred$ is continuous. 
\end{proof}

For any affine monoid scheme $U=\MSpec(A)$, the morphism $U^\sred\to U$ is a closed immersion since $A\to A^\sred= A/\mathfrak{nil}(A)$ is surjective. Since being a closed immersion is a local property, it follows from the definition of $X^\sred$ that the natural morphism $i^\sred: X^\sred\to X$ is a closed immersion for any general monoid scheme $X$. 

We call a monoid scheme $X$ \emph{strongly reduced} if $i^\sred: X^\sred\to X$ is an isomorphism of monoid schemes. 

\begin{prop}\label{sred-c.i}
 Let $X$ be a monoid scheme. Then the following hold.
 \begin{enumerate}
  \item\label{strongred1} The maps $i^\sred: X^\sred\to X $ and $\widetilde{i^\sred}: \widetilde{X^\sred}\to \widetilde{X}$ are homeomorphisms.
  \item\label{strongred2} Given a closed immersion $\phi: Z\to X$ such that ${\tilde\phi}: \widetilde{Z}\to\widetilde{X}$ is a homeomorphism, there exists $\psi: X^\sred\to Z$ such that the following diagram commutes
  \begin{equation*}
   \xymatrix{
			Z\ar[r]^{\phi}& X\\
			X^\sred \ar@<-2pt>[u]^{\psi}  \ar[ur]_{i^\sred} &
		}
  \end{equation*}
  \item\label{strongred3} If $X=X^\sred= \MSpec(A)$ and $\widetilde{V_{a,b}}=\widetilde{X}$, then $a=b$.
 \end{enumerate}
\end{prop}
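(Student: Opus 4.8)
The plan is to handle the three parts in order, reducing each to the affine case and then gluing. Throughout, write $\pi\colon A\to A^\sred=A/\mathfrak{nil}(A)$ for the quotient map, so that in the affine case $i^\sred=\pi^\ast$ and $\widetilde{i^\sred}=\pi^\ast$ on congruence spaces.

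For \eqref{strongred1} I would first treat $X=\MSpec(A)$. By Proposition \ref{prop: radicals} the nullideal $I_{\mathfrak{nil}(A)}=\Nil(A)$ lies in every prime ideal and $\mathfrak{nil}(A)=\bigcap_{\fp}\fp$ lies in every prime congruence; combined with Proposition \ref{prop: congruences in quotients} this shows that pullback along $\pi$ is a bijection both on prime ideals and on prime congruences, so $i^\sred$ and $\pi^\ast$ are bijective. Continuity of $\pi^\ast$ is Lemma \ref{cong1} (with the analogous statement for $\MSpec$). For openness I would use surjectivity of $\pi$: every subbasic open of $\Cong(A^\sred)$ is of the form $U_{\pi(a),\pi(b)}=(\pi^\ast)^{-1}(U_{a,b})$, and since $\pi^\ast$ is a bijection it carries this onto $U_{a,b}$, which is open; as a bijection preserves unions and finite intersections, $\pi^\ast$ maps every open set to an open set and is thus a homeomorphism (and likewise for $i^\sred$). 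For general $X$ I would invoke that both $(-)^\congr$ and $(-)^\sred$ are assembled by gluing along affine open covers (Proposition \ref{prop: functoriality of congruence spaces} together with the extension of $(-)^\sred$ to $\MSch$): the affine pieces $\widetilde{U_i^\sred}\to\widetilde{U_i}$ are compatible homeomorphisms covering both sides, hence glue to a homeomorphism $\widetilde{i^\sred}$, and the same argument applies to $i^\sred$ itself.

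For \eqref{strongred2} I would again start affine, $X=\MSpec(A)$. A closed immersion $\phi\colon Z\to X$ is affine, so $Z=\MSpec(B)$ with $\phi$ induced by a surjection $g\colon A\to B$ and $B\cong A/\congker(g)$. By Proposition \ref{prop: congruences in quotients} the map $\widetilde\phi=g^\ast$ identifies $\widetilde Z$ with the prime congruences of $A$ containing $\congker(g)$, and this is all of $\widetilde X$ precisely when $\congker(g)\subseteq\bigcap_\fp\fp=\mathfrak{nil}(A)$ by Proposition \ref{prop: radicals}. Thus the hypothesis that $\widetilde\phi$ is a homeomorphism forces $\congker(g)\subseteq\congker(\pi)$, so $\pi$ factors as $\pi=h\circ g$ for a morphism $h\colon B\to A^\sred$, which is unique since $g$ is surjective; setting $\psi=h^\ast\colon X^\sred\to Z$ gives $\phi\circ\psi=(h\circ g)^\ast=\pi^\ast=i^\sred$. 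For general $X$, cover $X$ by affine opens $U_i$; since $\phi$ is affine each $Z_i=\phi^{-1}(U_i)$ is affine, and using $\pi_X\circ\widetilde\phi=\phi\circ\pi_Z$ together with $\widetilde{Z_i}=\pi_Z^{-1}(Z_i)$ one gets $\widetilde\phi^{-1}(\widetilde{U_i})=\widetilde{Z_i}$, so each restriction $\widetilde\phi|_{\widetilde{Z_i}}\colon\widetilde{Z_i}\to\widetilde{U_i}$ is a homeomorphism. The affine case yields unique $\psi_i\colon U_i^\sred\to Z_i$ with $\phi\circ\psi_i=i^\sred$ on $U_i^\sred$, and uniqueness forces agreement on overlaps, so the $\psi_i$ glue to the desired $\psi$.

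Part \eqref{strongred3} is then immediate from Proposition \ref{prop: radicals}: with $X=\MSpec(A)$ the condition $\widetilde{V_{a,b}}=\widetilde X$ says $(a,b)\in\fp$ for every prime congruence $\fp$, i.e.\ $(a,b)\in\bigcap_\fp\fp=\mathfrak{nil}(A)$; since $X=X^\sred$ the monoid $A$ is strongly reduced, so $\mathfrak{nil}(A)=\fc_\triv$ and hence $a=b$. The main obstacle I expect is in \eqref{strongred1}: passing from the easy bijections to genuine homeomorphisms and then checking the gluing is coherent, where surjectivity of $\pi$ (to control the subbasis $U_{a,b}$) is the crucial input. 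By contrast, \eqref{strongred2} is a clean application of the universal property of the quotient once the containment $\congker(g)\subseteq\mathfrak{nil}(A)$ is read off from the hypothesis, with the only delicate point being the consistency of the gluing, which the uniqueness of $h$ takes care of.
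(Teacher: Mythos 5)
In part \eqref{strongred1}, your affine argument for the bijectivity of $i^\sred$ on prime ideals does not follow from what you cite. Proposition \ref{prop: congruences in quotients} is a statement about congruences on $A$ versus congruences on $A/\fc$; it says nothing about prime ideals, and the general principle you implicitly invoke---that prime ideals of $A/\fc$ correspond to prime ideals of $A$ containing the nullideal $I_\fc$---is false. Take $A=\Fun[t]$ and $\fc=\gen{(t,1)}$: then $I_\fc=\{0\}$ and $A/\fc\cong\Fun$ has a single prime ideal, while $A$ has two prime ideals containing $I_\fc$, namely $\gen{0}$ and $\gen{t}$. So surjectivity of $\MSpec(A^\sred)\to\MSpec(A)$ needs an argument specific to $\fc=\nil(A)$. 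Two cheap repairs: (i) prime ideals are saturated for $\nil(A)$---if $(a,b)\in\nil(A)$ and $a\in Q$ with $Q$ prime, then $bb^n=ab^n\in Q$ for some $n\geq 0$, hence $b^{n+1}\in Q$ and $b\in Q$---so $Q\mapsto\pi(Q)$ is a well-defined inverse to pullback; or (ii), which is the paper's route, deduce surjectivity of $i^\sred$ from surjectivity of $\widetilde{i^\sred}$ (which you did establish correctly from Proposition \ref{prop: congruences in quotients} and $\nil(A)=\bigcap\fp$) via the commuting square of Proposition \ref{prop: projection onto the monoid scheme}, using that both projections $\pi_{X^\sred}$ and $\pi_X$ are surjective; injectivity is free since $i^\sred$ is a closed immersion.

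Apart from that one step the proposal is sound and close to the paper. Parts \eqref{strongred2} and \eqref{strongred3} essentially coincide with the paper's proof: the same reading-off of $\congker(g)\subseteq\bigcap\fc=\nil(A)$ from surjectivity of ${\tilde\phi}$, the same factorization through the quotient, and the same computation in \eqref{strongred3}; you are more explicit than the paper about gluing the local lifts $\psi_i$, using $\widetilde{Z_i}=\pi_Z^{-1}(Z_i)$ and the uniqueness of $h$, which the paper leaves implicit---a worthwhile addition. In \eqref{strongred1}, the paper bypasses your subbasis-plus-gluing bookkeeping by quoting Proposition \ref{imm-emb}: $i^\sred$ and $\widetilde{i^\sred}$ are (closed) topological embeddings globally, so only surjectivity remains and is checked affine-locally; your direct route also works once repaired, but it is exactly where the extra care, and your one gap, lives.
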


\begin{proof}
 We begin with \eqref{strongred1}. Since $i^\sred: X^\sred\to X $ is a closed immersion, we know that by definition $i^\sred$ is a homeomorphism onto its image. Also $\widetilde{i^\sred}: \widetilde{X^\sred}\to \widetilde{X}$ is a homeomorphism onto its image by Proposition \ref{imm-emb}. By Proposition \ref{prop: projection onto the monoid scheme}, the diagram 
		\begin{equation*}
		\xymatrix{
			\widetilde{X^\sred} \ar[r]^{\widetilde{i^\sred}} \ar@<-2pt>[d]_{\pi_{X^\sred}} & 	\widetilde{X} \ar@<-2pt>[d]^{\pi_X} \\
			X^\sred\ar[r]_{i^\sred} & X
		}
	\end{equation*}
	commutes. Since $\pi_{X^\sred}$ and $\pi_X$ are both surjective, it is enough to show that $\widetilde{i^\sred}$ is surjective to prove $i^\sred$ is surjective. Since the question is local in $X$, we can assume that $X=\MSpec(A)$. Let $p:A\to A^\sred$ be the quotient map. Since every $\mathfrak{c}\in\widetilde{X}$ contains $\mathfrak{nil}(A)$, Proposition \ref{prop: congruences in quotients} implies that the map $\tilde p^\ast:\til X^\sred\to \til X$ is surjective. This shows \eqref{strongred1}.

 We continue with \eqref{strongred2}. Let $\phi: Z\to X$ be a closed immersion such that ${\tilde\phi}: \widetilde{Z}\to\widetilde{X}$ is a homeomorphism. Since the question is local in $X$ and $\phi$ is a closed immersion, we can assume that $X=\MSpec(A)$ and $Z=\MSpec(A/\mathfrak{d})$ for some $\mathfrak{d}\in \Cong(A)$. By assumption, ${\tilde\phi}: \widetilde{Z}\to\widetilde{X}$ is surjective and therefore every $\mathfrak{c}\in \widetilde{X}$ contains $\mathfrak{d}$. Thus $\mathfrak{d}\subseteq\bigcap_{\mathfrak{c}\in \widetilde{X}}\mathfrak{c}= \mathfrak{nil}(A)$ and so there exists $p:A/\mathfrak{d}\to A/\mathfrak{nil}(A)$ such that the following diagram commutes
	\begin{equation*}
	\xymatrix{
		A\ar[r]^{}\ar[dr]^{}& A/\mathfrak{d}\ar[d]_{p} \\
		&A/\mathfrak{nil}(A)
	}
 \end{equation*}
 which yields \eqref{strongred2}.

We continue with \eqref{strongred3}. If $\widetilde{V_{a,b}}=\widetilde{X}$ for $X=X^\sred= \MSpec(A)$ , then $\mathfrak{nil}(A)= \mathfrak{c}_\triv$ and $(a,b)\in \mathfrak{c}$ for all $\mathfrak{c}\in \widetilde{X}$. Thus $(a,b)\in \bigcap_{\mathfrak{c}\in \widetilde{X}}\mathfrak{c}= \mathfrak{nil}(A)=\mathfrak{c}_\triv$ which implies $a=b$. This shows \eqref{strongred3} and concludes the proof.
\end{proof}

%%%%%%%%%%%%%%%%%%%%%%%%%%%%%%%%%%%%%%%%%%%%%%%%%%%%%%%%%%%%%%%%%%%%%%%%%%%%%%%%%%%%%%%%%%%%%%%%%%%%%%%%%%%%%%%%%%%%%%%%%%%%%%%%%%%%%%%%%%%%%%%%%%%%%%%%%%%%%%%%%%%%%
%%%%%%%%%%%%%%%%%%%%%%%%%%%%%%%%%%%%%%%%%%%%%%%%%%%%%%%%%%%%%%%%%%%%%%%%%%%%%%%%%%%%%%%%%%%%%%%%%%%%%%%%%%%%%%%%%%%%%%%%%%%%%%%%%%%%%%%%%%%%%%%%%%%%%%%%%%%%%%%%%%%%%
\section{Closed immersions and vanishing sets}
\label{section: Closed immersions and vanishing sets}
%%%%%%%%%%%%%%%%%%%%%%%%%%%%%%%%%%%%%%%%%%%%%%%%%%%%%%%%%%%%%%%%%%%%%%%%%%%%%%%%%%%%%%%%%%%%%%%%%%%%%%%%%%%%%%%%%%%%%%%%%%%%%%%%%%%%%%%%%%%%%%%%%%%%%%%%%%%%%%%%%%%%%
%%%%%%%%%%%%%%%%%%%%%%%%%%%%%%%%%%%%%%%%%%%%%%%%%%%%%%%%%%%%%%%%%%%%%%%%%%%%%%%%%%%%%%%%%%%%%%%%%%%%%%%%%%%%%%%%%%%%%%%%%%%%%%%%%%%%%%%%%%%%%%%%%%%%%%%%%%%%%%%%%%%%%

In this section, we investigate the supports of closed subschemes of a monoid scheme $X$ as subsets of its congruence space $\til X$. A first insight is that these are closed subsets of $\til X$. However, not every closed subset of $\til X$ is the support of a closed subscheme. The aim of this section is to characterize supports of closed subschemes in terms of the vanishing sets of quasi-coherent congruence sheaves. This study leads eventually to a topological characterization of closed immersions.

%%%%%%%%%%%%%%%%%%%%%%%%%%%%%%%%%%%%%%%%%%%%%%%%%%%%%%%%%%%%%%%%%%%%%%%%%%%%%%%%%%%%%%%%%%%%%%%%%%%%%%%%%%%%%%%%%%%%%%%%%%%%%%%%%%%%%%%%%%%%%%%%%%%%%%%%%%%%%%%%%%%%%
\subsection{Closed subschemes}

Closed immersions of monoid schemes were introduced in \cite{CHWW15}. We recall the definition. 

\begin{defn}
 A morphism $\phi:Y\to X$ of monoid schemes is \emph{affine} if the inverse image $\varphi^{-1}(U)$ of every affine open subset $U$ of $X$ is affine. A \emph{closed immersion} is an affine morphism $\phi:Y\to X$ for which the pullback $\Gamma U\to\Gamma V$ of sections is surjective for every affine open $U$ of $X$ and $V=\phi^{-1}(U)$. A \emph{closed subscheme of $X$} is an isomorphism class of closed immersions into $X$. The \emph{support of a closed subscheme} represented by a closed immersion $\varphi:Y\to X$ is the image of $\til\varphi$ in $\til X$.
\end{defn}

Note that isomorphic closed immersions have the same image in $\til X$. Therefore the support of a closed subscheme does not depend on the choice of representative. In fact, every closed subscheme of $X$ has a canonical representative $\varphi:Z\to X$, which consists of the inclusion of a topological subspace $Z$ of $X$ together with a quotient sheaf of $(\cO_X)\vert_Z$. In this case, $\til Z$ appears as a subset of $\til X$ and is thus equal to the support of the closed subscheme $Z$.

The relevance of congruence spaces is that they have enough closed subsets to include the supports of closed subschemes. This is made precise in the following result.

\begin{prop}\label{prop: closed immersions of congruence spaces}\label{imm-emb}
 Let $\varphi:Y\to X$ be a morphism of monoid schemes and $\widetilde\varphi:\widetilde Y\to\widetilde X$ the associated map of congruence spaces. If $\varphi$ is a closed immersion, then $\widetilde\varphi$ is a closed topological embedding. In particular, the support of a closed subscheme of $X$ is closed in $\til X$.
\end{prop}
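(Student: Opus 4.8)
The plan is to prove the statement first for an affine closed immersion and then to glue, using that both ``being an embedding'' and ``having closed image'' are local conditions on the target, together with the fact that the $\til U_i$ form an open cover of $\til X$.

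\textbf{Affine case.} First I would reduce to $\varphi=f^\ast:\MSpec(B)\to\MSpec(A)$ where $f:A\to B$ is a surjective morphism of pointed monoids. Surjectivity means $B\cong A/\fd$ with $\fd=\congker(f)$, so $\til\varphi$ is the pullback $f^\ast:\Cong(A/\fd)\to\Cong(A)$ along the quotient map. By Proposition \ref{prop: congruences in quotients}, $f^\ast$ is a bijection onto the set of prime congruences containing $\fd$, which is
\[
 \{\fp\in\Cong(A)\mid \fd\subseteq\fp\} \ = \ \bigcap_{(a,b)\in\fd} V_{a,b},
\]
an intersection of the closed sets $V_{a,b}$ and hence closed in $\Cong(A)$. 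Continuity of $f^\ast$ is Lemma \ref{cong1}. To upgrade this to a homeomorphism onto the image $Z$, I would compute that $f^\ast(U_{[a],[b]})=Z\cap U_{a,b}$ for classes $[a],[b]\in A/\fd$; since the $U_{[a],[b]}$ form a subbasis and $f^\ast$ is injective, this shows $f^\ast$ carries subbasic (hence all) opens to relative opens of $Z$, so $f^\ast$ is an open map onto $Z$. Thus $\til\varphi$ is a closed topological embedding in the affine case.

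\textbf{Globalization.} Next I would choose an affine open cover $X=\bigcup_{i\in I}U_i$. Since $\varphi$ is affine, each $V_i=\varphi^{-1}(U_i)$ is affine, and since $\varphi$ is a closed immersion the pullback $\Gamma U_i\to\Gamma V_i$ is surjective; so each restriction $\varphi|_{V_i}:V_i\to U_i$ is an affine closed immersion and the affine case applies, yielding closed embeddings $\til V_i\to\til U_i$ onto closed subsets $Z_i\subseteq\til U_i$. By Proposition \ref{prop: functoriality of congruence spaces}, $\{\til U_i\}$ and $\{\til V_i\}$ are open covers of $\til X$ and $\til Y$, and $\til\varphi$ restricts to $\widetilde{\varphi|_{V_i}}$ over $\til U_i$. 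Using the commuting squares for $\pi$ from Proposition \ref{prop: projection onto the monoid scheme} I would verify $\til\varphi^{-1}(\til U_i)=\til V_i$, via $\til\varphi^{-1}(\pi_X^{-1}(U_i))=(\varphi\circ\pi_Y)^{-1}(U_i)=\pi_Y^{-1}(V_i)=\til V_i$.

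\textbf{Conclusion.} Finally I would assemble the global statement. For injectivity: if $\til\varphi(\fp)=\til\varphi(\fq)$, pick $i$ with this common image in $\til U_i$; then $\fp,\fq\in\til\varphi^{-1}(\til U_i)=\til V_i$, where $\til\varphi$ is injective, so $\fp=\fq$. For the closed image: $\til\varphi(\til Y)\cap\til U_i=\til\varphi(\til V_i)=Z_i$ is closed in $\til U_i$, and a subset meeting every member of an open cover in a closed set is closed, so $\til\varphi(\til Y)$ is closed in $\til X$. For the embedding: since $\til\varphi$ is continuous and injective and restricts to an embedding over each $\til U_i$ of an open cover, the standard ``local on the target'' criterion shows $\til\varphi$ is an open map onto its image, hence a topological embedding. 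The final clause then follows, as the support of a closed subscheme is by definition the image of such a $\til\varphi$. The main obstacle is the affine openness computation $f^\ast(U_{[a],[b]})=Z\cap U_{a,b}$ and the careful local-to-global assembly; everything else is formal once $\til\varphi^{-1}(\til U_i)=\til V_i$ is in hand.
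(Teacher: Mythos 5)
Your proof is correct and takes essentially the same route as the paper: both reduce to the affine case $\MSpec(A/\fc)\to\MSpec(A)$, obtain injectivity from the surjectivity of $A\to A/\fc$ via Proposition \ref{prop: congruences in quotients}, and finish with a subbasis computation plus the fact that images under an injective map commute with unions and intersections --- you compute $f^\ast(U_{[a],[b]})=Z\cap U_{a,b}$ on the open subbasis where the paper computes $\til\iota(V_{\bar a,\bar b})=V_{a,b}\cap\bigcap_{(c,d)\in\fc}V_{c,d}$ on the closed one (the same computation up to complements), and your explicit gluing is precisely what the paper compresses into ``the claim is local in $X$''. The one step you use without justification is the identity $\til U=\pi_X^{-1}(U)$ for an open subscheme $U\subseteq X$, invoked twice in your verification of $\til\varphi^{-1}(\til U_i)=\til V_i$; it is true and follows from Proposition \ref{prop: prime congruences in localizations} --- given $\fp\in\pi_X^{-1}(U)$ with $\fp\in\til W$ for some affine open $W$, choose a principal open $W_s\subseteq U\cap W$ containing $\pi_X(\fp)$, so that $I_\fp\cap\{s^n\}_{n\geq 0}=\emptyset$ and $\fp$ lifts to $\Cong(\Gamma W[s^{-1}])$, i.e.\ $\fp\in\til W_s\subseteq\til U$ --- so this is an omission of routine bookkeeping rather than a genuine gap.
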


\begin{proof}
 The claim of the proposition is local in $X$, so we can assume that $X= \MSpec(A)$ and $Y= \MSpec(A/ \mathfrak{c})$ for some congruence $\mathfrak{c}$ on $A$. We consider the morphism $\Gamma_\iota: A\to A/\mathfrak{c}$ of pointed monoids corresponding to the morphism $\iota:\MSpec(A/ \mathfrak{c})\to \MSpec(A)$. We have $\til \iota(\mathfrak{d})= \{(a,b)\in A\times A\mid (\Gamma_\iota(a),\Gamma_\iota(b))\in\mathfrak{d}\}$ for $\mathfrak{d}\in \til{Y}$. If $\til \iota(\mathfrak{d})= \til \iota(\mathfrak{d'})$ for $\mathfrak{d}, \mathfrak{d'}\in \til{X}$, then $ (\Gamma_\iota(a), \Gamma_\iota(b))\in \mathfrak{d}$ if and only if $(\Gamma_\iota(a), \Gamma_\iota(b))\in \mathfrak{d'}$ for all $(a,b)\in A\times A$.
This implies $\mathfrak{d}=\mathfrak{d'}$ since $\Gamma_\iota$ is surjective. Hence $\til\iota$ is injective. To prove our claim it is enough to check that $\til\iota$ is closed. Consider $\bar{a},\bar{b}\in A/\mathfrak{c}$ where $\bar{a}=\Gamma_\iota(a),\bar{b}=\Gamma_\iota(b)$. Then
	\begin{align*}
		\til\iota(Z_{\bar{a},\bar{b}}) \ 
		&= \ \{\mathfrak{e}\in \til{Y}\mid \mathfrak{e}= \til\iota(\mathfrak{d})\text{~with~}(\bar{a},\bar{b})\in \mathfrak{d}\text{~for some~} \mathfrak{d}\in \til{X}\}\\
		&= \ \{\mathfrak{e}\in \til {Y}\mid \mathfrak{c}\subseteq \mathfrak{e}\text{~and~}(a,b)\in \mathfrak{e}\} = \ Z_{a,b}\cap (\bigcap_{(c,d)\in \mathfrak{c}}Z_{c,d}).	\qedhere
	\end{align*}
\end{proof}

\begin{rem}
 Note that the definition of closed immersion $\varphi:Y\to X$ in \cite{CHWW15} asks in addition that $\varphi$ is a homeomorphism onto its image. We can omit this condition since it follows from the other properties of a closed immersion. 
 
 Indeed, since the question is local in $X$, we can assume that $X=\MSpec(A)$ and its inverse image $Y=\MSpec(B)$ are affine. By the definition of a closed immersion, $B$ is a quotient of $A$. Thus pulling back prime ideals from $B$ to $A$ is injective. It is a homeomorphism onto its image since $\varphi(U_{[h]})=U_h\cap\im\varphi$ for every $h\in A$ with class $[h]$ in $B$.
\end{rem}

%%%%%%%%%%%%%%%%%%%%%%%%%%%%%%%%%%%%%%%%%%%%%%%%%%%%%%%%%%%%%%%%%%%%%%%%%%%%%%%%%%%%%%%%%%%%%%%%%%%%%%%%%%%%%%%%%%%%%%%%%%%%%%%%%%%%%%%%%%%%%%%%%%%%%%%%%%%%%%%%%%%%%
\subsection{Congruence sheaves}

We recall some basic notions around modules over monoids from \cite{Chu-Lorscheid-Santhanam12}. A pointed monoid $A$ acts naturally on $A\times A$ via the diagonal multiplication. A \emph{submodule of $A\times A$} is a nonempty subset $M$ of $A\times A$ such that $(ca,cb)\in M$ for all $c\in A$ and $(a,b)\in M$. Given a morphism $f:A\to B$, the \emph{base extension of $M$ along $f$} is the submodule
\[
 M\otimes_AB \ = \ \big\{ (a,b)\in B\times B~\big|~a=cf(a')\text{ and }b=cf(b') \text{ for some }a',b'\in A,\, c\in B \big\}
\]
of $B\times B$. The \emph{quasi-coherent $\cO_X$-module on $X=\MSpec(A)$ associated with $M$} is the sheaf of monoids $ M^\sim$ with
\[
 M^\sim(U) \ = \ M\otimes_A\Gamma U
\]
for all affine opens $U$ of $X$ with respect to the restriction map $\res_{X,U}\colon A=\Gamma X\to\Gamma U$. 

\begin{defn}
 Let $X$ be a monoid scheme. A \emph{congruence sheaf on $X$} is a subsheaf $\fC$ of $\mathcal{O}_X\times \mathcal{O}_X$ such that $\mathfrak{C}(U)$ is a submodule of $\mathcal{O}_X(U)\times \mathcal{O}_X(U)$ and an equivalence relation for all open subsets $U$ of $X$. A congruence sheaf $\fC$ is \emph{quasi-coherent} if $\fC\vert_U\simeq{\fC(U)}^\sim$ for every affine open subset $U$ of $X$.
 
 Let $\varphi:Y\to X$ be a morphism of monoid schemes. The \emph{congruence kernel of $\varphi$} is the congruence sheaf $\Congker(\varphi)$ on $X$ given by 
 \[
  \Congker(\varphi)(U) \ = \ \congker\big(\varphi^\#(U): \Gamma U\to \Gamma\varphi^{-1}(U)\big)
 \]
 for every open subset $U$ of $X$.
\end{defn}

Note that $\Congker(\varphi)$ inherits the sheaf property from $\cO_X\times\cO_X$ and is thus indeed a sheaf. It is in general not quasi-coherent. It is, however, for closed immersions, as the next result shows, and more generally for quasi-compact morphisms, as proven in Lemma \ref{lemma: the congrunce kernel of a quasi-compact morphism is quasi-coherent}.

\begin{lemma}\label{qcoh-cong}
 Let $\varphi:Y\to X$ be a closed immersion of monoid schemes. Then $\Congker(\varphi)$ is a quasi-coherent congruence sheaf. Conversely, any quasi-coherent congruence sheaf determines a closed immersion of monoid schemes. 
\end{lemma}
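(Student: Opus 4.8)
The plan is to treat both directions affine-locally, since being a closed immersion and being a quasi-coherent congruence sheaf are both local on $X$. For the forward direction I would reduce to $X=\MSpec(A)$ and $Y=\MSpec(B)$ with $f=\varphi^\#(X)\colon A\to B$ surjective, so that $\fc:=\Congker(\varphi)(X)=\congker(f)$ is a congruence on $A$. To verify quasi-coherence it suffices to compare $\Congker(\varphi)$ with $\fc^\sim$ on every principal open $U_h=\MSpec(A[h^{-1}])$. On the one hand $\varphi^{-1}(U_h)=\MSpec(B[f(h)^{-1}])$ and $\varphi^\#(U_h)$ is the localized map $f_h$, so Lemma \ref{lemma: kernels of localization} gives $\Congker(\varphi)(U_h)=\congker(f_h)=S_h^{-1}\fc$ with $S_h=\{h^i\}_{i\geq0}$.

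On the other hand I would compute the base extension directly and show $\fc\otimes_AA[h^{-1}]=S_h^{-1}\fc$. The inclusion ``$\subseteq$'' is immediate: a base-extension element $(c\cdot\tfrac{a'}{1},c\cdot\tfrac{b'}{1})$ with $(a',b')\in\fc$ and $c=\tfrac{d}{h^n}$ equals $(\tfrac{da'}{h^n},\tfrac{db'}{h^n})$, and $(da',db')\in\fc$ witnesses membership in $S_h^{-1}\fc$. For ``$\supseteq$'', given $(\tfrac a{h^i},\tfrac{a'}{h^j})\in S_h^{-1}\fc$ there is $k$ with $(h^{k+j}a,h^{k+i}a')\in\fc$, and setting $c=\tfrac1{h^{k+i+j}}$ exhibits the pair as a base-extension element of the pair $(h^{k+j}a,h^{k+i}a')\in\fc$. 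This identification, together with the localization lemma, is the technical heart of the forward direction; once it is in place $\Congker(\varphi)\simeq\fc^\sim$ on the affine base, and quasi-coherence follows.

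For the converse, in the affine case a quasi-coherent congruence sheaf $\fC$ on $X=\MSpec(A)$ is determined by the congruence $\fc=\fC(X)$. I would set $B=A/\fc$ and let $\varphi\colon\MSpec(B)\to\MSpec(A)$ be induced by the quotient map $A\to A/\fc$; this is affine, and since $A\to A/\fc$ is surjective and localizations preserve surjectivity, every section pullback is surjective, so $\varphi$ is a closed immersion. By the forward direction $\Congker(\varphi)$ is quasi-coherent with $\Congker(\varphi)(X)=\fc=\fC(X)$, and two quasi-coherent congruence sheaves agreeing on global sections over an affine base coincide (both being $(\,\cdot\,)(X)^\sim$); hence $\Congker(\varphi)=\fC$.

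Finally, for general $X$ I would cover $X$ by affine opens $U_i=\MSpec(A_i)$ and assemble the local closed immersions $\varphi_i\colon\MSpec(A_i/\fC(U_i))\to U_i$. The restriction maps of the sheaf $\fC$ guarantee that the congruences $\fC(U_i)$ are compatible on overlaps, so the $\varphi_i$ glue to a closed immersion $\varphi\colon Y\to X$; quasi-coherence of $\Congker(\varphi)$ and of $\fC$, together with their agreement on the $U_i$, gives $\Congker(\varphi)=\fC$. The main obstacle I anticipate is the base-extension identity in the forward direction, namely ensuring that the abstractly defined $\fc\otimes_AA[h^{-1}]$, which for a general submodule need not even be a congruence, coincides on the nose with the localized congruence $S_h^{-1}\fc$; the gluing in the converse is routine once the local compatibility coming from the sheaf axioms is noted.
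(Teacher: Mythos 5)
Your proposal is correct and follows essentially the same route as the paper: reduce to the affine case, use the exactness of localization (Lemma \ref{lemma: kernels of localization}) to identify $\Congker(\varphi)$ on principal opens with the localized congruence $S_h^{-1}\fc$, and glue quotient monoid schemes via quasi-coherence for the converse. The one point where you go beyond the paper is a welcome one: the paper asserts the identity $S^{-1}\fc=\fc\otimes_A A[s^{-1}]$ without justification, while you verify both inclusions explicitly and correctly flag that for a general submodule of $A\times A$ the base extension need not even be a congruence, so this identification genuinely requires an argument.
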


\begin{proof}
 Let $\varphi:Y\to X$ be a closed immersion of monoid schemes. The claim of the proposition is local in $X$, so we can assume that $X= \MSpec(A)$ and $Y= \MSpec(A/ \mathfrak{c})$ for some congruence $\mathfrak{c}$ on $A$. We will show that $\Congker(\varphi)=\fc^\sim$. Since $\Congker(\varphi)$ is a sheaf, it is enough to check that $\Congker(\varphi)(U)=\fc^\sim (U)$ for every affine open $U$ of $X$. Since every affine open of $X$ is a principal open, we can assume that $U=\MSpec((A)[s^{-1}])$ for some $s\in A$. We note that by universal property of pullback it follows $ \Gamma\varphi^{-1}(\MSpec((A)[s^{-1}]))\cong\Gamma\MSpec( (A/\fc)[\varphi(s)^{-1}])\Cong(A)/\fc[\varphi(s)^{-1}]$. Since localization of monoids is exact by Lemma \ref{lemma: kernels of localization}, we have
\begin{align*}
	 \Congker(\varphi)(U) \ &= \ \congker\big(\varphi^\#(U): A[s^{-1}]\to A/\fc[\varphi(s)^{-1}]\big)\\
	 &=S^{-1}\big(\congker(A\to A/\fc)\big) \qquad\text{where~} S=\{1,s,s^2,s^3,\hdots\}\\
	 &=S^{-1}\fc =\fc\otimes_A A[s^{-1}]=  \fc^\sim(U).
\end{align*}
Conversely, let $\fC$ be a quasi-coherent congruence sheaf on $X$. For any affine open $U=\MSpec(A)$ of $X$, we know that the surjection $A\to A/\fC(U)$ determines a closed immersion. Since $\fC$ is quasi-coherent, we obtain a closed immersion $\varphi: Y\to X$ with $\varphi|_{\varphi^{-1}(U)}: \MSpec (A/\fC(U))\to U=\MSpec(A)$.
\end{proof}

%%%%%%%%%%%%%%%%%%%%%%%%%%%%%%%%%%%%%%%%%%%%%%%%%%%%%%%%%%%%%%%%%%%%%%%%%%%%%%%%%%%%%%%%%%%%%%%%%%%%%%%%%%%%%%%%%%%%%%%%%%%%%%%%%%%%%%%%%%%%%%%%%%%%%%%%%%%%%%%%%%%%%
\subsection{Vanishing sets}
\label{subsection: Vanishing sets}

Let $A$ be a pointed monoid with spectrum $X=\MSpec(A)$ and congruence space $\til X$. Recall that the topology of closed subsets of $\til X$ is generated by the closed subsets $V_{a,b}=\{\fp\in \til X\mid (a,b)\in\fp\}$ with $a,b\in A$. For a subset $S$ of $A\times A$, we define 
\[
 V_S \ = \ \bigcap_{(a,b)\in S} \ V_{a,b}.
\]
Note that not every closed subset of $\til X$ is of the form $V_S$; cf.\ Example \ref{notbasis}. 

\begin{lemma}\label{lemma: vanishing sets of affine monoid schemes}
 Let $A$ be a pointed monoid, $S$ a subset of $A\times A$ and $\fc=\gen S$ the congruence generated by $S$. Then
 \[
  V_S \ = \ V_\fc \ = \ V_{\sqrt{\fc}}.
 \]
\end{lemma}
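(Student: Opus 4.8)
The plan is to reduce all three objects to a single uniform description as sets of prime congruences, and then chase containments. Unwinding the definition $V_T = \bigcap_{(a,b)\in T} V_{a,b}$ applied successively to $T=S$, $T=\fc$ and $T=\sqrt{\fc}$ (each of which is a subset of $A\times A$), one finds that
\[
 V_S = \{\fp\in\til X\mid S\subseteq\fp\}, \qquad V_\fc = \{\fp\in\til X\mid \fc\subseteq\fp\}, \qquad V_{\sqrt\fc} = \{\fp\in\til X\mid \sqrt\fc\subseteq\fp\}.
\]
Thus each $V_T$ is precisely the set of prime congruences $\fp$ that contain the respective subset $T$ of $A\times A$, and the whole statement becomes a claim that $S$, $\fc$ and $\sqrt\fc$ are contained in exactly the same prime congruences.

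First I would record the easy half. Since $S\subseteq\fc\subseteq\sqrt\fc$, any prime congruence containing a larger subset contains the smaller one, so that $V_{\sqrt\fc}\subseteq V_\fc\subseteq V_S$. This requires no hypothesis beyond the inclusions of the subsets themselves.

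Next I would establish the reverse inclusions. For $V_S\subseteq V_\fc$: if $\fp$ is a prime congruence with $S\subseteq\fp$, then $\fp$ is in particular a congruence containing $S$, and since $\fc=\gen S$ is by definition the smallest congruence containing $S$, we get $\fc\subseteq\fp$, i.e.\ $\fp\in V_\fc$. For $V_\fc\subseteq V_{\sqrt\fc}$: here I invoke Proposition \ref{prop: radicals}, which gives $\sqrt\fc=\bigcap_{\fc\subseteq\fp,\ \fp\text{ prime}}\fp$. Hence if $\fp$ is any prime congruence containing $\fc$, it is one of the terms in this intersection, so $\sqrt\fc\subseteq\fp$ and $\fp\in V_{\sqrt\fc}$. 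Combining the two chains yields $V_S=V_\fc=V_{\sqrt\fc}$.

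There is essentially no serious obstacle here: the argument is a bookkeeping exercise in set containments, and the only nontrivial input is the radical formula of Proposition \ref{prop: radicals}, which is exactly what converts the a priori weaker condition ``$\fp$ contains $\fc$'' into ``$\fp$ contains $\sqrt\fc$''. The one point to state carefully is the identification of each $V_T$ with a membership condition ``$T\subseteq\fp$'', since the definition presents $V_T$ as an intersection of the basic closed sets $V_{a,b}$ rather than directly as such a condition.
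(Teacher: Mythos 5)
Your proof is correct and follows essentially the same route as the paper: the easy inclusions from $S\subseteq\fc\subseteq\sqrt\fc$, the minimality of $\gen S$ for $V_S\subseteq V_\fc$, and the radical formula of Proposition \ref{prop: radicals} for $V_\fc\subseteq V_{\sqrt\fc}$. Your explicit identification of each $V_T$ with the condition $T\subseteq\fp$ is a helpful clarification the paper leaves implicit, but the substance is identical.
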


\begin{proof}
 The inclusions $ V_{\sqrt{\fc}}\subseteq V_\fc \subseteq V_S$ follow from elementary set theoretic considerations. If $\fp$ is a prime congruence containing $S$, then we also have $\fc = \langle S\rangle \subseteq \fp$ and therefore $ V_S \ \subseteq \ V_\fc $. Since $\sqrt{\fc}= \bigcap_{\fc\subseteq \fp}\fp$ it follows that $V_\fc\subseteq V_{\sqrt{\fc}}$.
\end{proof}

\begin{lem}\label{loc-van}
	Let $\iota_S:A\to S^{-1}A$ be a localization of a pointed monoid $A$ at $S$ and let $\iota_S^*: \Cong(S^{-1}A) \to \Cong(A)$ denotes the corresponding morphism of congruence spaces. Then ${\iota_S^*}^{-1}(V_\fc)=V_{\iota_{S,*}(\fc)}$.
\end{lem}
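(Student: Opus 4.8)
The plan is to show that both sides of the asserted equality coincide with a single elementary membership condition on prime congruences $\fq$ of $S^{-1}A$, obtained by unwinding the definitions of $V_{(-)}$, of the pullback $\iota_S^\ast$, and of the push-forward $\iota_{S,\ast}$. No property of localization enters beyond the defining formula for $\iota_S$, so the argument is purely formal.

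First I would record the reformulation $V_\fc = \{\fp\in\Cong(A)\mid \fc\subseteq\fp\}$, which is immediate from the definition $V_\fc=\bigcap_{(a,b)\in\fc}V_{a,b}$ together with $V_{a,b}=\{\fp\mid(a,b)\in\fp\}$. Consequently, for a prime congruence $\fq$ of $S^{-1}A$ we have $\fq\in(\iota_S^\ast)^{-1}(V_\fc)$ if and only if $\iota_S^\ast(\fq)\in V_\fc$, i.e.\ if and only if $\fc\subseteq\iota_S^\ast(\fq)$. By the definition of the pullback, $\iota_S^\ast(\fq)=\{(a,b)\mid(\iota_S(a),\iota_S(b))\in\fq\}$, so this containment says precisely that $(\iota_S(a),\iota_S(b))\in\fq$ for every $(a,b)\in\fc$; in other words, that $\fq$ contains the generating set $\{(\iota_S(a),\iota_S(b))\mid(a,b)\in\fc\}$ of $\iota_{S,\ast}(\fc)$.

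Next I would unwind the right-hand side the same way: $\fq\in V_{\iota_{S,\ast}(\fc)}$ if and only if $\iota_{S,\ast}(\fc)\subseteq\fq$. Here I use that $\iota_{S,\ast}(\fc)$ is by definition the congruence $\langle\{(\iota_S(a),\iota_S(b))\mid(a,b)\in\fc\}\rangle$ generated by the images, and that $\fq$ is itself a congruence: a congruence contains a generated congruence $\langle T\rangle$ exactly when it contains the generating set $T$. Thus $\iota_{S,\ast}(\fc)\subseteq\fq$ is again equivalent to $(\iota_S(a),\iota_S(b))\in\fq$ for all $(a,b)\in\fc$, which matches the condition derived from the left-hand side. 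Combining the two chains of equivalences yields $(\iota_S^\ast)^{-1}(V_\fc)=V_{\iota_{S,\ast}(\fc)}$.

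There is essentially no analytic obstacle here; the only step that is not purely set-theoretic is the passage between \emph{contains the generating set $T$} and \emph{contains $\langle T\rangle$}, which is valid precisely because the testing object $\fq$ is a congruence, hence closed under the relation-generating operations. I would make sure to state this reduction explicitly, since it is the one place where the proof uses more than direct manipulation of the definitions; note also that the primeness of $\fq$ is never invoked, so the same reasoning would apply verbatim to arbitrary congruences.
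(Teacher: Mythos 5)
Your proof is correct, and it takes a genuinely different route from the paper's. The paper reduces the lemma to the equivalence $\fc\subseteq\iota_S^\ast(\fd)\iff\iota_{S,\ast}(\fc)\subseteq\fd$ and justifies it by invoking Proposition \ref{prop: prime congruences in localizations}: since $\iota_S^\ast$ and $\iota_{S,\ast}$ are mutually inverse bijections on prime congruences, one has $\iota_{S,\ast}(\iota_S^\ast(\fd))=\fd$, and the equivalence follows by monotonicity of pushforward and pullback. You instead obtain the same equivalence purely formally: both $\fc\subseteq\iota_S^\ast(\fq)$ and $\iota_{S,\ast}(\fc)\subseteq\fq$ unwind to the single condition that $\fq$ contain the generating set $\{(\iota_S(a),\iota_S(b))\mid(a,b)\in\fc\}$, the only non-trivial step being that a congruence contains $\gen{T}$ exactly when it contains $T$, which holds because $\gen{T}$ is by definition the smallest congruence containing $T$. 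Your Galois-connection argument buys generality and independence: it uses nothing about localizations and, as you observe, nothing about primeness of $\fq$, so it proves $(f^\ast)^{-1}(V_\fc)=V_{f_\ast(\fc)}$ verbatim for an arbitrary morphism $f:A\to B$ of pointed monoids, without relying on the earlier bijection result. What the paper's route buys is only brevity given machinery already in place; your version is arguably preferable, since the cited proposition is a localization-specific statement about prime congruences whose full strength is not needed here.
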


\begin{proof}
	We have ${\iota_S^*}^{-1}(V_\fc)=\{\fd\in \Cong(B)~|~\fc \subseteq \iota_S^*(\fd)\}$. By Proposition \ref{prop: prime congruences in localizations}, we know that $\iota_{S,*}$ and $\iota_S^*$ are mutually inverse bijections. It follows that $\iota_{S,*}(\iota_S^*(\fd))= \fd$ and $~\fc \subseteq \iota_S^*(\fd)$ if and only if $\iota_{S,*}(\fc) \subseteq \fd$.
	Thus we have $\varphi^{-1}(V_\fc)=V_{\iota_{S,*}(\fc)}$.
\end{proof}

\begin{df}
 Let $X$ be a monoid scheme with congruence space $\til X$. A \emph{vanishing set of $X$} is a subset $Z$ of $\til X$ such that for every affine open $U$ of $X$ with congruence space $\til U$, the intersection $Z\cap \til U$ is of the form $V_\fc$ for some congruence $\fc$ on $\Gamma U$.
\end{df}
 
\begin{lemma}\label{lemma: vanishing sets are detectable by open affine covers}
 Let $X$ be a monoid scheme with congruence space $\til X$ and $\cU$ an affine open covering of $X$. Then a subset $Z$ of $\til X$ is a vanishing set if for every $U\in\cU$ with congruence space $\til U$, the intersection $Z\cap \til U$ is of the form $V_\fc$ for some congruence $\fc$ on $\Gamma U$.
\end{lemma}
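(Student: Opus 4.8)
The plan is to take an arbitrary affine open $W=\MSpec(B)$ of $X$ and produce a single congruence $\fd$ on $B$ with $Z\cap\til W=V_\fd$; by the definition of a vanishing set this is exactly what must be shown. Since the hypothesis only controls $Z$ on the members of $\cU$, the first step is to pass to a common refinement. As $\MSpec(B)$ is quasi-compact and affine opens of a monoid scheme admit mutual refinements by principal opens (cf.\ \cite[Lemma 1.3]{CHWW15} and \cite[Lemma 2.4]{CHWW15}), I would choose \emph{finitely many} principal opens $W_1,\dots,W_n$ of $W$ covering $W$, each of which is also a principal open of some $U_i\in\cU$. Writing $\iota_i\colon B\to\Gamma W_i=B[t_i^{-1}]$ for the localization, Lemma \ref{loc-van} applied to the inclusion $W_i\subseteq U_i$ turns the hypothesis $Z\cap\til{U_i}=V_{\fc_{U_i}}$ into $Z\cap\til{W_i}=V_{\fc_i'}$ for some congruence $\fc_i'$ on $B[t_i^{-1}]$.

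Next I would replace each $\fc_i'$ by its radical $\fr_i=\sqrt{\fc_i'}$, which leaves the vanishing set unchanged ($V_{\fr_i}=V_{\fc_i'}=Z\cap\til{W_i}$ by Lemma \ref{lemma: vanishing sets of affine monoid schemes}) but renders the local data compatible: on an overlap $W_i\cap W_j$ the two localizations of $\fr_i$ and of $\fr_j$ are both radical by Proposition \ref{prop: radical congruences in localizations} and, by Lemma \ref{loc-van}, have the same vanishing set $Z\cap\til{W_i\cap W_j}$; since a radical congruence is the intersection of the prime congruences containing it (Proposition \ref{prop: radicals}), it is determined by its vanishing set, so these localizations coincide. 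I would then set $\fd=\bigcap_{i=1}^n\iota_i^\ast(\fr_i)$, which is a congruence on $B$.

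It remains to prove $V_\fd\cap\til{W_j}=Z\cap\til{W_j}$ for each $j$; since $\{\til{W_j}\}$ covers $\til W$ (Proposition \ref{prop: functoriality of congruence spaces}), this gives $V_\fd=Z\cap\til W$. One inclusion is immediate: if $\fp\in Z\cap\til{W_j}=V_{\fr_j}$, then $\iota_j^\ast(\fr_j)\subseteq\fp$ under the identification $\til{W_j}\cong\Cong(B[t_j^{-1}])$ of Proposition \ref{prop: prime congruences in localizations}, and a fortiori $\fd\subseteq\fp$. The reverse inclusion is the crux, and this is where both the compatibility of the $\fr_i$ and the finiteness of the cover enter. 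Given $\fp\in\til{W_j}$ with $\fd\subseteq\fp$ and a pair $(a,b)\in\iota_j^\ast(\fr_j)$, the overlap identities yield, for each $i$, an exponent $m_i$ with $(t_j^{m_i}a,\,t_j^{m_i}b)\in\iota_i^\ast(\fr_i)$; taking $m=\max_i m_i$, which is legitimate because the cover is finite, gives $(t_j^m a,\,t_j^m b)\in\bigcap_i\iota_i^\ast(\fr_i)=\fd\subseteq\fp$. As $\fp$ is prime and $t_j\notin I_\fp$ (because $\fp\in\til{W_j}$), the factor $t_j^m$ cancels, so $(a,b)\in\fp$; hence $\iota_j^\ast(\fr_j)\subseteq\fp$ and therefore $\fp\in V_{\fr_j}=Z\cap\til{W_j}$.

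I expect the reverse inclusion to be the main obstacle. A congruence is \emph{not} determined by its vanishing set, so the naive strategies of gluing the $\fc_i'$ directly or of setting $\fd=\bigcap_{\fp\in Z\cap\til W}\fp$ both fail: prime congruences of $Z$ lying outside $\til{W_j}$ contaminate the global intersection and obstruct the localization identity $V_\fd\cap\til{W_j}=Z\cap\til{W_j}$. Passing to radicals to obtain genuinely compatible local congruences, combined with the quasi-compactness of $W$ to clear denominators uniformly, is precisely what allows the cancellation argument above to succeed.
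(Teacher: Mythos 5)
Your proof is correct, but it takes a genuinely different --- and much heavier --- route than the paper's. I checked your crux carefully: radicalization makes the local data honestly agree on overlaps (by Proposition \ref{prop: radicals} a radical congruence is the intersection of the prime congruences containing it, hence is determined by its vanishing set), finiteness of the refinement legitimizes the common exponent $m=\max_i m_i$, and primality of $\fp$ together with $t_j\notin I_\fp$ cancels $t_j^m$; all of this goes through. The paper instead exploits a rigidity special to monoid schemes: an affine open $W$ has a \emph{unique closed point}, the maximal ideal $M_{\Gamma W}$, to which every point of $W$ specializes. That closed point lies in some $U\in\cU$, and since open sets are stable under generization, \emph{all} of $W$ is contained in $U$; thus $\Gamma W$ is a single localization $S^{-1}\Gamma U$, and one application of Lemma \ref{loc-van} (the paper invokes Proposition \ref{prop: radical congruences in localizations}) transports $Z\cap\til U=V_\fc$ directly to $Z\cap\til W=V_{S^{-1}\fc}$ --- no refinement, gluing, radicals or quasi-compactness enter. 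The same rigidity in fact collapses your construction: one of your principal opens $W_i=\MSpec(B[t_i^{-1}])$ must contain the closed point of $W$, which forces $t_i\in B^\times$ and $W_i=W$, so you could always take $n=1$, at which point your argument reduces to the paper's. What your version buys is portability: it is the classical scheme-theoretic gluing pattern and would survive in settings where affine objects do not have unique closed points, at the cost of considerable length. One small correction to your closing commentary: the naive choice $\fd=\bigcap_{\fp\in Z\cap\til W}\fp$ does \emph{not} fail here --- for $\fp\in Z$ with $t_j\in I_\fp$ one has $(t_ja,0),(t_jb,0)\in\fp$ and hence $(t_ja,t_jb)\in\fp$ by symmetry and transitivity, so your cancellation runs with the uniform exponent $m=1$ and needs neither radicals nor finiteness; but this only affects your motivation, not the validity of your proof.
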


\begin{proof}
 Consider an affine open $U$ of $X$. Then the unique maximal ideal $M_{\Gamma V}$ of $\Gamma V$ is its unique closed point, which is contained in some $W\in\cU$. Since $W$ is open, it contains the whole subset $U$. Thus $\Gamma U=S^{-1}\Gamma W$ for $S=\{a\in\Gamma W\mid \res_{W,U}(a)\notin M_{\Gamma U}\}$. If $Z\cap \til W=V_\fc$, as required by the hypothesis of the lemma, then $Z\cap \til U=V_{S^{-1}\fc}$ by Proposition \ref{prop: radical congruences in localizations}. This shows that $Z$ is a vanishing set.
\end{proof}

%%%%%%%%%%%%%%%%%%%%%%%%%%%%%%%%%%%%%%%%%%%%%%%%%%%%%%%%%%%%%%%%%%%%%%%%%%%%%%%%%%%%%%%%%%%%%%%%%%%%%%%%%%%%%%%%%%%%%%%%%%%%%%%%%%%%%%%%%%%%%%%%%%%%%%%%%%%%%%%%%%%%%
\subsection{Supports of closed subschemes}
\label{subsection: Supports of closed subschemes}

\begin{df}
 Let $X$ be a monoid scheme with congruence space $\til X$ and $\fC$ a quasi-coherent congruence sheaf on $X$. A \emph{vanishing set of $\fC$} is a vanishing set $Z$ of $\til X$ such that $Z\cap \til U=V_{\fC(U)}$ for every affine open subset $U$ of $X$ with congruence space $\til U$.
\end{df}

\begin{thm}\label{thm: characterization of vanishing sets}
 Let $X$ be a monoid scheme with congruence space $\til X$ and $Z$ a subset of $\til X$. Then the following are equivalent:
 \begin{enumerate}
  \item\label{char1} $Z$ is the support of a closed subscheme of $X$.
  \item\label{char2} $Z$ is the vanishing set of a quasi-coherent congruence sheaf on $X$.
  \item\label{char3} $Z$ is a vanishing set.
 \end{enumerate}
\end{thm}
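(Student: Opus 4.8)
The plan is to establish the cycle of implications \eqref{char1}$\Rightarrow$\eqref{char2}$\Rightarrow$\eqref{char3}$\Rightarrow$\eqref{char1}. Since each of the three conditions is formulated in terms of affine opens of $X$, throughout I would reduce to the affine situation $U=\MSpec(A)$ and glue. For \eqref{char1}$\Rightarrow$\eqref{char2}, suppose $Z=\im\til\varphi$ for a closed immersion $\varphi\colon Y\to X$. By Lemma \ref{qcoh-cong}, the congruence kernel $\Congker(\varphi)$ is a quasi-coherent congruence sheaf, so it remains to identify $Z$ as its vanishing set. Working on an affine open $U=\MSpec(A)$, we have $\varphi^{-1}(U)=\MSpec(A/\fc)$ with $\fc=\Congker(\varphi)(U)$, and by Proposition \ref{prop: congruences in quotients} the prime congruences of $A/\fc$ correspond to those prime congruences of $A$ that contain $\fc$. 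Hence the image of $\til\varphi$ meets $\til U$ in $V_\fc=V_{\Congker(\varphi)(U)}$, which is exactly the condition that $Z$ is the vanishing set of $\Congker(\varphi)$.

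The implication \eqref{char2}$\Rightarrow$\eqref{char3} is immediate: if $Z\cap\til U=V_{\fC(U)}$ for every affine open $U$, then $Z\cap\til U$ is of the form $V_\fc$ with $\fc=\fC(U)$, so $Z$ is a vanishing set. The heart of the argument is \eqref{char3}$\Rightarrow$\eqref{char1}. Given a vanishing set $Z$, I would define, on each affine open $U$, the congruence
\[
 \fC(U) \ = \ \bigcap_{\fp\in Z\cap\til U}\fp,
\]
which is the radical congruence cut out by $Z\cap\til U$. Indeed, writing $Z\cap\til U=V_{\fc_U}$ for some congruence $\fc_U$, Proposition \ref{prop: radicals} gives $\fC(U)=\sqrt{\fc_U}$, and Lemma \ref{lemma: vanishing sets of affine monoid schemes} gives $V_{\fC(U)}=V_{\fc_U}=Z\cap\til U$; moreover $\fC(U)$ is a genuine congruence, being an intersection of prime congruences.

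The decisive point is quasi-coherence, i.e.\ compatibility of the assignment $U\mapsto\fC(U)$ with restriction to a principal open $U'=\MSpec(\Gamma U[s^{-1}])$ of $U$. For $S=\{s^i\}_{i\geq 0}$, Lemma \ref{loc-van} identifies $Z\cap\til{U'}$ with $V_{\iota_{S,\ast}(\fC(U))}$ inside $\til{U'}$, and Proposition \ref{prop: radical congruences in localizations} shows that $\iota_{S,\ast}(\fC(U))$ is again radical precisely because $\fC(U)$ is. Combining these yields $\fC(U')=\iota_{S,\ast}(\fC(U))=\fC(U)\otimes_{\Gamma U}\Gamma U'$, which is exactly the quasi-coherence condition $\fC\vert_U\simeq\fC(U)^\sim$; the values on non-affine opens are then forced by realizing $\fC$ as a subsheaf of $\cO_X\times\cO_X$, and the congruence property is inherited from the affine pieces. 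Thus $\fC$ is a quasi-coherent congruence sheaf, and Lemma \ref{qcoh-cong} produces a closed immersion $\varphi\colon Y\to X$ with $\varphi^{-1}(U)=\MSpec(\Gamma U/\fC(U))$. Arguing exactly as in \eqref{char1}$\Rightarrow$\eqref{char2}, its support satisfies $\im\til\varphi\cap\til U=V_{\fC(U)}=Z\cap\til U$, so $Z=\im\til\varphi$ is the support of a closed subscheme.

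I expect the quasi-coherence verification in \eqref{char3}$\Rightarrow$\eqref{char1} to be the main obstacle. The difficulty is that the local data $\fc_U$ furnished by the hypothesis are only determined up to having the same vanishing set, and a non-radical representative need not localize correctly; one must therefore pin down the \emph{canonical} radical representative $\fC(U)$ so that the affine pieces patch into a sheaf. The entire compatibility then rests on the interplay of Proposition \ref{prop: radical congruences in localizations} (that $\iota_{S,\ast}$ preserves radical congruences) with Lemma \ref{loc-van} (the translation of vanishing sets along localizations), and it is precisely the passage to radicals that makes these two results applicable.
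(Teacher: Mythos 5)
Your proposal is correct and follows essentially the same route as the paper's proof: the same cycle \eqref{char1}$\Rightarrow$\eqref{char2}$\Rightarrow$\eqref{char3}$\Rightarrow$\eqref{char1}, with Lemma \ref{qcoh-cong} handling the first implication and, for \eqref{char3}$\Rightarrow$\eqref{char1}, the choice of the radical (``strongly reduced'') congruence $\fC(U)=\bigcap_{\fp\in Z\cap\til U}\fp$ as the canonical local representative, whose compatibility with localization via Lemma \ref{loc-van} and Proposition \ref{prop: radical congruences in localizations} yields quasi-coherence. If anything, you make explicit the role of Proposition \ref{prop: radical congruences in localizations} in the patching step, which the paper's proof leaves implicit behind its citation of Lemma \ref{loc-van}.
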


\begin{proof}
 Let $Z$ be the support of a closed subscheme of $X$ represented by a closed immersion $\varphi:Y\to X$, i.e., $Z=\til{\varphi} (\til Y)$. By Lemma \ref{qcoh-cong}, we know that the $\Congker(\varphi)$ is a quasi-coherent congruence sheaf of $X$. We claim that $Z$ is the vanishing set of the quasi-coherent congruence sheaf $\fC:=\Congker(\varphi)$. Let $U=\MSpec(A)$ be an affine open of $X$. Then $\varphi^{-1}(U) = \MSpec (A/\fC(U))$ and $Z\cap \til U=\til {\varphi}( \Cong (A/\fC(U)))= \{\fp \in \Cong(A)~|~\fC(U)\subseteq \fp\} =V_{\fC(U)}$. This proves our claim and shows that $\eqref{char1}\implies \eqref{char2}\implies \eqref{char3}$.
 
 Let $Z$ be a vanishing set. For any affine open subset $U$ of $X$, we consider the strongly reduced congruence $\fc_U$ of $\Gamma U$ with $V_{\fc_U}= Z\cap \til U$. It follows from Lemma \ref{loc-van} that we have a quasi-coherent congruence sheaf $\fC$ given by $\fC(U)= \fc_U$. Therefore by Lemma \ref{qcoh-cong}, we have a closed immersion $\varphi: Y\to X$ with $\varphi|_{\varphi^{-1}(U)}: \MSpec (A/\fc_U)\to U=\MSpec(A)$ for every affine $U$ of $X$. We have $(\varphi|_{\varphi^{-1}(U)})^\congr(\Cong (A/\fc_U)) = V_{\fc_U}= Z\cap \til U$. Thus it follows $\til{\varphi}(\til Y)=Z$. This shows  $\eqref{char3}\implies \eqref{char1}$.
\end{proof}

\begin{cor}
 Every quasi-coherent congruence sheaf has a vanishing set.
\end{cor}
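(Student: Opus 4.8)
The plan is to read this off directly from the two immediately preceding results: Lemma \ref{qcoh-cong} converts a quasi-coherent congruence sheaf into a closed immersion, and the implication $\eqref{char1}\Rightarrow\eqref{char2}$ of Theorem \ref{thm: characterization of vanishing sets} equips the resulting closed subscheme with a vanishing set. So the corollary should follow by feeding the output of the former into the latter, with only a small identification to check in between.

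Concretely, I would start with a quasi-coherent congruence sheaf $\fC$ on $X$ and apply the second half of Lemma \ref{qcoh-cong} to produce a closed immersion $\varphi\colon Y\to X$ which restricts to $\varphi|_{\varphi^{-1}(U)}\colon\MSpec(A/\fC(U))\to U$ over every affine open $U=\MSpec(A)$ of $X$. The key bookkeeping step is then to identify the congruence kernel of this $\varphi$ with $\fC$ itself: over each such $U$ the pullback $\varphi^\#(U)\colon A\to A/\fC(U)$ is exactly the quotient map, whose congruence kernel is $\fC(U)$ by the bijective correspondence between congruences and quotients recalled after the definition of the congruence kernel. Hence $\Congker(\varphi)=\fC$ as quasi-coherent congruence sheaves. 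Finally I would invoke Theorem \ref{thm: characterization of vanishing sets}: the support $Z=\til\varphi(\til Y)$ of the closed subscheme represented by $\varphi$ is the vanishing set of $\Congker(\varphi)=\fC$, so $\fC$ has a vanishing set.

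The main point requiring care---indeed the only place where anything could go wrong---is the identification $\Congker(\varphi)=\fC$, that is, that the closed immersion manufactured by Lemma \ref{qcoh-cong} recovers $\fC$ on the nose rather than merely some congruence sheaf with the same vanishing locus. Since being a vanishing set of $\fC$ is tested affine-open by affine-open against $V_{\fC(U)}$, it suffices to verify this identification locally, where it reduces to the statement that the congruence kernel of $A\to A/\fC(U)$ equals $\fC(U)$; no gluing beyond that already performed inside Lemma \ref{qcoh-cong} is needed. I expect the whole argument to be a short corollary, with the equality $\Congker(\varphi)=\fC$ being the only substantive (and already essentially routine) verification.
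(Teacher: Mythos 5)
Your proposal is correct and follows the paper's own route exactly: apply the converse direction of Lemma \ref{qcoh-cong} to get a closed immersion $\varphi$ from $\fC$, then invoke the implication \eqref{char1}$\Rightarrow$\eqref{char2} of Theorem \ref{thm: characterization of vanishing sets}. The identification $\Congker(\varphi)=\fC$ that you single out is left implicit in the paper's one-line proof (and is verified explicitly only later, in the $\Phi\circ\Psi=\id$ part of Theorem \ref{thm: equivalence between closed subschemes and congruence scheaves}), and your local verification of it via the congruence--quotient correspondence is exactly right.
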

\begin{proof}
 Any quasi-coherent congruence sheaf on $X$ determines a closed immersion $\varphi:Y\to X$ by Lemma \ref{qcoh-cong} and the support of the closed subscheme $[\varphi:Y\to X]$ is a vanishing set by the implication \eqref{char1}$\Rightarrow$\eqref{char2} of Theorem \ref{thm: characterization of vanishing sets}. 
\end{proof}

We denote the vanishing set of a quasi-coherent congruence sheaf $\fC$ on $X$ by $V_\fC$ and we note that for every affine $U$ of $X$, we have $V_\fC \cap \til U = V_{\fC(U)}$.

\begin{thm}\label{thm: equivalence between closed subschemes and congruence scheaves}
 Let $X$ be a monoid scheme. Then there are mutually inverse bijections
 \[
  \begin{tikzcd}[column sep=60]
    \big\{\text{closed subschemes of $X$}\big\} \ar[r,shift left=1,"\Phi"] & 
    \Big\{\begin{array}{c}\text{quasi-coherent}\\ \text{congruence sheaves on $X$}\end{array}\Big\} \ar[l,shift left=1,"\Psi"]
  \end{tikzcd}
 \]
 that are given by $\Phi([\varphi:Y\to X])=\Congker(\varphi)$ and $\Psi(\fC)=[\varphi:Y\to X]$ where $Y=\pi_X(V_\fC)$, embedded via $\varphi$ as a subspace of $X$, and $\cO_Y=(\cO_X/\fC)|_Y$.
\end{thm}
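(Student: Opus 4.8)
The plan is to assemble the bijection from the two structural results that precede it: Lemma \ref{qcoh-cong}, which matches closed immersions with quasi-coherent congruence sheaves affine-locally, and Theorem \ref{thm: characterization of vanishing sets}, which identifies supports with vanishing sets. Almost all of the substance is already contained in these, so the remaining task is to check that $\Phi$ and $\Psi$ are well defined and mutually inverse, and this reduces to affine-local computations.

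First I would verify that $\Phi$ is well defined. By Lemma \ref{qcoh-cong}, the congruence kernel $\Congker(\varphi)$ of a closed immersion $\varphi:Y\to X$ is a quasi-coherent congruence sheaf, so it suffices to see that $\Congker(\varphi)$ depends only on the isomorphism class of $\varphi$. If $\varphi_1$ and $\varphi_2$ are isomorphic closed immersions into $X$, then for every affine open $U$ of $X$ the isomorphism provides an identification of $\Gamma\varphi_1^{-1}(U)$ with $\Gamma\varphi_2^{-1}(U)$ that intertwines $\varphi_1^\#(U)$ and $\varphi_2^\#(U)$, whence $\congker(\varphi_1^\#(U))=\congker(\varphi_2^\#(U))$. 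Thus the two congruence kernels agree on an affine cover and therefore coincide.

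Next I would check that $\Psi$ is well defined, i.e.\ that $Y=\pi_X(V_\fC)$, embedded as a subspace of $X$ and equipped with $\cO_Y=(\cO_X/\fC)|_Y$, is precisely the closed subscheme produced by Lemma \ref{qcoh-cong}. Working over an affine open $U=\MSpec(A)$, that lemma yields the local closed immersion $\psi_U\colon\MSpec(A/\fC(U))\to U$, and the proof of Theorem \ref{thm: characterization of vanishing sets} shows $\til{\psi_U}(\Cong(A/\fC(U)))=V_{\fC(U)}=V_\fC\cap\til U$. Applying $\pi_X$ and invoking its functoriality together with the surjectivity of the projection from Proposition \ref{prop: projection onto the monoid scheme}, I obtain $\pi_X(V_\fC\cap\til U)=\psi_U(\MSpec(A/\fC(U)))$, so the topological space $\pi_X(V_\fC)$ is exactly the image of the closed immersion $\varphi$ glued from the pieces $\psi_U$. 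Since $\fC$ is quasi-coherent, the quotient sheaf $\cO_X/\fC$ restricts over $U$ to the quotient $A\to A/\fC(U)$, matching $\cO_Y=(\cO_X/\fC)|_Y$. Hence $\Psi(\fC)$ is the canonical representative of the closed subscheme determined by $\fC$.

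Finally, mutual inverseness is affine-local. For $\Phi\circ\Psi=\id$: given $\fC$ and $U=\MSpec(A)$, the closed immersion $\Psi(\fC)$ restricts over $U$ to $A\to A/\fC(U)$, whose congruence kernel is $\fC(U)$, because every congruence is the congruence kernel of its own quotient map; hence $\Congker(\Psi(\fC))=\fC$. For $\Psi\circ\Phi=\id$: given a closed immersion $\varphi$, over each affine $U=\MSpec(A)$ one has $\varphi^{-1}(U)=\MSpec(B)$ with $B\cong A/\fc$ for $\fc=\congker(A\to B)=\Congker(\varphi)(U)$, so $\Psi(\Congker(\varphi))$ agrees with $\varphi$ over $U$ and therefore represents the same closed subscheme. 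I expect the only genuinely delicate point to be the bookkeeping in the third paragraph, namely confirming that the explicit description $Y=\pi_X(V_\fC)$ with $\cO_Y=(\cO_X/\fC)|_Y$ reproduces the monoid scheme glued from the local pieces of Lemma \ref{qcoh-cong}; once this is settled, the two inverseness identities follow immediately from the local quotient and congruence-kernel formulas.
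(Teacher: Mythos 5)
Your proposal is correct and follows the paper's own proof essentially step for step: well-definedness of $\Phi$ and $\Psi$ via Lemma \ref{qcoh-cong}, the affine-local identification $\pi_X(V_\fC)\cap U=\MSpec(A/\fC(U))$ together with the quasi-coherence of $\cO_X/\fC$ for $\Psi$, and the local formula $\Congker(\varphi)(U)=\congker(\varphi^\#(U))$ for both inverseness identities. The only cosmetic difference is in $\Psi\circ\Phi=\id$, where the paper invokes the global identity $\til\varphi(\til Y)=V_{\Congker(\varphi)}$ from Theorem \ref{thm: characterization of vanishing sets} and pushes it down along $\pi_X$, while you instead check directly that the two closed immersions agree over each affine open via the canonical isomorphism $B\cong A/\congker(\varphi^\#(U))$ --- an equivalent local rephrasing whose gluing compatibility is automatic by naturality.
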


\begin{proof}
 As the first part of the proof, we very that $\Phi$ and $\Psi$ are well-defined. If $[\varphi:Y\to X]$ is a closed subscheme, then we know by Lemma \ref{qcoh-cong} that $\Congker(\phi)$ is a quasi-coherent congruence sheaf. Thus $\Phi$ is well defined. 
 
 Let $\mathfrak{C}$ be any quasi-coherent congruence sheaf on $X$. Then by Lemma \ref{qcoh-cong}, we have a closed immersion $\varphi: Y\longrightarrow X$ with $\varphi|_{\varphi^{-1}(U)}: \MSpec (A/\fC(U))\to U=\MSpec(A)$. As a subspace of $X$, we indeed have $Y = \pi_X(V_\mathfrak{C})$ since for any affine open $U=\MSpec(A)$ of $X$ we have 
 \begin{multline*}
  \pi_X(V_\mathfrak{C})\cap U \ = \ \pi_X(V_\mathfrak{C}\cap\widetilde{U}) \ = \ \pi_X(V_{\mathfrak{C}(U)}) \\
  = \ \pi_X(\Cong(A/\mathfrak{C}(U))) \ = \ \MSpec (A/\mathfrak{C}(U)).  
 \end{multline*}
 In addition, $(\cO_X/\fC)|_U$ is the quasi-coherent sheaf of monoids $({ A/\mathfrak{C}(U)})^\sim$ associated with the $A$-module $A/\mathfrak{C}(U)$.  Therefore 
 \[
  (\cO_X/\fC)|_{\MSpec(A/\mathfrak{C}(U))}=({ A/\mathfrak{C}(U)\otimes_A A/\mathfrak{C}(U)})^\sim= ({ A/\mathfrak{C}(U)})^\sim = \cO_{\MSpec(A/\mathfrak{C}(U))}
 \]
 where $A/\mathfrak{C}(U)$ is viewed as an $A/\mathfrak{C}(U)$-module. Since an affine cover of $Y$ is given by $(\MSpec(A/\mathfrak{C}(U)), \cO_{\MSpec(A/\mathfrak{C}(U))})$, it follows that $\cO_Y=(\cO_X/\fC)|_Y$. Therefore $\Psi$ is also well defined.
 
 We turn to the proof that $\Phi$ and $\Psi$ are mutually inverse bijections. Let $\mathfrak{C}$ be a quasi-coherent congruence sheaf on $X$. For any affine open $U=\MSpec(A)$ of $X$, we have $\Phi(\Psi(\mathfrak{C}))|_{U}= \Congker(\varphi)|_{U}$ where $[\varphi:\pi_X(V_\mathfrak{C})\longrightarrow X ]$ denotes the closed subscheme associated with $\mathfrak{C}$. Since 
 \[
  \Congker(\varphi)|_{U} \ = \ {\congker (\varphi^\#(U))}^\sim \ = \ {\mathfrak{C}(U)}^\sim \ = \ \mathfrak{C}|_U, 
 \]
 it follows that $\Phi(\Psi(\mathfrak{C})) = \mathfrak{C}$. 
 
 Conversely, let $[\varphi:Y\to X]$  be a closed subscheme of $X$. By Theorem \ref{thm: characterization of vanishing sets}$(1)$, we know that $\widetilde{\varphi}(\widetilde{Y})=V_{\Congker(\varphi)}$. Therefore $\pi_X(V_{\Congker(\varphi)})= \pi_X(\widetilde{\varphi}(\widetilde{Y}))= \varphi(\pi_Y(\widetilde{Y}))=\varphi(Y)$. Thus $\Psi(\Phi([\varphi: Y\longrightarrow X]))= [\varphi:Y\longrightarrow X]$, which completes the proof.  
\end{proof}

\begin{rem}\label{vanish-c.i-qcoh}
 The results of this section can be summarized in the following diagram:
 \[
  \begin{tikzcd}[column sep=60]
   \big\{\text{vanishing sets of $X$}\big\} \ar[<->,r,"\text{bijective}"] \ar[>->,d,shift right=2] & \Big\{\begin{array}{c}\text{strongly reduced}\\ \text{closed subschemes of $X$}\end{array}\Big\} \ar[>->,d,shift right=2] \\
   \Big\{\begin{array}{c}\text{quasi-coherent} \\ \text{congruence sheaves on $X$}\end{array}\Big\} \ar[->>,u,shift right=2]\ar[<->,r,"\text{bijective}"] & \big\{\text{closed subschemes of $X$}\big\} \ar[->>,u,shift right=2]
  \end{tikzcd}
 \]
 The upper horizontal arrow identifies a vanishing set with the unique strongly reduced closed subscheme supported on it and the lower horizontal arrow stands for the bijection from Theorem \ref{thm: equivalence between closed subschemes and congruence scheaves}. The downwards arrow on the left hand side sends a vanishing set to the corresponding reduced quasi-coherent congruence sheaf, the upwards arrow sends a quasi-coherent congruence sheaf to its vanishing set. The downwards arrow on the right hand side is the natural inclusion, the upwards arrow sends a closed subscheme to its strong reduction. 
 
 All of these arrows are functorial in, respectively, inclusions of subspaces, surjections of sheaves and closed immersions of closed subschemes. The horizontal arrows become equivalences of categories and the vertical arrows are mutual adjoints (one is an embedding, the other a retract of categories). The horizontal arrows commute with the downwards arrows and with the upwards arrows.
\end{rem}

%%%%%%%%%%%%%%%%%%%%%%%%%%%%%%%%%%%%%%%%%%%%%%%%%%%%%%%%%%%%%%%%%%%%%%%%%%%%%%%%%%%%%%%%%%%%%%%%%%%%%%%%%%%%%%%%%%%%%%%%%%%%%%%%%%%%%%%%%%%%%%%%%%%%%%%%%%%%%%%%%%%%%
\subsection{Quasi-compact morphisms}
\label{subsection: quasi-compact morphisms}

\begin{defn}
 A morphism $\phi:Y\to X$ of monoid schemes is \emph{quasi-compact} if $\phi^{-1}(U)$ is quasi-compact for all affine open subsets $U$ of $X$.
\end{defn}

Note that an affine monoid scheme $Y=\MSpec(A)$ is always quasi-compact since it has a unique closed point. Therefore affine morphisms are quasi-compact. Since finite unions of affine opens are also quasi-compact, morphisms of finite type are quasi-compact. In general, quasi-compactness of morphisms can be verified on a chosen open affine covering $\cU$ of $X$. That is, $\varphi:Y\to X$ is quasi-compact if $\phi_i^{-1}(U)$ is quasi-compact for all $U\in\cU$.

%%%%%%%%%%%%%%%%%%%%%%%%%%%%%%%%%%%%%%%%%%%%%%%%%%%%%%%%%%%%%%%%%%%%%%%%%%%%%%%%%%%%%%%%%%%%%%%%%%%%%%%%%%%%%%%%%%%%%%%%%%%%%%%%%%%%%%%%%%%%%%%%%%%%%%%%%%%%%%%%%%%%%
\subsection{Vanishing closure}
\label{subsection: Vanishing closure}

\begin{defn}
 Let $X$ be a monoid scheme with congruence space $\til X$. Let $Z$ be a subset of $\til X$. The \emph{vanishing closure of $Z$} is the vanishing set
 \[
  Z^\vcl \ = \ \bigcap_{\substack{Z\subset V\\ \text{$V$ vanishing set}}} V.
 \]
\end{defn}

Note that $Z^\vcl$ is indeed a vanishing set due to its affine nature and the relation $V_\fc\cap V_{\fc'}=V_{\gen{\fc\cup\fc'}}$ given by Lemma \ref{lemma: vanishing sets of affine monoid schemes}.

\begin{lemma}\label{lemma: the congrunce kernel of a quasi-compact morphism is quasi-coherent}
 The congruence kernel of a quasi-compact morphism of monoid schemes $\varphi:Y\to X$ is quasi-coherent.
\end{lemma}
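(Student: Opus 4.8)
The plan is to reduce to the affine base and then exploit that quasi-compactness provides a \emph{finite} affine cover of the preimage, since the localization of congruences commutes with finite, but not arbitrary, intersections. Quasi-coherence of $\Congker(\varphi)$ is local on $X$ and can be tested on the basis of principal opens, so I would first reduce to $X=\MSpec(A)$ affine and show that for every $h\in A$, writing $S_h=\{h^j\}_{j\geq0}$, one has $\Congker(\varphi)(U_h)=S_h^{-1}\fc$ where $\fc=\Congker(\varphi)(X)$. By the same computation already carried out in the proof of Lemma \ref{qcoh-cong} we have $S_h^{-1}\fc=\fc\otimes_AA[h^{-1}]=\fc^\sim(U_h)$, which is precisely the statement $\Congker(\varphi)|_X\simeq\fc^\sim$ of quasi-coherence.

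Since $\varphi$ is quasi-compact and $X$ is affine, $Y=\varphi^{-1}(X)$ is quasi-compact, so I choose a \emph{finite} affine open cover $Y=\bigcup_{i=1}^n V_i$ with $V_i=\MSpec(B_i)$ and structure maps $\beta_i\colon A\to B_i$ coming from $\varphi|_{V_i}\colon V_i\to X$. The separation axiom for the sheaf $\cO_Y$ makes $\Gamma Y\hookrightarrow\prod_{i=1}^n B_i$ injective, and the $i$-th component of $\varphi^\#(X)$ is $\beta_i$, so kernels computed into an injective target give $\fc=\congker(A\to\Gamma Y)=\bigcap_{i=1}^n\congker(\beta_i)$. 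For the principal open $U_h$ the same mechanism applies: each intersection $\varphi^{-1}(U_h)\cap V_i$ equals the principal open of $V_i$ cut out by $\beta_i(h)$, namely $\MSpec\bigl(B_i[\beta_i(h)^{-1}]\bigr)$ with sections $S_h^{-1}B_i$, and these pieces cover $\varphi^{-1}(U_h)$. Using injectivity of $\Gamma\varphi^{-1}(U_h)\hookrightarrow\prod_i S_h^{-1}B_i$ together with the compatibility of pullback maps with restriction (so that the $i$-th component of $\varphi^\#(U_h)$ is the localized morphism $(\beta_i)_{S_h}\colon S_h^{-1}A\to S_h^{-1}B_i$), I obtain $\Congker(\varphi)(U_h)=\bigcap_{i=1}^n\congker\bigl((\beta_i)_{S_h}\bigr)$. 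By Lemma \ref{lemma: kernels of localization} each term equals $S_h^{-1}\congker(\beta_i)$, so $\Congker(\varphi)(U_h)=\bigcap_{i=1}^n S_h^{-1}\fc_i$ with $\fc_i=\congker(\beta_i)$.

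It remains to interchange localization with the finite intersection, i.e.\ to prove $\bigcap_{i=1}^n S_h^{-1}\fc_i=S_h^{-1}\bigl(\bigcap_{i=1}^n\fc_i\bigr)$. The inclusion $\supseteq$ is immediate. For $\subseteq$, given $(\tfrac as,\tfrac{a'}{s'})$ lying in every $S_h^{-1}\fc_i$, I pick $t_i\in S_h$ with $(t_is'a,t_isa')\in\fc_i$ and set $t=t_1\cdots t_n\in S_h$; multiplying the $i$-th relation by $\prod_{j\neq i}t_j$ and using that each $\fc_i$ is a congruence gives $(ts'a,tsa')\in\fc_i$ for all $i$, hence $(ts'a,tsa')\in\bigcap_i\fc_i$, so $(\tfrac as,\tfrac{a'}{s'})\in S_h^{-1}\bigl(\bigcap_i\fc_i\bigr)$. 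Combining the two paragraphs yields $\Congker(\varphi)(U_h)=S_h^{-1}\bigcap_i\fc_i=S_h^{-1}\fc$, as needed.

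The main obstacle — and the only point where quasi-compactness is actually used — is this final interchange of localization and intersection: the element $t=t_1\cdots t_n$ belongs to $S_h$ only because the product is finite, which would fail for an infinite affine cover. Everything else is a formal unwinding of the sheaf separation axiom, the identification of $\varphi^{-1}(U_h)\cap V_i$ as a principal open, and Lemma \ref{lemma: kernels of localization}; in particular, note that no quasi-separatedness hypothesis is needed, because a congruence kernel is computed purely from the injection into the product $\prod_i\Gamma V_i$ and never involves the overlaps $V_i\cap V_j$.
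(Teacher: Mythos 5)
Your proof is correct and follows the same skeleton as the paper's: reduce to an affine base $X=\MSpec(A)$, test quasi-coherence on principal opens $U_h$, use quasi-compactness to choose a finite affine cover $Y=\bigcup_{i=1}^n V_i$ with $V_i=\MSpec(B_i)$ so that the sheaf-separation injection $\Gamma Y\hookrightarrow\prod_i B_i$ computes the congruence kernel, and invoke Lemma \ref{lemma: kernels of localization}. The one visible difference is where the finiteness is cashed in. The paper keeps the product as a single target: it uses that localization commutes with \emph{finite products} (the isomorphism $\iota\colon\prod_i B_i[\bar h_i^{-1}]\to\bigl(\prod_i B_i\bigr)[\bar h^{-1}]$, where $\bar h_i$ denotes the image of $h$ in $B_i$) and then runs one exactness-of-localization argument for the composite map $A\to\prod_i B_i$. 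You instead decompose the congruence kernel as the finite intersection $\bigcap_i\congker(\beta_i)$, localize each term by Lemma \ref{lemma: kernels of localization}, and prove directly that localization commutes with \emph{finite intersections} of congruences via the element $t=t_1\cdots t_n\in S_h$. These are equivalent encodings of the same finiteness; your route is slightly more elementary and self-contained (no auxiliary commutative diagram or product monoid is needed, at the cost of one extra interchange lemma proved inline), while the paper's applies the localization lemma only once. Your closing remarks are also accurate and match the paper: the product $t=t_1\cdots t_n$ is precisely where an infinite cover would break the argument, and no quasi-separatedness is needed because the congruence kernel is read off from the injection into the product alone and never involves the overlaps $V_i\cap V_j$.
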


\begin{proof}
 Since the question is local in $X$, we can assume that $X=\MSpec(B)$ is affine. Consider a principal affine open $U=\MSpec(B[h^{-1}]) \subseteq X$ and let $V$ denote the open subset $\varphi^{-1}(U)= U\times_X Y$ of $Y$. Let 
 \begin{equation*}
  \mathfrak{c}= \congker(\varphi^\#(X):B\longrightarrow \Gamma Y)\quad \text{and} \quad \mathfrak{c}_h= \congker(\varphi^\#(U):B[h^{-1}]\longrightarrow \Gamma V).
 \end{equation*}
 We need to show $\mathfrak{c}_h = S^{-1}\mathfrak{c}$ for $S=\{h^i\}_{i\geq 0}$. Since $\varphi$ is quasi-compact, we have $Y=\bigcup_{i\in I} W_i$ for affine open $W_i= \MSpec(C_i)$ where $I$ is a finite set. Therefore the corresponding morphism of global sections $\Gamma Y\to \prod_i C_i$ is injective. If $\bar{h}=\varphi^\#(X)(h)\in \Gamma Y$ and $\bar{h_i}= \res_{Y,W_i}(\bar{h})\in C_i$ then $C_i\otimes_B B[h^{-1}]=C[\bar{h_i}^{-1}]$. Thus $W_i\times_X U= \MSpec (C_i[\bar{h_i}^{-1}])$ and we have the following commutative diagram
 \begin{equation*}
	\xymatrix@!0@R=1.5cm@C=3cm{
		B \ar@<-2pt>[d]_{\lambda_h} \ar[r]^{\varphi^\#(X)}& \Gamma Y\ar[r]^{\zeta} \ar@<-2pt>[d]_{\res_{Y,V}} &  \prod_{i\in I} C_i\ar@<-2pt>[d]_{\prod\lambda_{\bar{h_i}}}\ar@/^3.0pc/@[black][dd]^{\lambda_{\bar{h}}}\\
		B[h^{-1}]\ar[r]_{\varphi^\#(U)} &	\Gamma V\ar[r]_{\eta} & \prod_{i\in I} C_i[\bar{h_i}^{-1}]\ar[d]_{\iota}\\
		& & (\prod_{i\in I} C_i)[\bar{h}^{-1}]
	}
 \end{equation*}
 where $\iota$ is an isomorphism since $I$ is finite. Both $\eta$ and $\zeta$ are injections. Thus if $\gamma=\zeta\circ \varphi^\#(X)$ and $\delta=\iota \circ\eta\circ \varphi^\#(U)$, then we have $\mathfrak{c}=\congker(\gamma)$ and $\mathfrak{c}_h= \congker(\delta)$. 
 
 For to show that $S^{-1}\mathfrak{c}= \mathfrak{c}_h$, it is enough to check that any element of the form $(\frac{b}{1}, \frac{b'}{1})$  in $S^{-1}\mathfrak{c}$ also belongs to $\mathfrak{c}_h$ and vice versa.  If $(\frac{b}{1}, \frac{b'}{1})\in S^{-1}\mathfrak{c}$, then there exists $t\in S$ such that $(bt, b't)\in \mathfrak{c}$ and therefore we have $(\lambda_{\bar{h}}\circ\gamma)(bt)=(\lambda_{\bar{h}}\circ\gamma)(b't)$. It follows from the commutativity of the above diagram that $\delta(\frac{bt}{1})= \delta(\frac{b't}{1})$. This implies that $(\frac{bt}{1}, \frac{b't}{1})\in \mathfrak{c}_h$ or, equivalently, $(\frac{b}{1}, \frac{b'}{1})\in \mathfrak{c}_h$. 
 
 Conversely, consider any $(\frac{b}{1}, \frac{b'}{1})$  in $ \mathfrak{c}_h$. Then we have $(\delta\circ \lambda_h)(b) = (\delta\circ \lambda_h)(b')$.  It follows from the commutativity of the above diagram that $\frac{\gamma(b)}{1} = \frac{\gamma(b')}{1}$, i.e., there exists some $i\in \mathbb{Z}$ such that $\gamma(b)\bar{h}^i = \gamma(b')\bar{h}^i$. In other words, we have $\gamma(bh^i) = \gamma(b'h^i)$, i.e., $(bh^i, b'h^i)\in \mathfrak{c}$. Thus $(\frac{b}{1}, \frac{b'}{1})$  in $S^{-1}\mathfrak{c}$.
\end{proof}

\begin{lemma}\label{lemma: vanishing set of the congruence kernel is the vanishing closure of the image}
 Let $\varphi:Y\to X$ be a quasi-compact morphism of monoid schemes and $\tilde\varphi:\til Y\to\til X$ the associated map of congruence spaces. Then
 \[
  V_{\Congker(\varphi)} \ = \ \im(\tilde\varphi)^\vcl.
 \]
\end{lemma}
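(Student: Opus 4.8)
The plan is to reduce to the affine case and then prove the two inclusions separately, with the finiteness coming from quasi-compactness being the decisive ingredient for one of them. First I would argue that the claim is local on $X$: both sides restrict compatibly to the open pieces $\til U$ for $U$ in an affine cover, since $V_{\Congker(\varphi)}\cap\til U=V_{\congker(\varphi^\#(U))}$ by the paragraph following Theorem \ref{thm: characterization of vanishing sets}, and since passing to the open subspace $\til U$ can only shrink a vanishing closure. Thus I may assume $X=\MSpec(B)$, in which case Lemma \ref{lemma: the congrunce kernel of a quasi-compact morphism is quasi-coherent} guarantees that $V_{\Congker(\varphi)}$ is genuinely the vanishing set $V_\fc$ for the congruence $\fc=\congker(\varphi^\#(X)\colon B\to\Gamma Y)$ on $B$.

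For the inclusion $\im(\tilde\varphi)^\vcl\subseteq V_\fc$ I would note that every point of $\im(\tilde\varphi)$ has the form $g_V^\ast(\fq)$, where $V=\MSpec(C)$ is an affine open of $Y$, $g_V\colon B\to C$ is the structure map of $\varphi|_V$ (a factor of $\varphi^\#(X)$ through a restriction), and $\fq$ is a prime congruence on $C$. Since $g_V$ factors through $B/\fc$, any $(b,b')\in\fc$ satisfies $g_V(b)=g_V(b')$ and hence $(b,b')\in g_V^\ast(\fq)$; this gives $\fc\subseteq g_V^\ast(\fq)$, so $\im(\tilde\varphi)\subseteq V_\fc$. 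As $V_\fc$ is itself a vanishing set, the inclusion of vanishing closures is immediate.

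The substantial direction is $V_\fc\subseteq\im(\tilde\varphi)^\vcl$. Here I would take an arbitrary vanishing set of the affine $\tilde X$, necessarily of the form $V_\fd$, with $\im(\tilde\varphi)\subseteq V_\fd$, and show $\fd\subseteq\sqrt\fc$; by Lemma \ref{lemma: vanishing sets of affine monoid schemes} this yields $V_\fc=V_{\sqrt\fc}\subseteq V_\fd$, and intersecting over all such $\fd$ produces the claim. Fix $(b,b')\in\fd$ and a finite affine cover $Y=\bigcup_{i=1}^n\MSpec(C_i)$ with structure maps $g_i\colon B\to C_i$. Since $\tilde\varphi$ restricts to $g_i^\ast$ on $\Cong(C_i)$, the hypothesis $\im(\tilde\varphi)\subseteq V_\fd$ forces $(g_i(b),g_i(b'))$ to lie in every prime congruence of $C_i$, hence in $\nil(C_i)=\bigcap_{\fq}\fq$ by Proposition \ref{prop: radicals}.

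Each such membership furnishes an exponent $n_i$ with $g_i(b)g_i(b)^{n_i}=g_i(b')g_i(b)^{n_i}$ together with the symmetric relation, and this is precisely where quasi-compactness is essential: because the index set is finite, I can pass to the uniform exponent $n=\max_i n_i$, using that membership in $\nil(C_i)$ is monotone in the exponent. The injection $\Gamma Y\hookrightarrow\prod_i C_i$ provided by the finite cover (as in the proof of Lemma \ref{lemma: the congrunce kernel of a quasi-compact morphism is quasi-coherent}) then upgrades these component-wise identities to $(bb^n,b'b^n)\in\fc$ and $(bb'^n,b'b'^n)\in\fc$, i.e.\ $(b,b')\in\sqrt\fc$. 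I expect the main obstacle to be exactly this extraction of a single exponent $n$ valid on all pieces at once; without quasi-compactness the $n_i$ may be unbounded and the argument breaks, which is what makes the hypothesis indispensable.
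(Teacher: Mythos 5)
Your proof is correct, and it takes a genuinely more hands-on route than the paper's. The paper's proof is essentially two sentences: quasi-compactness makes $\Congker(\varphi)$ quasi-coherent (Lemma \ref{lemma: the congrunce kernel of a quasi-compact morphism is quasi-coherent}), hence it defines a closed subscheme $Z$ of $X$ with support $V_{\Congker(\varphi)}$; this $Z$ is then asserted to be the smallest closed subscheme whose congruence space contains $\im(\tilde\varphi)$, so that the claim follows from the identification of vanishing sets with supports of closed subschemes (Theorem \ref{thm: characterization of vanishing sets}). You instead reduce to the affine case and prove the two inclusions directly, the decisive step being the explicit verification that $\im(\tilde\varphi)\subseteq V_\fd$ forces $\fd\subseteq\sqrt{\fc}$ for $\fc=\congker(\varphi^\#(X))$, via the finite affine cover, $\nil(C_i)=\bigcap\fq$ (Proposition \ref{prop: radicals}), the passage to a uniform exponent, the injection $\Gamma Y\hookrightarrow\prod_i C_i$, and finally $V_\fc=V_{\sqrt{\fc}}$ (Lemma \ref{lemma: vanishing sets of affine monoid schemes}). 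This in effect supplies, at the level of congruences, a proof of the minimality claim that the paper leaves implicit --- its phrase ``by definition of the congruence kernel and its quasi-coherence'' glosses over exactly the radical computation you carry out --- and it reuses the product-injection trick from the proof of the quasi-coherence lemma, now applied to radicals rather than to localizations. What each approach buys: the paper's argument is shorter and stays within the vanishing-set/closed-subscheme dictionary already established; yours is self-contained, and it pinpoints precisely where quasi-compactness is indispensable, namely the uniform exponent $n=\max_i n_i$, which is correct --- with infinitely many pieces the exponents could be unbounded and $(b,b')$ need not land in $\sqrt{\fc}$. Your localization step is stated tersely but is sound: the restriction of a vanishing set of $\til X$ to $\til U$ is a vanishing set of $\til U$ (so vanishing closures can only shrink under restriction), which combined with $V_{\Congker(\varphi)}\cap\til U=V_{\congker(\varphi^\#(U))}$ yields both global inclusions from the affine case.
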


\begin{proof}
 Since $\varphi$ is quasi-compact, the congruence kernel $\Congker(\varphi)$ defines a quasi-coherent congruence sheaf by \autoref{lemma: the congrunce kernel of a quasi-compact morphism is quasi-coherent}, which means that it defines a closed subscheme $Z$ of $X$ with vanishing set $V_{\Congker(\varphi)}$. 
 
 The closed subscheme $Z$ is, by definition of the congruence kernel and its quasi-coherence, the smallest closed subscheme of $X$ such that $\til Z$ contains the image of $\tilde\varphi$. Since every vanishing set is the support of the congruence space of some closed subscheme of $X$, this establishes the claim of the lemma.
\end{proof}

%%%%%%%%%%%%%%%%%%%%%%%%%%%%%%%%%%%%%%%%%%%%%%%%%%%%%%%%%%%%%%%%%%%%%%%%%%%%%%%%%%%%%%%%%%%%%%%%%%%%%%%%%%%%%%%%%%%%%%%%%%%%%%%%%%%%%%%%%%%%%%%%%%%%%%%%%%%%%%%%%%%%%
\subsection{A topological characterization of closed immersions}
\label{subsection: A topological characterization of closed immersions}

\begin{thm}\label{thm: topological characterization of closed immersion}
 Let $\varphi:Y\to X$ be a morphism of monoid scheme and $\tilde\varphi:\til Y\to \til X$ the associated map of congruence spaces. Then the following are equivalent.
 \begin{enumerate}
  \item\label{closed1} $\varphi:Y\to X$ is a closed immersion.
  \item\label{closed2} $\varphi:Y\to X$ is a quasi-compact topological embedding, $\varphi^\#:\cO_X\to\varphi_\ast\cO_Y$ is surjective and the image of $\tilde\varphi$ is a vanishing set.
 \end{enumerate}
\end{thm}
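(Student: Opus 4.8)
The plan is to prove the two implications in turn, drawing on the structural results of this section. For $\eqref{closed1}\Rightarrow\eqref{closed2}$: if $\varphi$ is a closed immersion, it is affine and hence quasi-compact, and the Remark following Proposition \ref{imm-emb} shows that it is a homeomorphism onto its image, so a topological embedding; surjectivity of $\varphi^\#:\cO_X\to\varphi_\ast\cO_Y$ is built into the definition of a closed immersion. Finally $\im(\tilde\varphi)$ is, by definition, the support of the closed subscheme represented by $\varphi$, and this is a vanishing set by the implication $\eqref{char1}\Rightarrow\eqref{char3}$ of Theorem \ref{thm: characterization of vanishing sets}. Thus every clause of \eqref{closed2} is immediate.

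For the converse $\eqref{closed2}\Rightarrow\eqref{closed1}$, I would proceed as follows. Since $\varphi$ is quasi-compact, Lemma \ref{lemma: the congrunce kernel of a quasi-compact morphism is quasi-coherent} shows that $\fC:=\Congker(\varphi)$ is a quasi-coherent congruence sheaf, so by Lemma \ref{qcoh-cong} (see also Theorem \ref{thm: equivalence between closed subschemes and congruence scheaves}) it determines a closed immersion $\iota:Z\to X$ whose image $\tilde\iota(\til Z)$ is the vanishing set $V_\fC$ and whose underlying subspace is $\pi_X(V_\fC)$, with $\cO_Z=(\cO_X/\fC)\vert_Z$. By construction $\varphi$ factors as $Y\xrightarrow{\psi}Z\xrightarrow{\iota}X$, since over every affine open $U$ the map $\varphi^\#(U)$ has congruence kernel $\fC(U)$ and hence factors through $\Gamma U\to\Gamma U/\fC(U)=\cO_Z(Z\cap U)$; these local factorizations glue to a morphism $\psi:Y\to Z$. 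It then suffices to show that $\psi$ is an isomorphism, for then $\varphi=\iota\circ\psi$ is a closed immersion. The hypothesis that $\im(\tilde\varphi)$ be a vanishing set enters now through Lemma \ref{lemma: vanishing set of the congruence kernel is the vanishing closure of the image}: it gives $V_\fC=\im(\tilde\varphi)^\vcl=\im(\tilde\varphi)$, and applying $\pi_X$ together with functoriality of $\pi$ and surjectivity of $\pi_Y$ yields $\varphi(Y)=\pi_X(\im\tilde\varphi)=\pi_X(V_\fC)=Z$ as subspaces of $X$.

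It remains to check that $\psi$ is an isomorphism of monoid schemes, which I would do by treating the underlying space and the stalks separately. Since $\varphi$ is a topological embedding with image $Z$ and $\iota$ is a homeomorphism onto $Z$, the factor $\psi$ is a homeomorphism. For the stalks, fix $y\in Y$, put $z=\psi(y)$ and $x=\iota(z)$; the stalk map $\varphi_y^\#:\cO_{X,x}\to\cO_{Y,y}$ is surjective because $\varphi^\#$ is a surjection of sheaves, and its congruence kernel is the stalk $\fC_x$ of the quasi-coherent sheaf $\fC$, since taking congruence kernels commutes with the filtered colimits defining the stalks by Lemma \ref{lemma: kernels of localization}. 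As $\cO_{Z,z}=\cO_{X,x}/\fC_x$, the induced stalk map $\psi^\#_z:\cO_{Z,z}\to\cO_{Y,y}$ is an isomorphism. A morphism of monoid schemes that is a homeomorphism and induces isomorphisms on all stalks is an isomorphism, so $\psi$ is an isomorphism and $\varphi=\iota\circ\psi$ is a closed immersion.

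The step I expect to be the main obstacle is this converse implication, specifically the verification that $\psi$ is a homeomorphism. This is precisely where the hypothesis that $\im(\tilde\varphi)$ is a vanishing set — and not merely a closed subset of $\til X$ — is indispensable: it is what identifies $\im(\tilde\varphi)$ with $V_{\Congker(\varphi)}$ and thereby pins down the image $\varphi(Y)$ inside $X$. The complementary role of quasi-compactness is to guarantee the quasi-coherence of $\Congker(\varphi)$; this is what makes the stalk $\fC_x$ agree with the congruence kernel of $\varphi^\#_y$, and without it $\psi^\#$ could fail to be an isomorphism on stalks.
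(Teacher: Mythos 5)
Your proposal is correct, and while the first half of your converse argument coincides with the paper's --- both use quasi-compactness to make $\fC=\Congker(\varphi)$ quasi-coherent (Lemma \ref{lemma: the congrunce kernel of a quasi-compact morphism is quasi-coherent}), combine Lemma \ref{lemma: vanishing set of the congruence kernel is the vanishing closure of the image} with the vanishing-set hypothesis to get $\im(\tilde\varphi)=V_\fC$, and factor $\varphi$ through the closed immersion $Z\to X$ --- your endgame is genuinely different. The paper, after substituting $X$ by $Z$, localizes to affine $X$ and proves that $Y$ is affine: it locates the unique closed point $\fm$ of the fibre $\pi_X^{-1}(M)$ over the maximal ideal $M$, uses injectivity of $\tilde\varphi$ to find its unique preimage $\fn\in\til Y$, and argues that every closed point of $Y$ equals $\pi_Y(\fn)$, so that $Y$ has a unique closed point, is affine, and hence $\varphi:Y\to\MSpec(\Gamma Y)=X$ is the identity. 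You instead transplant the locally ringed space argument from ordinary scheme theory: $\psi:Y\to Z$ is a homeomorphism (from the embedding hypothesis and $\varphi(Y)=\pi_X(V_\fC)=Z$) and a stalkwise isomorphism (surjective by the hypothesis on $\varphi^\#$, injective because $\congker(\varphi^\#_x)=\fC_x$), and a homeomorphism inducing stalkwise isomorphisms is an isomorphism of monoid schemes. Your route buys a proof closer to the classical one and avoids the monoid-scheme-specific fact that a quasi-compact monoid scheme with a unique closed point is affine; the paper's route, exploiting closed points of congruence spaces, avoids the sheaf-theoretic bookkeeping your argument needs. On that bookkeeping, two points deserve to be made explicit, though neither is a gap: the identification $(\varphi_\ast\cO_Y)_x\cong\cO_{Y,y}$, used tacitly when you call $\varphi^\#_y$ surjective, holds only because $\varphi$ is a topological embedding (so the opens $\varphi^{-1}(U)$ with $U\ni x$ are cofinal among neighbourhoods of $y$); and Lemma \ref{lemma: kernels of localization} is literally a statement about localizations, so to compute $\congker(\varphi^\#_x)$ you should either invoke it on affines, where stalks are localizations, or simply note that congruence kernels commute with the filtered colimits defining stalks. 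Relatedly, your closing remark slightly misattributes the role of quasi-compactness: the identity $\fC_x=\congker(\varphi^\#_x)$ is automatic for filtered colimits; quasi-compactness is needed so that $\Congker(\varphi)$ is quasi-coherent, making $Z$ exist as a closed subscheme and Lemma \ref{lemma: vanishing set of the congruence kernel is the vanishing closure of the image} applicable.
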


\begin{proof}
 By Proposition \ref{prop: closed immersions of congruence spaces}, \eqref{closed1} implies \eqref{closed2}. To prove the converse implication, assume \eqref{closed2} and let $\fC$ be the congruence kernel of $\varphi$. By Lemma \ref{lemma: vanishing set of the congruence kernel is the vanishing closure of the image}, the image of $\tilde\varphi$ is equal to the vanishing set $V_\fC$ of $\fC$. Thus $\varphi$ factors through the closed immersion $\psi:Z\to X$ corresponding to $\fC$. After substituting $X$ by $Z$, we can thus assume that $\varphi^\#:\cO_X\to\varphi_\ast\cO_Y$ is an isomorphism of sheaves and that $\tilde\varphi$ is a homeomorphism. 
 
 Since the problem is local in $X$, we can assume that $X$ is affine, i.e.\ $X=\MSpec(\Gamma X)\simeq\MSpec(\Gamma Y)$. This reduces us to the situation that $\varphi$ is the canonical morphism $\varphi:Y\to \MSpec(\Gamma Y)=X$. Under these assumptions, it suffices to show that $Y$ is affine, since this implies that $Y=\MSpec(\Gamma Y)=X$ and that $\varphi:Y\to X$ is the identity.
 
 Let $M$ be the maximal prime ideal of $B=\Gamma X$. Then its fibre $\pi_X^{-1}(M)$ in $\til X$ has a unique closed point $\fm$, which is the congruence kernel of the composition $B\to M/B\to \Fun$ of the quotient map $B\to B/M$ with the unique map $B/M\to\Fun$ that maps all units of the pointed group $B/M$ to $1$. Let $Z=\varphi^{-1}(\fm)$ and $\fn\in \til Y$ be the unique point that maps to $\fm$. Since $\pi_X\circ\tilde\varphi=\varphi\circ\pi_Y$ and since $\tilde\varphi$ is a homeomorphism, $\fn$ is the unique closed point of $\pi_Y^{-1}(Z)$. Let $N=\pi_Y(\fn)$.

 Let $y$ be any closed point of $Y$. Since $\varphi$ is a topological embedding, $\varphi(y)=M$ and thus $y\in Z$. Since $y$ is closed, $\pi_Y^{-1}(y)$ is closed in $\til Y$ and thus contains $\fn$. This shows that $y=N$. We conclude that $Y$ has a unique closed point and is therefore affine, which concludes the proof.
\end{proof}

%%%%%%%%%%%%%%%%%%%%%%%%%%%%%%%%%%%%%%%%%%%%%%%%%%%%%%%%%%%%%%%%%%%%%%%%%%%%%%%%%%%%%%%%%%%%%%%%%%%%%%%%%%%%%%%%%%%%%%%%%%%%%%%%%%%%%%%%%%%%%%%%%%%%%%%%%%%%%%%%%%%%%
\subsection{Dominant morphisms}
\label{subsection: Dominant morphisms}

\begin{defn}
 Let $X$ be a monoid scheme and $\til X$ its congruence space. A subset $Z$ of $\til X$ is \emph{strictly dense} if its vanishing closure is $\til X$. A morphism $\varphi:Y\to X$ of monoid schemes is \emph{dominant} if the associated map $\tilde\varphi:\til Y\to \til X$ of congruence spaces has a strictly dense image.
\end{defn}

\begin{lem}
If $\phi: Y\to X$ is a dominant morphism of monoid schemes, then $\phi(Y)$ is dense in $X$.
\end{lem}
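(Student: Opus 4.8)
The plan is to reduce the density statement to the topological criterion that $\phi(Y)$ is dense in $X$ if and only if it meets every nonempty basic open subset $U_h$, where $U_h\subseteq V$ ranges over the basic opens of the affine open subschemes $V=\MSpec(B)$ of $X$. First I would record the identity $\phi(Y)=\pi_X(\tilde\phi(\til Y))$, which follows from the commutativity of the square in Proposition \ref{prop: projection onto the monoid scheme} together with the surjectivity of $\pi_Y$; this transports the problem to the congruence spaces, where the hypothesis of dominance lives.

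The heart of the argument is a single structural observation: for every closed subset $C$ of $X$, the preimage $\pi_X^{-1}(C)$ is a \emph{vanishing set} of $\til X$. I would verify this on an affine open cover and invoke Lemma \ref{lemma: vanishing sets are detectable by open affine covers}. Concretely, on an affine open $V=\MSpec(B)$ write $C\cap V=V(I)$ for an ideal $I\subseteq B$; since $\pi_X$ restricts to $\pi_V$ with $\pi_V(\fp)=I_\fp$, one computes
\[ \pi_X^{-1}(C)\cap\til V \ = \ \{\fp\in\Cong(B)\mid I\subseteq I_\fp\} \ = \ \bigcap_{a\in I}V_{a,0} \ = \ V_{\{(a,0)\mid a\in I\}}, \]
which is of the required form $V_\fc$. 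Thus $\pi_X^{-1}(C)$ is a vanishing set.

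With this in hand I would argue by contradiction. If $\phi(Y)$ is not dense, there is a nonempty basic open $U_h$ of some affine chart with $U_h\cap\phi(Y)=\emptyset$; putting $C=X-U_h$ (closed) and $Z=\tilde\phi(\til Y)$, the identity $\phi(Y)=\pi_X(Z)\subseteq C$ gives $Z\subseteq\pi_X^{-1}(C)$. By the observation above $\pi_X^{-1}(C)$ is a vanishing set, and it is \emph{proper} in $\til X$ because $\pi_X^{-1}(U_h)$ is nonempty (as $\pi_X$ is surjective and $U_h\neq\emptyset$). Hence the vanishing closure satisfies $Z^\vcl\subseteq\pi_X^{-1}(C)\subsetneq\til X$, contradicting the dominance hypothesis $Z^\vcl=\til X$. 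This forces $\phi(Y)$ to meet every nonempty open, i.e.\ to be dense.

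I would flag that the main subtlety, and the reason the proof is not entirely immediate, is that \emph{strict} density of $Z$ (vanishing closure equal to $\til X$) is strictly weaker than topological density of $Z$ in $\til X$: since vanishing sets are closed but not conversely, one only has $\overline Z\subseteq Z^\vcl$. Consequently one cannot simply say that $Z$ is dense and deduce that $\pi_X(Z)$ is dense. The fix is precisely the observation that the particular closed sets we must exclude, namely the preimages $\pi_X^{-1}(C)$ of closed subsets of $X$, are always vanishing sets, so that dominance does give control over them. The only mild technical point is the localization behaviour already packaged in Lemma \ref{lemma: vanishing sets are detectable by open affine covers}, which lets the chart-wise computation glue to a global vanishing set.
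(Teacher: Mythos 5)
Your proof is correct, and it takes a genuinely different route from the paper's. The paper argues via the section $\sigma_Y:Y\to\til Y$ of $\pi_Y$: since $\sigma_Y(q)$ is the minimal point of the fibre $\pi_Y^{-1}(q)$, every open neighbourhood of a point of $\im(\tilde\phi)$ contains a point of $\tilde\phi\circ\sigma_Y(Y)$, whence $\overline{\tilde\phi(\til Y)}=\overline{\tilde\phi\circ\sigma_Y(Y)}\subseteq\pi_X^{-1}(\overline{\phi(Y)})$, and surjectivity of $\pi_X$ concludes. You instead prove the structural fact that $\pi_X^{-1}(C)$ is a vanishing set of $\til X$ for every closed $C\subseteq X$ (checked chartwise via Lemma~\ref{lemma: vanishing sets of affine monoid schemes} and glued by Lemma~\ref{lemma: vanishing sets are detectable by open affine covers}) and then run the definition of the vanishing closure directly against the dominance hypothesis. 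Notably, your route confronts head-on the very subtlety you flag: the paper's proof contains the assertion $\til X=\overline{\tilde\phi(\til Y)}$, which, read as topological closure, would require the image of $\tilde\phi$ to be dense --- and by the remark immediately following the lemma, dominance (strict density) does \emph{not} imply density of the image. Your observation that preimages under $\pi_X$ of closed subsets of $X$ are vanishing sets is precisely what legitimizes that step: one gets $\til X=\tilde\phi(\til Y)^\vcl\subseteq\pi_X^{-1}(\overline{\phi(Y)})$ because the right-hand side is a vanishing set containing the image, with no density claim needed. So your argument is, if anything, more robust at exactly the point where the paper is most terse, and it isolates a reusable compatibility between the closed sets of $X$ and the vanishing sets of $\til X$; the paper's approach, in turn, exploits the minimality of $\sigma_Y$ in fibres, a fact of independent use elsewhere in the theory. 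One small point worth a line in a polished write-up: your chartwise identity $C\cap V=V(I)$ uses that \emph{every} closed subset of $\MSpec(B)$ has this form, which holds because the basic opens are stable under intersection ($U_{h_1}\cap U_{h_2}=U_{h_1h_2}$ by primality of the points), so closed sets are exactly intersections of sets $V(h)$, i.e.\ of the form $V(I)$.
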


\begin{proof}
Since the morphisms $\pi_Y: \widetilde{Y} \to Y$ are functorial and $\pi_Y\circ \sigma_Y= \id_Y$, the diagram 
\[ \begin{tikzcd}
	& \widetilde{Y}\arrow{r}{{\tilde\phi}} & \widetilde{X}\arrow{d}{\pi_X}\\
	& Y \arrow{u}{\sigma_Y} \arrow{r}{\phi} & X
\end{tikzcd}
\]
commutes. By definition, $\sigma_Y(p)$ is minimal in $\pi_Y^{-1}(p)$ for all $p\in Y$, i.e., $\pi_Y^{-1}(p)\subseteq \overline{\{\sigma_Y(p)\}}$. We consider any $\mathfrak{c}\in {\tilde\phi}(\widetilde{Y})$ and let $ \mathfrak{c}={\tilde\phi}(\mathfrak{d})$. Let $V$ be any open in $\widetilde{X}$ such that $\mathfrak{c}\in V$. If $q= \pi_Y(\mathfrak{d})$, then we have $	\mathfrak{d}\in \pi_Y^{-1}(q)\subseteq \overline{\{\sigma_Y(q)\}}$ which implies $\sigma_Y(q)\in {\tilde\phi}^{-1}(V)$. Thus ${\tilde\phi}(\sigma_Y(q))\in V$ and so $V\cap {\tilde\phi}(\sigma_Y(Y))\neq \emptyset$. Therefore it follows that $\overline{{\tilde\phi}(\widetilde{Y})}=\overline{{\tilde\phi}\circ\sigma_Y(Y)}$. Hence $\widetilde{X}=\overline{{\tilde\phi}(\widetilde{Y})}=\overline{{\tilde\phi}\circ\sigma_Y(Y)} \subseteq \pi_X^{-1}(\overline{\phi(Y)})$. This shows that $\overline{\phi(Y)}=X$.
\end{proof}

\begin{rem}
 Note that $\varphi:Y\to X$ does not need to be dominant if $\varphi(Y)$ is dense in $X$. For example, the diagonal $\Delta:\A^1_\Fun\to\A^2_\Fun$ has a dense image, but is not dominant. 
 
 On the other hand, a strictly dense image of the associated map $\tilde\varphi:\til Y\to\til X$ of congruence spaces does not imply that the image of $\tilde\varphi$ is dense, as attested by the morphism $\varphi:\MSpec(\Fun)\amalg\MSpec(\Fun)\to\A^1_\Fun$ that stems from the two different maps $\Fun[t]\to\Fun$, one which sends $t$ to $0$, the other which sends $t$ to $1$. The image of $\tilde\varphi$ consists of the two prime congruences $\gen{(t,0)}$ and $\gen{(t,1)}$ on $\A^1_\Fun$, which form a proper closed subset of $\Cong(\A^1_\Fun)$. Thus the image is \textit{not} dense, but its vanishing closure is all of $\Cong(\A^1_\Fun)$. 
\end{rem}

\begin{lem}
 Let $\varphi:Y\to X$ be a dominant morphism of monoid schemes and $X$ be strongly reduced. Then $\varphi^\#:\Gamma X\to\Gamma Y$ is injective.
\end{lem}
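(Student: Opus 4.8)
The plan is to prove that the congruence kernel of $f=\varphi^\#\colon\Gamma X\to\Gamma Y$ is trivial. Fix a pair $(a,b)\in\congker(f)$. Since $\cO_X$ is a sheaf, it suffices to show $a|_U=b|_U$ for every member $U=\MSpec(A)$ of an affine open cover of $X$, and because $X$ is strongly reduced each such $A$ satisfies $\nil(A)=\fc_\triv=\bigcap_{\fp\in\Cong(A)}\fp$ by Proposition \ref{prop: radicals}. Thus it is enough to show that $(a|_U,b|_U)$ lies in \emph{every} prime congruence $\fp$ of $A$. I would encode this condition on $\til X$: for $\tilde x\in\til U$ corresponding to the prime congruence $\fp$ on $A=\Gamma U$, say that $(a,b)$ \emph{vanishes at} $\tilde x$ if $(a|_U,b|_U)\in\fp$; this is independent of the chosen affine $U$ by the localization compatibility of Lemma \ref{loc-van} and Proposition \ref{prop: prime congruences in localizations}. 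The locus $V(a,b)$ of points where $(a,b)$ vanishes meets each $\til U$ in $V_{\gen{(a|_U,b|_U)}}$, so $V(a,b)$ is a vanishing set of $\til X$ in the sense of Section \ref{subsection: Vanishing sets}.

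The key step is to show $\im(\widetilde\varphi)\subseteq V(a,b)$. Given $\tilde y\in\til Y$, set $\tilde x=\widetilde\varphi(\tilde y)$ and choose an affine open $U\subseteq X$ with $\tilde x\in\til U$ (possible since $\{\til U\}$ covers $\til X$ by Proposition \ref{prop: functoriality of congruence spaces}). Then $\tilde y\in\widetilde\varphi^{-1}(\til U)=\widetilde{\varphi^{-1}(U)}$, so I may pick an affine open $W\subseteq\varphi^{-1}(U)$ with $\tilde y\in\til W$; writing $g\colon A=\Gamma U\to\Gamma W$ for the induced map of sections, the affine-local description of $\widetilde\varphi$ gives $\tilde x=g^\ast(\fq)$ for the prime congruence $\fq$ on $\Gamma W$ representing $\tilde y$. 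Since $g=\res_{\varphi^{-1}(U),W}\circ\varphi^\#(U)$ and $\varphi^\#$ commutes with restriction, $(a|_U,b|_U)\in\congker(\varphi^\#(U))\subseteq\congker(g)\subseteq g^\ast(\fq)=\fp$, so $(a,b)$ vanishes at $\tilde x$. Because $\varphi$ is dominant, $\im(\widetilde\varphi)^\vcl=\til X$; as $V(a,b)$ is a vanishing set containing $\im(\widetilde\varphi)$, minimality of the vanishing closure yields $\til X=\im(\widetilde\varphi)^\vcl\subseteq V(a,b)$, hence $V(a,b)=\til X$.

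It then follows that $(a|_U,b|_U)\in\fp$ for every affine open $U=\MSpec(A)$ and every $\fp\in\Cong(A)$, so $(a|_U,b|_U)\in\nil(A)=\fc_\triv$ by strong reducedness, i.e.\ $a|_U=b|_U$; the sheaf property gives $a=b$, proving $f$ injective. The main obstacle I anticipate is the passage through a \emph{single} pair $(a,b)$ rather than through the congruence kernel sheaf $\Congker(\varphi)$: without a quasi-compactness hypothesis on $\varphi$ the latter need not be quasi-coherent and need not define a vanishing set, so I cannot simply invoke Lemma \ref{lemma: vanishing set of the congruence kernel is the vanishing closure of the image}. Keeping the argument at the level of the individual vanishing set $V(a,b)$ circumvents this, and the only genuine verifications are that $V(a,b)$ is localization-compatible (hence a bona fide vanishing set) and that each image point $\widetilde\varphi(\tilde y)$ contains the relevant local congruence kernel.
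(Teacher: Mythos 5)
Your proof is correct, and it is in fact more careful than the paper's own argument. The paper disposes of this lemma in one line, by invoking Lemma \ref{lemma: vanishing set of the congruence kernel is the vanishing closure of the image} (that $V_{\Congker(\varphi)}=\im(\tilde\varphi)^\vcl$) together with Proposition \ref{sred-c.i}; but that lemma is stated only for \emph{quasi-compact} $\varphi$ --- quasi-compactness is exactly what guarantees, via Lemma \ref{lemma: the congrunce kernel of a quasi-compact morphism is quasi-coherent}, that $\Congker(\varphi)$ is quasi-coherent and hence cuts out a vanishing set --- and the statement at hand carries no quasi-compactness hypothesis. You identified precisely this mismatch and circumvented it: for a fixed pair $(a,b)\in\congker(\varphi^\#(X))$, the congruences $\gen{(a|_U,b|_U)}$ are automatically localization-compatible (the pushforward of $\gen{(a|_U,b|_U)}$ along a restriction $\Gamma U\to\Gamma V$ is $\gen{(a|_V,b|_V)}$, so Lemma \ref{loc-van} applies), hence $V(a,b)$ is a bona fide vanishing set containing $\im(\tilde\varphi)$ with no finiteness assumption; dominance then forces $V(a,b)=\til X$, and strong reducedness, via $\nil(\Gamma U)=\bigcap_{\fp}\fp=\fc_\triv$ from Proposition \ref{prop: radicals}, gives $a|_U=b|_U$ on an affine cover, so $a=b$ by the sheaf axiom. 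The overall strategy (dominance forces the relevant vanishing set to be all of $\til X$; strong reducedness then forces triviality) is the same as the paper's, but what your single-pair variant buys is validity in the stated generality --- in effect you prove by hand the only case of quasi-coherence of $\Congker(\varphi)$ that the argument actually needs. The two steps you leave implicit --- that $\tilde\varphi^{-1}(\til U)=\widetilde{\varphi^{-1}(U)}$ (which follows from $\til V=\pi_Y^{-1}(V)$ for open subschemes $V$, itself a consequence of Proposition \ref{prop: prime congruences in localizations} and the commutativity $\pi_X\circ\tilde\varphi=\varphi\circ\pi_Y$) and the well-definedness of your pointwise vanishing condition across the colimit defining $\til X$ --- are routine and are settled by exactly the localization compatibilities you cite.
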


\begin{proof}
 Since the image of the associated map $\tilde\varphi:\til Y\to\til X$ has a strictly dense image and since $X$ is strictly reduced, it follows from Lemma \ref{lemma: vanishing set of the congruence kernel is the vanishing closure of the image} and Proposition \ref{sred-c.i} that the congruence kernel of $\varphi$ is trivial. Thus $\varphi^\#:\Gamma X\to\Gamma Y$ is injective.
\end{proof}

%%%%%%%%%%%%%%%%%%%%%%%%%%%%%%%%%%%%%%%%%%%%%%%%%%%%%%%%%%%%%%%%%%%%%%%%%%%%%%%%%%%%%%%%%%%%%%%%%%%%%%%%%%%%%%%%%%%%%%%%%%%%%%%%%%%%%%%%%%%%%%%%%%%%%%%%%%%%%%%%%%%%%
%%%%%%%%%%%%%%%%%%%%%%%%%%%%%%%%%%%%%%%%%%%%%%%%%%%%%%%%%%%%%%%%%%%%%%%%%%%%%%%%%%%%%%%%%%%%%%%%%%%%%%%%%%%%%%%%%%%%%%%%%%%%%%%%%%%%%%%%%%%%%%%%%%%%%%%%%%%%%%%%%%%%%
\section{Valuation monoids}
\label{section: Valuation monoids}
%%%%%%%%%%%%%%%%%%%%%%%%%%%%%%%%%%%%%%%%%%%%%%%%%%%%%%%%%%%%%%%%%%%%%%%%%%%%%%%%%%%%%%%%%%%%%%%%%%%%%%%%%%%%%%%%%%%%%%%%%%%%%%%%%%%%%%%%%%%%%%%%%%%%%%%%%%%%%%%%%%%%%
%%%%%%%%%%%%%%%%%%%%%%%%%%%%%%%%%%%%%%%%%%%%%%%%%%%%%%%%%%%%%%%%%%%%%%%%%%%%%%%%%%%%%%%%%%%%%%%%%%%%%%%%%%%%%%%%%%%%%%%%%%%%%%%%%%%%%%%%%%%%%%%%%%%%%%%%%%%%%%%%%%%%%

The notion of a valuation monoid was first introduced in \cite{CHWW15}. 

\begin{defn}
 A \emph{valuation monoid} is an integral monoid $A$ such that for every non-zero element $\alpha\in \Frac A$, either $\alpha\in A$ or $\alpha^{-1}\in A$. 
\end{defn}

For our study, we require an existence result of valuation monoids that dominate a given integral monoid. All of this notions are introduced in the following.

%%%%%%%%%%%%%%%%%%%%%%%%%%%%%%%%%%%%%%%%%%%%%%%%%%%%%%%%%%%%%%%%%%%%%%%%%%%%%%%%%%%%%%%%%%%%%%%%%%%%%%%%%%%%%%%%%%%%%%%%%%%%%%%%%%%%%%%%%%%%%%%%%%%%%%%%%%%%%%%%%%%%%
\subsection{Dominant maps of monoids}
\label{subsection: Dominant maps of monoids}

\begin{defn}
 A morphism $f: A\to B$ of pointed monoids is \emph{dominant}, and $B$ \emph{dominates $A$}, if $f^{-1}(M_B) = M_A$ where $M_A$ is the maximal ideal of $A$ and $M_B$ is the maximal ideal of $B$.
\end{defn}

Note that for any morphism $f: A\to B$ of pointed monoids, it is always true that $f(A^\times)\subseteq B^\times$ and so $f^{-1}(M_B)\subseteq M_A$. Thus the morphism $f$ is dominant if and only if $f^{-1}(B^\times)\subseteq A^\times$. Also note that composition of dominant morphisms is dominant. 

\begin{defn}
 Let $B$ be a pointed monoid and let $A$ be an integral submonoid of $B$.  An element $b$ of $B$ is \emph{algebraic over $A$} if there is an $a\in A$ and an integer $n\geq 1$ such that $ab^n\in A$. Elements of $B$ that are not algebraic over $A$ are called \emph{transcendental over $A$}.
 
 An \emph{integral extension of monoids} is an injective morphism $f:A\to B$ of pointed monoids such that for any $b\in B$ there is an integer $ n\geq 1$ such that $b^n\in A$.
\end{defn}

\begin{lem}\label{intextdom}
 An integral extension $f:A\to B$ is dominant.
\end{lem}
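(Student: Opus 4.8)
The plan is to verify dominance by means of the equivalent criterion recorded just before the statement: a morphism $f:A\to B$ of pointed monoids is dominant if and only if $f^{-1}(B^\times)\subseteq A^\times$ (the reverse inclusion $A^\times\subseteq f^{-1}(B^\times)$ holds for any morphism). Since $f$ is injective, I would identify $A$ with its image and treat it as a submonoid of $B$; the integral extension hypothesis then reads simply that for every $b\in B$ there is an integer $n\geq 1$ with $b^n\in A$.

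First I would take an element $a\in A$ whose image $f(a)$ is a unit of $B$, and aim to show that $a$ is already a unit of $A$. Writing $b=f(a)\in B^\times$ and letting $c=b^{-1}\in B$ be its inverse, I apply the integral extension property to $c$ to obtain an integer $n\geq 1$ and an element $a'\in A$ with $c^n=f(a')$. Next I would transport this relation back into $A$ using the injectivity of $f$: from $bc=1$ one gets $f(a)^n f(a')=b^n c^n=(bc)^n=1=f(1)$, hence $f(a^n a')=f(1)$, and therefore $a^n a'=1$ in $A$. This exhibits $a^{n-1}a'\in A$ as an inverse of $a$ (valid also when $n=1$, since $a^{0}=1$), so that $a\in A^\times$. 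This establishes $f^{-1}(B^\times)\subseteq A^\times$ and thus that $f$ is dominant.

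The argument is short, and the only point requiring care — the "obstacle", such as it is — is the bookkeeping around injectivity: the integral extension condition only guarantees that $c^n$ lies in the image of $f$, and it is precisely the injectivity of $f$ that upgrades the unit relation in $B$ to an honest identity $a^n a'=1$ in $A$. Note that no use is made of $\Frac A$ or of integrality of $B$ at this step; the integrality built into the definition of an integral extension enters only through the injectivity hypothesis, which is exactly what the final reduction exploits.
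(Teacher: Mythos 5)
Your proof is correct and follows essentially the same route as the paper's: take the inverse $c$ of $f(a)$ in $B$, use the integral extension property to get $c^n\in f(A)$, and exhibit $a^{n-1}a'$ as an inverse of $a$ in $A$ (the paper identifies $A$ with $f(A)$ and writes this as $a(a^{n-1}b^n)=(ab)^n=1$). Your explicit bookkeeping via injectivity of $f$ is just a more careful phrasing of the paper's identification, not a different argument.
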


\begin{proof}
 We want to show $B^\times \cap A\subseteq A^\times$ (we identify $A$ with $f(A)\subseteq B$). We consider $a\in B^\times \cap A$, i.e., $ab=1$ for some $b\in B$. Since $A\hookrightarrow B$ is an integral extension, there exists some integer $n\geq 1$ such that $c:=b^n \in A$. Thus $a(a^{n-1}c)= a(a^{n-1}b^n)= (ab)^n =1 $ which shows $a\in A^\times$.
\end{proof}

%%%%%%%%%%%%%%%%%%%%%%%%%%%%%%%%%%%%%%%%%%%%%%%%%%%%%%%%%%%%%%%%%%%%%%%%%%%%%%%%%%%%%%%%%%%%%%%%%%%%%%%%%%%%%%%%%%%%%%%%%%%%%%%%%%%%%%%%%%%%%%%%%%%%%%%%%%%%%%%%%%%%%
\subsection{Characterization and existence of valuation monoids}
\label{subsection: Characterization and existence of valuation monoids}

\begin{prop}\label{val-dom}
 An integral monoid $A$ is a valuation monoid if and only if $A$ is a maximal submonoid of $\Frac A$ with respect to domination.
\end{prop}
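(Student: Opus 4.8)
The plan is to prove the two implications separately, using the reformulation of dominance recorded just above the statement: for an inclusion of submonoids $A\subseteq B$ of $\Frac A$, the inclusion is dominant if and only if $A\cap B^\times=A^\times$, i.e.\ no non-unit of $A$ acquires an inverse in $B$.

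First I would treat the implication that a valuation monoid $A$ is maximal with respect to domination. Suppose $A\subseteq B\subseteq\Frac A$ with the inclusion dominant, and take any nonzero $\beta\in B$. Since $A$ is a valuation monoid, either $\beta\in A$ or $\beta^{-1}\in A$; in the second case $\beta^{-1}\in A\subseteq B$ is invertible in $B$ (with inverse $\beta$), so dominance forces $\beta^{-1}\in A^\times$ and hence $\beta\in A^\times\subseteq A$. Either way $\beta\in A$, so $B=A$. This direction is short and causes no difficulty.

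The substance is the converse, which I would prove in contrapositive form: if $A$ is not a valuation monoid, then it is not maximal. Fix a nonzero $\alpha\in\Frac A$ with $\alpha\notin A$ and $\alpha^{-1}\notin A$, and consider the two submonoids $A[\alpha]$ and $A[\alpha^{-1}]$ of $\Frac A$ generated by $A$ together with $\alpha$, respectively with $\alpha^{-1}$. Both properly contain $A$. The key computation is to analyze when such an extension fails to be dominant: a non-unit $a\in M_A$ becomes invertible in $A[\alpha]$ exactly when $a\cdot(a'\alpha^{j})=1$ in $\Frac A$ for some $a'\in A$ and some $j\geq0$; the case $j=0$ forces $a\in A^\times$, contradicting $a\in M_A$, so necessarily $j\geq1$ and the relation rearranges to $\alpha^{-j}=aa'$, which lies in $M_A$ since $a$ is a non-unit in the cancellative monoid $A$. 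Thus $A[\alpha]$ dominates $A$ unless $\alpha^{-j}\in M_A$ for some $j\geq 1$, and symmetrically $A[\alpha^{-1}]$ dominates $A$ unless $\alpha^{j'}\in M_A$ for some $j'\geq1$.

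I expect the main obstacle to be ruling out the possibility that \emph{both} adjunctions fail to be dominant simultaneously. If $\alpha^{-j}\in M_A$ and $\alpha^{j'}\in M_A$ for some $j,j'\geq1$, then, because $M_A$ is an ideal, both $\alpha^{-jj'}=(\alpha^{-j})^{j'}$ and $\alpha^{jj'}=(\alpha^{j'})^{j}$ lie in $M_A$; but their product is $1$, which would make $\alpha^{jj'}$ a unit of $A$, contradicting $\alpha^{jj'}\in M_A=A\setminus A^\times$. Hence at least one of $A[\alpha]$, $A[\alpha^{-1}]$ is a submonoid of $\Frac A$ that properly contains $A$ and dominates it, contradicting maximality. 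The only points requiring care are the bookkeeping of whether the exponent $j$ can be $0$ (corresponding exactly to $a$ being a unit already in $A$), so that the reduction to $j\geq1$ is clean, and the observation that the whole argument is vacuous, and the statement trivially true, in the degenerate case $A=\Frac A$ where $A$ is already a group.
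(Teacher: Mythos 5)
Your proof is correct, and while the easy direction (valuation $\Rightarrow$ maximal) coincides with the paper's argument verbatim, your hard direction is organized genuinely differently. The paper proves maximality $\Rightarrow$ valuation directly: given $a\in\Frac A\setminus A$, it uses maximality a first time to conclude that $A\hookrightarrow A[a]$ is \emph{not} dominant, unwinds this to get $a\in A[a]^\times$ and hence $(a^{-1})^{k+1}\in A$, concludes that $A\hookrightarrow A[a^{-1}]$ is an integral extension and therefore dominant by Lemma \ref{intextdom}, and then uses maximality a second time to force $A=A[a^{-1}]$, i.e.\ $a^{-1}\in A$. You instead argue by contraposition from a single witness $\alpha$ with $\alpha,\alpha^{-1}\notin A$, showing via the symmetric dichotomy that non-dominance of $A\subseteq A[\alpha]$ forces $\alpha^{-j}\in M_A$ for some $j\geq1$ (and dually $\alpha^{j'}\in M_A$ for $A[\alpha^{-1}]$), and that both conditions together contradict $\alpha^{-jj'}\cdot\alpha^{jj'}=1$ since $M_A=A\setminus A^\times$ is an ideal; so one of the two adjunctions is a proper dominant extension. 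The underlying computation is the same in both proofs --- the inverse of a non-unit $a\in M_A\cap A[\alpha]^\times$ has the form $a'\alpha^j$ with $j\geq1$, putting a power of $\alpha^{\pm1}$ into $A$ --- but your route is more self-contained and symmetric: it invokes maximality only once, avoids integral extensions and Lemma \ref{intextdom} entirely, and sidesteps the paper's somewhat delicate case analysis ($i\geq1$ versus $i=0$). What the paper's route buys in exchange is reuse of infrastructure: Lemma \ref{intextdom} is also the engine of the existence result Proposition \ref{val-dom2}, so the direct argument keeps the two proofs structurally parallel. One cosmetic remark: your appeal to cancellativity when showing $aa'\in M_A$ is unnecessary --- in any commutative monoid a divisor of a unit is a unit --- and your closing caveat about $A=\Frac A$ is automatic in the contrapositive, since a witness $\alpha$ with $\alpha,\alpha^{-1}\notin A$ already forces $A\neq\Frac A$.
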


\begin{proof}
	Suppose $A$ is maximal in $\Frac A$ with respect to domination and consider $a\in \Frac A\setminus A$. We define $B:=A[a]=\{ca^i\in \Frac A\mid c\in A, i\geq 0\}$. Since $A$ is maximal with respect to domination and $A\neq B$, the inclusion $A\hookrightarrow B$ is not a dominant morphism, i.e., $M_A\nsubseteq M_B\cap A$. Therefore there exists some $d \in M_A\setminus M_B = M_A\cap B^\times$. Let $d= ca^i$ for some $c\in A$ and some integer $i\geq0$. If $i\geq 1$, we have $a\in B^\times$. If $i=0$, then $c=d\in M_A\cap B^\times$ and so there exists $c^{-1} = c'a^j \in B^\times$ for some $c'\in A$ and integer $j\geq 1$ (since $c\notin A^\times$). This shows that $a\in B^\times$ also in the case $i=0$. Let $a^{-1}= \tilde{c}a^k$ for some $\tilde{c}\in A$ and integer $k\geq 0$. Thus $(a^{-1})^{k+1}=\tilde{c}\in A$ and $A[a^{-1}]= \{ca^{-i}\in \Frac A\mid c\in A, i\geq 0\}$ is an integral extension of $A$. Therefore by Lemma \ref{intextdom}, the inclusion $A\hookrightarrow A[a^{-1}]$ is a dominant morphism. Since $A$ is maximal with respect to domination, we must have $A\Cong(A)[a^{-1}]$ which implies $a\in A^\times$ as desired.
	
	Conversely, suppose $A$ is a valuation monoid. Let $B$ be a pointed submonoid of $\Frac A$ and $f: A\to B$ is a dominant morphism such that the canonical injection $A\hookrightarrow \Frac A$ factorizes through $f$. We want to show that $f$ is an isomorphism. We consider any $b\in B\setminus \{0\}\subseteq \Frac A$. Since $A$ is a valuation monoid, either $b\in A$ or  $b^{-1}\in A$ (we identify $A$ with $f(A)\subseteq B$). If $b^{-1}\in A$, we have $b^{-1}\in B^\times \cap A= A^\times$ since $B$ dominates $A$. Thus $b= (b^{-1})^{-1}\in A$.
\end{proof}
 
\begin{prop}\label{val-dom2}
 Let $K$ be a pointed group and let $A\subseteq K$ be a pointed submonoid. Then there exists a valuation monoid $A$ dominating $A$ such that $K$ is the pointed group completion of $A$. 
\end{prop}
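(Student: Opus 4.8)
The plan is to adapt the classical existence proof for valuation rings dominating a local ring, which becomes considerably simpler for monoids because only monomials, rather than polynomials, occur. I will write $V$ for the valuation monoid to be produced. First I would record two facts: every pointed submonoid of the pointed group $K$ is integral, since the nonzero elements of $K$ are cancellable; and the group completion of such a submonoid embeds into $K$ as the subgroup it generates, so a submonoid $B\subseteq K$ satisfies $\Frac B=K$ precisely when $B$ generates $K$ as a group. Then I would apply Zorn's lemma to the poset $\Sigma$ of all pointed submonoids $B$ with $A\subseteq B\subseteq K$ whose inclusion $A\hookrightarrow B$ is dominant (equivalently $B^\times\cap A\subseteq A^\times$), ordered by domination. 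Here $\Sigma$ is nonempty since $A\in\Sigma$, and for a chain $\{B_i\}$ the union $B=\bigcup_i B_i$ again lies in $\Sigma$ and dominates each $B_i$: if $b\in B^\times\cap B_i$, then $b^{-1}$ lies in some $B_j$, and comparing $B_i,B_j$ in the chain we may assume $B_i\subseteq B_j$, whence $b\in B_j^\times$ and domination of $B_i$ in the chain forces $b\in B_i^\times$; the same template with $A$ in place of $B_i$ shows $B$ dominates $A$. Thus Zorn's lemma yields a maximal element $V\in\Sigma$.

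The heart of the argument, and the step I expect to be the main obstacle, is the following claim: if $x\in K$ is nonzero with $x\notin V$ and $x^{-1}\notin V$, then at least one of the submonoids $V[x]$ and $V[x^{-1}]$ of $K$ dominates $V$. I would prove this by contradiction. If $V[x]$ fails to dominate $V$, then some non-unit $v\in M_V$ becomes invertible in $V[x]$, so $vwx^n=1$ in $K$ for some $w\in V$ and $n\geq 0$; the case $n=0$ forces $v\in V^\times$, so in fact $n\geq 1$ and $x^{-n}=vw\in M_V$ because $M_V$ is an ideal. Symmetrically, failure of $V[x^{-1}]$ to dominate $V$ yields $x^{m}\in M_V$ for some $m\geq 1$. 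But then $x^{-nm}=(x^{-n})^m$ and $x^{nm}=(x^m)^n$ both lie in the ideal $M_V$, so their product $x^{-nm}\cdot x^{nm}=1$ lies in $M_V$, contradicting $1\in V^\times$. This single monomial identity replaces the polynomial-coefficient bookkeeping of the ring-theoretic proof, and is where essentially all the work is concentrated; everything else is formal.

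With the claim in hand I would conclude as follows. For any nonzero $x\in K$, if both $x$ and $x^{-1}$ were outside $V$, the claim would supply one of $V[x],V[x^{-1}]$ dominating $V$; since domination is transitive this submonoid also dominates $A$ and hence lies in $\Sigma$, while strictly containing $V$, contradicting maximality. Therefore $x\in V$ or $x^{-1}\in V$ for every nonzero $x\in K$. In particular every element of $K$ lies in the group completion of $V$, so $\Frac V=K$, and the condition just established is exactly the definition of a valuation monoid; alternatively one invokes Proposition~\ref{val-dom}, observing that $V$ is by construction a maximal submonoid of $\Frac V=K$ with respect to domination. Finally $V$ dominates $A$ because $V\in\Sigma$, which completes the proof.
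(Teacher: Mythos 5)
Your proof is correct, and while it shares the paper's Zorn skeleton, the heart of the argument is genuinely different. Both proofs consider the poset of pointed submonoids of $K$ dominating $A$, ordered by domination, verify that chains have upper bounds via unions, and extract a maximal element. The paper then splits the remaining work in two: it invokes Proposition~\ref{val-dom} (maximality under domination inside the group completion implies valuation monoid) to get the valuation property, and separately proves $\Frac \overline{A}=K$ by a case analysis on whether a putative element $t\in K - \Frac\overline{A}$ is algebraic or transcendental over $\overline{A}$, using Lemma~\ref{intextdom} (integral extensions are dominant) in the algebraic case and a computation of the maximal ideal of $\overline{A}[t]$ in the transcendental case. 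You instead establish the dichotomy $x\in V$ or $x^{-1}\in V$ for \emph{all} nonzero $x\in K$ at once, via the symmetric claim that if neither $V[x]$ nor $V[x^{-1}]$ dominates $V$ then $x^{-n}\in M_V$ and $x^{m}\in M_V$ for some $n,m\geq 1$, whence $1=x^{nm}\cdot x^{-nm}\in M_V$, a contradiction; this is the monoid analogue of the classical Chevalley argument for valuation rings, correctly reduced to a single monomial identity since only monomials occur in $V[x]$. Your route is more self-contained: it needs neither Proposition~\ref{val-dom} nor Lemma~\ref{intextdom} nor the algebraic/transcendental distinction, and it delivers $\Frac V=K$ and the valuation property simultaneously (the paper's algebraic case is in fact slightly delicate, since its definition of algebraicity only gives $at^n\in\overline{A}$ for some coefficient $a$, not $t^n\in\overline{A}$ outright, so your bypass is a genuine simplification). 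What the paper's factorization buys is reuse: Proposition~\ref{val-dom} is needed elsewhere anyway (e.g.\ in the proof of Theorem~\ref{uni-closed}), so routing the argument through it costs nothing globally.
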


\begin{proof}
	We consider the collection $\mathscr{S}$ of all pointed submonoids of $K$ dominating $A$ as a partially ordered set using the relation of domination, i.e., for any $B,C\in \mathscr{S}$, we say $B\leq C$ if and only if $B\subseteq C$ and $M_B=M_C\cap B$. Suppose that $\{A_i\}_{i\in I}$ is a totally ordered chain in $\mathscr{S}$. Then $D=\bigcup A_i$ is clearly a pointed submonoid of $K$ which dominates all of the $A_i$ and hence $D$ dominates $A$ as well. Therefore by Zorn's Lemma, $\mathscr{S}$ has a maximal element, say $\overline{A}$. Since $\overline{A}$ is a maximal element in $\mathscr{S}$, by Proposition \ref{val-dom}, $A_\max$ is a valuation monoid. We are left to check that $K$ is the pointed group completion of $\overline{A}$. Suppose, if possible, $\Frac \overline{A}\neq K$. We consider $t\in K$ such that $t\notin \Frac \overline{A}$. If $t$ is algebraic over $\overline{A}$, there exists an integer $n\geq 1 $ such that $t^n\in \overline{A}$. Thus $\overline{A}\to \overline{A}[t]$ is an integral extension and therefore it is dominant by Lemma \ref{intextdom}. This contradicts that $\overline{A}$ is a maximal element in $\mathscr{S}$. On the other hand, if $t$ is transcendental over $\overline{A}$, it follows that $C:=\overline{A}[t]=\{kt^i\in K\mid k\in \overline{A}, i\geq 0\}\subseteq K$ is a pointed submonoid of $K$ with $M_C=\langle M_{\overline{A}}, t\rangle$ and that $C$ dominates $\overline{A}$, i.e., $M_C\cap \overline{A}= M_{\overline{A}}$. It follows that $C$ dominates $A$. Hence $C$ is a pointed submonoid of $K$ dominating $A$ and strictly containing $\overline{A}$. This gives the required contradiction.
\end{proof}

%%%%%%%%%%%%%%%%%%%%%%%%%%%%%%%%%%%%%%%%%%%%%%%%%%%%%%%%%%%%%%%%%%%%%%%%%%%%%%%%%%%%%%%%%%%%%%%%%%%%%%%%%%%%%%%%%%%%%%%%%%%%%%%%%%%%%%%%%%%%%%%%%%%%%%%%%%%%%%%%%%%%%
%%%%%%%%%%%%%%%%%%%%%%%%%%%%%%%%%%%%%%%%%%%%%%%%%%%%%%%%%%%%%%%%%%%%%%%%%%%%%%%%%%%%%%%%%%%%%%%%%%%%%%%%%%%%%%%%%%%%%%%%%%%%%%%%%%%%%%%%%%%%%%%%%%%%%%%%%%%%%%%%%%%%%
\section{Universally closed morphisms}
\label{section: Universally closed morphisms}
%%%%%%%%%%%%%%%%%%%%%%%%%%%%%%%%%%%%%%%%%%%%%%%%%%%%%%%%%%%%%%%%%%%%%%%%%%%%%%%%%%%%%%%%%%%%%%%%%%%%%%%%%%%%%%%%%%%%%%%%%%%%%%%%%%%%%%%%%%%%%%%%%%%%%%%%%%%%%%%%%%%%%
%%%%%%%%%%%%%%%%%%%%%%%%%%%%%%%%%%%%%%%%%%%%%%%%%%%%%%%%%%%%%%%%%%%%%%%%%%%%%%%%%%%%%%%%%%%%%%%%%%%%%%%%%%%%%%%%%%%%%%%%%%%%%%%%%%%%%%%%%%%%%%%%%%%%%%%%%%%%%%%%%%%%%

In this section, we introduce closed morphisms and universally closed morphisms. Under the assumption of quasi-compactness, we characterize universally closed morphisms in terms of a valuative criterion.

%%%%%%%%%%%%%%%%%%%%%%%%%%%%%%%%%%%%%%%%%%%%%%%%%%%%%%%%%%%%%%%%%%%%%%%%%%%%%%%%%%%%%%%%%%%%%%%%%%%%%%%%%%%%%%%%%%%%%%%%%%%%%%%%%%%%%%%%%%%%%%%%%%%%%%%%%%%%%%%%%%%%%
\subsection{Closed images}
\label{subsection: Closed morphisms}

\begin{prop}\label{specl}
 Let $\phi:Y\to X$ be a quasi-compact morphism of monoid schemes and $\til\phi:\til Y\to \til X$ be the associated map between congruence spaces. Then ${\tilde\phi}(\widetilde{Y})$ is closed in $\widetilde{X}$ if and only if ${\tilde\phi}(\widetilde{Y})$ is closed under specialization.
\end{prop}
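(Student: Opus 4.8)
The forward implication is immediate: a closed subset of any topological space contains the closure of each of its points and is therefore closed under specialization. The content is the converse. I would first record that, on an affine piece $\Cong(A)$, the specialization order is precisely reverse inclusion of congruences: $\fp$ lies in the closure of $\{\fp'\}$ if and only if $\fp'\subseteq\fp$, since $\fp\in U_{a,b}$ means exactly $(a,b)\notin\fp$. Thus the hypothesis reads: whenever $\fp'\in Z$ and $\fp'\subseteq\fp$ with $\fp$ prime, then $\fp\in Z$, where $Z=\tilde\phi(\til Y)$. Since $Z\subseteq\overline Z$ always, it suffices to prove $\overline Z\subseteq Z$; and for a point $\fp\in\overline Z$ this reduces to exhibiting a single $\fp'\in Z$ with $\fp'\subseteq\fp$, after which specialization-closedness yields $\fp\in Z$.

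The problem is local on $X$. Choosing an affine open $U=\MSpec(A)$ containing $\pi_X(\fp)$, the point $\fp$ lies in the open subspace $\til U=\Cong(A)$, and since $\til U$ is open we have $\fp\in\overline{Z\cap\til U}$ computed inside $\til U$. By quasi-compactness of $\phi$ the preimage $\phi^{-1}(U)$ is covered by finitely many affine opens $V_i=\MSpec(B_i)$, $i=1,\dots,n$, so that $Z\cap\til U=\bigcup_i f_i^\ast(\Cong(B_i))$ for the structure maps $f_i:A\to B_i$. As the union is finite, $\fp\in\overline{f_i^\ast(\Cong(B_i))}$ for some $i$; I fix this $i$ and write $f=f_i$, $B=B_i$.

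The membership $\fp\in\overline{f^\ast(\Cong(B))}$ means precisely that for every finite set $F$ of pairs with $F\cap\fp=\emptyset$ there is a prime congruence $\fq_F$ of $B$ with $(f(a),f(b))\notin\fq_F$ for all $(a,b)\in F$; one reads this off the basic open $\bigcap_{(a,b)\in F}U_{a,b}$ together with $(f^\ast)^{-1}(U_{a,b})=U_{f(a),f(b)}$ from Lemma \ref{cong1}. The plan is then to amalgamate the family $\{\fq_F\}$ into a single prime congruence. The finite intersection property of the tails $\{F : (a,b)\in F\}$, indexed by the pairs $(a,b)\notin\fp$, lets me fix an ultrafilter $\mathcal U$ on the directed poset of these finite sets containing all such tails, and I set $\fq_\infty=\{(x,y)\in B\times B : \{F : (x,y)\in\fq_F\}\in\mathcal U\}$. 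A routine ultrafilter computation shows that $\fq_\infty$ is again a congruence, that it is prime (the defining implication of primeness passes through $\mathcal U$ because an ultrafilter places each relevant index set in exactly one of the two alternatives), and that its quotient is nonzero. By construction $(f(a),f(b))\notin\fq_\infty$ whenever $(a,b)\notin\fp$, whence $f^\ast(\fq_\infty)\subseteq\fp$; setting $\fp'=f^\ast(\fq_\infty)=\tilde\phi(\fq_\infty)\in Z$ furnishes the required generization.

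I expect the main obstacle to be exactly this passage from the topological hypothesis to an honest point of the image: the condition $\fp\in\overline Z$ supplies only approximate prime congruences $\fq_F$ solving finitely many constraints, and the delicate point is to take a limit that simultaneously remains a \emph{prime} congruence and has pullback contained in $\fp$. Quasi-compactness is what makes the index set directed with the finite intersection property, and the ultrafilter limit is what upgrades the finitary solutions to a genuine generization; checking that primeness and the nondegeneracy $0\neq1$ survive the limit are the steps demanding care. The degenerate case $\complement\fp=\emptyset$ (the total congruence) needs no limit, since any point of a nonempty $Z$ specializes to $\fp$ directly.
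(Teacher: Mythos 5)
Your proof is correct, and it reaches the crux by a genuinely different mechanism than the paper's. Both arguments share the same skeleton: reduce by quasi-compactness to a single affine map $f\colon A\to B$ (your identity $Z\cap\til U=\bigcup_i \im(\tilde\varphi_i)$ rests on the identification $\til U=\pi_X^{-1}(U)$, which follows from Proposition \ref{prop: prime congruences in localizations}; the paper makes the same silent move), and then exhibit a generization of $\fp$ lying in the image, after which specialization-closedness finishes. At that point the paper argues algebraically: it factors out $\congker(f)$ and passes to strong reductions so that $f$ may be assumed injective, picks a \emph{minimal} prime congruence $\fd'\subseteq\fp$ by Zorn, and shows that $\fd'$ itself is in the image by localizing at its nullideal and pulling back the trivial congruence along $B\to A_{\fd'}\to A_{\fd'}/M_{A_{\fd'}}$ --- so the generization it produces is a canonical, minimal one. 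You instead read $\fp\in\overline{\im(\tilde f^\ast)}$ as finite satisfiability of the constraints $(f(a),f(b))\notin\fq$ for $(a,b)\notin\fp$, and amalgamate the finitary witnesses $\fq_F$ by an ultrafilter: your $\fq_\infty$ is precisely the congruence kernel of $B\to\bigl(\prod_F B/\fq_F\bigr)/\,\mathcal{U}$, and since integrality together with $0\neq 1$ is a first-order condition, it survives the ultraproduct (\L{}o\'s), which is the content of your ``routine ultrafilter computation''; I checked the prime and nondegeneracy verifications and they go through as you indicate. This buys a shorter argument that bypasses the strong-reduction machinery and the minimal-prime/localization analysis entirely (in particular, you never need $f$ injective), at the cost of the ultrafilter lemma --- no real foundational cost, since the paper's proof also invokes Zorn. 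Two points to make explicit in a final write-up: the check that $\fq_\infty$ has nonzero quotient uses the paper's (implicit) convention that prime congruences are proper, so $(0,1)\in\fq_F$ never occurs (cf.\ Example \ref{notbasis}, where the total congruence is excluded from $\Cong(\F_1[t])$); and by the same convention your degenerate case cannot arise, since a point $\fp$ of $\til X$ is proper, so $(0,1)\notin\fp$ and the index set of tails is nonempty.
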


\begin{proof}
 Since every closed set is closed under specialization, we are left with the converse implication. Assume that ${\tilde\phi}(\widetilde{Y})$ is closed under specialization. Without loss of generality, we can assume that $X=\MSpec(B)$ is affine and that both $Y$ and $X$ are strongly reduced. Consider $\mathfrak{d}\in \overline{{\tilde\phi}(\widetilde{Y})}$. We want to show  $\mathfrak{d}\in {\tilde\phi}(\widetilde{Y})$. Since $\phi$ is quasi-compact, $Y=\bigcup_{i=1}^{n}  Y_i$ for some affine opens $Y_i$. Thus $\overline{{\tilde\phi}(\widetilde{Y})}= \bigcup_{i=1}^{n} \overline{{\tilde\phi}(\widetilde{Y_i})}$ and $\mathfrak{d}\in  \overline{{\tilde\phi}(\widetilde{Y_i})}$ for some $i$. Hence we can assume that $Y=Y_i=\MSpec(A)$ is affine. Let $f=\Gamma_\phi: B\to A$ and let $\mathfrak{c}_0= \congker(f)$. The surjective morphism $p:B\to B/\mathfrak{c}_0$ induces a closed immersion $p^*: \MSpec(B/\mathfrak{c}_0)\hookrightarrow \MSpec(B)$. By Proposition \ref{imm-emb}, we know that $\widetilde{p^*}$ is a closed topological embedding. Since the diagram 
	\begin{equation*}
	\xymatrix{
	\Cong (A) \ar[r]^{{\tilde\phi} }\ar@<-2pt>[d]_{} & \Cong(B) \\		\Cong(B/\mathfrak{c}_0)\ar[ur]_{\widetilde{p^*}} & 
	}
\end{equation*}
 commutes, we can assume that $p:B\xrightarrow{\cong}B/\mathfrak{c}_0$ is an isomorphism and $f:=\Gamma_\phi: B\to A$ is an injection. Let $\mathfrak{d'}\subseteq \mathfrak{d}$ be a minimal prime congruence of $B$ (which exists by a standard application of Zorn's lemma). Let $S_{\mathfrak{d'}}= B\setminus \mathfrak{d}'$. By Lemma \ref{lemma: kernels of localization},
 $$f_{\mathfrak{d}'}: \ B_{\mathfrak{d}'} \ = \ S_{\mathfrak{d}'}^{-1}B \ \longrightarrow \ f(S_{\mathfrak{d}'})^{-1}A \ = \ A_{\mathfrak{d}'}$$ 
 is an injection. Since $\mathfrak{d}'$ is minimal, $B_{\mathfrak{d}'}$ is a pointed group and $\langle \mathfrak{d}'\rangle_{B_{\mathfrak{d}'}}$ is trivial. Thus the natural composition 
 $$\bar{f_{\mathfrak{d}'}}: B_{\mathfrak{d}'}\overset{f_{\mathfrak{d}'}}\longrightarrow A_{\mathfrak{d}'}\overset{q}\longrightarrow A_{\mathfrak{d}'}/M_{A_{\mathfrak{d}'}}= \{0\}\cup A_{\mathfrak{d}'}^\times$$
 is injective and 
 \begin{equation*}
 	 \langle \mathfrak{d}'\rangle_{B_{\mathfrak{d}'}}=\widetilde{{\bar{f_{\mathfrak{d}'}}}^*}(\mathfrak{c}_\triv)=\widetilde{f_{\mathfrak{d}'}^*}(\widetilde{q^*}(\mathfrak{c}_\triv)).
 \end{equation*}
Let $g: A\to A_{\mathfrak{d}'}$ and $h: B\to B_{\mathfrak{d}'}$ denote the canonical localization maps. This and the commutative diagram 
	\begin{equation*}
	\xymatrix{
	A\ar[r]^{g} & A_{\mathfrak{d}'}\ar[r]^{q} & A_{\mathfrak{d}'}/M_{\mathfrak{d}'}\\
		B\ar@<-2pt>[u]^{f} \ar[r]_{h} & B_{\mathfrak{d}'}\ar[u]_{f_{\mathfrak{d}'}}  &
	}
\end{equation*}
imply that $\mathfrak{d}'=\widetilde{h^*}( \langle \mathfrak{d}'\rangle_{B_{\mathfrak{d}'}})=\widetilde{h^*}(\widetilde{f_{\mathfrak{d}'}^*}(\widetilde{q^*}(\mathfrak{c}_\triv)))=\widetilde{f^*}(\widetilde{g^*}(\widetilde{q^*}(\mathfrak{c}_\triv))$ and thus $\mathfrak{d}'\in {\tilde\phi}(\widetilde{Y})$. Since $ {\tilde\phi}(\widetilde{X})$ is closed under specialization, we have $\mathfrak{d}\in  {\tilde\phi}(\widetilde{Y})$.
\end{proof}

%%%%%%%%%%%%%%%%%%%%%%%%%%%%%%%%%%%%%%%%%%%%%%%%%%%%%%%%%%%%%%%%%%%%%%%%%%%%%%%%%%%%%%%%%%%%%%%%%%%%%%%%%%%%%%%%%%%%%%%%%%%%%%%%%%%%%%%%%%%%%%%%%%%%%%%%%%%%%%%%%%%%%
\subsection{The valuative criterion for universally closed morphisms}
\label{subsection: The valuative criterion for universally closed morphisms}

\begin{defn}
 A morphism $\varphi:Y\to X$ is \emph{closed} if the associated map $\tilde\varphi:\til Y\to\til X$ between the respective congruence spaces is a closed continuous map. A morphism $\varphi:Y\to X$ of monoid schemes is \emph{universally closed} if for every morphism $\psi:X'\to X$, the base change $\varphi':Y'=X'\times_XY\to X'$ of $\varphi$ along $\psi$ is a closed morphism.
\end{defn}

As a first example, we have the following. 

\begin{lemma}\label{lemma: closes immersions are universally closed}
 Closed immersions are universally closed.
\end{lemma}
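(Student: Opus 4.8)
The plan is to reduce the statement to two ingredients that are already at our disposal: first, that closed immersions are stable under arbitrary base change, and second, that every closed immersion induces a \emph{closed} map on congruence spaces, which is precisely Proposition~\ref{imm-emb}. Granting stability under base change, the argument is immediate: for any $\psi:X'\to X$ the projection $\varphi':Y'=X'\times_XY\to X'$ is again a closed immersion, hence $\tilde\varphi':\til{Y'}\to\til{X'}$ is a closed topological embedding by Proposition~\ref{imm-emb}, in particular a closed continuous map, so $\varphi'$ is a closed morphism; as $\psi$ was arbitrary, $\varphi$ is universally closed.

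So the content is to verify that the base change $\varphi'$ of a closed immersion $\varphi:Y\to X$ is a closed immersion. Being a closed immersion is local on the target, so I may check the condition on an affine open cover of $X'$ and thereby assume $X'=\MSpec(A')$ is affine. Since an affine monoid scheme has a unique closed point and the only open neighbourhood of that point is the whole space, the morphism $\psi$ factors through any affine open $U=\MSpec(A)$ of $X$ that contains the image of the closed point; thus $\psi:\MSpec(A')\to\MSpec(A)$ is induced by a monoid morphism $g:A\to A'$. As $\varphi$ is a closed immersion, $\varphi^{-1}(U)=\MSpec(B)$ is affine with $f:A\to B$ surjective, so $B\cong A/\fc$ for the congruence $\fc=\congker(f)$. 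Consequently $Y'=\MSpec(A')\times_{\MSpec(A)}\MSpec(B)=\MSpec(A'\otimes_AB)$ and $\varphi'$ is the affine morphism induced by the base-change map $A'\to A'\otimes_AB$.

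It remains to see that this map is surjective. This is the monoid-theoretic heart of the matter: surjections of pointed monoids are stable under pushout. Concretely, every generator of $A'\otimes_AB$ has the form $a'\otimes b$, and surjectivity of $f$ gives $b=f(a)$ for some $a\in A$, whence
\[
 a'\otimes b \ = \ a'\otimes f(a) \ = \ \big(a'\,g(a)\big)\otimes 1
\]
in $A'\otimes_AB$. Thus every element lies in the image of $c\mapsto c\otimes 1$, so $A'\to A'\otimes_AB$ is surjective, and $\varphi'$ is a closed immersion. This step is the only point requiring care; everything else is formal reduction and an appeal to Proposition~\ref{imm-emb}. I expect no genuine obstacle, since the preservation of surjectivity under the tensor product of pointed monoids follows directly from the right-exactness of the pushout, exactly as computed above.
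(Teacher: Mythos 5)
Your proof is correct and follows essentially the same route as the paper, which argues in two sentences that surjective monoid morphisms are stable under cobase change (hence closed immersions under base change) and then invokes Proposition~\ref{prop: closed immersions of congruence spaces}. The only difference is that you spell out the details the paper leaves implicit — the reduction to affine $X'$ via the unique closed point, the identification $Y'=\MSpec(A'\otimes_AB)$, and the pure-tensor computation $a'\otimes f(a)=a'g(a)\otimes 1$ — all of which are sound.
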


\begin{proof}
 Since surjective monoid morphisms are stable under cobase change, it follows that closed immersions are stable under base change. Therefore the result follows from Proposition \ref{prop: closed immersions of congruence spaces}.
\end{proof}

\begin{defn}
 Let $\phi:Y\to X$ be a morphism of monoid schemes. A \emph{test diagram for $\varphi$} is a commutative diagram
 \begin{equation*}
  \begin{tikzcd}[column sep=60]
   U=\MSpec(K)\ar[r,"\eta"] \ar[d,"\iota"'] & Y\ar[d,"\phi"] \\
   T=\MSpec(A)  \ar[r,"\nu"] & X
  \end{tikzcd}
 \end{equation*}
 where $A$ is a valuation monoid, $K=\Frac A$ and $\iota=i^*$ is the inclusion $i:A\hookrightarrow K$. A \emph{lift for the test diagram} is a morphism $\hat{\nu}:T\to Y$ such that 
 \begin{equation*}
  \begin{tikzcd}[column sep=60]
   U\ar[r,"\eta"] \ar[d,"\iota"'] & Y\ar[d,"\phi"] \\
   T\ar[r,"\nu"] \ar[ur,dashed,"\hat\nu"] & X
  \end{tikzcd}
 \end{equation*}
 commutes.
\end{defn}

\begin{rem}\label{aff-pointed-grp}
 The prime congruences $\mathfrak{c}$ of a pointed group $K$ correspond bijectively to the subgroups $H_\mathfrak{c}:=\{a\in K\mid (a,1)\in \mathfrak{c}\}$ of $K^\times$. Given a morphism of pointed groups $f: K_1\to K_2$, we have an induced morphism $\widetilde{f}_*: {\Cong(K_1)}\to {\Cong(K_2)}$ that sends a prime congruence $ \mathfrak{c}$ on $K_1$ to its pushforward
 \begin{equation*}\label{ext}
  \widetilde f_\ast(\fc) \ = \ \big\{ \big(cf(a),cf(b)\big)\in K_1\times K_1 \,\big|\, c\in K_2, (a,b)\in\fc \big\},
 \end{equation*}
 which is the prime congruence associated with $f(H_\mathfrak{c})= H_{\widetilde{f}_*(\mathfrak{c})}$. 
\end{rem}

In the proof of the next result, we use the following shorthand notation. Given a monoid scheme $X$ with congruence space $\til X$ and $\fp,\fq\in\til X$ such that $\fq$ is a specialization of $\fp$, we denote by $\cO_{\til X,\fq}/\fp$ the quotient of the stalk $\cO_{\til X,\fq}$ by the congruence generated by $\fp$ where we interpret $\fp$ as a congruence on a monoid $A=\Gamma U$ for an affine open neighbourhood $U$ of $\fq$ in $X$. In other words, $\cO_{\til X,\fq}/\fp=S^{-1}_\fq A/S^{-1}_\fq\fp$ for $S_\fq=A\setminus \fq$.

\begin{thm}\label{uni-closed}
 Let $\phi:X\to Y$ be a quasi-compact morphism of monoid schemes. Then $\phi:Y\to X$ is universally closed if and only if every test diagram 
 \[
  \begin{tikzcd}[column sep=40, row sep=15]
   U\ar[r,"\eta"] \ar[d,"\iota"'] & Y\ar[d,"\phi"] \\
   T\ar[r,"\nu"] & X
  \end{tikzcd}
 \]
 has a lift $\hat\nu:T\to Y$.
\end{thm}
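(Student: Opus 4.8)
The plan is to analyse both implications through the base change $Y_T=T\times_X Y$ of $\phi$ along the bottom arrow $\nu$ of a test diagram, so that questions about lifts become questions about the projection $p\colon Y_T\to T$ onto the spectrum of a valuation monoid. First I record that giving a lift $\hat\nu\colon T\to Y$ is, by the universal property of the fibre product, the same as giving a section $s\colon T\to Y_T$ of $p$ with $s\circ\iota=\xi$, where $\xi\colon U\to Y_T$ is the canonical map induced by $\eta$ and $\iota$; then $\hat\nu=q\circ s$ for the second projection $q\colon Y_T\to Y$.

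Assume $\phi$ is universally closed, so that $p$, being a base change of $\phi$, is closed. Put $\tilde x_0=\til\xi(\fc_\triv)\in\til{Y_T}$, the image of the generic prime congruence $\fc_\triv$ of $K=\Frac A$. Since $A\hookrightarrow K$ is injective, $\til p(\tilde x_0)=\fc_\triv$ is the minimal prime congruence of $A$, whose closure in $\til T$ is all of $\til T$; hence closedness of $\til p$ forces $\til p\big(\overline{\{\tilde x_0\}}\big)=\til T$, and there is a specialization $\tilde x_1$ of $\tilde x_0$ with $\til p(\tilde x_1)=\tau_T(M_A)$ over the closed point of $T$. Now the monoid $C=\cO_{\til{Y_T},\tilde x_1}/\tilde x_0$ is integral, it dominates $A=\cO_{\til T,\tau_T(M_A)}/\fc_\triv$ through the local map induced by $p$, and its group completion $k(\tilde x_0)$ embeds into $K$ compatibly with $A\hookrightarrow K$ (here I use that the residue-field maps of congruence spaces are injective). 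Thus $A\subseteq C\subseteq K$ with $A\hookrightarrow C$ dominant, and Proposition~\ref{val-dom} (maximality of valuation monoids with respect to domination) yields $A=C$. The resulting local homomorphism $\cO_{Y,q(\pi_{Y_T}(\tilde x_1))}\to C=A$ defines the section $s$, and checking $s\circ\iota=\xi$ and locality is the remaining bookkeeping.

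For the converse, I first note that the lifting property is stable under base change: a test diagram for $\phi'\colon Y'=X'\times_X Y\to X'$ composes with the projections to a test diagram for $\phi$, and a lift for the latter recombines with the structure map to $X'$, via the universal property, into a lift for $\phi'$. As quasi-compactness is likewise preserved, it suffices to prove that the lifting property implies $\til\phi$ is a closed map. Given a closed $Z\subseteq\til Y$, I show $\til\phi(Z)$ is stable under specialization. For a specialization $\fq_0\rightsquigarrow\fq_1$ in $\til X$ with $\fq_0=\til\phi(\fp_0)$ and $\fp_0\in Z$, set $K=k(\fp_0)$; then $\cO_{\til X,\fq_1}/\fq_0$ is an integral submonoid of $K$, and Proposition~\ref{val-dom2} provides a valuation monoid $A$ dominating it with $\Frac A=K$. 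The point $\fp_0$ furnishes $\eta\colon U=\MSpec K\to Y$, the domination furnishes $\nu\colon T=\MSpec A\to X$ realizing $\fq_0\rightsquigarrow\fq_1$, and a lift $\hat\nu$ produces a point $\fp_1\in\til Y$ over $\fq_1$ that specializes $\fp_0$; as $Z$ is closed, $\fp_1\in Z$ and hence $\fq_1=\til\phi(\fp_1)\in\til\phi(Z)$.

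The main obstacle is the final upgrade from specialization-closedness to closedness. Proposition~\ref{specl} delivers exactly this upgrade for quasi-compact morphisms, but only for the image $\til\phi(\til Y)$ of the entire congruence space, whereas here $Z$ is an arbitrary closed subset of $\til Y$ — and, crucially, not every closed subset of $\til Y$ is a vanishing set, so $Z$ need not be the congruence space of a closed subscheme (cf.\ Example~\ref{notbasis}). I expect to resolve this by re-running the argument behind Proposition~\ref{specl} relative to $Z$: after reducing to affine $X=\MSpec B$ and using quasi-compactness to cover $Y$ by finitely many affines $Y_i$, so that $\overline{\til\phi(Z)}=\bigcup_i\overline{\til\phi(Z\cap\til Y_i)}$, the minimal-prime-congruence construction can be carried out inside each closed set $Z\cap\til Y_i$. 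Making this localized construction land back inside $Z$, rather than merely inside $\til\phi(\til Y_i)$, is the delicate point on which the converse direction ultimately rests.
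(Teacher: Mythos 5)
Apart from the final step that you yourself flag, your proposal is essentially the paper's own proof. In the forward direction, your $\tilde x_0=\til\xi(\fc_\triv)$, the specialization $\tilde x_1$ extracted from closedness of $\til p$ on $\overline{\{\tilde x_0\}}$, the integral monoid $C=\cO_{\til{Y_T},\tilde x_1}/\tilde x_0$ sitting dominantly over $A$ inside $K$, and the appeal to Proposition \ref{val-dom} are exactly the paper's $\fq_0$, $\fq_1$, $B$ and its use of the same proposition; the only deviation is that you place $\tilde x_1$ over $\tau_T(M_A)$ where the paper uses $\sigma_T(M_A)$, which is immaterial since the argument only uses that the chosen point of $\til T$ has nullideal $M_A$ (so that locality of the stalk map, together with the fact that the nullideal of $\tilde x_1$ is the maximal ideal of the stalk, yields dominance). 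In the converse direction, your test diagram built from the injection $\cO_{\til X,\fq_1}/\fq_0\hookrightarrow k(\fp_0)$ and Proposition \ref{val-dom2} is again the paper's construction, as is your base-change reduction. One real elision there: ``a lift produces a point $\fp_1\in\til Y$ over $\fq_1$'' requires exhibiting a point of $\til T$ that $\tilde\nu$ sends to $\fq_1$ itself; the fibre of $\pi_T$ over the closed point of $T$ is the whole of $\Cong$ of the residue pointed group (Proposition \ref{prop: fibre of pi_X}), and the paper has to single out the correct point via the pushforward $\widetilde{\overline h}_*$ on residue pointed groups (Remark \ref{aff-pointed-grp}), using injectivity of the induced map of residue pointed groups.

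The obstacle you identify at the end is the one genuine gap in your write-up, and you should know that the paper does not close it either: its converse simply states ``By Proposition \ref{specl}, it suffices to show that $\tilde\phi'(Z)$ is closed under specialization'' for an \emph{arbitrary} closed $Z\subseteq\til{Y'}$, although Proposition \ref{specl}, as stated and as proved, concerns only the full image $\tilde\phi(\til Y)$. Your diagnosis of why the obvious reduction fails is correct ($Z$ need not be a vanishing set, cf.\ Example \ref{notbasis}, so it carries no closed subscheme structure allowing one to apply Proposition \ref{specl} to a composite $Z\to X'$), but your proposed repair will not work as described: in the proof of Proposition \ref{specl}, after dividing out $\congker(f)$, the minimal prime $\fd'\subseteq\fd$ is realized as the pullback of $\congker\big(A\to A_{\fd'}/M_{A_{\fd'}}\big)$, so it lies in $\tilde\phi(\til Y_i)$ for structural reasons, and nothing in that mechanism can force the exhibited preimage prime into $Z\cap\til Y_i$ --- the congruence-kernel reduction sees a single congruence, whereas $Z$ is cut out by arbitrary intersections of finite unions of sets $V_{a,b}$. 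What is actually needed is the relative statement: for quasi-compact $\varphi$ and closed $Z$, every $\fd\in\overline{\tilde\varphi(Z)}$ admits a generization inside $\tilde\varphi(Z)$; your specialization step then finishes. A workable route: after the finite affine reduction one has $\fd\in\overline{\tilde\varphi(Z\cap\til Y_i)}$ for some $i$, so, writing $f$ for the induced map on sections, every basic neighbourhood $U_{a_1,b_1}\cap\dots\cap U_{a_n,b_n}$ of $\fd$ produces a point of $Z\cap\til Y_i$ in $U_{f(a_1),f(b_1)}\cap\dots\cap U_{f(a_n),f(b_n)}$; hence the family $\{\,Z\cap\til Y_i\cap U_{f(a),f(b)}\,\}_{(a,b)\notin\fd}$ has the finite intersection property, and any point of its total intersection is a point of $Z$ whose image is contained in $\fd$. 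Producing such a point requires a compactness property of $\Cong(A)$ --- every finitely realizable system of conditions $(a_i,b_i)\in\fp$, $(c_j,d_j)\notin\fp$ is realizable in a single prime congruence, provable for instance by an ultraproduct of integral quotients, integrality being first order --- which appears nowhere in the paper. (Alternatively, for $Z$ a finite union of vanishing sets one may apply Proposition \ref{specl} verbatim to the quasi-compact morphism $\coprod_j Z_j\to X'$, but general closed sets are only arbitrary intersections of such unions, so this does not suffice.) Until an argument of this kind is supplied, your converse --- like the paper's --- is complete only modulo this relative form of Proposition \ref{specl}.
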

\begin{proof}
 Assume $\phi:Y\to X$ is universally closed and consider a test diagram as in the theorem with $A=\Gamma T$ and $K=\Frac A$. This diagram extends to
	\begin{equation*}\label{testdiag1}
	\xymatrix{
		U\ar[r]^{\lambda}\ar[dr]_{\iota}\ar@/^2.0pc/@[black][rr]^{\eta}& T\times_X Y\ar[r]^{~~~\nu'} \ar@<-2pt>[d]_{\phi'} & Y\ar@<-2pt>[d]^{\phi} \\
		&	T\ar[r]_{\nu} & X
	}
\end{equation*}
Let $u_0=\langle0\rangle\in U$ be the generic point of $U$, $y_0=\lambda(u_0)\in Y_T= T\times_X Y$ and $t_0=\iota(u_0)=\phi'(y_0)\in T$ the generic point of $T$. Then we obtain the following commutative diagram of residue fields: 
	\begin{equation*}
	\xymatrix{
		K=k(t_0)\ar[r]^{\overline{\phi'_{T,t_0}}} \ar@<-2pt>[dr]_{id} & L=k(y_0)\ar@<-2pt>[d]^{p=\overline{\lambda_{Y_T,y_0}}} \\
	 & K=k(u_0)
	}
\end{equation*}
Thus $p$ is surjective and $K\cong L/\mathfrak{c}_0$ for $\mathfrak{c}_0=\congker(L\xrightarrow{p} K)$. Consider
\[
 \begin{tikzcd}[column sep=40pt]
   \psi: \quad {\Cong(L)} \ar[r,"\overline{\phi'_{T,t_0}}"] & {\Cong(K)}=\til U  \ar[r,"\tilde\lambda"] & \widetilde{Y_T}
 \end{tikzcd}
\]
and define $\mathfrak{q}_0=\psi(\fc_0)\in \widetilde{Y_T}$. Since the diagram of maps
\begin{equation*}
 \begin{tikzcd}[column sep=40pt]
	                     & {\Cong(L/\mathfrak{c}_0)}  \ar[r,"\tilde\iota"] &\widetilde{T} \\
	{\Cong(L)}\ar[r,"\overline{\phi'_{T,t_0}}"] \ar[ur] & \til U \ar[r,"\tilde\lambda"] \ar[d,"\pi_{U}"] \ar[u,"\simeq"'] & \widetilde{Y_T}\ar[d,"\pi_{Y_T}"] \ar[u,"{\tilde\phi'}"'] \\
	%&  \cong{\Cong(K)}\ar[dr]_{\widetilde{\iota}} \ar[r]^{} &\widetilde{Y_T}\\
	&	U\ar[r,"\lambda"] & Y_T & 
 \end{tikzcd}
\end{equation*}
commutes, we have $\pi_{Y_T}(\mathfrak{q}_0)= y_0$ in $Y_T$ and $\mathfrak{z}_0:={\tilde\phi'}(\mathfrak{q}_0)=\mathfrak{c}_\triv=\sigma_T(t_0)$ in $\widetilde{T}$. Let $Z=\overline{\{\mathfrak{q}_0\}}\subseteq \widetilde{Y_T}$. Then ${\tilde\phi'}(Z)\subseteq \widetilde{T}$ is closed since ${\tilde\phi}$ is universally closed. Since $\mathfrak{z}_0 \in {\tilde\phi'}(Z)$, we have ${\tilde\phi'}(Z)=\widetilde{T}$. Consider $\mathfrak{z}_1:= \sigma_T(M_A)\in \widetilde{T}$. Then there exists some $\mathfrak{q}_1\in Z$ such that $\mathfrak{z}_1= {\tilde\phi'}(\mathfrak{q}_1)$. Thus we have the induced map on stalks ${\tilde\phi'_{\mathfrak{q}_1}}:\mathcal{O}_{\widetilde{T},\mathfrak{z}_1}\to \mathcal{O}_{\widetilde{Y_T},\mathfrak{q}_1}$. Let $B:=  \mathcal{O}_{\widetilde{Y_T},\mathfrak{q}_1}/\mathfrak{q}_0$. Then $\mathfrak{q}:=S_{\mathfrak{q}_0}^{-1}\mathfrak{q}_0\in{\Cong(B)}$ and $S_\mathfrak{q}^{-1}B=\mathcal{O}_{\widetilde{Y_T},\mathfrak{q}_0}/{\mathfrak{q}_0}$. Let $s: B\to S_\mathfrak{q}^{-1}B$ denote the localization map. We obtain the commutative diagram
	\begin{equation*}
	\xymatrix{
&A=\mathcal{O}_{T,M_A}=\mathcal{O}_{\widetilde{T},\mathfrak{z}_1}\ar[drr]_{\Gamma_{\iota}} \ar[r]^{~~~~~~{\tilde\phi'_{\mathfrak{q}_1}}} & \mathcal{O}_{\widetilde{Y_T},\mathfrak{q}_1}\ar[r]^{q} &\mathcal{O}_{\widetilde{Y_T},\mathfrak{q}_1}/\mathfrak{q}_0=B\ar[d]^{s}\\
		&\qquad\qquad  & &K\cong L/\mathfrak{c}_0=\mathcal{O}_{\widetilde{Y_T},\mathfrak{q}_0}/{\mathfrak{q}_0}
	}
\end{equation*}
Since $s$ is a localization map of the integral monoid $B$, it must be injective. Therefore the above diagram shows that $\Frac B\cong K$. Let $f:= q\circ {\tilde\phi'_{\mathfrak{q}_1}}:A\to B$. Then $f^{-1}(M_B)=f^{-1}(\pi_{Y_T}(\mathfrak{q}_1))=\pi_T(\mathfrak{z}_1)=M_A$, which shows $f:A\to B$ is dominant. Thus by Proposition \ref{val-dom}, $f$ is an isomorphism. Let $g: B\to A$ be its inverse. Then
$$\hat{\nu}:T=\MSpec(A)\overset{g^*}\longrightarrow \MSpec(B)\hookrightarrow Y_T\overset{\nu'}\longrightarrow Y.$$ 
is the required lift, which completes one direction of the proof.

Conversely suppose every test diagram for the morphism $\phi:Y\to X$ has a lift. We consider a base change
	\begin{equation*}
	\xymatrix{
		Y'=X'\times_X Y\ar[r]^{~~~~~~~\psi'} \ar@<-2pt>[d]_{\phi'} & Y\ar@<-2pt>[d]^{\phi} \\
		X'\ar[r]_{\psi} & X
	}
\end{equation*}
Let $Z\subseteq \widetilde{Y'}$ be closed. We want to show that ${\tilde\phi'}(Z)\subseteq \widetilde{X'}$ is closed. Since $\phi $ is quasi-compact, so is $\phi'$. By Proposition \ref{specl}, it suffices to show that ${\tilde\phi'}(Z)$ is closed under specialization. We consider any $\mathfrak{z}_1\in Z$ and let $\mathfrak{p}_1= \tilde\phi'(\mathfrak{z}_1)\in \widetilde{X'}$. For any $\mathfrak{p}_0\in \overline{\{\mathfrak{p}_1\}}$, we need to show that $\mathfrak{p}_0\in {\tilde\phi'}(Z)$. As explained in section \ref{subsection: Stalks and residue fields}, we obtain an injection
\begin{equation*}
	A:= \mathcal{O}_{\widetilde{X'},\mathfrak{p}_0}/\mathfrak{p}_1\overset{i}\longrightarrow  \mathcal{O}_{\widetilde{X'},\mathfrak{p}_1}/\mathfrak{p}_1=k(\mathfrak{p}_1) \overset{{\tilde\phi'}_{\mathfrak{z}_1}}\longrightarrow k(\mathfrak{z}_1)=:L.
\end{equation*}
By Proposition \ref{val-dom2} there exists a valuation monoid $\overline{A}$ dominating $A$ such that $\Frac \overline{A}\cong L$. In other words, there are morphisms $h:A\to \overline{A}$ and $j: \overline{A}\hookrightarrow \Frac \overline{A}\cong L$ such that $h$ is dominant, $\overline{A}$ is a valuation monoid and the diagram 
\begin{equation*}
		\begin{tikzcd}
			A\arrow[hookrightarrow]{r}{i} \arrow{d}{h} & K\arrow[hookrightarrow]{d}{{\tilde\phi'}_{\mathfrak{z}_1}} \\
			\overline{A}\arrow{r}{j} & L
		\end{tikzcd}
\end{equation*}
commutes. We obtain the commutative diagram
\begin{equation*}
\begin{tikzcd}
  U:=\MSpec(L) \ar[rrr,bend left=20pt,"\eta"]\arrow{d}{\iota=j^*}	\ar[rr, hook,"\eta'"']& &	Y'\arrow[r,"\psi'"'] \arrow{d}{\phi'} & Y\arrow{d}{\phi} \\
  T:=\MSpec(\overline{A}) \ar[rrr,bend right=20pt,"\nu"'] \ar[rr,bend right=12pt,pos=0.62,"\nu'"'] \arrow{r}{~~h^*}&	\MSpec(A)\ar[r, hook,"\alpha"]&	X'\arrow{r}{\psi} & X
\end{tikzcd}
\end{equation*}
Its outer square is a test diagram, which has a lift $\hat{\nu}:T \to Y$ by assumption. This lift defines a unique morphism $\hat\nu':T\to Y'$ such that the diagram 
\begin{equation*}
	\xymatrix{
	&		T\ar[dr]^{\hat\nu'}\ar@/_1.0pc/@[black][ddr]_{\nu'}\ar@/^1.0pc/@[black][drr]^{\hat{\nu}} & &\\
&	& Y'\ar[r]^{\psi'} \ar@<-2pt>[d]_{\phi'} & Y\ar@<-2pt>[d]^{\phi} \\
&		&	X'\ar[r]_{\psi} & X
	}
\end{equation*}
commutes. By construction, we have 
\[
 \mathfrak{z}_1=\widetilde{\hat\nu'}(\mathfrak{c}_\triv), \qquad \widetilde{\nu'}(\mathfrak{c}_\triv)=\mathfrak{p}_1={\tilde\phi'}(\mathfrak{z}_1), \qquad  \nu'(M_{\overline{A}})=h^*(M_{\overline{A}})=M_A=\pi_{X'}(\mathfrak{p}_0). 
\]
Since $Z$ is closed and $\mathfrak{c}_\triv\in \widetilde{T}$ is generic, $\widetilde{\hat\nu'}(\widetilde{T})\subseteq Z$. Since $A\overset{h}\to {\overline{A}}$ is an injection and since $h^{-1}(M_{\overline{A}})= M_A$, the induced map $\overline{h}: A/M_A\to {\overline{A}}/M_{\overline{A}}$ is an injection. Consider the map $\widetilde{\overline{h}}_*: { \Cong(A/M_A)}\to { \Cong({\overline{A}}/M_{\overline{A}})}$, as defined in section \ref{subsection: Stalks and residue fields}. Then $\widetilde{\overline{h}^*}(\widetilde{\overline{h}_*}(\mathfrak{p}_0))= \mathfrak{p}_0$. Then ${\tilde\phi'}\big(\widetilde{\hat\nu'}(\widetilde{\overline{h}_*}(\mathfrak{p}_0)\big)={\tilde\nu'}\big(\widetilde{\overline{h}_*}(\mathfrak{p}_0)\big)=\mathfrak{p}_0$, which shows that $\mathfrak{p}_0\in {\tilde\phi'}(Z)$.
\end{proof}

%%%%%%%%%%%%%%%%%%%%%%%%%%%%%%%%%%%%%%%%%%%%%%%%%%%%%%%%%%%%%%%%%%%%%%%%%%%%%%%%%%%%%%%%%%%%%%%%%%%%%%%%%%%%%%%%%%%%%%%%%%%%%%%%%%%%%%%%%%%%%%%%%%%%%%%%%%%%%%%%%%%%%
%%%%%%%%%%%%%%%%%%%%%%%%%%%%%%%%%%%%%%%%%%%%%%%%%%%%%%%%%%%%%%%%%%%%%%%%%%%%%%%%%%%%%%%%%%%%%%%%%%%%%%%%%%%%%%%%%%%%%%%%%%%%%%%%%%%%%%%%%%%%%%%%%%%%%%%%%%%%%%%%%%%%%
\section{Separated morphisms}
\label{section: Separated morphisms}
%%%%%%%%%%%%%%%%%%%%%%%%%%%%%%%%%%%%%%%%%%%%%%%%%%%%%%%%%%%%%%%%%%%%%%%%%%%%%%%%%%%%%%%%%%%%%%%%%%%%%%%%%%%%%%%%%%%%%%%%%%%%%%%%%%%%%%%%%%%%%%%%%%%%%%%%%%%%%%%%%%%%%
%%%%%%%%%%%%%%%%%%%%%%%%%%%%%%%%%%%%%%%%%%%%%%%%%%%%%%%%%%%%%%%%%%%%%%%%%%%%%%%%%%%%%%%%%%%%%%%%%%%%%%%%%%%%%%%%%%%%%%%%%%%%%%%%%%%%%%%%%%%%%%%%%%%%%%%%%%%%%%%%%%%%%

In this section, we develop a topological characterization of separated morphisms and extend the characterization in terms of the valuative criterion from finite type morphisms, as treated in \cite{CHWW15}, to arbitrary morphisms.

\begin{defn}
 A morphism $\varphi:Y\to X$ of monoid schemes is \emph{separated} if the diagonal $\Delta_\varphi\colon Y\to Y\times_XY$ is a closed immersion.  A morphism $\varphi:Y\to X$ of monoid schemes is \emph{quasi-separated} if the diagonal $\Delta_\varphi:Y\to Y\times_XY$ is quasi-compact.
\end{defn}

%%%%%%%%%%%%%%%%%%%%%%%%%%%%%%%%%%%%%%%%%%%%%%%%%%%%%%%%%%%%%%%%%%%%%%%%%%%%%%%%%%%%%%%%%%%%%%%%%%%%%%%%%%%%%%%%%%%%%%%%%%%%%%%%%%%%%%%%%%%%%%%%%%%%%%%%%%%%%%%%%%%%
\subsection{Locally closed immersions}
Our topological characterization of separated morphisms relies on some general facts for locally closed immersions.

\begin{defn}
 A morphism $\varphi:Y\to X$ is a \emph{locally closed immersion} if it factors into a closed immersion $Y\to U$ followed by an open immersion $U\to X$.
\end{defn}

\begin{lemma}\label{lemma: closure of diagonal is a vanishing set}
 Let $\varphi:Y\to X$ be a quasi-separated morphism of monoid schemes with diagonal $\Delta_\varphi:Y\to Y\times_XY$ and associated map $\tilde\Delta_\varphi:\til Y\to (Y\times_XY)^\congr$ of congruence spaces. Then $\Delta_\varphi$ is a locally closed immersion and the topological closure of $\im(\tilde\Delta_\varphi)$ is a vanishing set of $(Y\times_XY)^\congr$. 
\end{lemma}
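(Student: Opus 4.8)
The plan is to treat the two assertions in turn, dispatching the locally closed immersion first and then reducing the closure statement to a density question.

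\emph{That $\Delta_\varphi$ is a locally closed immersion.} Here I would mimic the classical construction. Choose an affine open cover $\{V_i\}$ of $X$ and, for each $i$, an affine open cover $\{U_{ij}\}$ of $\varphi^{-1}(V_i)$, so that $\{U_{ij}\}$ covers $Y$ and $\varphi(U_{ij})\subseteq V_i$. Put $W=\bigcup_{i,j}U_{ij}\times_{V_i}U_{ij}$, which is an open subscheme of $Y\times_XY$ (each factor is open, and $U_{ij}\times_{V_i}U_{ij}=U_{ij}\times_XU_{ij}$ since both projections land in $V_i$). Since $\Delta_\varphi(U_{ij})\subseteq U_{ij}\times_{V_i}U_{ij}$, the diagonal factors through $W$, and $\Delta_\varphi^{-1}(U_{ij}\times_{V_i}U_{ij})=U_{ij}$. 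On each such affine piece $\Delta_\varphi$ corresponds to the multiplication map $\Gamma U_{ij}\otimes_{\Gamma V_i}\Gamma U_{ij}\to\Gamma U_{ij}$, which is surjective; hence $\Delta_\varphi\colon Y\to W$ is a closed immersion, and composing with the open immersion $W\hookrightarrow Y\times_XY$ exhibits $\Delta_\varphi$ as a locally closed immersion.

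\emph{Reducing the closure claim to a density statement.} Since $\varphi$ is quasi-separated, $\Delta_\varphi$ is quasi-compact, so by \autoref{lemma: the congrunce kernel of a quasi-compact morphism is quasi-coherent} the congruence kernel $\Congker(\Delta_\varphi)$ is quasi-coherent, and \autoref{lemma: vanishing set of the congruence kernel is the vanishing closure of the image} gives $V_{\Congker(\Delta_\varphi)}=\im(\tilde\Delta_\varphi)^{\vcl}$. Writing $Z$ for the associated closed subscheme of $P=Y\times_XY$, this is exactly the vanishing set $\til Z$. As every vanishing set is closed by \autoref{prop: closed immersions of congruence spaces}, we get $\overline{\im(\tilde\Delta_\varphi)}\subseteq\til Z$ for free, so it remains to prove the reverse inclusion; equivalently, that $\im(\tilde\Delta_\varphi)$ is topologically dense in $\til Z$, which then forces $\overline{\im(\tilde\Delta_\varphi)}=\til Z=\im(\tilde\Delta_\varphi)^{\vcl}$ to be a vanishing set. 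The geometric input is that, by the factorization from the first part, $Y\cong Z\cap W$ is an \emph{open} subscheme of $Z$ and $\im(\tilde\Delta_\varphi)=\pi_Z^{-1}(Z\cap W)$, the preimage under $\pi_Z$ of the dense open $G:=Z\cap W\subseteq Z$ (dense because $Z$ is the scheme-theoretic image of the quasi-compact morphism $\Delta_\varphi$). Thus everything comes down to showing that $\pi_Z^{-1}(G)$ is dense in $\til Z$ for $G$ dense open.

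\emph{The density step.} For this I would use that every $\fq\in\til Z$ satisfies $\fq\in\overline{\{\sigma_Z(\pi_Z(\fq))\}}$, because the fibre-generic section is minimal in its fibre (cf.\ \autoref{prop: fibre of pi_X} and the proof that dominant morphisms have dense image). Hence it suffices to show $\sigma_Z(P)\in\overline{\pi_Z^{-1}(G)}$ for every $P\in Z$. Since $\sigma_Z$ is functorial along the open immersion $Y\cong G\hookrightarrow Z$, the points $\sigma_Z(Q)$ with $Q\in G$ already lie in $\pi_Z^{-1}(G)$; the remaining task is to reach $\sigma_Z(P)$ for $P\notin G$ as a specialization $\sigma_Z(\fm)\subseteq\sigma_Z(P)$ of such a point, by choosing a generization $\fm\subseteq P$ lying in the dense open $G$.

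The hard part will be precisely this last step. Because the subsets $U_{a,b}$ form only a subbasis (\autoref{notbasis}) and the section $\sigma_Z$ is in general neither continuous nor monotone, one cannot simply transport density of $G$ in $Z$ through $\pi_Z^{-1}$ or through $\sigma_Z$; a finite intersection of subbasic neighbourhoods of $\sigma_Z(P)$ must be met simultaneously. The delicate point is therefore to produce, for each special $P\notin G$, a generization $\fm\in G$ whose fibre-generic congruence $\sigma_Z(\fm)$ specializes to $\sigma_Z(P)$, i.e.\ to interpolate between the residue datum at the generic point $\fm$ and at $P$. I expect this to be carried out with the valuation-monoid existence result \autoref{val-dom2}, dominating the relevant local monoid by a valuation monoid whose fraction group realizes the generic residue field, exactly in the spirit of the valuative criteria developed later in the paper.
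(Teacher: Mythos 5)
Your first half is correct and coincides with the paper's argument: cover $Y$ by affines $U$ mapping into affines $V\subseteq X$, observe that $\Gamma U\otimes_{\Gamma V}\Gamma U\to\Gamma U$ is surjective, and factor $\Delta_\varphi$ as a closed immersion into the open $W=\bigcup U\times_XU$. Your reduction of the second half is also sound, and in fact sharper than you state it: since the vanishing closure $\im(\tilde\Delta_\varphi)^\vcl$ equals $V_{\Congker(\Delta_\varphi)}=\til Z$ by \autoref{lemma: the congrunce kernel of a quasi-compact morphism is quasi-coherent} and \autoref{lemma: vanishing set of the congruence kernel is the vanishing closure of the image}, and since $\til Z$ is the \emph{smallest} vanishing set containing the image, the claim is precisely equivalent to density of $\im(\tilde\Delta_\varphi)$ in $\til Z$; your identifications $Y\cong Z\cap W$ and $\im(\tilde\Delta_\varphi)=\pi_Z^{-1}(Z\cap W)$ (the latter via \autoref{prop: prime congruences in localizations}) are correct, and the reduction to showing $\fq\in\overline{\{\sigma_Z(\pi_Z(\fq))\}}\subseteq\overline{\im(\tilde\Delta_\varphi)}$ is valid because the fibre-generic congruence is minimal in its fibre.

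But the proof stops exactly where the lemma's actual content begins, and the step you defer is not a technicality: your plan requires, for $P\notin G$, a generization $\fm\subseteq P$ with $\fm\in G$ \emph{and} $\sigma_Z(\fm)\subseteq\sigma_Z(P)$, and neither half comes from density of $G$ (a dense open need not contain a generization of every point, and $\fm\subseteq P$ does not imply $\sigma_Z(\fm)\subseteq\sigma_Z(P)$ -- e.g.\ in $A=\F_1[s,a,b]/\gen{(sa,sb)}$ with $\fm=\gen{0}\subseteq P=\gen{s}$ one has $(a,b)\in\sigma(\fm)$ but $(a,b)\notin\sigma(P)$, so monotonicity of $\sigma$ genuinely fails). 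Moreover your guess that \autoref{val-dom2} closes the gap is misdirected: valuation monoids play no role here; they enter only in the valuative criteria of the later sections. What the paper actually does is an explicit affine computation tailored to the diagonal: given $\fq\in V_\fD$ lying in a chart $U_1\times_VU_2$ with $\fd=\fD(U_1\times_VU_2)\subseteq\fq$, it takes the nullideal $Q=I_\fq$, projects it to $Q_1\in U_1$, chooses a generization $Q_1'\subseteq Q_1$ inside $U_1\cap U_2$, and sets $\fq'=\tilde\Delta_\varphi(\sigma_{U_1\cap U_2}(Q_1'))$; the containment $\fq'\subseteq\fq$ is then verified on nullideals, using the diagonal-specific input that restrictions from $U_2$ to $U_1\cap U_2$ agree with restrictions from $U_1$ up to units of $\Gamma(U_1\cap U_2)$ (affine opens of monoid schemes being localizations), so that $Q'\subseteq Q$ follows and $\fq\in\overline{\{\fq'\}}$. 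In other words, one must compare the candidate generic point with the \emph{given} congruence $\fq$ directly through $\fd\subseteq\fq$, rather than route everything through the sections $\sigma_Z$; without this computation your proposal has a genuine gap at the heart of the lemma.
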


\begin{proof}
 If $U=\MSpec(A)$ is affine open of $Y$, then $\varphi(U)$ is contained in an affine open $V=\MSpec(B)$ of $X$ and the restriction of the diagonal morphism to $\Delta_\varphi:U\to U\times_VU$ corresponds to the surjective monoid morphism $A\otimes_{B}A\to A$. This shows that $\Delta_\varphi:U\to U\times_VU=U\times_XU$ is a closed immersion. 
 
 Note that $U\times_VU=U\times_XU$ is an open subset of $Y\times_XY$. If $\cU$ is an affine open covering of $Y$, then $W=\bigcup_{U\in\cU}U\times_XU$ is open in $Y\times_XY$ and $\Delta_\varphi:Y\to W$ is a closed  immersion. Thus $\Delta_\varphi:Y\to Y\times_XY$ is a locally closed immersion, which establishes the first claim of the lemma.
 
 We turn to the second claim. Since $\Delta_\varphi$ is quasi-compact, Lemma \ref{lemma: vanishing set of the congruence kernel is the vanishing closure of the image} shows that the vanishing closure of $Z=\im(\tilde\Delta_\varphi)$ is the vanishing set of the congruence kernel $\fD=\Congker(\Delta_\varphi)$ of $\Delta_\varphi$. We need to show that every point $\fq\in Z$ is in the image of $\tilde\Delta_\varphi$.
 
 The fibre product $Y\times_XY$ is covered by affine opens of the form $U_1\times_V U_2$ where $U_1$ and $U_2$ are affine opens of $Y$ and $V$ is an affine open of $X$ that contains both $\varphi(U_1)$ and $\varphi(U_2)$. If $\fq\in U_1\times_VU_2$, then $U_1\times_VU_2$ is non-trivial and thus $W=\varphi^{-1}(U_1\times_V U_2)=U_1\cap U_2$ is a nonempty open subset of $Y$. 
 
 The congruence kernel of $\Gamma (U_1\times_V U_2)\to\Gamma W$ is
 \[
  \fd \ = \ \fD(U_1\times_V U_2) \ = \ \{ (a_1\otimes a_2,\ b_1\otimes b_2) \mid a_1a_2=b_1b_2 \text{ in } \Gamma W\}
 \]
 where $a_1\otimes a_2$ and $b_1\otimes b_2$ are elements of $\Gamma (U_1\times_VU_2)=\Gamma U_1\otimes_{\Gamma V}\Gamma U_2$ and where $a_1a_2$ is a shorthand notation for $\res_{U_1,W}(a_1)\cdot \res_{U_2,W}(a_2)$, and similar for $b_1b_2$. By our assumptions, $\fq\in V_\fd$, i.e., $\fd\subseteq\fq$ as congruence on $\Gamma U_1\otimes_{\Gamma V}\Gamma U_2$.
 
 In the next step, we find a prime congruence $\fq'\in \im(\til\Delta_\varphi)$ with $\fq'\subseteq \fq$. Let $Q=I_\fq$ be the vanishing ideal of $\fq$, which is a point of $U_1\times_VU_2$. Let $\pr_1:Y\times_XY\to Y$ be the canonical projection onto the first factor. Then $Q_1=\pr_1(Q)\in U_1$. Let $Q_1'\subseteq Q_1$ be a prime ideal that is in $W$ and $\fq_1'=\sigma_W(Q_1')$ the minimal prime congruence in $\til W$ with vanishing ideal $Q_1'$. Then $\fq'=\til\Delta_\varphi(\fq'_1)\in W\times_VW$ is the minimal prime congruence in $\im(\til\Delta_\varphi)$ with vanishing ideal 
 \[
  I_{\fq'} \ =  \ \{a_1\otimes a_2\mid a_1a_2\in Q_1'\text{ in }\Gamma W\}.
 \]
 We claim that $\fq'\subseteq\fq$ as congruences on $\Gamma U_1\otimes_{\Gamma V}\Gamma U_2$. By the minimality of $\fq'$, it suffices to show that $Q'=I_{\fq'}$ is contained in $Q=I_\fq$. Consider $a\otimes b\in Q'$, i.e., 
 \[
  \res_{U_1,W}(a) \cdot \res_{U_2,W}(b) \ \in \ Q_1' \ \subseteq \ Q_1. 
 \]
 The restriction $\res_{U_2,W}(b)$ is the multiple of an element of the form $\res_{U_1,W}(b')$ by an invertible element in $\Gamma W$. Since $Q_1$ is invariant under the multiplication by units, we conclude that $\res_{U_1,W}(ab')\in Q_1$ and thus $ab\otimes 1\in Q$, as desired. This shows that $\fq'\subseteq\fq$.
 
 By our assumptions $\im(\tilde\Delta_\varphi)$ is closed and thus contains every specialization of $\fq'$. This shows that $\fq$ is in the image of $\tilde\Delta_\varphi$, which concludes the proof.
\end{proof}

\begin{lemma}\label{lemma: locally closed immersion whose image is a vanishing set is closed}
 Let $\varphi:Y\to X$ be a locally closed immersion and $\tilde\varphi:\til Y\to \til X$ the associated map of congruence spaces. If $\im(\tilde\varphi)$ is a vanishing set of $X$, then $\varphi$ is a closed immersion. 
\end{lemma}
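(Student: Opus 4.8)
The plan is to verify directly the two defining conditions of a closed immersion from the paper's definition — that $\varphi$ is affine and that the pullback on sections over each affine open is surjective — rather than routing through the topological criterion of Theorem \ref{thm: topological characterization of closed immersion}. First I would fix the factorization $Y\to U\to X$ witnessing the locally closed immersion, with $\iota:Y\to U$ a closed immersion and $j:U\to X$ an open immersion, and identify $U$ with the open subset $j(U)$ of $X$, so that $\varphi(Y)\subseteq U$ and $\cO_X|_U=\cO_U$.

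The first real step is to promote the hypothesis on $\im(\tilde\varphi)$ to genuine closedness of the set-theoretic image $\varphi(Y)$ inside $X$. Since $\im(\tilde\varphi)$ is a vanishing set, Theorem \ref{thm: characterization of vanishing sets} furnishes a closed immersion $\psi:Z\to X$ with $\tilde\psi(\til Z)=\im(\tilde\varphi)$. Applying $\pi_X$ and using the functoriality $\pi_X\circ\tilde\varphi=\varphi\circ\pi_Y$ of Proposition \ref{prop: projection onto the monoid scheme}, together with the surjectivity of $\pi_Y$ and $\pi_Z$, gives $\varphi(Y)=\pi_X(\im\tilde\varphi)=\pi_X(\tilde\psi(\til Z))=\psi(Z)$. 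As the image of a closed immersion is closed (locally on an affine $\MSpec(B)$ it is $V(I_\fc)$ for the relevant congruence $\fc$), the set $\varphi(Y)$ is closed in $X$.

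The decisive step is affineness of $\varphi$, where the unique-closed-point property of affine monoid schemes does the work. Fix an affine open $W=\MSpec(B)$ of $X$. If $\varphi(Y)\cap W=\emptyset$, then $\varphi^{-1}(W)=\emptyset$ is affine. Otherwise $\varphi(Y)\cap W$ is a nonempty closed subset of $W$ and hence contains the unique closed point $M_B$; since $\varphi(Y)\subseteq U$, this forces $M_B\in U\cap W$, and an open subset of $\MSpec(B)$ containing $M_B$ must be all of $W$, so $W\subseteq U$. Then $\varphi^{-1}(W)=\iota^{-1}(W)$ is affine because $\iota$ is a closed immersion, hence affine. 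The same dichotomy yields surjectivity of $\Gamma(W,\cO_X)\to\Gamma(\varphi^{-1}(W),\cO_Y)$: over the empty fibre the target is the zero monoid, while for $W\subseteq U$ one has $\Gamma(W,\cO_X)=\Gamma(W,\cO_U)$ and surjectivity is exactly the closed-immersion property of $\iota$. By the definition of closed immersion, these two facts show that $\varphi$ is a closed immersion.

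The hard part is the second step: extracting closedness of $\varphi(Y)$ in $X$ — not merely in $U$ — from the vanishing-set hypothesis. This is precisely what makes the reduction $W\subseteq U$ go through, since without closedness the closed point $M_B$ of an affine $W$ meeting $\varphi(Y)$ need not lie in $\varphi(Y)$, and the argument would collapse to the locally closed case. Everything else is bookkeeping built on the unique closed point of affine monoid schemes.
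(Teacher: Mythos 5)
Your proof has a genuine gap, and it sits at the load-bearing step: the claim that the image of a closed immersion of monoid schemes is closed in the Zariski topology of $X$ (``locally on an affine $\MSpec(B)$ it is $V(I_\fc)$'') is false. A closed immersion is by definition affine with surjective pullback of sections; on prime spectra it is a homeomorphism onto its image, but that image is in general a proper, non-closed subset of the set of primes containing $I_\fc$ (locally it consists only of those primes that are unions of $\fc$-classes). This failure is not incidental --- it is the raison d'\^etre of the congruence space in this paper. Concretely, take $X=\A^1_\Fun=\MSpec(\Fun[t])$ and $\varphi:Y=\MSpec(\Fun)\to X$ induced by $t\mapsto 1$, with $\fc=\gen{(t,1)}$ and $I_\fc=\gen{0}$. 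Then $\varphi(Y)=\{\gen 0\}$ is the generic point, which is not closed in $X$, even though $\varphi$ is a closed immersion and the hypothesis of the lemma holds: $\im(\tilde\varphi)=V_{t,1}$ is a vanishing set of $\til X$. So your step 2 is false even in instances of the lemma, and it cannot be repaired. The same example destroys your decisive dichotomy: $\varphi$ factors through the open $U=U_t$ as a closed immersion followed by an open immersion, and for the affine open $W=X$ the intersection $\varphi(Y)\cap W$ is nonempty but does not contain the closed point $\gen t$ of $W$, and $W\not\subseteq U$; nevertheless $\varphi^{-1}(W)=Y$ is affine and $\Gamma W=\Fun[t]\to\Fun$ is surjective --- for reasons invisible to your case analysis, since $\Fun[t]\to\Fun[t^{\pm1}]\to\Fun$ is surjective although the first map is not.

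This is why the paper's proof never asserts topological closedness of $\varphi(Y)$ in $X$. Instead it works scheme-theoretically: it takes the \emph{strongly reduced} closed subscheme $j:Y'\to X$ supported on the vanishing set $\im(\tilde\varphi)$ (Theorem \ref{thm: characterization of vanishing sets}), shows $j$ factors through $U$ by composing with the section $\sigma_{Y'}$ and the projections $\pi$, factors $\varphi_U$ through $Y'$ using that a strongly reduced closed subscheme is final among closed subschemes with the given support, and then concludes by a cancellation argument: $j=\iota_U\circ j_U$ a closed immersion with $\iota_U$ separated forces $j_U$ to be a closed immersion, hence separated, so $\varphi_U=j_U\circ\iota'$ being a closed immersion forces $\iota'$ to be one, and $\varphi=j\circ\iota'$ is a composition of closed immersions. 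The only setting in which your ``closed image'' intuition is valid is the congruence space, where closed immersions are indeed closed topological embeddings (Proposition \ref{prop: closed immersions of congruence spaces}); but the passage from closedness in $\til X$ back to your affineness dichotomy in $X$ is exactly what fails, since $\pi_X$ does not transport closed sets to closed sets.
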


\begin{proof}
 Since $\varphi:Y\to X$ is a locally closed immersion, there exists an open subset $U$ of $X$ such that $\varphi$ factors into
 $$\varphi: Y\xrightarrow{\varphi_U}U\xrightarrow{\iota_U} X.$$
 where $\varphi_U$ is a closed immersion and $\iota_U$ is an open immersion. Let $Z:= \im(\tilde\varphi)$. Since $Z$ is a vanishing set of $X$, Theorem \ref {thm: characterization of vanishing sets} implies that there exists a strongly reduced closed subscheme $Y'\overset{j}\hookrightarrow X$ such that $Z=\im(\widetilde{j}:\widetilde{Y'}\to \widetilde{X})$. Since $\im(\tilde\varphi)\subset\til U$, $\widetilde{j}$ factors into
 \begin{equation*}
\widetilde{j}: \widetilde{Y'}\overset{\widetilde{j_U}}\longrightarrow \widetilde{U}\overset{\widetilde{\iota_U}}\longrightarrow \widetilde{X}.
 \end{equation*}
Thus 
\[
 j \ = \ \pi_{X}\circ \widetilde{j}\circ \sigma_{Y'} \ = \ \pi_{X}\circ \widetilde{\iota_U}\circ \widetilde{j_U}\circ \sigma_{Y'} \ = \ \iota_U\circ \pi_U \circ \widetilde{j_U}\circ \sigma_{Y'},
\]
which shows that $j$ factors through $U$ via the map $j_U:=  \pi_U \circ \widetilde{j_U}\circ \sigma_{Y'}:Y'\to U$. Since $Y'$ is strongly reduced (and therefore final among all closed subschemes with support $Z$), $\varphi_U$ factors through $Y'$, i.e., there exists some morphism $\iota': Y\to Y'$ such that the diagram 
\begin{equation*}
	\begin{tikzcd}
	Y\arrow{d}{\iota'}	\arrow{r}{\varphi_U}&  U\arrow{d}{\iota_U} \\
    Y'\arrow{r}{~~j}\arrow{ru}{j_U}&	X
	\end{tikzcd}
\end{equation*}
commutes. Since $j= \iota_U\circ j_U$ is a closed immersion and $\iota_U$ is separated as an open immersion, $j_U$ is a closed immersion. Therefore $j_U$ is separated. Since $\varphi_U=j_U\circ i'$ is a closed immersion, it follows that $\iota'$ is a closed immersion. Since composition of closed immersions is a closed immersion, we conclude that $\varphi = j\circ \iota'$ is a closed immersion.
\end{proof}

\begin{rem}
 It is not enough to ask for $\im(\tilde\varphi)$ to be closed for Lemma \ref{lemma: locally closed immersion whose image is a vanishing set is closed} to hold. For example, let $X=\MSpec(\Fun[x,y]/{\langle xy\rangle})$ and consider the locally closed immersion 
 \[
  \varphi: \ Y = \MSpec(\Fun) \amalg \MSpec(\Fun) \stackrel{\iota_Y}\longrightarrow \mathbb{G}_{m,\Fun} \amalg \mathbb{G}_{m,\Fun} \stackrel{\iota_U}\longrightarrow \{\langle x\rangle, \langle y\rangle\} = U \subseteq  X
 \]
  where the map $\iota_Y$ is induced by maps of the form $\Fun[t^{\pm1}]\to \Fun$ given by $t\mapsto 1$ and the map $\iota_U$ is induced by isomorphisms of the form $\Fun[s,t^{\pm1}]/\langle s\rangle \to \Fun[t^{\pm1}]$. It follows that $\iota_Y$ is a closed immersion and $\iota_U$ is an open immersion and hence $\varphi$ is a locally closed immersion. The image $\widetilde{\varphi}(\widetilde{Y})= V_{x,1}\cup V_{y,1}$ is a closed subset of $\widetilde{X}$, but $\varphi$ is not a closed immersion since it is not affine.
\end{rem}

%%%%%%%%%%%%%%%%%%%%%%%%%%%%%%%%%%%%%%%%%%%%%%%%%%%%%%%%%%%%%%%%%%%%%%%%%%%%%%%%%%%%%%%%%%%%%%%%%%%%%%%%%%%%%%%%%%%%%%%%%%%%%%%%%%%%%%%%%%%%%%%%%%%%%%%%%%%%%%%%%%%%%
\subsection{Topological characterization of separated morphisms}
\label{subsection: Topological characterization of separated morphisms}

\begin{thm}\label{thm: topological characterization of separated morphisms}
 Let $\varphi:Y\to X$ be a morphism of monoid schemes with diagonal $\Delta_\varphi:Y\to Y\times_XY$ and associated map $\tilde\Delta_\varphi:\til Y\to (Y\times_XY)^\congr$ of congruence spaces. Then the following are equivalent:
 \begin{enumerate}
  \item\label{sep1} $\varphi$ is separated;
  \item\label{sep2} $\varphi$ is quasi-separated and the image of $\tilde\Delta_\varphi$ is a closed subset of $(Y\times_XY)^\congr$.
 \end{enumerate}
\end{thm}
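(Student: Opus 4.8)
The plan is to deduce the equivalence from the earlier results already assembled, treating the two implications separately. The direction $\eqref{sep1}\Rightarrow\eqref{sep2}$ should be nearly immediate: if $\varphi$ is separated, then by definition $\Delta_\varphi$ is a closed immersion, hence in particular affine and therefore quasi-compact, which gives quasi-separatedness; and by Proposition \ref{prop: closed immersions of congruence spaces} the associated map $\tilde\Delta_\varphi$ is a closed topological embedding, so its image is closed in $(Y\times_XY)^\congr$. Thus \eqref{sep1} implies \eqref{sep2} with no real work.

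The substance lies in $\eqref{sep2}\Rightarrow\eqref{sep1}$, and here the strategy is to route everything through the two preparatory lemmas on locally closed immersions. First I would invoke Lemma \ref{lemma: closure of diagonal is a vanishing set}: since $\varphi$ is quasi-separated, the diagonal $\Delta_\varphi$ is a \emph{locally closed} immersion, and the topological closure $\overline{\im(\tilde\Delta_\varphi)}$ is a \emph{vanishing set} of $(Y\times_XY)^\congr$. By hypothesis \eqref{sep2} the image $\im(\tilde\Delta_\varphi)$ is already closed, so it coincides with its closure; hence $\im(\tilde\Delta_\varphi)$ is itself a vanishing set. This is the key point where the closedness assumption is consumed: without it one only controls the vanishing closure, not the image.

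With $\Delta_\varphi$ now known to be a locally closed immersion whose congruence-space image is a vanishing set, I would apply Lemma \ref{lemma: locally closed immersion whose image is a vanishing set is closed} directly to $\varphi=\Delta_\varphi$, concluding that $\Delta_\varphi$ is a closed immersion. By definition this is exactly the statement that $\varphi$ is separated, completing the proof.

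The main obstacle is conceptual rather than computational, and it has essentially been discharged by the preparatory lemmas: the delicate matching of the topological closedness of the image with the \emph{algebraic} notion of a vanishing set (a possibly infinite intersection of subbasic closed sets $V_{a,b}$, which need not be closed under finite unions, per Example \ref{notbasis}). The risk I would watch for is conflating ``closed subset'' with ``vanishing set''; the Remark following Lemma \ref{lemma: locally closed immersion whose image is a vanishing set is closed} shows these genuinely differ, so the argument must pass through the vanishing-set structure supplied by Lemma \ref{lemma: closure of diagonal is a vanishing set} and not merely through closedness. Once that identification is in place, the rest is formal.
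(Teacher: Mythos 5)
Your proposal is correct and follows essentially the same route as the paper: \eqref{sep1}$\Rightarrow$\eqref{sep2} via Proposition \ref{prop: closed immersions of congruence spaces}, and \eqref{sep2}$\Rightarrow$\eqref{sep1} by combining Lemma \ref{lemma: closure of diagonal is a vanishing set} with Lemma \ref{lemma: locally closed immersion whose image is a vanishing set is closed}. Your explicit intermediate step---that the closed image coincides with its closure, so the lemma's vanishing-set conclusion applies to the image itself---is a small clarification the paper leaves implicit, not a different argument.
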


\begin{proof}
 Suppose $\varphi$ is separated. Then $\tilde\Delta_\varphi$ is a closed immersion and therefore quasi-compact and by Proposition \ref{prop: closed immersions of congruence spaces} the image of $\tilde\Delta_\varphi$ is a closed subset of $(Y\times_XY)^\congr$. This shows that \eqref{sep1} implies \eqref{sep2}.
 
 Assume \eqref{sep2}. Then by Lemma \ref{lemma: closure of diagonal is a vanishing set}, we know that $\Delta_\varphi:Y\to Y\times_XY$ is a locally closed immersion and $\im(\tilde\Delta_\varphi)$ is a vanishing set of $(Y\times_XY)^\congr$. Therefore $\Delta_\varphi$ is a closed immersion by Lemma \ref{lemma: locally closed immersion whose image is a vanishing set is closed}. This shows that \eqref{sep2} implies \eqref{sep1} and completes the proof.
\end{proof}

%%%%%%%%%%%%%%%%%%%%%%%%%%%%%%%%%%%%%%%%%%%%%%%%%%%%%%%%%%%%%%%%%%%%%%%%%%%%%%%%%%%%%%%%%%%%%%%%%%%%%%%%%%%%%%%%%%%%%%%%%%%%%%%%%%%%%%%%%%%%%%%%%%%%%%%%%%%%%%%%%%%%%
\subsection{The valuative criterion for separated morphisms}

\begin{thm}\label{thm: valuative criterion of separatedness}
 A morphism $\varphi:Y\to X$ of monoid schemes is separated if and only if
 \begin{enumerate}
 	\item\label{valsep1} the morphism $\varphi$ is quasi-separated, and 
 	\item\label{valsep2} every test diagram 
    \begin{equation*}
     \begin{tikzcd}[column sep=60]
      U=\MSpec(K)\ar[r,"\eta"] \ar[d,"\iota"'] & Y\ar[d,"\phi"] \\
      T=\MSpec(A)  \ar[r,"\nu"] & X
     \end{tikzcd}
    \end{equation*}
 	has at most one lift.
 \end{enumerate}
\end{thm}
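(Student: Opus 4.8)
The plan is to deduce this valuative criterion from two results already available: the valuative criterion for universally closed morphisms (Theorem \ref{uni-closed}) applied to the diagonal $\Delta_\varphi\colon Y\to Y\times_XY$, and the topological characterization of separated morphisms (Theorem \ref{thm: topological characterization of separated morphisms}). The conceptual heart, which I would isolate as a preliminary step, is a purely formal correspondence valid for \emph{any} morphism $\varphi$: every test diagram for $\varphi$ has \emph{at most} one lift if and only if every test diagram for $\Delta_\varphi$ has \emph{at least} one lift. Here the point is that ``uniqueness of lifts for $\varphi$'' and ``existence of lifts for $\Delta_\varphi$'' are two ways of saying the same thing about pairs of maps into $Y$ that agree after $\varphi$.

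To establish this correspondence I would fix a valuation monoid $A$ with $K=\Frac A$, set $U=\MSpec(K)$, $T=\MSpec(A)$ and let $\iota\colon U\to T$ be the induced map, keeping this data fixed throughout so that all squares I build are genuine test diagrams. For one direction, given two lifts $\hat\nu_1,\hat\nu_2\colon T\to Y$ of a test diagram $(\eta\colon U\to Y,\ \nu\colon T\to X)$ for $\varphi$, the equality $\varphi\circ\hat\nu_1=\nu=\varphi\circ\hat\nu_2$ lets me form $(\hat\nu_1,\hat\nu_2)\colon T\to Y\times_XY$ by the universal property of the fibre product; its restriction along $\iota$ is $\Delta_\varphi\circ\eta$, so $(\eta,(\hat\nu_1,\hat\nu_2))$ is a test diagram for $\Delta_\varphi$, and a lift $\lambda\colon T\to Y$ of it forces $\hat\nu_1=\pr_1\circ\Delta_\varphi\circ\lambda=\lambda=\pr_2\circ\Delta_\varphi\circ\lambda=\hat\nu_2$. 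Conversely, a test diagram $(\eta\colon U\to Y,\ \nu\colon T\to Y\times_XY)$ for $\Delta_\varphi$ produces $\nu_i=\pr_i\circ\nu\colon T\to Y$ with $\varphi\circ\nu_1=\varphi\circ\nu_2$ and $\nu_i\circ\iota=\eta$, i.e.\ two lifts of a single test diagram for $\varphi$; uniqueness forces $\nu_1=\nu_2$, whence $\nu=\Delta_\varphi\circ\nu_1$ and $\nu_1$ is the required lift.

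With the correspondence in hand I would finish both implications of the theorem directly. For the forward direction, if $\varphi$ is separated then $\Delta_\varphi$ is a closed immersion, hence affine and therefore quasi-compact, so $\varphi$ is quasi-separated; since closed immersions are universally closed (Lemma \ref{lemma: closes immersions are universally closed}), Theorem \ref{uni-closed} provides a lift for every test diagram of $\Delta_\varphi$, and the correspondence yields at most one lift for every test diagram of $\varphi$. For the converse, quasi-separatedness is exactly quasi-compactness of $\Delta_\varphi$, and the uniqueness hypothesis gives, via the correspondence, a lift for every test diagram of $\Delta_\varphi$; Theorem \ref{uni-closed} then shows $\Delta_\varphi$ is universally closed, in particular closed, so $\tilde\Delta_\varphi(\til Y)$ is a closed subset of $(Y\times_XY)^\congr$. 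As $\varphi$ is quasi-separated, Theorem \ref{thm: topological characterization of separated morphisms} concludes that $\varphi$ is separated.

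The main obstacle I anticipate is not a hard computation but the careful bookkeeping inside the correspondence: checking that the passage $\hat\nu_1,\hat\nu_2\mapsto(\hat\nu_1,\hat\nu_2)$ and its inverse use only the universal property of $Y\times_XY$, and that the auxiliary squares are test diagrams in the precise sense required (same valuation monoid $A$, same $K=\Frac A$, same $\iota$). A secondary point to verify is the legitimacy of invoking Theorem \ref{uni-closed}, namely that the quasi-compactness of $\Delta_\varphi$ it demands is furnished by quasi-separatedness in the converse and by the affineness of closed immersions in the forward direction.
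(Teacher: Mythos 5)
Your proof is correct. Your treatment of the direction (quasi-separated $+$ uniqueness of lifts) $\Rightarrow$ separated coincides with the paper's: extracting $p_1\circ\nu$ and $p_2\circ\nu$ from a test diagram for $\Delta_\varphi$, concluding they agree, and then combining Theorem \ref{uni-closed} with Theorem \ref{thm: topological characterization of separated morphisms} is exactly the paper's argument. Where you genuinely diverge is the forward direction. The paper proves uniqueness of lifts directly: given two lifts $a,b\colon T\to Y$, it forms the equalizer $Z=Y\times_{Y\times_XY}T$, notes that $Z\to T$ is a closed immersion (base change of $\Delta_\varphi$ along $(a,b)$), so $\til Z$ is closed in $\Cong(A)$ by Proposition \ref{prop: closed immersions of congruence spaces}; since $a\circ\iota=\eta=b\circ\iota$, the generic point of $\til T$ lies in $\til Z$, forcing $\til Z=\til T$, hence $Z=T$ and $a=b$. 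You instead route this direction through Lemma \ref{lemma: closes immersions are universally closed} and the existence half of Theorem \ref{uni-closed} applied to the closed immersion $\Delta_\varphi$ (quasi-compact because affine), then transfer back through your formal correspondence between uniqueness of lifts for $\varphi$ and existence of lifts for $\Delta_\varphi$ --- and your bookkeeping there (same $A$, same $K=\Frac A$, same $\iota$, universal property of the fibre product in both passages) is sound. Your version is more uniform: the whole theorem becomes a corollary of the universally-closed criterion plus the topological characterization, mediated by a single dictionary, and it in fact yields uniqueness of lifts for any morphism whose diagonal is quasi-compact and universally closed, not just for closed immersions. The cost is that it invokes the hard direction of Theorem \ref{uni-closed} (the long stalk/valuation-monoid argument), whereas the paper's equalizer argument is elementary and self-contained, needing only Proposition \ref{prop: closed immersions of congruence spaces} and the observation that the trivial congruence is a generic point of $\til T$ lying over $\til U$.
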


\begin{proof}
 Suppose $\varphi:Y\to X$ satisfies \eqref{valsep1} and \eqref{valsep2}. Then $\Delta_\varphi:Y\to Y\times_XY$ is quasi-compact. By Theorems \ref{uni-closed} and \ref{thm: topological characterization of separated morphisms}, it suffices to show that the morphism $\Delta_\varphi:Y\to Y\times_XY$ satisfies the existence part of the valuative criterion. Consider a test diagram 
 	\begin{equation*}
 		 \begin{tikzcd}[column sep=40, row sep=15]
 			U=\MSpec(K)\ar[r,"\eta"] \ar[d,"\iota"'] & Y\ar[d,"\Delta_\varphi"] \\
 			T= \MSpec(A)\ar[r,"\nu"] & Y\times_X Y
 		\end{tikzcd}
 	\end{equation*}
 for $\Delta_\varphi$. Let $p_1, p_2: Y\times_X Y\longrightarrow Y$ be the canonical projections. Then both maps $a= p_1 \circ \nu$ and $b=p_2\circ \nu$ are lifts for the test diagram 
 \[
 \begin{tikzcd}[column sep=40, row sep=15]
 	U\ar[r,"\eta"] \ar[d,"\iota"'] & Y\ar[d,"\varphi"] \\
 	T\ar[r,"\varphi\circ a= \varphi\circ b"] & X
 \end{tikzcd}
 \]
 and consequently equal by \eqref{valsep2}. Hence $a=b:\MSpec(A)\longrightarrow Y$ is a lift for the the test for $\Delta_\varphi$. This shows that $\varphi$ is separated.
 
 Conversely, let $\varphi:Y\to X$ be a separated morphism. By Theorem \ref{thm: topological characterization of separated morphisms}, $\varphi$ is quasi-separated, which establishes \eqref{valsep1}. Consider two lifts $a,b:\MSpec(A)\longrightarrow Y$ for the test diagram in \eqref{valsep2}. Let $Z\subseteq \MSpec(A)$ be the equalizer of $a$ and $b$. In other words, $Z$ is the fibre product of $Y$ and $T$ over $Y\times_XY$. Thus the projection $p_2:Z=Y\times_{Y\times_XY}T\to T$ is the base change of the closed immersion $\Delta_\varphi$ along $\nu:T\to Y\times_XY$ and therefore itself a closed immersion. By Proposition \ref{prop: closed immersions of congruence spaces}, $\widetilde{Z}$ is homeomorphic to a closed subset of $\Cong(A)$. 
 
 Since $a\circ\iota=\eta=b\circ\iota$, the image of $\tilde\iota:\til U\to \til T$ is contained in $\til Z$. Thus the generic point of $\til T$, which is in $\til U$, is in $\til Z$. We conclude that $\widetilde{Z} = \til T$ and $Z = \pi_Z(\widetilde{Z})= \pi_{T}(\til T)= T$. Hence $a=b$, which establishes \eqref{valsep2}.
\end{proof}

%%%%%%%%%%%%%%%%%%%%%%%%%%%%%%%%%%%%%%%%%%%%%%%%%%%%%%%%%%%%%%%%%%%%%%%%%%%%%%%%%%%%%%%%%%%%%%%%%%%%%%%%%%%%%%%%%%%%%%%%%%%%%%%%%%%%%%%%%%%%%%%%%%%%%%%%%%%%%%%%%%%%%
%%%%%%%%%%%%%%%%%%%%%%%%%%%%%%%%%%%%%%%%%%%%%%%%%%%%%%%%%%%%%%%%%%%%%%%%%%%%%%%%%%%%%%%%%%%%%%%%%%%%%%%%%%%%%%%%%%%%%%%%%%%%%%%%%%%%%%%%%%%%%%%%%%%%%%%%%%%%%%%%%%%%%
\section{Proper morphisms}
\label{section: Proper morphisms}
%%%%%%%%%%%%%%%%%%%%%%%%%%%%%%%%%%%%%%%%%%%%%%%%%%%%%%%%%%%%%%%%%%%%%%%%%%%%%%%%%%%%%%%%%%%%%%%%%%%%%%%%%%%%%%%%%%%%%%%%%%%%%%%%%%%%%%%%%%%%%%%%%%%%%%%%%%%%%%%%%%%%%
%%%%%%%%%%%%%%%%%%%%%%%%%%%%%%%%%%%%%%%%%%%%%%%%%%%%%%%%%%%%%%%%%%%%%%%%%%%%%%%%%%%%%%%%%%%%%%%%%%%%%%%%%%%%%%%%%%%%%%%%%%%%%%%%%%%%%%%%%%%%%%%%%%%%%%%%%%%%%%%%%%%%%

In this section, we introduce proper morphisms of monoid schemes and characterize them in terms of the valuative criterion for properness in the finite type case. This shows, in particular, that our notion of proper morphisms extends that of Corti\~naz, Haesemeyer, Walker and Weibel; cf.\ \cite[Definition 8.4]{CHWW15}.

\begin{defn}
 A morphism $\varphi:Y\to X$ of monoid schemes is \emph{proper} if it is separated, of finite type and universally closed.
\end{defn}

%%%%%%%%%%%%%%%%%%%%%%%%%%%%%%%%%%%%%%%%%%%%%%%%%%%%%%%%%%%%%%%%%%%%%%%%%%%%%%%%%%%%%%%%%%%%%%%%%%%%%%%%%%%%%%%%%%%%%%%%%%%%%%%%%%%%%%%%%%%%%%%%%%%%%%%%%%%%%%%%%%%%%
\subsection{The valuative criterion of properness}
\label{subsection: The valuative criterion of properness}

\begin{thm}\label{prop-intrinsic}
 Let $\phi:Y\to X$  be a finite type morphism of monoid schemes. Then $\phi$ is proper if and only if every test diagram 
 \[
  \begin{tikzcd}[column sep=40, row sep=15]
   U\ar[r,"\eta"] \ar[d,"\iota"'] & Y\ar[d,"\phi"] \\
   T\ar[r,"\nu"] & X
  \end{tikzcd}
 \]
 has a unique lift.
\end{thm}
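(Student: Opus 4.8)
The plan is to deduce the criterion by combining the two valuative criteria already at our disposal: Theorem \ref{uni-closed} for universally closed morphisms and Theorem \ref{thm: valuative criterion of separatedness} for separated morphisms. The bridge is the fact, recorded in the paper, that a finite type morphism is quasi-compact, so that Theorem \ref{uni-closed} is applicable to $\phi$, together with the observation that ``exactly one lift'' splits as ``at least one lift'' plus ``at most one lift,'' both phrased for the \emph{same} test diagrams of $\phi$.

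For the forward direction, assume $\phi$ is proper, i.e.\ separated, of finite type and universally closed. Being of finite type, $\phi$ is quasi-compact, so Theorem \ref{uni-closed} converts universal closedness into the existence of a lift for every test diagram. Being separated, the forward implication of Theorem \ref{thm: valuative criterion of separatedness} shows that every test diagram has at most one lift. Hence every test diagram has exactly one lift.

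For the backward direction, assume $\phi$ is of finite type and every test diagram has a unique lift. Quasi-compactness from finite type together with the existence half of the hypothesis yields, via Theorem \ref{uni-closed}, that $\phi$ is universally closed; it remains to prove separatedness. I would avoid simply quoting Theorem \ref{thm: valuative criterion of separatedness}, whose quasi-separatedness hypothesis is not a formal consequence of finite type, and instead work directly with the diagonal. By the first part of Lemma \ref{lemma: closure of diagonal is a vanishing set} the map $\Delta_\phi\colon Y\to Y\times_XY$ is always a locally closed immersion, factoring as a closed immersion $Y\to W$ into the open subscheme $W=\bigcup_U U\times_XU$; this step uses only that $\Delta_\varphi\colon U\to U\times_VU$ is a closed immersion on affines, not quasi-separatedness. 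By Lemma \ref{lemma: locally closed immersion whose image is a vanishing set is closed} it therefore suffices to show that $\im(\tilde\Delta_\phi)$ is a vanishing set of $(Y\times_XY)^\congr$. To feed in the hypothesis, note that uniqueness of lifts for $\phi$ gives the existence of lifts for $\Delta_\phi$: the two projections $p_1,p_2\colon Y\times_XY\to Y$ turn any test diagram for $\Delta_\phi$ into two lifts of a single test diagram for $\phi$, which coincide by uniqueness and thus provide the desired lift. Realising specializations by valuations as in the proof of Theorem \ref{uni-closed} (using Proposition \ref{val-dom2}), existence of lifts for $\Delta_\phi$ shows that $\im(\tilde\Delta_\phi)$ is closed under specialization; combined with the fact that $\im(\tilde\Delta_\phi)$ is already a vanishing set of $\til W$, one upgrades it to a vanishing set of the whole congruence space, whence $\Delta_\phi$ is a closed immersion and $\phi$ is separated.

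The main obstacle is precisely this separatedness step in the backward direction. Finite type does \emph{not} by itself imply quasi-separatedness (gluing two copies of $\MSpec(A)$, with $A$ not finitely generated, along a non-quasi-compact open yields a finite type but non-quasi-separated morphism), so the quasi-separatedness condition of Theorem \ref{thm: valuative criterion of separatedness} cannot be taken for granted and the passage ``closed under specialization $\Rightarrow$ closed'' of Proposition \ref{specl} is not directly available for $\Delta_\phi$. The uniqueness of lifts must therefore be used to control the behaviour of $\tilde\Delta_\phi$ over the off-diagonal affines $U_1\times_VU_2$ and to promote ``vanishing set of $\til W$'' to ``vanishing set of $(Y\times_XY)^\congr$''; executing this upgrade carefully, rather than the routine bookkeeping of the forward direction, is where the real work lies.
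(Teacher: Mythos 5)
Your forward direction, and the universal-closedness half of the backward direction, coincide with the paper's proof, which is exactly the two-line combination you describe: finite type gives quasi-compactness, Theorem \ref{uni-closed} converts existence of lifts into universal closedness, and Theorem \ref{thm: valuative criterion of separatedness} handles uniqueness. Where you depart from the paper is the separatedness step, and there your proposal has a genuine gap: the decisive upgrade---promoting ``$\im(\tilde\Delta_\phi)$ is a vanishing set of $\til W$ and closed under specialization'' to ``$\im(\tilde\Delta_\phi)$ is a vanishing set of $(Y\times_XY)^\congr$'' without quasi-compactness of $\Delta_\phi$---is precisely where the paper's machinery consumes quasi-separatedness (quasi-coherence of $\Congker(\Delta_\varphi)$ via Lemma \ref{lemma: the congrunce kernel of a quasi-compact morphism is quasi-coherent}, the identification $V_{\Congker(\Delta_\varphi)}=\im(\tilde\Delta_\varphi)^\vcl$ of Lemma \ref{lemma: vanishing set of the congruence kernel is the vanishing closure of the image}, and Proposition \ref{specl}), and you explicitly defer it (``where the real work lies'') instead of executing it. An argument that names its hardest step and leaves it open is a plan, not a proof: to close it you would have to redo the affine-local analysis of Lemma \ref{lemma: closure of diagonal is a vanishing set} on each $\til{(U_1\times_VU_2)}$ with ``closed'' weakened to ``closed under specialization,'' including the existence of the generization $Q_1'\subseteq Q_1$ lying in $U_1\cap U_2$, none of which is carried out.

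That said, the subtlety you isolate is real: the paper's own proof cites Theorem \ref{thm: valuative criterion of separatedness} in the abbreviated form ``separated iff at most one lift,'' silently suppressing its clause \eqref{valsep1}, and finite type does not formally imply quasi-separatedness for monoid schemes. Your parenthetical counterexample is wrong as stated, however: gluing two copies of $\MSpec(A)$, with $A$ not finitely generated, along a non-quasi-compact open yields a monoid scheme that is \emph{not} of finite type over $\Fun$---since any affine open containing the unique closed point of either copy must equal that copy, every affine cover has a member with coordinate monoid $A$. The correct example is relative: take $X=\MSpec(A)$ with $A=\Fun[t_1,t_2,\dotsc]$ and let $Y\to X$ glue two copies of $X$ along the complement of the closed point; this is of finite type in the paper's sense but not quasi-separated. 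Note that in this example uniqueness of lifts also fails (a test diagram hitting the closed point admits two lifts, one through each copy), which is consistent with the theorem; whether ``finite type plus unique lifts'' forces quasi-separatedness---which would simultaneously repair the paper's citation and complete your sketch---is exactly the question your proposal leaves unanswered.
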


\begin{proof}
 By Theorem \ref{thm: valuative criterion of separatedness}, $\phi$ is separated if and only if every test diagram has at most one lift. Since $\varphi$ is of finite type, it is quasi-compact. Thus we can apply Theorem \ref{uni-closed}, ${\tilde\phi}:  \widetilde{Y}\to \widetilde{X}$, which shows that $\varphi$ is universally closed if and only if every test diagram has a lift.
\end{proof}

\begin{cor}\label{complete}
 Let $X$ be a monoid scheme and let $k$ be an algebraically closed field. Then $X\to \MSpec(\mathbb{F}_1)$ is proper if and only if $X_k$ is a complete $k$-variety.
\end{cor}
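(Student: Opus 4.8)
The plan is to prove both implications by matching the intrinsic valuative criterion for properness over $\Fun$ (Theorem \ref{prop-intrinsic}) against the classical valuative criterion for properness of the finite type $k$-scheme $X_k$, using as a bridge the adjunction between the base change $(-)_k$ and the functor sending a $k$-algebra to its underlying multiplicative pointed monoid. The key observation is that for a $k$-valuation ring $R$ with fraction field $L$, the multiplicative monoid $(R,\cdot)$ is a valuation monoid with $\Frac(R,\cdot)=(L,\cdot)$ (for nonzero $a\in L$ either $a$ or $a^{-1}$ lies in $R$), and that for every pointed monoid $B$ there is a natural bijection $\Hom_{k\text{-alg}}(k[B],R)=\Hom_{\Mon}(B,(R,\cdot))$, since a $k$-algebra map is determined by its values on the basis of monomials and these multiply as $x^{b}x^{b'}=x^{bb'}$. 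As the affine opens of $X_k$ are exactly the base changes $(\MSpec B)_k=\Spec k[B]$ of affine opens of $X$, this bijection is local and identifies lifts of a $k$-test diagram with lifts of the associated $\Fun$-test diagram with valuation monoid $(R,\cdot)$ and pointed group $(L,\cdot)$, compatibly with $\eta$, $\iota$ and $\nu$.

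Before invoking the criteria I would record that finite type and separatedness pass between $X$ and $X_k$: a finitely generated monoid base-changes to a finitely generated $k$-algebra and conversely, and the diagonal of $X_k/k$ is the base change of the diagonal of $X/\Fun$ (surjections of monoids base-change to surjections of $k$-algebras), so each property holds for $X/\Fun$ if and only if it holds for $X_k/k$. In particular both sides of the asserted equivalence force $X$ to be of finite type, so that Theorem \ref{prop-intrinsic} is applicable throughout.

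For the implication that $\Fun$-properness yields completeness of $X_k$, I would take an arbitrary $k$-test diagram for $X_k/k$ with valuation ring $R$ and fraction field $L$. Its generic point factors through some affine chart and is a $k$-algebra map $k[B]\to L$, which under the dictionary corresponds to an $\Fun$-test diagram for $X/\Fun$ with valuation monoid $(R,\cdot)$ and pointed group $(L,\cdot)$. By Theorem \ref{prop-intrinsic} this has a unique lift $\MSpec(R,\cdot)\to X$, and the bijection converts it back into a unique lift $\Spec R\to X_k$ of the original diagram. Since $R$ was arbitrary, $X_k$ satisfies the valuative criterion of properness and is therefore complete.

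The converse is the substantial direction, and by Theorem \ref{prop-intrinsic} it amounts to lifting uniquely every $\Fun$-test diagram with an \emph{arbitrary} valuation monoid $A$, $K=\Frac A$, generic map $\eta$. Writing $M=K^\times$, the domination order of $A$ makes $M/A^\times$ a totally ordered group and defines a monomial valuation $w$ on $\Frac k[M]$ (trivial on $k$, with $w(x^m)$ the class of $m$) whose valuation ring $R\supseteq k$ satisfies $(R,\cdot)\cap K=A$ under $m\mapsto x^m$. The base change of $\eta$ is a $k$-test diagram for this $R$, so by completeness it has a unique lift $\hat\mu:\Spec R\to X_k$, i.e.\ a map $k[B']\to R$. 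Uniqueness of the desired $\Fun$-lift is then immediate, as two distinct monoid maps $B'\to A$ induce distinct monomial maps $k[B']\to k[A]\subseteq R$, contradicting uniqueness of $\hat\mu$. The genuine obstacle is \emph{existence}: one must show that $\hat\mu$ can be chosen monomial, so that $k[B']\to R$ factors through $k[A]\subseteq R$ and descends to a monoid map $B'\to(R,\cdot)\cap K=A$, giving the $\Fun$-lift. This is precisely the toric fact that properness is detected on monomial (one-parameter) valuations, and I would resolve it by a torus-action argument: the residual torus $\Spec k[A^\times]$ acts on the chart and fixes both $w$ and the monomial generic map, so uniqueness of $\hat\mu$ forces it to be equivariant; equivalently, after reducing to the case $M/A^\times\hookrightarrow\Z$ (legitimate since $X$ is of finite type), $\hat\mu$ is the limit of the generic point along the one-parameter subgroup determined by the valuation, which lands in a torus orbit and is visibly monomial. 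I expect this monomiality-and-descent step — importing the classical toric criterion into the congruence-space framework — to be the only real difficulty, the remaining points (chart compatibility, the reduction to value group $\Z$, and the variety structure of $X_k$) being routine.
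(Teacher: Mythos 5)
Your overall architecture is the same as the paper's: apply Theorem \ref{prop-intrinsic} on the monoid side, the classical valuative criterion on the scheme side, and transfer finite type between $X$ and $X_k$ (the paper cites \cite[Lemma 3.5]{Chu-Lorscheid-Santhanam12} for this). The difference is the bridge between the two lifting properties. The paper simply cites \cite[Theorem 8.9]{CHWW15}, which states exactly that every test diagram for $X\to\MSpec(\Fun)$ has a unique lift if and only if every test diagram for $X_k\to\Spec(k)$ has a unique lift; you instead try to reprove this equivalence. Your easy direction is correct: for a valuation ring $R$ with fraction field $L$, the multiplicative monoid $(R,\cdot)$ is a valuation monoid with $\Frac(R,\cdot)=(L,\cdot)$, and the adjunction $\Hom_{k\text{-alg}}(k[B],R)\cong\Hom_{\Mon}(B,(R,\cdot))$ transfers lifts and their uniqueness chart by chart (a map from $\Spec R$ or $\MSpec(R,\cdot)$ lands in a single affine chart via the closed point).

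The hard direction, however, has genuine gaps precisely at the step you flag. First, $k[M]$ for $M=K^\times$ need not be a domain: valuation monoids can have torsion units (e.g.\ $A=\{0\}\cup(\mu_2\times\N)$ inside $K=\{0\}\cup(\mu_2\times\Z)$), so ``$\Frac k[M]$'' and the monomial valuation $w$ are not defined as stated; one must first choose a character of the torsion or pass to a quotient, and then re-verify that $(R,\cdot)\cap K=A$ and that the generic map transfers. Second, the proposed reduction to $M/A^\times\hookrightarrow\Z$ is not available: a finitely generated totally ordered abelian group embeds into $\Z^n$ with lexicographic order but in general not into $\Z$, so higher-rank valuation monoids genuinely occur even for finite type $X$, and the one-parameter-subgroup limit argument only handles rank one. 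Third, the equivariance argument presupposes structures that are not in place: a general monoid scheme $X$ carries no global torus action on $X_k$ (only chart-wise big tori $\Spec k[(B')^{\mathrm{gp}}]$, which do not glue), the torus does not act on $L$ or preserve $R$ inside $L$, and the implication ``equivariant lift $\Rightarrow$ monomial lift $k[B']\to k[A]\subseteq R$'' is exactly the descent statement to be proved, not a consequence of uniqueness alone. As written, your argument is therefore a correct proof of the corollary \emph{modulo} \cite[Theorem 8.9]{CHWW15}; making it self-contained would require carrying out the monomiality-and-descent step in full, including an induction on the rank of the value group, which is the substance of the cited result.
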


\begin{proof}
 By Theorem \ref{prop-intrinsic}, the morphism $X\to \MSpec(\mathbb{F}_1)$ is proper if and only if every test diagram for $X\to \MSpec(\mathbb{F}_1)$ has a unique lift and if $X$ is of finite type. By  \cite[Theorem 8.9]{CHWW15}, every test diagram for $X\to \MSpec(\mathbb{F}_1)$ has a unique lift if and only if every test diagram for $X_k\to  \Spec(k)$ has a unique lift. By \cite[Lemma 3.5]{Chu-Lorscheid-Santhanam12}, $X$ is of finite type if and only if $X_k$ is of finite type. By the valuative criterion for properness for usual schemes, this is, in turn, equivalent to $X_k\to \Spec(k)$ being proper, i.e.\ $X_k$ being a complete $k$-variety.
\end{proof}

\begin{lem}\label{pro-prop} \ 
\begin{enumerate}
	\item\label{proper1} Closed immersions are proper.
	\item\label{proper2} Proper morphisms are closed under composition and base change.
	\item\label{proper3} Properness is local on the base.
\end{enumerate}
\end{lem}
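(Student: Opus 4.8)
The plan is to reduce every assertion to the three defining properties of a proper morphism—separatedness, finite type and universal closedness—and to lean on the three valuative criteria (Theorems \ref{uni-closed}, \ref{thm: valuative criterion of separatedness} and \ref{prop-intrinsic}) wherever a lifting question arises.

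For \eqref{proper1} I would verify the three conditions for a closed immersion $\varphi:Y\to X$ one at a time. Universal closedness is exactly Lemma \ref{lemma: closes immersions are universally closed}. Separatedness follows because a closed immersion is a monomorphism: locally it is of the form $\MSpec(A/\fc)\to\MSpec(A)$, and since $(A/\fc)\otimes_A(A/\fc)\cong A/\fc$, we get $Y\times_XY\cong Y$, so the diagonal $\Delta_\varphi$ is an isomorphism and in particular a closed immersion. Finite type holds because the pullback $\Gamma U\to\Gamma V$ of sections of a closed immersion is surjective, and a surjective morphism of pointed monoids is of finite type (every element already lies in the image, so the empty set of generators suffices).

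For \eqref{proper2} the quickest route is again the valuative criterion. I would first record that finite type is stable under composition and base change; this is a routine verification from the definition (concatenate generating sets for a composite, and note that generators survive a base change). With finite type secured, it suffices by Theorem \ref{prop-intrinsic} to show that unique lifting is preserved. For a composite $\varphi\circ\psi$ of proper morphisms $\psi:Z\to Y$ and $\varphi:Y\to X$, a test diagram for $\varphi\circ\psi$ (top map $\eta:U\to Z$, bottom map $\nu:T\to X$) first yields a test diagram for $\varphi$ by replacing $\eta$ with $\psi\circ\eta$; its unique lift $T\to Y$ then furnishes a test diagram for $\psi$, whose unique lift $T\to Z$ is the desired lift, unique by the two uniqueness statements. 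For a base change $\varphi':X'\times_XY\to X'$ of a proper $\varphi$, a test diagram for $\varphi'$ pushes forward to a test diagram for $\varphi$; its unique lift $T\to Y$ together with the given map $T\to X'$ induces, by the universal property of the fibre product, the unique lift $T\to X'\times_XY$. (Alternatively, one may check separatedness, finite type and universal closedness individually—universal closedness is stable under base change by definition and under composition via $X'\times_XZ=(X'\times_XY)\times_YZ$, and separatedness propagates through the factorization of the diagonal using the base-change stability of closed immersions.)

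For \eqref{proper3} I would use Theorem \ref{prop-intrinsic} once more. Given an open cover $X=\bigcup_{i}X_i$ with restrictions $\varphi_i:\varphi^{-1}(X_i)\to X_i$, the ``only if'' direction is immediate from \eqref{proper2}, since $\varphi_i$ is the base change of $\varphi$ along the open immersion $X_i\hookrightarrow X$. For the ``if'' direction, finite type is local on the base by its very definition, so $\varphi$ is of finite type. The crucial observation is that in a test diagram with $T=\MSpec(A)$ the affine scheme $T$ has a unique closed point $M_A$ whose only open neighbourhood in $T$ is $T$ itself; since $\nu(M_A)$ lies in some $X_i$, continuity of $\nu$ forces $\nu(T)\subseteq X_i$, so every test diagram for $\varphi$ is in fact a test diagram for one of the $\varphi_i$. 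Because any lift $T\to Y$ necessarily lands in $\varphi^{-1}(X_i)$, lifts for $\varphi$ correspond bijectively to lifts for $\varphi_i$, which exist and are unique by properness of $\varphi_i$; Theorem \ref{prop-intrinsic} then yields that $\varphi$ is proper. The main obstacle is not any single computation but the bookkeeping that makes these reductions precise—most delicately, justifying in \eqref{proper3} that a test diagram cannot straddle two members of the cover and that its lifts match those of the localized morphism; a secondary point of care is the stability of finite type under composition and base change, which, though standard, must be invoked to license the use of the valuative criterion throughout.
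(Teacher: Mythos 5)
Your proposal is correct, but it routes parts \eqref{proper2} and \eqref{proper3} through the valuative criterion, whereas the paper verifies the three defining properties directly. For \eqref{proper1} the two arguments nearly coincide: the paper cites Lemma \ref{lemma: closes immersions are universally closed}, reads finite type off the definition, and gets separatedness from ``affine, hence separated''; you instead observe that a closed immersion is a monomorphism, so locally $(A/\fc)\otimes_A(A/\fc)\cong A/\fc$ and the diagonal is an isomorphism---both are fine, and your version makes the mechanism explicit. For \eqref{proper2} the paper checks each property separately, the only nontrivial point being the composition of separated morphisms, which it handles by factoring $\Delta_{Y/Z}$ as $Y\to Y\times_XY\to Y\times_ZY$ and noting the composite is affine with surjective pullback of sections; your lift-chasing argument (push a test diagram for $\varphi\circ\psi$ up to $\varphi$, then feed the lift into a test diagram for $\psi$; use the universal property of the fibre product for base change) is a valid alternative, at the cost of first securing stability of finite type, which Theorem \ref{prop-intrinsic} requires---though the paper asserts exactly the same stability facts without proof, so you are on equal footing there. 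For \eqref{proper3} the paper contents itself with the one-line assertion that finite type, separatedness and universal closedness are each local on the base; your argument is genuinely different and in fact more detailed: the key observation that $T=\MSpec(A)$ has a unique closed point $M_A$ whose only open neighbourhood is $T$ itself (every nonempty closed subset of $\MSpec(A)$ contains $M_A$), so that $\nu(T)$ lies in a single $X_i$ and lifts for $\varphi$ biject with lifts for $\varphi_i$, is a correct and pleasantly monoid-specific device that supplies rigour where the paper leaves the locality of universal closedness and separatedness implicit. In short, the paper's route is shorter and avoids the test-diagram bookkeeping; yours buys a uniform treatment of \eqref{proper2} and \eqref{proper3} through a single criterion and an honest proof of locality, with no circularity since Theorem \ref{prop-intrinsic} precedes this lemma.
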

\begin{proof}
We begin with \eqref{proper1}. By definition, closed immersions are of finite type. By Lemma \ref{lemma: closes immersions are universally closed}, closed immersions are universally closed. Closed immersions are affine and thus separated. Thus closed immersions are proper.

We continue with \eqref{proper2}. Finite type morphisms and universally closed morphisms are clearly stable under composition. If $Y\to X$ and $X\to Z$ are separated, then the composition 
\[
 Y\xrightarrow{\Delta_{Y/X}}  Y\times_X Y\longrightarrow Y\times_Z Y
\]
is a closed immersion because the composition of affine morphisms is affine and the composition of surjections of sheaves is surjective. This shows that proper morphisms are closed under composition. Proper morphisms are also closed under base change because the property of being finite type, separated and universally closed are all stable under base change. 

Finally we turn to \eqref{proper3}. All defining properties of proper morphisms, finite type, separated and universally closed, are local on the base and therefore so is properness.
\end{proof}

We recall from \cite[\S 7]{CHWW15} that a morphism $Y\to X$ of monoid schemes is \emph{projective}, if locally on $X$, it factors as a closed immersion $Y\to {\mathbb{P}^n_X}$ for some $n\geq 0$ followed by the projection $ {\mathbb{P}^n_X}\to X$.
\begin{cor}
	Projective morphisms are proper.
\end{cor}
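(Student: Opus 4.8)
The plan is to reduce the statement, by a purely formal chain of permanence properties from Lemma \ref{pro-prop}, to the properness of the single morphism $\P^n_\Fun\to\MSpec(\Fun)$, and then to settle that case through Corollary \ref{complete}. First, since properness is local on the base (Lemma \ref{pro-prop}\eqref{proper3}), I would pass to an affine open covering $\{X_\alpha\}$ of $X$ over which a projective morphism $\varphi:Y\to X$ admits a factorization $Y_\alpha\xrightarrow{i_\alpha}\P^n_{X_\alpha}\xrightarrow{\pi_\alpha}X_\alpha$ into a closed immersion $i_\alpha$ followed by the projection $\pi_\alpha$. It then suffices to show that each restriction $\varphi_\alpha=\pi_\alpha\circ i_\alpha$ is proper. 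By Lemma \ref{pro-prop}\eqref{proper1} every closed immersion is proper, and proper morphisms are stable under composition by Lemma \ref{pro-prop}\eqref{proper2}; hence the problem is reduced to proving that the structure projection $\pi:\P^n_X\to X$ is proper for an arbitrary monoid scheme $X$.

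Next I would exploit that $\MSpec(\Fun)$ is the terminal object of $\MSch$ (as $\Fun$ is the initial pointed monoid), so that $\P^n_X=X\times_{\MSpec(\Fun)}\P^n_\Fun$ and the projection $\pi:\P^n_X\to X$ is the base change of $\P^n_\Fun\to\MSpec(\Fun)$ along the unique morphism $X\to\MSpec(\Fun)$. Since proper morphisms are stable under base change (Lemma \ref{pro-prop}\eqref{proper2}), it is enough to establish that $\P^n_\Fun\to\MSpec(\Fun)$ itself is proper. For this I would invoke Corollary \ref{complete}: the morphism $\P^n_\Fun\to\MSpec(\Fun)$ is proper if and only if its base extension $(\P^n_\Fun)_k$ is a complete $k$-variety for an algebraically closed field $k$. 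Because base extension commutes with the gluing of the $n+1$ standard affine charts $\A^n_\Fun$, one has $(\P^n_\Fun)_k\cong\P^n_k$, which is complete; this finishes the reduction and hence the proof.

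The only non-formal step is the final one, namely the properness of $\P^n_\Fun\to\MSpec(\Fun)$; everything preceding it is bookkeeping with Lemma \ref{pro-prop}. Routing this step through Corollary \ref{complete} is the cleanest option, since it transports the question to the classical completeness of projective space over a field. A self-contained alternative would be to verify the valuative criterion of Theorem \ref{prop-intrinsic} directly: given a valuation monoid $A$ with $K=\Frac A$ and a point of $\P^n$ with coordinates $[a_0:\dots:a_n]\in\P^n(K)$, one normalizes by the coordinate $a_i$ that dominates the others in $A$ (using that for any nonzero $\alpha\in K$ either $\alpha\in A$ or $\alpha^{-1}\in A$), obtaining a unique extension to a point of $\P^n(A)$; uniqueness is immediate from $\P^n(A)\hookrightarrow\P^n(K)$ being injective. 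I expect the Corollary~\ref{complete} route to be the one worth recording, as it makes the compatibility with the finite-type definition of \cite{CHWW15} transparent.
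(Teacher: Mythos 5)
Your proof is correct and follows essentially the same route as the paper's: reduce via Lemma \ref{pro-prop} (closed immersions proper, stability under composition and base change, locality on the base) to the single morphism $\P^n_\Fun\to\MSpec(\Fun)$, and settle that case by Corollary \ref{complete} together with the classical properness of $\P^n_k$ (the paper uses $k=\C$ where you allow any algebraically closed field, an immaterial difference). Your sketched alternative via the valuative criterion is a reasonable backup but, as you note, the route through Corollary \ref{complete} is the one the paper records.
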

 \begin{proof}
 	Since closed immersions are proper by Lemma \ref{pro-prop}, it is enough to show that morphisms ${\mathbb{P}^n_X}\to X$ are proper. Since $\mathbb{P}^n_X\cong \mathbb{P}^n_{\mathbb{F}_1}\times_{\mathbb{F}_1}X$, it also suffices to show that morphisms ${\mathbb{P}^n_{\mathbb{F}_1}}\to {\mathbb{F}_1}$ are proper. We know that  ${\mathbb{P}^n_{\mathbb{C}}}\to {\mathbb{C}}$ are proper morphisms. Therefore the result follows from Corollary \ref{complete}.
\end{proof}

%%%%%%%%%%%%%%%%%%%%%%%%%%%%%%%%%%%%%%%%%%%%%%%%%%%%%%%%%%%%%%%%%%%%%%%%%%%%%%%%%%%%%%%%%%%%%%%%%%%%%%%%%%%%%%%%%%%%%%%%%%%%%%%%%%%%%%%%%%%%%%%%%%%%%%%%%%%%%%%%%%%%%
%%%%%%%%%%%%%%%%%%%%%%%%%%%%%%%%%%%%%%%%%%%%%%%%%%%%%%%%%%%%%%%%%%%%%%%%%%%%%%%%%%%%%%%%%%%%%%%%%%%%%%%%%%%%%%%%%%%%%%%%%%%%%%%%%%%%%%%%%%%%%%%%%%%%%%%%%%%%%%%%%%%%%
\appendix
%%%%%%%%%%%%%%%%%%%%%%%%%%%%%%%%%%%%%%%%%%%%%%%%%%%%%%%%%%%%%%%%%%%%%%%%%%%%%%%%%%%%%%%%%%%%%%%%%%%%%%%%%%%%%%%%%%%%%%%%%%%%%%%%%%%%%%%%%%%%%%%%%%%%%%%%%%%%%%%%%%%%%
%%%%%%%%%%%%%%%%%%%%%%%%%%%%%%%%%%%%%%%%%%%%%%%%%%%%%%%%%%%%%%%%%%%%%%%%%%%%%%%%%%%%%%%%%%%%%%%%%%%%%%%%%%%%%%%%%%%%%%%%%%%%%%%%%%%%%%%%%%%%%%%%%%%%%%%%%%%%%%%%%%%%%

\section{Alternatives, ramifications, and why not}

Certain concepts from ring theory and algebraic geometry allow for different generalizations to monoids and monoid schemes. From the outset, it is not clear which choices to make---and this is one of the struggles in the development of $\Fun$-geometry.

We comment in this appendix on alternative perspectives on the congruence space and why we choose to dismiss them.

%%%%%%%%%%%%%%%%%%%%%%%%%%%%%%%%%%%%%%%%%%%%%%%%%%%%%%%%%%%%%%%%%%%%%%%%%%%%%%%%%%%%%%%%%%%%%%%%%%%%%%%%%%%%%%%%%%%%%%%%%%%%%%%%%%%%%%%%%%%%%%%%%%%%%%%%%%%%%%%%%%%%%
\subsection{Prime congruences}

There are alternative concepts for prime congruences, which alters the notion of points of the congruence space. 

A more restrictive choice of prime congruences is considered by Manoel Jarra in \cite{Jarra23b} and \cite{Jarra23a}, which leads to subspaces of the congruence space $\til X$ of a monoid scheme $X$. This theory has different goals in mind than the present text: Jarra shows that his subspaces reflect suitably the dimension of monoid schemes and link to Smirnov's approach to the ABC-conjecture. 

There is also a more lax notion of prime congruence. To facilitate the language in our discussion, we say that a congruence $\fc$ on a monoid $A$ is \emph{weak prime} if $A/\fc$ is without zero divisors. As a first observation, note that every prime congruence is weakly prime, but not vice versa. For example, the trivial congruence on $A=\{0,e,1\}$ with idempotent $e=e^2$ is weak prime, but not prime.

We define the \emph{weak congruence space of $A$} as the topological space $\Cong^w(A)$ of all weak prime congruence of $A$ together with the topology generated by the open subsets of the form
\[
 U_{a,b} \ = \ \{ \fp\in\Cong^w(A)\mid (a,b)\notin\fp\}
\]
with $a,b\in A$. It comes with a topological embedding $\iota_A:\Cong(A)\hookrightarrow\Cong^w(A)$.

A large part of the theory of this paper transfer \textit{mutatis mutandis} to weak congruence spaces: the weak congruence space generalizes to monoid schemes $X$ in terms of open coverings, which yields a topological space $\til X^w$ and a commutative diagram of continuous maps
 \[
  \begin{tikzcd}[column sep=60pt, row sep=15pt]
   {\til X} \ar[r,>->,"\iota_X"] \ar[dr,->>,"\pi_X"'] & {\til X^w} \ar[d,->>,"\pi_X^w"] \\
                                                       & X
  \end{tikzcd}
 \]
Open (closed) immersions induce open (closed) topological embeddings of weak congruence spaces. The topological characterization of closed immersions and separated morphisms carries over to weak congruence spaces.

\begin{rem}[Uncertainty about closed morphisms and valuative criteria]
At the point of writing, we do not know, however, if the weak congruence space characterizes the same class of closed morphisms as the congruence space. It seems very possible that they do, and in this case, the weak congruence space would be a natural extension of the congruence space. 

If not, then this infers a discrepancy for (universally) closed and proper morphisms. The valuative criteria for these classes of morphisms serve as a strong argument that the definitions of this paper are satisfactory. This is our primary reason to work with the congruence space $\til X$ in this paper opposed to the weak congruence space $\til X^w$.

Zooming in into the proof of the valuative criteria of universally closed and proper morphisms, we see that a direct transfer of the arguments to the setting of weak congruence spaces faces the difficulty that the congruence kernel of morphisms into valuation monoids are prime congruences. This means that weak prime congruence that are not prime are not seen by test diagrams in the sense of this paper.

A natural variant that might come to mind is a larger class of test objects, such as pointed monoids without zero divisors whose integral closure is a valuation monoid. A simple example of this nature is the projection $f:A=\{0,e,1\}\to \Fun$, which sends the idempotent element $e=e^2$ of $A$ to $f(e)=1$. However, this particular choice of test object fails to satisfy the lifting property for the induced closed immersion $\varphi=f^\ast:U=\MSpec(\Fun)\to\MSpec(A)=T$ and the tautological test diagram
\[
 \begin{tikzcd}[column sep=80pt, row sep=20pt]
  U \ar[r,"\id_U"] \ar[d,"\varphi"'] & U \ar[d,"\varphi"] \\
  T \ar[r,"\id_T"] & T
 \end{tikzcd}
\]
since $\varphi:U\to T$ is not an isomorphism.
\end{rem}

To conclude, it might be possible that the weak congruence space leans itself for a characterization of closed morphisms (in the sense of this paper) and therefore is \emph{a posteriori} a suitable extension of the notion of congruence space. \emph{A priori}, we do not know whether this holds and therefore refrain from a consideration of weak congruence spaces in the main body of this paper.

%%%%%%%%%%%%%%%%%%%%%%%%%%%%%%%%%%%%%%%%%%%%%%%%%%%%%%%%%%%%%%%%%%%%%%%%%%%%%%%%%%%%%%%%%%%%%%%%%%%%%%%%%%%%%%%%%%%%%%%%%%%%%%%%%%%%%%%%%%%%%%%%%%%%%%%%%%%%%%%%%%%%%
\subsection{Structure sheaf for the congruence space}
\label{subsection: structure sheaf for the congruence space}

There are approaches to endow the congruence space $\til X=\Cong A$ of a monoid $A$ with a structure sheaf of local coordinate monoids. In contrast to the structure sheaf of the prime spectrum $X=\MSpec A$, this is more subtle since it is not clear what the coordinate monoids for the open subsets of the form $U_{a,b}=\{\fp\mid (a,b)\notin\fp\}$ should be. 

There are different solutions to this, which we discuss in the following. However, in every case the conclusion is that a refinement of the structure sheaf of $\cO_X$ to a sheaf $\cO_{\til X}$ on $\til X$ results in the fact that for some pointed monoids $A$, the canonical map $A\to\cO_{\til X}(\til X)$ is not an isomorphism.

Topos theory sheds light on this phenomenon: if we endow the category $\cA$ of affine monoid schemes with the Grothendieck topology of all covering families $\{U_i\to X\}_{i\in I}$ by principal opens that appear in the canonical topology for $\cA$, then we find that each such covering has an $i\in I$ for which $U_i\to X$ is an isomorphism. As a consequence, the locale of open subschemes of $X$ is the locale of the prime spectrum of $A=\Gamma X$; in essence this follows from Marty's work in \cite{Marty07}. This means that any refinement of the topology of $\MSpec(A)$ towards $\Cong(A)$ destroys the property of $A\to\cO_X(X)$ being an isomorphism.

This insight is the reason for us to consider the congruence space merely as a topological space and to dismiss the notion of a structure sheaf. Nevertheless, we describe some natural sheaves on the congruence space in the following. The main reason for the existence of different natural choices is that there are several reasonable alternatives for coordinate monoids of open subsets of the form $U_{a,b}$ of the congruence space $\Cong(A)$ of a pointed monoid.

%%%%%%%%%%%%%%%%%%%%%%%%%%%%%%%%%%%%%%%%%%%%%%%%%%%%%%%%%%%%%%%%%%%%%%%%%%%%%%%%%%%%%%%%%%%%%%%%%%%%%%%%%%%%%%%%%%%%%%%%%%%%%%%%%%%%%%%%%%%%%%%%%%%%%%%%%%%%%%%%%%%%%
\subsubsection{Pullback sheaf}
\label{subsubsection: pullback sheaf}

The first candidate for a structure sheaf for the congruence space $\til X$ of a monoid scheme $X$ that might come to mind is the pullback $\cO_{\til X}=\pi_X^\ast\cO_X$ of the structure sheaf of $X$. Explicitly $\cO_{\til X}$ is the sheafification of the presheaf with values
\[
 \cO_{\til X}(V) \ = \ \colim \cO_X(U)
\]
where $U$ ranges over all open subsets of $X$ that contain $\pi_X(V)$, modulo the identifications coming from the restriction maps. 

For general $X$, a sheafification of the presheaf pullback is necessary. This means, in particular, that for affine $X=\MSpec(A)$, the canonical morphism $A=\cO_X(X)\to \cO_{\til X}(\til X)$ is in general not an isomorphism, as the following example shows.

\begin{ex}\label{ex: sheaf for Cong A part 1}
 Consider $A=\{0,e,1\}$ with idempotent $e^2=e$. Then $X=\MSpec(A)$ has two prime ideals $\gen 0=\{0\}$ and $\gen e=\{0,e\}$ and $\til X$ has two prime congruences $\fp_0=\gen{(e,1)}$ and $\fp_e=\gen{(e,0)}$. The map $\pi_X:\til X\to X$ is a bijection with $\pi_X(\fp_0)=\gen 0$ and $\pi_X(\fp_e)=\gen e$, but not a homeomorphism since $U_{e,1}=\{\fp_e\}$ is open in $\til X$, but $\pi_X(U_{e,1})=\{\gen e\}$ is not open in $X$. 
 
 In particular, $\til X=U_{e,0}\amalg U_{e,1}=\{\fp_e\}\amalg\{\fp_0\}$ is discrete and thus $\cO_{\til X}(\til X)=\cO_{\til X}(U_{e,0})\times\cO_{\til X}(Y_{e,1})$. We find that
 \[
  \cO_{\til X}(U_{e,0}) \ = \ \cO_{X}(\{\gen 0\}) \ = \ \Fun \qquad \text{and} \qquad \cO_{\til X}(U_{e,1}) \ = \ \cO_{X}(X) \ = \ A.
 \]
 We see that in this case the canonical map $A=\cO_X(X)\to\cO_{\til X}(\til X)=\Fun\times A$ is injective, but not surjective.
\end{ex}

%%%%%%%%%%%%%%%%%%%%%%%%%%%%%%%%%%%%%%%%%%%%%%%%%%%%%%%%%%%%%%%%%%%%%%%%%%%%%%%%%%%%%%%%%%%%%%%%%%%%%%%%%%%%%%%%%%%%%%%%%%%%%%%%%%%%%%%%%%%%%%%%%%%%%%%%%%%%%%%%%%%%%
\subsubsection{Restriction to a coarser topology}
\label{subsubsection: restriction to coarses topology}

The failure of the presheaf pullback of $\cO_X$ to $\til X$ from section \ref{subsubsection: pullback sheaf} is caused by open subsets of $\til X$ that are not inverse images of open subsets of $X$. 

There is a trivial solution to fix this failure: we consider $\til X$ together with the topology generated by the inverse image of open subsets of $X$. In the affine case $X=\MSpec(A)$, these are unions of open subsets of $\til X$ of the form $U_{a,0}$ with $a\in A$.

For this coarser topology on $\til X$, the presheaf pullback $\cO_{\til X}'$ of $\cO_X$ along $\pi_X$ is a sheaf and satisfies 
\[
 \cO_{\til X}' (U_{a,0}) \ = \ \cO_X(U_a) \ = \ A[a^{-1}].
\]
In particular, we have $\cO_{\til X}'(\til X)=A$ in the affine case $\til X=\Cong(A)$. This point of view does, however, not provide any additional insight into the nature of monoid schemes and is therefore not of interest.

%%%%%%%%%%%%%%%%%%%%%%%%%%%%%%%%%%%%%%%%%%%%%%%%%%%%%%%%%%%%%%%%%%%%%%%%%%%%%%%%%%%%%%%%%%%%%%%%%%%%%%%%%%%%%%%%%%%%%%%%%%%%%%%%%%%%%%%%%%%%%%%%%%%%%%%%%%%%%%%%%%%%%
\subsubsection{Berkovich's approach: open subsets with coordinate monoid}
\label{subsubsection: Berkovich}

In contrast to the sheaf $\cO_{\til X}$ from section \ref{subsubsection: pullback sheaf}, which endows open subsets of $\til X$ with (colimits of) local coordinate monoids for $X$, Berkovich takes another route in \cite{Berkovich12a}.

Namely, there are more open subsets of $\til X$ than the inverse images of opens in $X$ (as considered in section \ref{subsubsection: restriction to coarses topology}) that come with a canonical coordinate monoid. Let us call an open subset $U$ of $\til X$ \emph{affine-like} if there is a monoid morphism $f:A\to A_U$ such that the induced morphism $\varphi=f^\ast:Y\to X$ with $Y=\MSpec A_U$ satisfies the following properties:
\begin{enumerate}
 \item The image of the associated map $\tilde\varphi:\til Y\to\til X$ between congruence spaces has image $U$.
 \item Every morphism $\psi:Z\to X$ with $\im(\tilde\psi)\subset U$ factors uniquely through $\varphi:Y\to X$.
 \item For every $y\in \til Y$ and $x=\varphi(y)$, the induced morphism $k(y)\to k(x)$ be the respective residue fields is an isomorphism.
\end{enumerate}

For such open subsets $U$ of $\til U$, the monoid $A_U$ is a canonical choice of coordinate monoid. In fact, affine-like open subsets of $\til X$ are closed under finite intersections and form a basis for a topology on $\til X$ that is in general strictly coarser than the topology of all open subsets and strictly finer than the topology induced from $X$ (as considered in section \ref{subsubsection: restriction to coarses topology}).

If we attempt to extend the sheaf $\cO_{\til X}$ to this finer topology, then we gain only a presheaf that fails in general to satisfy the sheaf axiom. Sheafification defines a sheaf of monoids on this finer topology, whose global sections do not necessarily agree with $A$, as the following example shows. 

\begin{ex}\label{ex: sheaf for Cong A part 2}
 We reconsider the monoid $A=\{0,e,1\}$ with idempotent $e=e^2$ from Example \ref{ex: sheaf for Cong A part 1}. The only opens of $\til X=\Cong A$ that stem from $X$ are $\emptyset$, $\til X$ and $U_{e,0}=\{\gen{e\sim 1}\}$. The open subset $U_{e,1}=\{\gen{e\sim0}\}$ is affine-like as witnessed by the morphism $A\to A/\gen{e\sim 0}\simeq\Fun$. Thus the coordinate monoid of $U_{e,1}$ is $\Fun$, equally to $\cO_{\til X}(U_{e,0})\simeq\Fun$. The monoid of global sections of the sheaf on affine-like opens is $\Fun\times\Fun$, which contains $A$ as a proper submonoid.
\end{ex}

%%%%%%%%%%%%%%%%%%%%%%%%%%%%%%%%%%%%%%%%%%%%%%%%%%%%%%%%%%%%%%%%%%%%%%%%%%%%%%%%%%%%%%%%%%%%%%%%%%%%%%%%%%%%%%%%%%%%%%%%%%%%%%%%%%%%%%%%%%%%%%%%%%%%%%%%%%%%%%%%%%%%%
\subsubsection{Deitmar: sheaf for all opens via enriched structures}

Deitmar uses in \cite{Deitmar13} a richer structure, called sesquiads, that incorporate additive relations, which allows him to define localizations $A[\frac{1}{a-b}]$ for difference of elements $a$ and $b$ in $A$. This generalized localization $A[\frac{1}{a-b}]$ serves as the coordinate object for $U_{a,b}$. Also this theory fails to be subcanonical; in particular, the global sections of the resulting structure sheaf on $\Cong A$ do in general not equal $A$.

%%%%%%%%%%%%%%%%%%%%%%%%%%%%%%%%%%%%%%%%%%%%%%%%%%%%%%%%%%%%%%%%%%%%%%%%%%%%%%%%%%%%%%%%%%%%%%%%%%%%%%%%%%%%%%%%%%%%%%%%%%%%%%%%%%%%%%%%%%%%%%%%%%%%%%%%%%%%%%%%%%%%%
%%%%%%%%%%%%%%%%%%%%%%%%%%%%%%%%%%%%%%%%%%%%%%%%%%%%%%%%%%%%%%%%%%%%%%%%%%%%%%%%%%%%%%%%%%%%%%%%%%%%%%%%%%%%%%%%%%%%%%%%%%%%%%%%%%%%%%%%%%%%%%%%%%%%%%%%%%%%%%%%%%%%%

\begin{small}
 \bibliographystyle{plain}
 \bibliography{f1}
\end{small}

\end{document}